\numberwithin{equation}{section}
\newtheorem{mainTheorem}{Theorem}
\begin{document}
\title{\vspace{-1cm}The divisor function along sums of two biquadrates}
\author{Wing Hong Leung}
\address{Department of Mathematics, Rutgers University, Piscataway, NJ 08854, U.S.A.}
\email{joseph.leung@rutgers.edu}
\author{Mayank Pandey\vspace{-1cm}}
\address{Department of Mathematics, Princeton University, Princeton, NJ 08540, U.S.A.}
\email{mayankpandey9973@gmail.com}
\maketitle
\begin{abstract}
  We establish power saving asymptotics for the sum of the divisor function along
  a binary quartic form, improving on work of Daniel.
  The proof involves an application of a recent two dimensional delta method due to
  Li, Rydin-Myerson, and Vishe and an exploitation of $\GL_2 $ automorphic forms arising
  from the factorization of varying cubic Dedekind zeta functions.
\end{abstract}
\renewcommand{\baselinestretch}{0.68}\normalsize
\tableofcontents
\renewcommand{\baselinestretch}{1.0}\normalsize

\newpage
\section{Introduction and statement of results}\label{sec:intro}

Let
\begin{equation*}
  d_k(n) = \sum_{d_1\dots d_k = n } 1
\end{equation*}
be the $k$-fold divisor function, and write $d(n)= d_2(n)$ (which we'll refer
to as the divisor function).
It is often of interest to estimate sums of the form
\begin{equation}\label{eq:crb36qtzpt}
  \sum_{ n < X} a(n)d_k(n)
\end{equation}
for sequences $a : \mathbb{N} \to \mathbb{C}$ of arithmetic interest.
Beyond their own intrinsic interest, questions of estimating \eqref{eq:crb36qtzpt} are relevant
to the study of sums over primes
\begin{equation}
  \sum_{p < X } a(p),
\end{equation}
which is most clearly seen through an identity of Linnik
(see \cite[Proposition 13.2]{IK}) that 
\begin{equation}
  \sum_{p^\nu<X}a(p^\nu)=-\sum_{k}\frac{(-1)^k}{k}\sum_{n< X}a(n)d_k'(n),
\end{equation}
where $d_k'(n)$ is the number of ways to write $n$ as the product of $k$ integers greater than
$1$. Note that $d_k'$ is closely related to the usual $d_k$ for we have
\begin{equation}
  d_k'(n) = \sum_{0\le \ell\le k} (-1)^{k - \ell} \binom{ k}{\ell}d_\ell(n),
\end{equation}
with $d_0(n)=\charf{n=1}$.

In this paper, we obtain power saving asymptotics for sums of the divisor function
along values of a binary quartic form, showing the following result.

\begin{mainTheorem}\label{theorem:cq5oju0wf8}
  There exists $\delta > 0$ such that for $N\ge 1$, we have that
  \begin{equation}\label{eq:cq5o3i91id}
    \sum_{m,n \in \mathbb{Z} \\ 0 < m^4 + n^4\le N } d(m^4 + n^4)
    = \kappa N^{\frac{1}{2}}(c_{-1}\log N + 2(c_0 - c_{-1})) + O(N^{\frac{1}{2} - \delta}),
  \end{equation}
  where
  \begin{equation}
    \kappa = \int_{\mathbb{R}^2} \mathbbm{1}_{ x_1^4 + x_2^4\le  1 } \,d x_1 \,d x_2,
  \end{equation}
  and $c_{-1}, c_0$ are such that
  \begin{equation}
    \sum_{q\ge 1} \frac{\rho(q)}{q^{s + 1}}
    = \frac{c_{-1}}{s - 1} + c_0 + O(|s - 1|)
  \end{equation}
  for $\Re (s) > 1$, where $\rho(q) = \#\set{x_1, x_2\in \mathbb{Z}/q \mathbb{Z} :q | x_1^4 + x_2^4}$.
\end{mainTheorem}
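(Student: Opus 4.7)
The plan is to start from Dirichlet's hyperbola identity $d(k) = 2\sum_{a\mid k,\,a\le\sqrt k} 1 - \mathbf{1}_{\sqrt k\in\mathbb{Z}}$ and, after a smooth dyadic decomposition of the $a$-sum at scales $A\le N^{1/2}$, reduce the theorem to a power-saving estimate for
\[
T_A := \sum_{a}\psi(a/A)\,\#\bigl\{(m,n)\in\mathbb{Z}^2 : a\mid m^4+n^4,\;a^2\le m^4+n^4\le N\bigr\}
\]
uniformly in dyadic $A\le N^{1/2}$. Heuristically $T_A$ should equal $\sum_a\psi(a/A)(\rho(a)/a^2)\,\kappa(N^{1/2}-a)$; summing $2\sum_A T_A$ (with the factor $2$ coming from the hyperbola identity), together with the Laurent expansion of $\sum_q\rho(q)q^{-s-1}$ at $s=1$ and partial summation to handle $\sum_{a\le\sqrt N}\rho(a)/a$, reproduces exactly the asymptotic main term $\kappa N^{1/2}(c_{-1}\log N + 2(c_0-c_{-1}))$. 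The substance of the theorem is then a genuine power saving in the error coming from equidistribution of $(m,n)\bmod a$.

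To obtain that power saving I would split at $A=N^{1/4}$. For $A\le N^{1/4-\eta}$ the modulus $a$ is much smaller than the $N^{1/4}$-sized box in $(m,n)$, so Poisson summation modulo $a$ together with Weil/Deligne-type bounds on the complete exponential sums $\sum_{x,y\bmod a}e_a(\alpha(x^4+y^4)+\beta x+\gamma y)$ at nonzero frequencies delivers a power saving comfortably. For $A>N^{1/4}$ this strategy breaks down, and I would instead regard $m^4+n^4=ab$ as an affine hypersurface in the four variables $(m,n,a,b)$ ranging over smooth dyadic boxes of sizes $\asymp (N^{1/4},N^{1/4},A,N/A)$, and apply the two-dimensional delta method of Li, Rydin-Myerson, and Vishe to detect the single equation $m^4+n^4-ab=0$. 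Poisson summation in all four variables then produces a dual sum over integer frequencies, weighted by stationary-phase integrals of the quartic form against short exponential sums modulo a pair of delta-method moduli $(q_1,q_2)$.

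The critical step is a spectral bound for these dual sums. The local counts $\rho(q)$ are naturally packaged through Dedekind zeta functions of the cubic resolvent fields $K=K(a,b)$ attached to the fibres $x^4+y^4=c$; via the factorization $\zeta_K(s)=\zeta(s)L(s,\pi_K)$ with $\pi_K$ a cuspidal $\GL_2$-representation realizing the $2$-dimensional Artin representation of $\mathrm{Gal}(\widetilde K/\mathbb{Q})$, the Fourier coefficients arising in the dual sum are Hecke eigenvalues of $\pi_K$, and the Kuznetsov formula / spectral large sieve for $\GL_2$ then supplies the necessary cancellation on average over $(a,b)$. The main obstacle is to execute this step with control uniform in the conductor of $\pi_K$ as $K$ varies with $(a,b)$, and to match the spectral gains against the oscillatory stationary-phase analysis of the quartic; in particular, bridging the transitional range $A\asymp N^{1/4}$, where neither Poisson nor the delta method alone is manifestly efficient, is likely the most delicate part of the argument.
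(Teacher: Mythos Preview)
Your outline assembles the right ingredients—hyperbola method, a delta symbol, and $\GL_2$ cancellation via cubic Dedekind zeta functions—but the architecture misses the step that makes the argument close.

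The decisive idea absent from your plan is the passage to $K=\mathbb{Q}(\zeta_8)$ \emph{before} any delta method. Since $m^4+n^4=N_{K/\mathbb{Q}}(m+n\zeta)$, factorisations $m^4+n^4=a_1a_2$ correspond (for coprime $m,n$, with mild sieving otherwise) to factorisations $(m+n\zeta)=(\alpha_1)(\alpha_2)$ in $\mathcal{O}_K$. The count becomes: pairs $\alpha_1,\alpha_2\in\mathcal{O}_K$ of balanced size with $\alpha_1\alpha_2\in\mathbb{Z}+\mathbb{Z}\zeta$, i.e.\ with $\ensuremath{\boldsymbol\ell}(\alpha_1\alpha_2)=0$ where $\ensuremath{\boldsymbol\ell}(n_0+\cdots+n_3\zeta^3)=(n_3,n_2)$. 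It is this \emph{two-dimensional bilinear} condition that the delta method of Li--Rydin-Myerson--Vishe detects; after Poisson in each $\alpha_i\in\mathcal{O}_K\cong\mathbb{Z}^4$, the arithmetic sum at a prime $p$ evaluates to $-1+\#\{x\bmod p: f_{\alpha_1\alpha_2}(x)\equiv 0\}+O(1/p)$ for the cubic $f_\alpha(x)=n_3x^3+n_2x^2+n_1x+n_0$. This is where the cubic fields $k_\alpha$ and the cusp forms $\pi_\alpha$ with $\zeta_{k_\alpha}(s)=\zeta(s)L(s,\pi_\alpha)$ enter. The saving then comes not from Kuznetsov or the spectral large sieve but from an eighth-moment/Cauchy reduction to Rankin--Selberg convexity for $L(s,\pi_\alpha\times\pi_{\alpha'})$, together with a diagonal count showing $k_\alpha\simeq k_{\alpha'}$ is rare as $\alpha,\alpha'$ vary.

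Applying a delta method directly to $m^4+n^4-ab=0$, as you propose, runs into two obstacles. First, this is a single equation, so the two-dimensional delta symbol is the wrong shape; more seriously, the equation is linear in $a$ and in $b$, so Poisson in those variables simply returns the congruence $a\mid m^4+n^4$ rather than producing new structure, while the remaining quartic in $(m,n)$ sits in a box of side $N^{1/4}$ that is far too short relative to the moduli for circle-method gains. Second, your description of the $\GL_2$ input—``cubic resolvent fields attached to the fibres $x^4+y^4=c$''—does not correspond to a mechanism that actually appears: the cubic polynomials are parametrised by the product $\alpha_1\alpha_2\in\mathcal{O}_K$, not by the value $c=m^4+n^4$, and without the passage to $\mathcal{O}_K$ there is no visible family of varying $\GL_2$ forms over which to average. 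The linearisation via $K$ is precisely what makes both the delta-method step and the automorphic saving available.
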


This improves upon work of Daniel \cite{MR1670278}, who established asymptotics for \eqref{eq:cq5o3i91id} with a 
saving in the remainder of $(\log N)^{-1 + o(1)}$. Consequently, the secondary main term was
obscured in the prior work, which could not handle the case of two divisors of equal
size whose contribution to  \eqref{eq:cq5o3i91id} is $\asymp N$.
See \S\ref{sec:cuhe3z5rxo} for further discussion of Daniel's argument, a sketch of our argument, and
a discussion of the relevant bottlenecks.

We do not make $\delta$ explicit (though it is, in principle, computable), since the resulting exponent is almost certainly very poor and does not reflect any substantive barrier.
In Theorem \ref{theorem:cq5oju0wf8}, the precise value would mainly reflect the sharp cutoff and the loss from sieving out nonsplit primes in our passage to algebraic integers.
An explicit exponent is given in Theorem \ref{theorem:cq5o3jikeq}, which may be more useful in applications.

We also expect that by combining our results (see Theorem \ref{proposition:crb1e3s2w0}) with techniques of Hooley, one can show that for any irreducible binary quadratic
form $Q$ which does not represent $x^4 + y^4$ and for any $\mathcal{R} \subset \mathbb{R}^2\setminus \set{0}$ with $\partial \mathcal{R} $ of finite
length, we have
\begin{equation}\label{eq:cuhhvzyczs}
  \#\set{(x, y)\in X \mathcal{R} : \gcd(x,y ) = 1, \, \exists u,v\text{ s.t. } Q(u, v) = x^4 + y^4}
  \asymp_{\mathcal{R}} \frac{X^2}{\sqrt{\log X}}.
\end{equation}
This answers (with a slightly different quadratic form) a question posed by Loughran in
\cite{SkorobogatovBrightRamdoraiSofosVishe2024NewDirections}. The upper bound follows from an upper bound sieve in an argument due to Serre,
but the question of a lower bound, as is common with sieving, appears harder.
The analogous question (in somewhat more generality) with $x^4 + y^4$ replaced with a binary
cubic form is answered by Sofos in \cite{MR3534973}.

In future work, we intend to generalize Theorem \ref{theorem:cq5oju0wf8} to general binary quartic forms and to replace the divisor function with the Fourier coefficients of general $\GL_2 $ automorphic
forms, thereby obtaining bounds such as \eqref{eq:cuhhvzyczs} in greater generality than the
example above.

We now state the main theorem, which takes place over $K = \mathbb{Q}(\zeta)$ with
$\zeta$ a primitive $8$th root of unity, and refer the reader to \S\ref{sec:notation} for notation not
yet defined.

\begin{mainTheorem}\label{theorem:cq5o3jikeq}
  Let $M, \Omega\ge 1$ and suppose that $\Phi^\infty \in C_c^\infty((K_\infty\setminus K_\infty^0)^2)$ and for $\beta_1', \beta_2'\in \mathcal{O}_K/M \mathcal{O}_K$,
  write
  \begin{equation}
    \Phi^f(\beta_1, \beta_2) = \mathbbm{1}_{ \substack{\beta_1 \equiv \beta_1' \, (M)\\ \beta_2 \equiv \beta_2'\, (M)} }.
  \end{equation}
  Suppose furthermore that  $\Phi^\infty$ satisfies the following properties:
  \begin{enumerate}
  \item For all places $v | \infty$ of $K$ and $(x_1^\infty, x_2^\infty)\in\mathrm{supp}(\Phi^\infty)$, we have that 
    \begin{equation}\label{eq:crb7dyjvca}
      \frac{1}{\Omega}\ll |x_1^\infty|_v, |x_2^\infty|_v\ll \Omega.
    \end{equation}
  \item For all differential operators $P\in \mathcal{D}(K_\infty^2)$ of order $k$, we have that
    \begin{equation}
      \|  P\Phi^\infty \|_{\infty} \ll_P \Omega^k.
    \end{equation}
  \end{enumerate}
  Let $\ensuremath{\boldsymbol\ell} : \mathcal{O}_K\to \mathbb{Z}^2$ denote the linear map
  \begin{equation}
    \ensuremath{\boldsymbol\ell}(n_0 + n_1\zeta + n_2\zeta^2 + n_3\zeta^3) = (n_3, n_2).
  \end{equation}
  Then, for any $X_1, X_2\ge 1$, we have that 
  \begin{multline}\label{eq:crb7sil3ax}
    \sum_{\alpha_1, \alpha_2\in \mathcal{O}_K \\ \ensuremath{\boldsymbol\ell}(\alpha_1\alpha_2) = 0 } \Phi^\infty \biggl( \frac{\alpha_1}{X_1}, \frac{\alpha_2}{X_2} \biggr)\Phi^f(\alpha_1, \alpha_2)
    = X_1^2X_2^2\sigma_\infty \prod_p\sigma_p \\ 
    + O\biggl((\Omega M)^{O(1)}X_1^2X_2^2
    \min\biggl( (X_1X_2)^{ - \eta + o(1)}, \frac{X_1}{X_2} + \frac{1}{X_1},
    \frac{X_2}{X_1} + \frac{1}{X_2} \biggr) \biggr) ,
  \end{multline}
  where
  \begin{align}
    \sigma_\infty = \sigma_\infty(\Phi^\infty, \ensuremath{\boldsymbol\ell}) &:= \int_{K_\infty^2} \Phi^\infty(x_1^\infty, x_2^\infty)
                                                \delta(\ensuremath{\boldsymbol\ell}(x_1^\infty x_2^\infty)) \,d x_1^\infty \,d x_2^\infty, \\
    \sigma_p = \sigma_p(\Phi^f, \ensuremath{\boldsymbol\ell}) &:= \int_{\mathcal{O}_{K, p}^2} \Phi^f(x_1^p, x_2^p)
                                                \delta(\ensuremath{\boldsymbol\ell}(x_1^px_2^p)) \,d x_1^p \,d x_2^p
  \end{align}
  and
  \begin{equation}
    \eta = \frac{1}{236}.
  \end{equation}
\end{mainTheorem}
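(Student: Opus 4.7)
The strategy is to detect the linear constraint $\ensuremath{\boldsymbol\ell}(\alpha_1\alpha_2) = 0$ via the two-dimensional delta method of Li, Rydin-Myerson, and Vishe rather than by Daniel's Dirichlet hyperbola trick, thereby avoiding any sharp cut-off on the divisor variables. Concretely, taking $Q \asymp (X_1 X_2)^{1/2}$, I would write
\begin{equation*}
    \mathbbm{1}_{\ensuremath{\boldsymbol\ell}(\alpha_1 \alpha_2) = 0} = \frac{1}{Q^{2}} \sum_{1 \le q \ll Q} \,\sum_{\mathbf{c} \bmod q}^{\!\ast} e\!\left(\frac{\mathbf{c} \cdot \ensuremath{\boldsymbol\ell}(\alpha_1 \alpha_2)}{q}\right) w_Q\!\left(q, \ensuremath{\boldsymbol\ell}(\alpha_1 \alpha_2)\right)
\end{equation*}
for a smooth weight $w_Q$, and insert this into the left-hand side of \eqref{eq:crb7sil3ax}. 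The piece corresponding to $\mathbf{c} = 0$ unfolds by the standard Hardy--Littlewood argument to the expected product of local densities $X_1^2 X_2^2 \sigma_\infty \prod_p \sigma_p$: convergence of the singular series follows from smoothness of the variety $\ensuremath{\boldsymbol\ell}(xy) = 0$ off a codimension-two locus, and convergence of the singular integral uses that $\Phi^\infty$ is supported off $K_\infty^0$.

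\textbf{Error term.} The remainder is a weighted sum over $q$ and $\mathbf{c} \neq 0$ of bilinear character sums in $\alpha_1, \alpha_2$. The plan is to Poisson sum in $\alpha_1 \in \mathbb{Z}^4$: for fixed $\alpha_2$ the phase is $\mathbb{Z}$-linear in $\alpha_1$ with coefficient $L_{\alpha_2}^T \mathbf{c}$, where $L_{\alpha_2} := \ensuremath{\boldsymbol\ell} \circ M_{\alpha_2}$ and $M_{\alpha_2}$ is multiplication-by-$\alpha_2$ on $\mathcal{O}_K$. Dualising localises $\alpha_1$ near the solution set of $L_{\alpha_2}^T \mathbf{c} \equiv 0 \pmod q$, and the residual $\alpha_2$-sum is governed by the Dedekind zeta function of a varying cubic field $F_{\alpha_2}$, arising as the cubic resolvent of the quartic form $\det(L_{\alpha_2} L_{\alpha_2}^T)$ that controls the covolume of $\ker L_{\alpha_2}$. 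Since $[F_{\alpha_2}:\mathbb{Q}] = 3$, one has the factorisation $\zeta_{F_{\alpha_2}}(s) = \zeta_\mathbb{Q}(s) L(s, \rho_{\alpha_2})$ for a two-dimensional Artin representation $\rho_{\alpha_2}$ of dihedral type, and by Hecke's theta-series construction $L(s, \rho_{\alpha_2})$ is a $\GL_2$ automorphic $L$-function. One then applies the Kim--Sarnak pointwise bound together with a Kuznetsov/large-sieve average over the varying $F_{\alpha_2}$ to save a power.

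\textbf{Assembly and main obstacle.} Combining the contributions, summing over $q, \mathbf{c}$, and optimising $Q$ yields the error $X_1^2 X_2^2 (X_1 X_2)^{-\eta + o(1)}$ with $\eta = 1/236$; the specific exponent reflects the combined loss from the delta-method dualisation, the passage from pointwise to averaged automorphic input, and the polynomial conductor dependence that also produces the $(M\Omega)^{O(1)}$ factor. The hard part will be securing uniform $\GL_2$ estimates as $F_{\alpha_2}$ varies: its discriminant grows polynomially with $\alpha_2$, there is no fixed $L$-function to appeal to, and one must separately handle the thin locus of degenerate $\alpha_2$ where the resolvent becomes reducible or the cubic field collapses. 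Finally, the auxiliary bounds $X_1/X_2 + 1/X_1$ and $X_2/X_1 + 1/X_2$ cover the severely unbalanced regime and follow directly by parameterising $\gamma := \alpha_1 \alpha_2$ in the rank-two lattice $\mathbb{Z} + \mathbb{Z} \zeta$, fixing the smaller of $\alpha_1, \alpha_2$, and applying a standard lattice-point count for the other.
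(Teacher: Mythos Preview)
Your outline captures the high-level architecture---two-dimensional delta method plus automorphic input from varying cubic fields---but several of the concrete mechanisms you propose would not produce a power saving.

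First, the vanilla two-dimensional delta symbol with $Q\asymp (X_1X_2)^{1/2}$ is not enough. The paper introduces a free parameter $L$ (taken to be $X^{2\eta/3}$) that deliberately unbalances the two pieces of the delta expansion; at $L=1$ the trivial bounds on both pieces are \emph{exactly} $X^2$, so with your setup there is nothing to spare. The $L$-perturbation is what creates room to win on one piece by the automorphic argument while the other piece wins by the trivial bound.

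Second, and more structurally, the paper Poisson-sums in \emph{both} $\alpha_1$ and $\alpha_2$ simultaneously, not just $\alpha_1$. This yields a complete sum $S_1(\alpha_1,\alpha_2;q)$ that is multiplicative in $q$ and satisfies $S_1(\alpha_1,\alpha_2;p)=-1+\#\{x\bmod p:f_{\alpha_1\alpha_2}(x)\equiv 0\}+O(1/p)$, where $f_\alpha(x)=n_0+n_1x+n_2x^2+n_3x^3$ for $\alpha_1\alpha_2=n_0+n_1\zeta+n_2\zeta^2+n_3\zeta^3$. The cubic field is simply $k_\alpha=\mathbb{Q}[x]/(f_{\alpha_1\alpha_2})$---not any cubic resolvent of a quartic---and the cancellation is extracted from the \emph{$q$-sum} $\sum_q\lambda_{\pi_\alpha}(q)$, not from a residual $\alpha_2$-sum as you suggest. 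Your one-variable Poisson followed by averaging over $\alpha_2$ does not visibly access this multiplicativity in $q$, and the object you describe (``cubic resolvent of the quartic $\det(L_{\alpha_2}L_{\alpha_2}^T)$'') does not match what actually appears.

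Third, the automorphic input is neither Kim--Sarnak nor a Kuznetsov large sieve. The paper raises the $q$-sum to the eighth power via H\"older, applies Cauchy--Schwarz to smooth the resulting $d_8$-weight, and then bounds the Rankin--Selberg sums $\sum_q\lambda_{\pi_\alpha}(q)\lambda_{\pi_{\alpha'}}(q)$ by the \emph{convexity} bound for $L(s,\pi_\alpha\times\pi_{\alpha'})$ whenever $k_\alpha\not\simeq k_{\alpha'}$; the diagonal $k_\alpha\simeq k_{\alpha'}$ is sparse (count via the constant term) and handled trivially. This combination of the moment trick with Rankin--Selberg convexity is what produces $\eta=1/236$; a pointwise bound of Kim--Sarnak type would not suffice since one needs cancellation in a sum of signed Hecke eigenvalues, not a bound on their size.
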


\subsection{Sketch of proof and discussion of argument}\label{sec:cuhe3z5rxo}

We end this section with a discussion of Daniel's methods in \cite{MR1670278}, the bottlenecks
and limitations in his method, along with a sketch of our proofs of Theorem \ref{theorem:cq5oju0wf8} and
Theorem \ref{theorem:cq5o3jikeq}. As is typical with such sketches, we will avoid discussing the
technicalities of analytic
number theory such as smoothing and non-coprimality; we will only discuss
the top dyadic ranges which are typically the main difficulty.

Taking the sum in Theorem \ref{theorem:cq5oju0wf8}, opening up the divisor function, and splitting the
divisors into dyadic intervals, the problem of estimating the left-hand side of
\eqref{eq:cq5o3i91id} reduces to estimating 
\begin{equation}\label{eq:cuhe337fzw}
  \sum_{d\sim D} \sum_{m,n \sim N^{1/4} \\ d | m^4 + n^4 } 1
\end{equation}
for $D\ll N^{1/2}$.

Daniel~\cite{MR1670278}, using lattice point methods of Greaves \cite{MR271026, MR1150469},
was able to handle divisors of size $D \ll N^{1/2}\log^{-A}N$. 
The idea is that the inner sum in \eqref{eq:cuhe337fzw} ought to approximately be
\begin{equation}
  \frac{\rho(d)}{d^2}N^{1/2}\approx \frac{N^{1/2}}{D},
\end{equation}
which Daniel shows on average for $d\sim D$, bounding by $O(N^{1/4}\sqrt{D} (\log N)^{O(1)})$
the size of 
\begin{equation}\label{eq:cuhe35mno2}
  \sum_{d\sim D} \biggl| \sum_{n_1, n_2\sim N^{1/4} \\ d | n_1^4 + n_2^4 } 1 -
  \frac{\rho(d)}{d^2}N^{1/2}
  \biggr|.
\end{equation}

Daniel obtains such a bound by splitting into arithmetic progressions modulo $d$,
interpreting the count over $n_1, n_2$ as an average of lattice point counts, and
nontrivially estimating the count on average by using the homogeneity of $x_1^4 + x_2^4$
to show that the lattices on average do not have a shortest vector of size smaller than
one would expect given the covolume.

We remark that Friedlander--Iwaniec in \cite[Theorem 22.20]{FI}
obtain the same level of distribution of $1/2$ as Daniel with Poisson summation, in
which case the homogeneity plays a related role, this time providing more cancellation
in the resulting exponential sums than one would expect for an inhomogeneous polynomial.

The large moduli are treated similarly, but at this point, there is a strict limit
to the saving one can obtain. Focusing on the case of $D\asymp N^{1/2}$, the inner sum in
\eqref{eq:cuhe337fzw} has $1$ term on average and so one cannot hope for a bound on \eqref{eq:cuhe35mno2}
better than $N^{1/2}$, and indeed, this is the bound Daniel obtains. Summing over the
$\log\log N$ scales $N^{1/2}\log^{-A}N\ll D\ll N^{1/2}$, this gives Daniel an asymptotic for \eqref{eq:cq5o3i91id}
with remainder term $O(N^{1/2}\log\log N)$.

To obtain anything more than a saving of $(\log N)^{1 - o(1)}$ over the main term, it is
necessary then to obtain estimates for the sum \eqref{eq:cuhe35mno2}
without the absolute values. This is the barrier we refer to as the limit of the
hyperbola method: the point at which the sequence is too sparse to allow for
further gains without exploiting cancellation in more than one divisor.

One direct way to seek to breach this barrier, following either Daniel's or
Friedlander--Iwaniec's treatment, is to seek equidistribution of roots of the
congruence $x_1^4 + x_2^4 \equiv 0(q)$ as $q$ varies (beyond the equidistribution already implied by the
homogeneity). Such equidistribution of roots of a polynomial congruence to a varying
modulus is a notoriously difficult problem. This is only known for irreducible single
variable quadratic polynomials.

Incidentally, the case of single variable quadratic polynomials is
the only case of the hyperbola method barrier having been breached, which was first
done by Hooley \cite{MR153648} who obtained power saving asymptotics for sums of $d(n^2 + a)$.
Every aspect of the single variable case turns out to be intimately tied to $\GL_2$
automorphic forms (see \cite{MR750670, MR3096570}, for example), but no such interpretation appears
to exist for any other natural case of either polynomial root equidistribution or
for the divisor sum problem (though see \cite{MR4467125}).

Our work represents the first case of this hyperbola method barrier being breached for
polynomial sequences since Hooley \cite{MR153648}. We shall now sketch our argument.
As is typical with such sketches, the symbol ``='' should not be
interpreted too literally for it shall hide many nongeneric ranges and phenomena we deal
with carefully in the full proof.

We proceed with the estimation of the sum
\begin{equation}
  \sum_{d\sim N^{1/2}} \sum_{m, n\sim N^{1/4} \\ d | m^4 + n^4 } 1
  = \sum_{a_1a_2 = m^4 + n^4 \\ a_1, a_2\sim X^{2} \\ m, n\sim X} 1
\end{equation}
``as a whole'', treating $a_1, a_2$ equally (we've relabelled $X\asymp N^{1/4}$ for future compatibility
with Theorem \ref{theorem:cq5o3jikeq}). Passing to the number field $K=\mathbb{Q}(\zeta)$, we observe
that $N_{K/\mathbb{Q}}(m + n\zeta) = m^4 + n^4$, and therefore, by unique factorization\footnote{Finiteness of the class
  group would suffice in practice}, there is (modulo minor technicalities on split primes\footnote{In the case
  at hand, this holds as stated so long as $(m, n) = 1$}) a bijection between factorizations into rational integers $m^4 + n^4=a_1a_2$ and
factorizations $(m + n\zeta)=\mathfrak{a}_1 \mathfrak{a}_2$ into integral ideals $\mathfrak{a}_1, \mathfrak{a}_2\subset \mathcal{O}_K$, with
$\mathfrak{a}_i\mapsto N\mathfrak{a}_i = a_i$ giving one direction of the correspondence.

Writing $\mathfrak{a}_i = (\alpha_i)$ for some $\alpha_i\in \mathcal{O}_K$ by picking some fundamental domain for $\mathcal{O}_K/\mathcal{O}_K^\times$
and summing over the $\alpha_i$, detecting if $\alpha_1\alpha_2$ is of the form $m + n\zeta$, we obtain that
\begin{equation}
  \sum_{ a_1a_2 = m^4 + n^4 \\ a_1, a_2\sim X^2 \\ m, n\sim X } 1
  = \sum_{ \alpha_1, \alpha_2\sim X^{1/2}} \mathbbm{1}_{ \ensuremath{\boldsymbol\ell}(\alpha_1\alpha_2) = 0 }.
\end{equation}
The estimation of the right-hand side of the above is the content of Theorem \ref{theorem:cq5o3jikeq}, whose
proof we now sketch.

We detect the condition $\ensuremath{\boldsymbol\ell}(\alpha_1\alpha_2) = 0$ with a perturbation of a two dimensional
delta method due to \cite{2024arXiv2411.11355}, the statement of which amounts to
\begin{equation}
  \mathbbm{1}_{ \ensuremath{\boldsymbol\ell}(\alpha_1\alpha_2) = 0 }
  \approx -\frac{L^2}{X^{2/3}} \sum_{q\sim X^{2/3}L } \mathbbm{1}_{ q | \ensuremath{\boldsymbol\ell}(\alpha_1\alpha_2) }
  + \frac{L^2}{X^{2/3}}\sum_{d\ll X^{1/3}/L \\ \mathbf{c} \in \mathbb{Z}^2 \\ |\mathbf{c}|d \sim X^{1/3}/L \\ (c_1, c_2) = 1 }
  \frac{d L^{1/2}}{X^{2/3}}\sum_{ q\sim X^{2/3}/(d L^{1/2}) }
  \mathbbm{1}_{ \substack{dq | \det(\mathbf{c}, \ensuremath{\boldsymbol\ell}(\alpha_1\alpha_2)) \\ d | \ensuremath{\boldsymbol\ell}(\alpha_1\alpha_2) }},
\end{equation}
where $L$ is a small power of $X$ to be chosen (note here that $|\ensuremath{\boldsymbol\ell}(\alpha_1\alpha_2)| \ll X$).
Note that the conductor of the conditions on the first sum is $\ll X^{2/3}L$ and
on the second $\ll X^{2/3}/L^{1/2}$; the purpose of the introduction of the parameter $L$ is to
shift difficulty from the second sum onto the first sum for reasons that will soon be
clear.

After applying Poisson summation in $\alpha_1, \alpha_2$, we are reduced to bounding, in the generic
case (so apart from main terms coming from zero frequencies)
\begin{equation}
  \Sigma_1 = \frac{1}{L^{3}} \sum_{q\sim X^{2/3}L }\sum_{\alpha_1, \alpha_2\sim X^{1/6}L} S_1(\alpha_1,\alpha_2; q),
\end{equation}
where
\begin{equation*}
  S_1(\alpha_1,\alpha_2; q)
  = \frac{1}{q^3}\sum_{\beta_1, \beta_2\in \mathcal{O}_K/(q) \\ \ensuremath{\boldsymbol\ell}(\beta_1\beta_2) \equiv 0 \, (q) }
  e_q \biggl( \Tr \frac{\alpha_1\beta_1 + \alpha_2\beta_2}{\delta_K} \biggr)
\end{equation*}
turns out to be multiplicative in $q$ and satisfy
\begin{equation}\label{eq:4}
  S_1(\alpha_1,\alpha_2;p) = -1 + \#\set{x\, (p) : n_0 + \dots + n_3x^3 \equiv 0\,(p)} + O \biggl(\frac{1}{p}\biggr)
\end{equation}
with the notation $\alpha_1\alpha_2 = n_0 + \dots + n_3\zeta^3$ from now on, and 
\begin{multline}
  \Sigma_2 
  = X^{2/3}L^{5/2} \frac{L^2}{X^{2/3}}\sum_{|\mathbf{c}|d \sim X^{1/3}/L \\ (c_1, c_2) = 1 } \frac{d L^{1/2}}{X^{2/3}}
  \sum_{q\sim X^{2/3}/(d L^{1/2}) } \\ 
  \sum_{\alpha_1, \alpha_2\sim X^{1/6}/L^{1/2} } S_1(\alpha_1, \alpha_2; d)\mathbbm{1}_{ q | n_0c_1^3 - n_1 c_1^2c_2 + n_2c_1c_2^2 - n_3c_2^3}.
\end{multline}
At this point, we wish to bound $\max(|\Sigma_1|,|\Sigma_2|)$, and the trivial bounds (bounding each term
by $X^{o(1)}$ on average) on $\Sigma_1, \Sigma_2$ are $X^2L^6, X^2L^{-3/2}$, respectively, so when $L = 1$,
we are right at the boundary.

We are able to go beyond the trivial bound for $\Sigma_1$ by noting that because of \eqref{eq:4}, we
have that
\begin{equation}\label{eq:cuhgf3r9nr}
  \sum_{q } \frac{S_1(\alpha_1, \alpha_2; q)}{q^s}\approx \frac{L_{k_\alpha}(s)}{\zeta(s)} = L(s, \pi_\alpha),
\end{equation}
where $k_\alpha$ is the splitting field over $\mathbb{Q}$ of $f_\alpha(x) = n_0 + \dots + n_3x^3$, and $\pi_\alpha$ is a 
$\GL_2 $ cusp form of level $n_3^2\mathrm{Disc}(n_0 +\dots + n_3\zeta^3)\ll X^{2}L^4$. Letting $(\lambda_\alpha(n))_{n\ge 1} $ be the Fourier coefficients, we are at this point reduced to bounding
\begin{equation}
  \frac{1}{L^3}\sum_{\alpha\sim X^{1/3}L^2 } \biggl| \sum_{q\sim X^{2/3}L } \lambda_\alpha(q)\biggr|.
\end{equation}
At this point, we will use very little about $\lambda_\alpha $ to proceed. 
Using H\"older's inequality and the multiplicativity of $\lambda_\alpha $, at the cost of introducing
coefficients, we can lengthen the sum over $q $ and reduce ourselves to obtaining
a power saving over the trivial bound for 
\begin{equation}
  \sum_{\alpha\sim X^{1/3}L^2 } \biggl( \sum_{q\sim X^{2/3}L }\lambda_\alpha(q) \biggr)^8 \approx \sum_{\alpha\sim X^{1/6}L } \sum_{q\sim X^{16/3}L^8 }
  d_8(q)\lambda_\alpha(q).
\end{equation}
Now, applying Cauchy to smooth out the $d_8(q) $, we are reduced to getting a power saving
over the trivial bound for
\begin{equation}
  \sum_{\alpha, \alpha'\sim X^{1/3}L^2 }\sum_{q\sim X^{16/3}L^7 }\lambda_\alpha(q)\lambda_{\alpha'}(q).
\end{equation}
Note that at this point, the inner sum is essentially over 
the coefficients of the Rankin-Selberg convolution, whose L-function $L(s, \pi_\alpha\times\pi_{\alpha'}) $ has
conductor $\ll (X^2L^4)^4 = X^8L^{16} $. It is known that $\zeta_{k_\alpha, p} \neq \zeta_{k_{\alpha'}, p} $ for infinitely many primes
$p $ whenever $k_\alpha \neq k_{\alpha'} $, and so it follows that $L(s, \pi_\alpha\times\pi_{\alpha'}) $ has no pole at $s = 1 $. A convexity bound (equivalently, one application of Voronoi summation followed by a
trivial bound) yields that we may
save $ X^{-4/3}L^{O(1)} $ over the trivial bound on the inner sum whenever $k_\alpha \neq k_{\alpha'} $.
A crude count (ranging over the constant term) yields that $k_\alpha = k_{\alpha'} $ a $O(X^{-1/3}L^{-2}) $-fraction
of the time, and therefore, $\Sigma_1\ll X^{2 - \delta}L^{O(1)}$ for some $\delta > 0 $. 

Putting this together with the bound $|\Sigma_2|\ll X^{2}L^{-3/2}$ and taking $L$ a sufficiently small
power of $X$ depending on $\delta$, we obtain that $|\Sigma_1| + |\Sigma_2| \ll X^{2 - \eta}$ for some $\eta > 0$, as
desired.
\\

We end this section with some remarks on what more our methods should yield, in principle.
With some work handling the class group, it should be possible to prove Theorem \ref{theorem:cq5oju0wf8}
with $m^4 + n^4$ replaced by any irreducible binary quartic form.

Furthermore, note that at no point in our proof did we truly use the convolution
structure of the divisor function: all we do is Poisson in both variables simultaneously.
Therefore, we expect that Theorem \ref{theorem:cq5oju0wf8} should also hold with
$d(n)$ replaced by the Fourier coefficients of any $\GL_2$ automorphic form (perhaps assuming
Ramanujan), with the substitute for unique factorization functoriality of the base
change lift to the splitting field of the binary quartic. The presence of a main term
should then depend on the cuspidality of the automorphic form after lifting.

\section{Notation}\label{sec:notation}

Fix $\delta_K = 4\zeta^3$ as a generator of the different ideal, and for $\alpha, \beta\in K$ set
\begin{equation}
  \langle \alpha, \beta \rangle := \Tr \biggl( \frac{\alpha\beta}{\delta_K} \biggr),
\end{equation}
the trace pairing. Note that $\alpha = x_0 + x_1\zeta + x_2\zeta^2 + x_3\zeta^3\in K$ satisfies
$\langle \alpha, 1 \rangle = x_3$. We write $\psi(\alpha) = e(\langle \alpha, 1 \rangle)$ and let $\psi_\gamma(\alpha) = \psi(\alpha/\gamma)$.

Let $K_\infty = K \otimes_\mathbb{Q} \mathbb{R}$, $K_p = K\otimes_\mathbb{Q} \mathbb{Q}_p$, and $\mathcal{O}_{K, p} = \mathcal{O}_K\otimes_\mathbb{Z} \mathbb{Z}_p$.
The measure we take on $K_\infty$ and $K_p$ will be the Haar measure, normalized so that
$\vol(\mathcal{O}_{K,p}) = 1$. The norm can be extended as a function from $K_\infty\to \mathbb{R}_{\ge0}$ and
$\langle \cdot, \cdot \rangle$ can be extended to $K_\infty, K_p$ as well.

We let $|-|_\infty$ and  $|-|_p$ be defined on $K_\infty$ and $K_p$ as
\begin{align}
  |x|_\infty := \biggl(\prod_{v | \infty}|x_v|_v \biggr)^{1/[K : \mathbb{Q}]} \quad \text{ and } \quad |x|_p := \biggl( \prod_{v | p} |x_v|_v \biggr)^{1/[K: \mathbb{Q}]}.
\end{align}
On $K$, we have that $|x|_\infty = |N(x)|^{1/[K : \mathbb{Q}]}$ and similarly at $p$. 
With this, we write $K_\infty^0 = \set{x\in K_\infty : |x|_\infty = 0}$.

We also write $|x|_{\sup} = \sup_{v | \infty} |\sigma_v(x)|$. Clearly, we have $|x|_\infty \le |x|_{\sup}$.

For a vector $\mathbf{x} = (x_1, x_2)$, we write $|\mathbf{x}| = \sqrt{x_1^2 + x_2^2}$ and call a vector $\mathbf{x} = (x_1, x_2)\in \mathbb{Z}^2$
\emph{primitive} if $(x_1, x_2) = 1$. Define $\mathbf{x}^\perp := (-x_2, x_1)$.

For the rest of the paper, Greek letters such as $\alpha, \beta, \gamma$ will generally denote elements of
$\mathcal{O}_K$ unless otherwise specified.
$\delta $ in particular will generally denote a sufficiently small fixed positive quantity, and
may vary between different occurrences.

For a positive integer $q$, we employ the standard notation $\displaystyle\sum_{a\, (q)}$ and
$\displaystyle\sumCp_{a\, (q)}$ to denote sums over residue classes modulo $q$ and primitive residue classes modulo $q$
respectively. On the other hand, $\displaystyle\sum_{\alpha\, (q)}$ with greek letters refers to a sum $\displaystyle\sum_{\alpha\in \mathcal{O}_K/q\mathcal{O}_K}$.
Bold letters such as $\mathbf{c}$ denote vectors in $\mathbb{Z}^2$, with components denoted with subscripts
of the corresponding non-bold letter (e.g. $\mathbf{c} = (c_1, c_2)$).

Throughout, implied constants may depend on $\Omega^{O(1)}$; we do not mention this dependence further.
For instance, we may say that for $(x_1^\infty, x_2^\infty)$ in the
support of $\Phi^\infty$, by the condition \eqref{eq:crb7dyjvca}, we have
\begin{equation}
  |x_j^\infty|_\infty\asymp |x_j^\infty|_{\sup}.
\end{equation}
On the other hand, the statement $\exp(\Omega)\ll 1$ does not comply with our convention here.

\section{Preliminary lemmas}\label{sec:prelim_lemmas}

\subsection{Polynomial root counts in residue classes}

We shall need the following lemma on counting roots of polynomials
modulo prime powers. 

\begin{lemma}\label{lem.RoosofPoly}
  Suppose that $f(X) \in \mathbb{Z}[X] $ has degree $3 $ and $p $ is prime such that $p\nmid f(X) $.
  Then, for all $k\ge 0$ we have
  \begin{equation}
    \#\set*{x (p^k) : f(x) \equiv 0(p^k) }\ll (k + 1)^{k - 1}(p^k, \mathrm{Disc}(f))^{2/3}.
  \end{equation}
\end{lemma}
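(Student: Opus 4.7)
The plan is to proceed by induction on $k$, grouping the solutions to $f(x) \equiv 0 \pmod{p^k}$ according to their reduction $x_0$ modulo $p$ and treating each branch separately. The cases $k \le 1$ are immediate since any cubic has at most $3$ roots modulo $p$. For $k \ge 2$, I would fix $x_0 \in \mathbb{F}_p$ with $f(x_0) \equiv 0 \pmod{p}$: when $v_p(f'(x_0)) = 0$, Hensel's lemma gives a unique lift contributing $O(1)$, and when $v_p(f'(x_0)) \ge 1$, I would write $x = x_0 + py$ and Taylor-expand
\[
f(x_0 + py) = f(x_0) + py \cdot f'(x_0) + \tfrac{(py)^2}{2!} f''(x_0) + \tfrac{(py)^3}{3!} f'''(x_0).
\]
Letting $m$ be the minimal $p$-adic valuation among the terms on the right, I would divide through by $p^m$ to obtain a polynomial $g(y) \in \mathbb{Z}_p[y]$ of degree at most $3$ with unit content, so the problem reduces to counting $y \pmod{p^{k-m}}$ with $g(y) \equiv 0 \pmod{p^{k-m}}$, which can be iterated.

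The central mechanism is the relationship between $\mathrm{Disc}(g)$ and $\mathrm{Disc}(f)$. Using $\mathrm{Disc}(f(x_0 + py)) = p^{d(d-1)} \mathrm{Disc}(f) = p^6 \cdot \mathrm{Disc}(f)$ for $d = 3$, combined with the effect $p^{-(2d-2)m} = p^{-4m}$ of scaling out the content, one gets $v_p(\mathrm{Disc}(g)) = v_p(\mathrm{Disc}(f)) + 6 - 4m$. This is not monotonically decreasing in $m$---for $m = 1$ the discriminant valuation actually grows by $2$---so a clean induction in a single variable does not quite work. Instead, I would count solutions $p$-adically via the Newton polygon at each reduction point: a cluster of $r$ roots of $f$ within $p$-adic distance $\rho$ contributes at most $p^{(r-1)\rho}$ lifts while simultaneously forcing a contribution $2\binom{r}{2}\rho$ to $v_p(\mathrm{Disc}(f))$, and balancing these gives precisely the exponent $2/3 = 1 - 1/\deg f$ on the factor $(p^k, \mathrm{Disc}(f))$.

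The hardest part will be the bookkeeping in repeated-root configurations, particularly the triple-root case where all three $p$-adic roots of $f$ coincide modulo $p$. Since a single substitution step need not decrease $v_p(\mathrm{Disc})$, one must iterate, branching at each stage into up to $k+1$ subcases corresponding to the possible valuation patterns of $(f(x_0), f'(x_0), f''(x_0), f'''(x_0))$. Over at most $k-1$ such iterations this produces the factor $(k+1)^{k-1}$ appearing in the statement. Two secondary nuisances are the small primes $p \in \{2,3\}$, where the Taylor denominators $2!, 3!$ are not $p$-adic units and must be absorbed into the implicit constant, and the possibility that the $p$-adic roots of $f$ live in a ramified extension of $\mathbb{Q}_p$, which requires extending the valuation but does not affect the exponent $2/3$.
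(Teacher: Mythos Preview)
Your approach is sound in outline, but the paper's proof is entirely different in character: it does not argue at all, but simply invokes two classical references. When $p^k \mid \mathrm{Disc}(f)$, the paper cites Kamke (1924), who gives the uniform bound $\#\{x\,(p^k): f(x)\equiv 0\} \le (k+1)^{k-1} p^{2k/3}$ for cubics; when $v_p(\mathrm{Disc}(f)) < k$, it cites S\'andor (1952), who gives $\le 3\, p^{v_p(\mathrm{Disc}(f))/2}$. Combining these two cases immediately yields the stated bound.

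What you are sketching is, in effect, a from-scratch reconstruction of those two results via iterated Hensel lifting and Newton-polygon bookkeeping. The cluster-versus-discriminant heuristic you describe (an $r$-fold $p$-adic cluster of radius $\rho$ contributes $\le p^{(r-1)\rho}$ lifts and $r(r-1)\rho$ to $v_p(\mathrm{Disc}\,f)$, hence exponent $1-1/r \le 2/3$) is exactly the mechanism underlying the classical bounds, so your plan would succeed with enough care. Two places where your write-up is loose: the derivation of the $(k+1)^{k-1}$ factor from ``$k-1$ iterations, each with $k+1$ branches'' is more suggestive than rigorous (Kamke's recursion is more delicate), and you correctly flag but do not resolve the drop in degree that can occur when the leading coefficient of $f$ is divisible by $p$, which affects the discriminant-scaling identity $v_p(\mathrm{Disc}(g)) = v_p(\mathrm{Disc}(f)) + 6 - 4m$. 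Neither is fatal, but both would need to be written out to make the argument self-contained. The paper sidesteps all of this by citation; your route is more laborious but more transparent about where the exponent $2/3$ actually comes from.
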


\begin{proof}
  This follows when $p^k | \mathrm{Disc}(f) $ from a result of Kamke \cite[(35)]{Kam1924} which provides
  a bound of $\le (k + 1)^{k - 1}p^{2k/3} $ for the count in question, and otherwise, follows from
  work of S\'andor \cite{MR47679}, who showed a bound of $\le n p^{v_p(\mathrm{Disc}(f))/2} $ when $v_p(\mathrm{Disc}(f)) < k $.
\end{proof}

\subsection{Factorization in a rank $2$ submodule of $\mathcal{O}_K$}\label{sec:number-fields}

In this subsection, we shall prove some basic facts about the factorization of elements
in the orthogonal complement of $\ensuremath{\boldsymbol\ell}(\mathcal{O}_K)$ in $\mathcal{O}_K$, given by $\mathbb{Z} \oplus \mathbb{Z} \zeta$.

\begin{lemma}\label{lem.Only1Prime}
  Take $p$ a rational prime, $x, y\in \mathbb{Z}$, and $\mathfrak{p}_1, \mathfrak{p}_2 | p$ prime ideals in $\mathcal{O}_K$ such that
  $\mathfrak{p}_1, \mathfrak{p}_2 | x + y\zeta $. Then, at least one of the following holds:
  \begin{enumerate}
  \item $p | x, y$.
  \item $\mathfrak{p}_1 = \mathfrak{p}_2$, $N\mathfrak{p}_1 = p $. 
  \end{enumerate}
  In particular, if $p\nmid (x, y)$, we have
  \begin{equation}
    p^m | N(x + y\zeta) \iff  \text{ there exists } \, \mathfrak{p} | p  \, \text{ such that } \, N\mathfrak{p} = p \text{ and } \mathfrak{p}^m | x + y\zeta.
  \end{equation}
\end{lemma}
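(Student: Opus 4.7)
My plan is to split on the ramification and residue structure of $p$ in $\mathcal{O}_K=\mathbb{Z}[\zeta]$, isolating the precise situation where the hypothesis $\mathfrak{p}_1,\mathfrak{p}_2\mid x+y\zeta$ can hold without $p\mid(x,y)$. The conceptual point is that for an element of the rank-$2$ submodule $\mathbb{Z}\oplus\mathbb{Z}\zeta$ to be divisible by a prime $\mathfrak{p}$ of $\mathcal{O}_K$ without being divisible by the rational prime below it, the residue field $\mathcal{O}_K/\mathfrak{p}$ must equal $\mathbb{F}_p$; and for two distinct such primes this is ruled out by the Galois action, since the differences $\zeta-\zeta^k$ between Galois conjugates are supported only above the prime $2$.

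I would first record the splitting of $p$ in $K=\mathbb{Q}(\zeta_8)$: the prime $2$ is totally ramified with unique prime $\mathfrak{p}=(1-\zeta)$ of norm $2$, while odd $p$ is unramified with common residue degree equal to the order of $p$ in $(\mathbb{Z}/8\mathbb{Z})^{\times}$. For $p=2$ conclusion $(2)$ is then automatic, so assume $p$ is odd. If some $\mathfrak{p}_i$ has residue degree $f>1$, then the image of $\zeta$ in $\mathcal{O}_K/\mathfrak{p}_i\cong\mathbb{F}_{p^f}$ lies outside $\mathbb{F}_p$, so $1$ and $\zeta$ are $\mathbb{F}_p$-linearly independent mod $\mathfrak{p}_i$; the relation $x+y\zeta\equiv 0\,(\mathfrak{p}_i)$ with $x,y\in\mathbb{Z}$ then forces $p\mid x$ and $p\mid y$, giving conclusion $(1)$. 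We may therefore further assume $N\mathfrak{p}_1=N\mathfrak{p}_2=p$.

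It remains to handle the case $\mathfrak{p}_1\ne\mathfrak{p}_2$ with both primes of residue degree $1$. Here I would exploit the Galois action: $\mathrm{Gal}(K/\mathbb{Q})\cong(\mathbb{Z}/8\mathbb{Z})^{\times}$ acts transitively on primes above $p$, so there exists $\sigma$ with $\sigma(\zeta)=\zeta^k$ for some $k\in\{3,5,7\}$ and $\sigma\mathfrak{p}_1=\mathfrak{p}_2$. Applying $\sigma$ to $\mathfrak{p}_1\mid x+y\zeta$ (and using that $\sigma$ fixes $x,y\in\mathbb{Z}$) yields $\mathfrak{p}_2\mid x+y\zeta^k$; subtracting $\mathfrak{p}_2\mid x+y\zeta$ gives $\mathfrak{p}_2\mid y(\zeta-\zeta^k)$. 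Writing $\zeta-\zeta^k=\zeta(1-\zeta^{k-1})$ with $k-1\in\{2,4,6\}$ and computing $N(1-\zeta^2)=N(1-\zeta^6)=4$ together with $N(1-\zeta^4)=N(2)=16$, the principal ideal $(\zeta-\zeta^k)$ is seen to be supported only above $2$. Since $p$ is odd, $\mathfrak{p}_2$ is coprime to $(\zeta-\zeta^k)$, whence $\mathfrak{p}_2\mid y$, so $p\mid y$ and then $p\mid x$.

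For the ``in particular'' clause, assume $p\nmid(x,y)$ and factor $(x+y\zeta)=\prod_i\mathfrak{q}_i^{a_i}$ into prime ideals in $\mathcal{O}_K$. By the first part, among the $\mathfrak{q}_i$ lying above $p$ only a single prime $\mathfrak{p}$ appears, with $N\mathfrak{p}=p$ and some exponent $a\ge 0$; the $p$-part of $N(x+y\zeta)=x^4+y^4$ is then $N\mathfrak{p}^a=p^a$, giving $p^m\mid N(x+y\zeta)\iff a\ge m\iff\mathfrak{p}^m\mid x+y\zeta$. The step that I expect to require the most care is the Galois-difference argument above: it is where the specific arithmetic of $\mathbb{Q}(\zeta_8)$---namely the coincidence that all differences of primitive $8$th roots of unity lie above the prime $2$---really enters the picture.
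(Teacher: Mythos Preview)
Your proof is correct but proceeds quite differently from the paper's. You argue by cases on the splitting type of $p$ in $K=\mathbb{Q}(\zeta_8)$: the ramified prime $p=2$ is automatic, primes with residue degree $f>1$ are handled by the $\mathbb{F}_p$-linear independence of $1,\zeta$ modulo $\mathfrak{p}$, and for totally split primes you invoke the transitive Galois action and the fact that differences $\zeta-\zeta^k$ have $2$-power norm. The paper instead gives a uniform one-line computation: assuming $p\nmid y$ and scaling to $y=1$, it sets $I=(p,x+\zeta)$ and computes
\[
\mathcal{O}_K/I\cong \mathbb{Z}[T]/(p,\,T^4+1,\,T+x)\cong (\mathbb{Z}/p\mathbb{Z})/(x^4+1),
\]
so $N(I)=p$ and hence $I$ is itself prime with $\mathfrak{p}_1=\mathfrak{p}_2=I$. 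Your route is longer and leans on structural facts about cyclotomic fields (Galois transitivity, norms of $1-\zeta^j$), which makes explicit \emph{why} the rank-$2$ submodule $\mathbb{Z}\oplus\mathbb{Z}\zeta$ behaves this way; the paper's argument is shorter and avoids any case split, trading that structural insight for a direct quotient calculation that works uniformly in $p$.
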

\begin{proof}
  
  Suppose now that $p\nmid (x, y)$. Without loss of generality, suppose that
  $p\nmid y$, which we can harmlessly scale to be $1$.
  
  The condition that $\mathfrak{p}_1, \mathfrak{p}_2 | x + \zeta$ implies that $I = (p, x + \zeta)\subset \mathfrak{p}_1, \mathfrak{p}_2$.
  Then, we have that $\mathcal{O}_K/I$ is isomorphic to
  \begin{equation}
    (\mathcal{O}_K/p \mathcal{O}_K)/(x + \zeta)
    \cong \mathbb{Z}[\zeta]/(p, x + \zeta)
    \cong \mathbb{Z}[T]/(p, T^4 + 1, T + x)
    \cong (\mathbb{Z}/p \mathbb{Z})/(x^4 + 1).
  \end{equation}
  It follows that $N(I) = p$ and that $\mathfrak{p}_1 = \mathfrak{p}_2 = I$, from which the desired result follows.
\end{proof}

\begin{lemma}\label{lem.sumNorm}
  For $p$ a rational prime and $k\ge 1$, we have that
  \begin{equation}
    \sum_{x(p^k)} N((x + \zeta, p^k)) \ll p^k.
  \end{equation}
\end{lemma}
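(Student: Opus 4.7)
The plan is to reduce the computation of $(x + \zeta, p^k)$ to a power of a single prime ideal using Lemma \ref{lem.Only1Prime}, and then to evaluate the resulting sum directly.

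First, I would apply Lemma \ref{lem.Only1Prime} with $y = 1$. Since $p \nmid 1$, case $(1)$ of that lemma never applies, so at most one prime ideal $\mathfrak{p}$ of $\mathcal{O}_K$ above $p$ divides $x + \zeta$, and any such $\mathfrak{p}$ satisfies $N\mathfrak{p} = p$ (i.e., it has residue degree one over $\mathbb{Q}$). In the Dedekind domain $\mathcal{O}_K$ we have $(x + \zeta, p^k) = \prod_{\mathfrak{q} \mid p} \mathfrak{q}^{\min(v_\mathfrak{q}(x + \zeta),\, k e_\mathfrak{q})}$, which by the above reduces to either the unit ideal or a single prime power $\mathfrak{p}^{\min(v_\mathfrak{p}(x + \zeta),\, k e_\mathfrak{p})}$ for the unique $\mathfrak{p}$ of residue degree one dividing $x+\zeta$.

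A clean packaging comes from the chain of isomorphisms
\begin{equation*}
  \mathcal{O}_K/(x + \zeta,\, p^k) \;\cong\; (\mathbb{Z}/p^k)[X]/(X^4 + 1,\, X + x) \;\cong\; \mathbb{Z}/\gcd(x^4 + 1,\, p^k)\mathbb{Z},
\end{equation*}
obtained by substituting $X = -x$ in $\mathcal{O}_K/p^k \cong (\mathbb{Z}/p^k)[X]/(X^4 + 1)$, which uses $\mathcal{O}_K = \mathbb{Z}[\zeta]$. This yields the explicit formula $N((x + \zeta,\, p^k)) = p^{\min(v_p(x^4 + 1),\, k)}$, bypassing any case analysis of ramification and residue degree.

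It then remains to estimate $\sum_{x\,(p^k)} p^{\min(v_p(x^4 + 1),\, k)}$. I would split according to the splitting behavior of $p$ in $K = \mathbb{Q}(\zeta)$: for odd $p \not\equiv 1 \pmod 8$, the polynomial $X^4 + 1$ has no roots mod $p$ (no residue-degree-one prime above $p$), so every term is $1$ and the sum equals $p^k$; for $p = 2$, a direct check using $x + \zeta = (x+1) - (1-\zeta)$ shows $v_2(x^4 + 1) \leq 1$ for all $x \in \mathbb{Z}$, giving the bound trivially. The remaining case is $p \equiv 1 \pmod 8$, for which Hensel's lemma gives $\#\{x\,(p^k) : v_p(x^4 + 1) \geq j\} = 4 p^{k-j}$ for $1 \leq j \leq k$, and telescoping $p^{\min(v,k)} = \sum_{j=0}^{\min(v,k)}(p^j - p^{j-1})$ reduces the sum to a telescoping/geometric identity from which the stated bound follows.

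The main subtlety is the totally split case $p \equiv 1 \pmod 8$, where contributions from the four primes above $p$ must not be double-counted; here the ``at most one prime divides $x + \zeta$'' input from Lemma \ref{lem.Only1Prime} is essential. Without it, one would overcount residue classes satisfying $v_{\mathfrak{p}_i}(x+\zeta) \geq 1$ for multiple $i$, and the resulting bound would be worse by a factor depending on the number of split primes.
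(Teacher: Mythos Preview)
Your approach matches the paper's exactly: reduce $N((x+\zeta,p^k))$ to $\gcd(x^4+1,p^k)$ via Lemma~\ref{lem.Only1Prime} (your explicit quotient computation makes this especially clean), then invoke Hensel. Your case split by splitting type of $p$ is more detailed than the paper's one-line appeal to Hensel, but the substance is identical.

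There is, however, a gap in your final step for $p\equiv 1\pmod 8$ --- and the paper's proof shares it. Carrying out your telescoping gives
\[
\sum_{x\,(p^k)}\gcd(x^4+1,p^k)
= p^k + \sum_{j=1}^{k}(p^j-p^{j-1})\cdot 4p^{k-j}
= p^k\bigl(1+4k(1-1/p)\bigr),
\]
which is $\asymp kp^k$, not $\ll p^k$ with an absolute constant. So ``from which the stated bound follows'' is not quite right; the correct uniform bound is $\ll (k+1)p^k$. This extra factor of $k$ is harmless in every application of the lemma in the paper (it is absorbed into existing $(k+1)$ factors in Propositions~\ref{prop:S20} and~\ref{lemma:crb7r6zwci}, into $q^{o(1)}$ in Proposition~\ref{lemma:cran7rw9t5}, or into geometrically convergent sums over $k$ in Proposition~\ref{prop.N1}), so nothing downstream is affected; but the lemma as stated appears to be off by this factor, and your final sentence inherits that imprecision.
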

\begin{proof}
  This follows by noting that for $k\ge 1$, Lemma \ref{lem.Only1Prime} implies that $N((x + \zeta, p^k)) = (x^4 + 1, p^k)$, at which point
  the desired result follows from Hensel's lemma.
\end{proof}

\subsection{Poisson summation over $K$}\label{subsec:poisson}

The statement of Poisson summation we shall use is the following.

\begin{proposition}\label{proposition:cq6r4erq4l}
  Suppose that $w\in \mathcal{S}(K_\infty)$. Then, for any $\gamma\in \mathcal{O}_K\setminus\set{0}$, $g_\gamma : \mathcal{O}_K/(\gamma)\to \mathbb{C}$ and $X > 0$,
  we have that 
  \begin{equation}
    \sum_{\alpha\in \mathcal{O}_K} g_\gamma(\alpha) w \biggl(\frac{\alpha}{X}\biggr)
    = \frac{X^4}{\sqrt{N(\gamma)}}\sum_{\alpha\in \mathcal{O}_K} \hat g_\gamma(\alpha) 
    \hat w \biggl(\frac{\alpha X}{\gamma}\biggr)
  \end{equation}
  where the transforms $\hat w, \hat g_\gamma$ are defined as
  \begin{equation}
    \hat w(y) = \int_{K_\infty} w(x^\infty) e(-\langle x^\infty, y \rangle) \,d x^\infty,
  \end{equation}
  \begin{equation}
    \hat g_\gamma(\alpha) = \frac{1}{\sqrt{N(\gamma)}}\sum_{\beta\in \mathcal{O}_K/(\gamma)} g_\gamma(\beta)\psi_\gamma(\alpha\beta).
  \end{equation}
\end{proposition}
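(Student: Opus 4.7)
The plan is to reduce the identity to the classical Poisson summation formula applied to the lattice $\mathcal{O}_K \subset K_\infty$ with respect to the trace pairing $\langle\cdot,\cdot\rangle$. First, I partition the sum over $\mathcal{O}_K$ into residue classes modulo $(\gamma)$, writing each $\alpha\in\mathcal{O}_K$ uniquely as $\beta + \gamma\mu$ with $\beta$ running over a set of representatives for $\mathcal{O}_K/(\gamma)$ and $\mu\in\mathcal{O}_K$. Since $g_\gamma$ only sees residue classes, the left-hand side becomes
\begin{equation*}
\sum_{\beta\,(\gamma)} g_\gamma(\beta) \sum_{\mu \in \mathcal{O}_K} w\bigl((\beta + \gamma\mu)/X\bigr),
\end{equation*}
reducing the problem to evaluating the inner sum by Poisson summation on $\mathcal{O}_K$.

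For the Poisson step I would use two structural facts. First, the lattice $\mathcal{O}_K$ is self-dual under $\langle\cdot,\cdot\rangle$: the condition $\Tr(\alpha\beta/\delta_K)\in\mathbb{Z}$ for all $\beta\in\mathcal{O}_K$ is equivalent to $\alpha/\delta_K\in\mathfrak{d}^{-1}$, i.e.\ $\alpha\in\mathcal{O}_K$, because $\delta_K=4\zeta^3$ generates the different. Second, the paper's normalization $\vol(\mathcal{O}_{K,p})=1$ at each finite $p$, together with $|d_K|=|N(\delta_K)|=256$, is compatible (via the product formula and Tate's self-dual convention) with the Haar measure on $K_\infty$ under which $\mathcal{O}_K$ has covolume $1$. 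With these in place, classical Poisson yields $\sum_{\mu\in\mathcal{O}_K}G(\mu)=\sum_{\nu\in\mathcal{O}_K}\hat{G}(\nu)$ for $G(y):=w((\beta+\gamma y)/X)$, where $\hat{G}$ is taken with respect to the trace pairing $\langle\cdot,\cdot\rangle$.

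The remaining step is to compute $\hat{G}(\nu)$ by the change of variables $x=(\beta+\gamma y)/X$. The Jacobian is $X^4/N(\gamma)$ -- the $X^4$ from scalar dilation in $[K:\mathbb{Q}]=4$ real dimensions and $N(\gamma)$ from multiplication by $\gamma\in K_\infty$ (which is positive since $K$ is totally complex) -- while the phase splits as $e(-\langle y,\nu\rangle)=e(-\langle x,X\nu/\gamma\rangle)\psi_\gamma(\beta\nu)$ upon expanding $\langle(Xx-\beta)/\gamma,\nu\rangle$ and recognizing $e(\Tr(\beta\nu/(\gamma\delta_K)))=\psi_\gamma(\beta\nu)$. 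This gives $\hat{G}(\nu)=\tfrac{X^4}{N(\gamma)}\psi_\gamma(\beta\nu)\hat w(X\nu/\gamma)$, and after swapping the order of summation the $\beta$-sum collapses via the definition of $\hat g_\gamma$ to $\sqrt{N(\gamma)}\hat g_\gamma(\nu)$, producing the claimed prefactor $X^4/\sqrt{N(\gamma)}$. The main bookkeeping obstacle is confirming that the measure-theoretic conventions line up -- specifically, that the Haar measure on $K_\infty$ implicit in the paper is the one for which $\mathcal{O}_K$ is unimodular under $\langle\cdot,\cdot\rangle$ -- since absolute convergence (from $w\in\mathcal{S}(K_\infty)$ and finiteness of $g_\gamma$) and the sign conventions in the additive character are otherwise straightforward.
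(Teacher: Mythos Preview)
Your proposal is correct and takes essentially the same approach as the paper, which simply remarks that the statement ``amounts to Poisson summation for sums over a sublattice of $\mathbb{Z}^4$'' and gives no further detail. Your write-up supplies the standard residue-class decomposition and change-of-variables computation that the paper omits, and you correctly flag the one genuine check---that the Haar measure on $K_\infty$ is normalized so that $\mathcal{O}_K$ has covolume $1$ with respect to the self-dual pairing $\langle\cdot,\cdot\rangle$---which the paper leaves implicit.
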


Note that this simply amounts to Poisson summation for sums over a sublattice of $\mathbb{Z}^4$
with coordinates $n_i\in \mathbb{Z} $ such that $\alpha = n_0 + \dots + n_3\zeta^3$.
In the case we are concerned with, we will only ever be considering $\gamma\in \mathbb{Z}$, so the
correspondence is even more plain.

We elect to work over $K$ rather than return to sums over $\mathbb{Z}^8$ because doing so motivates
maneuvers in our exponential sum evaluation which might have otherwise been opaque.

\subsection{Factorization and coefficients of cubic Dedekind zeta functions}\label{sec:cuhk0c94z7}

We record in this subsection some facts about cubic Dedekind zeta functions that will
be used later. The setup we will suppose throughout is that for some $n_0,\dots, n_3\in \mathbb{Z}$,
$f(x) = n_3x^3 + \dots + n_0$, and let $k = \mathbb{Q}[x]/(f(x))$ be a cubic extension in which $f$ has a
root. With $k^{\mathrm{cl}}/\mathbb{Q}$ the Galois closure of $k/\mathbb{Q}$, we'll suppose that $\Gal(k^{\mathrm{cl}}/\mathbb{Q})\cong S_3$.

Then, we have the following lemma:
\begin{lemma}\label{proposition:cuhklr5wqk}
  We have that the degree $3$ Dedekind zeta function $\zeta_k(s) $ factors as $\zeta(s)L(s, \pi)$,
  where $\pi$ is a dihedral cuspidal automorphic representations over $\mathbb{Q}$
  of level $N | n_3^2 \mathrm{Disc}(f)$ and central character $\omega_{\pi} = \chi_L$.

  In particular, 
  \begin{equation}
    L(s, \pi) = \sum_{n } \frac{\lambda_\pi(n)}{n^s}
  \end{equation}
  with $|\lambda_\pi(n)|\leq d(n)$ and satisfies
  \begin{equation}
    \lambda_\pi(p) = -1 + \#\set{x\in \mathbb{Z}/p \mathbb{Z} : f(x) \equiv 0\,(p)}
  \end{equation}
  for $p\nmid N$.
\end{lemma}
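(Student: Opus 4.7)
The plan is to invoke the standard Artin--Langlands formalism combined with Hecke/Maass automorphic induction for two-dimensional dihedral Galois representations. Writing $H \le S_3$ for the order-two subgroup fixing $k$, Artin's formalism identifies $\zeta_k(s)$ with the $L$-function of $\mathrm{Ind}_H^{S_3} \mathbf{1} \cong \mathbf{1} \oplus \rho$, where $\rho$ is the unique irreducible two-dimensional representation of $S_3$. The factorization $\zeta_k(s) = \zeta(s) L(s, \rho)$ follows, and the proposition amounts to identifying $L(s, \rho)$ with the $L$-function of a cuspidal $\GL_2$ automorphic representation with the claimed parameters.

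I would then realize $\rho$ as a monomial representation: with $A_3 \le S_3$ the cyclic order-three subgroup and $\chi$ a nontrivial character of $A_3$, one has $\rho \cong \mathrm{Ind}_{A_3}^{S_3} \chi$. The fixed field of $A_3$ is the quadratic resolvent $L$, and by class field theory $\chi$ corresponds to a Hecke character $\widetilde{\chi}$ of $L$. Since $k^{\mathrm{cl}}/L$ is a nontrivial cyclic cubic extension, the nontrivial element of $\Gal(L/\mathbb{Q})$ inverts the order-three character group of $A_3$, so $\widetilde{\chi}$ does not factor through the norm $N_{L/\mathbb{Q}}$; the classical theta-lift construction of Hecke and Maass then produces a cuspidal automorphic representation $\pi$ on $\GL_2/\mathbb{Q}$ satisfying $L(s, \pi) = L(s, \rho)$ with central character $\omega_\pi = \det \rho = \chi_L$.

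For the level, conductor-multiplicativity applied to $\zeta_k(s) = \zeta(s) L(s, \rho)$ gives $\mathfrak{f}(\rho) = |\mathrm{Disc}(k)|$, and by automorphic induction the level of $\pi$ equals $\mathfrak{f}(\rho)$. To bound $|\mathrm{Disc}(k)|$, I would exhibit the order $\mathbb{Z}[n_3 \theta] \subseteq \mathcal{O}_k$, where $\theta \in k$ is a root of $f$: the polynomial $g(x) = n_3^2 f(x/n_3)$ is monic with integer coefficients and has $n_3\theta$ as a root, and a direct computation gives $\mathrm{Disc}(\mathbb{Z}[n_3\theta]) = \mathrm{Disc}(g) = n_3^2 \mathrm{Disc}(f)$, whence $|\mathrm{Disc}(k)| \mid n_3^2 \mathrm{Disc}(f)$.

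Finally, at an unramified prime $p \nmid N$ we have $\lambda_\pi(p) = \mathrm{tr}\, \rho(\mathrm{Frob}_p)$, and the character identity $\mathrm{Ind}_H^{S_3} \mathbf{1} = \mathbf{1} \oplus \rho$ gives $1 + \mathrm{tr}\, \rho(\mathrm{Frob}_p) = \#\{\text{fixed points of } \mathrm{Frob}_p \text{ on } S_3/H\}$, equal to the number of roots of $f$ modulo $p$. The Ramanujan-type bound $|\lambda_\pi(n)| \le d(n)$ is immediate from the Artin origin of $\pi$ since all eigenvalues of $\rho(\mathrm{Frob}_p)$ are roots of unity and everything is multiplicative. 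The main care lies in matching conductors on the automorphic and Galois sides at ramified primes and in the discriminant divisibility above; everything else is a bookkeeping exercise atop classical dihedral theory.
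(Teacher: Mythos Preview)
Your proposal is correct and takes essentially the same approach as the paper: both realize the Artin factor via induction from the quadratic resolvent $L$ (the fixed field of $A_3$), invoke automorphic induction to obtain the dihedral cusp form $\pi$, and bound the level by passing to the monic polynomial $g(x)=n_3^2 f(x/n_3)$ with $\mathrm{Disc}(g)=n_3^2\mathrm{Disc}(f)$. You supply more explicit justification for the formula $\lambda_\pi(p)=-1+\#\{x:f(x)\equiv 0\,(p)\}$ via the character identity and for the Ramanujan bound via the Artin origin, both of which the paper leaves implicit.
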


\begin{proof}
  Let $G = \Gal(k^{\mathrm{cl}}/\mathbb{Q}) \cong S_3$, $H = \Gal(k^{\mathrm{cl}}/k)$. Also, let $L$ be the unique quadratic subfield
  of $k^{\mathrm{cl}}$ (the fixed field of $A_3$). Furthermore, let $\Delta_k$ be the discriminant of $k/\mathbb{Q}$.

  Then, we have that $\zeta_k(s) = \zeta(s)L_L(s, \psi)$  for $\psi$ an order $3$ Dirichlet character of conductor
  $\mathfrak{f}$ such that $N\mathfrak{f} | \Delta_k$. The existence of $\pi$ of level dividing $\Delta_k $ with $L(s, \pi) = L_L(s, \psi) $ is
  standard.

  It remains to show that $\Delta_k | n_3^2 \mathrm{Disc}(f)$.

  To see this, note that $k = \mathbb{Q}[x]/(f_0(x))$ for $f_0(x) = n_3^2 f(x/n_3)$ monic.
  Then, we have that because $f_0$ is monic, $\mathrm{Disc}_k | \mathrm{Disc}(f_0)$, so the desired
  result follows upon noting that $ \mathrm{Disc}(f_0(x)) = n_3^2\mathrm{Disc}(f)$, from which
  the desired result follows.
\end{proof}

We record the following consequence of Rankin-Selberg too.
\begin{proposition}\label{proposition:cuhkndxg2y}
  Suppose that $f_1, f_2\in \mathbb{Z}[x]$ are of degree $3 $ with $\Gal(f_i)\cong S_3 $, and suppose furthermore that
  $k_1\not\simeq k_2$, with $k_i=\mathbb{Q}[x]/(f_i(x))$. Let $\pi_i$ be the corresponding automorphic representations
  from Lemma \ref{proposition:cuhklr5wqk}.
  Then, we have that for any $\phi\in C_c^\infty(\mathbb{R}_{> 0})$ with $|\phi|\le \mathbbm{1}_{ [1, 2] }$ and $B\ge 1$, we have that 
  \begin{equation}
    \sumSf_{(q, B) = 1} \lambda_{\pi_1}(q)\lambda_{\pi_2}(q) \phi \biggl(\frac{q}{Q}\biggr) \ll Q^{\frac{1}{2} + o(1)}(N_1 N_2)^{\frac{1}{2} + o(1)} B^{o(1)} \| \phi''' \|_\infty .
  \end{equation}
\end{proposition}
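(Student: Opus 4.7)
The approach is a contour-shift argument applied to the Rankin-Selberg L-function $L(s,\pi_1\times\pi_2)$. First I would use Mellin inversion: writing $\tilde\phi(s)=\int_0^\infty\phi(x)x^{s-1}\,dx$, integration by parts three times gives $|\tilde\phi(\sigma+it)|\ll (1+|t|)^{-3}\|\phi'''\|_\infty$ in any fixed vertical strip, and one has
\begin{equation*}
\sumSf_{(q,B)=1}\lambda_{\pi_1}(q)\lambda_{\pi_2}(q)\phi(q/Q) = \frac{1}{2\pi i}\int_{(c)}\tilde\phi(s)\,Q^s\,D_B(s)\,ds
\end{equation*}
for any $c>1$, where $D_B(s)=\sumSf_{(q,B)=1}\lambda_{\pi_1}(q)\lambda_{\pi_2}(q)q^{-s}$.

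Second, I would relate $D_B(s)$ to the Rankin-Selberg L-function. Since $\omega_{\pi_i}=\chi_{L_i}$ has order two, a standard Euler product computation on the unrestricted sum gives a factorization of the shape
\begin{equation*}
\sum_q\frac{\lambda_{\pi_1}(q)\lambda_{\pi_2}(q)}{q^s} = \frac{L(s,\pi_1\times\pi_2)}{\zeta(2s)}H(s),
\end{equation*}
where $H(s)$ is a finite product of local corrections at primes dividing $N_1N_2$ that is absolutely convergent and of size $(N_1N_2)^{o(1)}$ on $\Re(s)\geq 1/2+\epsilon$; restricting to the shape of sum occurring in $\sumSf$ and imposing the coprimality $(q,B)=1$ amounts to modifying finitely many Euler factors at primes $p\mid N_1N_2B$, contributing a multiplicative factor of size $B^{o(1)}$.

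Third, I would shift the contour from $\Re(s)=c$ down to $\Re(s)=1/2+\epsilon$. The hypothesis $k_1\not\simeq k_2$ forces $\pi_1\not\simeq\tilde\pi_2$: if they coincided then Lemma \ref{proposition:cuhklr5wqk} would produce equal Dedekind zeta functions $\zeta_{k_1}=\zeta_{k_2}$, and cubic fields are determined by their zeta functions (arithmetic equivalence in degree three implies isomorphism). By Jacquet--Shalika, $L(s,\pi_1\times\pi_2)$ is therefore entire; since $1/\zeta(2s)$ is holomorphic for $\Re(s)>1/4$, the contour shift picks up no residues.

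Finally, on the shifted line the convexity (Phragm\'en--Lindel\"of) bound for the degree four L-function $L(s,\pi_1\times\pi_2)$, whose analytic conductor is $\ll (N_1N_2)^2(1+|t|)^4$, yields
\begin{equation*}
|L(\tfrac{1}{2}+\epsilon+it,\pi_1\times\pi_2)|\ll (N_1N_2)^{1/2+o(1)}(1+|t|)^{1+o(1)}.
\end{equation*}
Combining with the $(1+|t|)^{-3}\|\phi'''\|_\infty$ decay of $\tilde\phi$ makes the integral absolutely convergent and produces the claimed bound $Q^{1/2+o(1)}(N_1N_2)^{1/2+o(1)}B^{o(1)}\|\phi'''\|_\infty$. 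The main substantive inputs are analytic—the absence of a pole of $L(s,\pi_1\times\pi_2)$ at $s=1$ and the convexity bound—while the chief technical care goes into bookkeeping the bad Euler factors at $p\mid N_1N_2B$ to verify they only inflate the bound by $(N_1N_2B)^{o(1)}$.
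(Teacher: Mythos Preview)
Your proposal is correct and follows essentially the same route as the paper: Mellin inversion, relate the Dirichlet series to $L(s,\pi_1\times\pi_2)$ divided by a degree-one $L$-function at $2s$ times harmless local factors, invoke the Perlis-type result that cubic fields are determined by their Dedekind zeta functions to rule out a pole, shift to $\Re s=\tfrac12+\varepsilon$, and apply convexity together with the $(1+|t|)^{-3}\|\phi'''\|_\infty$ decay from three integrations by parts. One small correction: the denominator in your factorization should be $L(2s,\omega_{\pi_1}\omega_{\pi_2})$ rather than $\zeta(2s)$ (the central characters $\chi_{L_1},\chi_{L_2}$ need not coincide), but this is harmless since $L(2s,\chi)$ is holomorphic and nonvanishing for $\Re s>\tfrac12$ regardless.
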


\begin{proof}
  We begin by noting that for $\Re s>1$,
  \begin{equation}
    L(s) := \sumSf_{(q, B) = 1 } \frac{\lambda_{\pi_1}(q)\lambda_{\pi_2}(q)}{q^s} = \frac{L(s, \pi_1\times\pi_2)}{ L(2s, \omega_{\pi_1}\omega_{\pi_2})}
    \prod_{p} \biggl( 1 + \frac{\mathbbm{1}_{ p | B }}{p^s} + \frac{O(1)}{p^{2s}} \biggr).
  \end{equation}
  As long as $\pi_{1, p} $ is not equal to $  \pi_{2, p} $ at all but finitely many primes, $L(s, \pi_1\times\pi_2)$
  is holomorphic. Since $k_1\not\simeq k_2$ and $\zeta_{k_i}(s) = \zeta(s)L(s, \pi_i) $, it follows
  from a result of Perlis \cite{MR447188} and Perlis and Stuart \cite{MR1348765} that for infinitely many
  primes, $\pi_{1, p} \not\simeq \pi_{2, p} $. In particular, in our situation, we may conclude that $L(s)$ has an
  analytic continuation to $\Re s > 1/2 $. 

  Therefore, by Mellin inversion, moving the contour to the $1/2 + 1/\log X$-line, we have that
  \begin{multline}
    \sumSf_{(q, B) = 1 } \lambda_{\pi_1}(q) \lambda_{\pi_2}(q) \phi \biggl(\frac{q}{Q}\biggr) 
    = \int_{(2)} L(s)Q^s \tilde\phi(s) \,d s \\ 
    \ll (B N_1N_2)^{o(1)} Q^{\frac{1}{2} + o(1)} \| \phi''' \|_\infty
    \int_{\mathbb{R}} \biggl| L \biggl( \frac{1}{2} + \frac{1}{\log X} + it , \pi_1\times\pi_2 \biggr) \biggr| \frac{\,d t }{1+|t|^3}. 
  \end{multline}
  Here we have integrated by parts three times on $\tilde{\phi}(s)=\int_0^\infty \phi(x)x^{s-1}dx$.
  The desired result follows immediately from the convexity bound
  $L(s, \pi_1\times\pi_2) \ll ((1 + |s|)^4(N_1N_2)^2)^{1/4 + o(1)} $.
\end{proof}

\section{The two dimensional $\delta$-method}\label{sec:delta-method}

In this section, we show the expansion of the two dimensional $\delta$-symbol we require. We
follow along the lines of \cite{2024arXiv2411.11355}, though we shall not require expanding into
additive characters, the lack thereof simplifies the proof.

Furthermore, a perturbation of the parameters in \cite{2024arXiv2411.11355} is required, so we
need to show the following version of their result ourselves.

\begin{theorem}\label{theorem:cq5o3o8yaf}
  Suppose that $\mathbf{n}\in \mathbb{Z}^2$ with $|n_1|, |n_2| < X$. Take $1 \ll D \ll X^{1/2}$.
  Let $\omega_1\in C_c^\infty(\mathbb{R}_{> 0})$. Let $\omega_2\in C_c^\infty(\mathbb{R})$ be even with $\omega_2(0)=0$. Further suppose that
  \begin{equation}\label{eq:cq5pgpuzx6}
    \int_{\mathbb{R}^2} \omega_1(|\mathbf{x}|) \, d \mathbf{x} = \int_{\mathbb{R}_{> 0}} \omega_2 (x) \,d x   = 1.
  \end{equation}
  Then, for any $A > 0$, we have that 
  \begin{multline}\label{multline:cq6r39jnv5}
    \charf{\mathbf{n} = 0} = \frac{1}{D^2}\sum_{d\ge 1}
    \sum_{\mathbf{c}\in \mathbb{Z}^2 \\ (c_1, c_2) = 1} \omega_1 \biggl(\frac{|\mathbf{c}|d}{D}\biggr)
    \frac{d}{\sqrt{DX}}\sum_{q\ge 1} \biggl( \omega_2 \biggl(\frac{d q}{\sqrt{DX}}\biggr)
    - \omega_2 \biggl(\frac{\det(\mathbf{c}, \mathbf{n})}{q\sqrt{DX}}\biggr) \biggr)
    \charf{\substack{\det(\mathbf{c}, \mathbf{n})\equiv 0 \, (d q) \\ \mathbf{n}\equiv 0 \, (d)}}\\
    - \frac{2}{D^2}\sum_{q\ge 1} \omega_1 \biggl(\frac{|\mathbf{n}|}{q D}\biggr)
    \charf{\mathbf{n}\equiv 0 \, (q)} + O_{A, \omega_1, \omega_2}(D^{-A}),
  \end{multline}
\end{theorem}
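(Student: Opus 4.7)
The plan is to adapt the two-dimensional $\delta$-method of \cite{2024arXiv2411.11355}, mildly perturbing the parameters so that the two smooth weights sit at scales $D$ and $\sqrt{DX}/d$ and the range $1\ll D\ll X^{1/2}$ is available. The structural content of \eqref{multline:cq6r39jnv5} is simultaneous Poisson summation in the $\mathbf{c}$ and $q$ variables, with the correction term absorbing the boundary contribution from primitive $\mathbf{c}$ parallel to $\mathbf{n}$.

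First, I would verify the identity directly at $\mathbf{n}=0$. Since $\omega_2(0)=0$ and the divisibility/determinant conditions are automatic, the second $\omega_2$-term vanishes, and after absorbing $\mathbf{c}=d\mathbf{c}'$ with $(c_1',c_2')=1$ into a single sum over $\mathbf{c}\ne 0$ (with $\gcd(c_1,c_2)$ now playing the role of the original $d$), the triple sum collapses to
\begin{equation*}
\frac{1}{D^2}\sum_{\mathbf{c}\ne 0}\omega_1\biggl(\frac{|\mathbf{c}|}{D}\biggr)\cdot\frac{\gcd(c_1,c_2)}{\sqrt{DX}}\sum_{q\ge 1}\omega_2\biggl(\frac{\gcd(c_1,c_2)\,q}{\sqrt{DX}}\biggr).
\end{equation*}
Poisson in $q$, together with $\omega_2$ being even with $\omega_2(0)=0$ and $\int_{\mathbb{R}_{>0}}\omega_2=1$, makes the inner factor equal to $1+O_A((\sqrt{DX}/\gcd(c_1,c_2))^{-A})$. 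Poisson in $\mathbf{c}\in\mathbb{Z}^2$ against $\int_{\mathbb{R}^2}\omega_1(|\mathbf{x}|)\,d\mathbf{x}=1$ gives $\sum_{\mathbf{c}\in\mathbb{Z}^2}\omega_1(|\mathbf{c}|/D)=D^2+O_A(D^{-A})$, with the $\mathbf{c}=0$ term vanishing since $\omega_1$ is supported in $\mathbb{R}_{>0}$. The correction term also vanishes at $\mathbf{n}=0$ because $\omega_1(0)=0$, giving the value $1$ up to acceptable error.

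Next, for $\mathbf{n}\neq 0$ the goal is to show the right-hand side reproduces the correction term to within $O_A(D^{-A})$. Substituting $\mathbf{m}=\mathbf{n}/d$ (valid on the support of $\charf{\mathbf{n}\equiv 0\,(d)}$) simplifies the inner congruence $dq\mid\det(\mathbf{c},\mathbf{n})$ to $q\mid\det(\mathbf{c},\mathbf{m})$. Applying Poisson in $q$ separately to each of the two $\omega_2$ terms, the pair is engineered so that under the change of variable $q\leftrightarrow\det(\mathbf{c},\mathbf{m})/q$ (valid when $\det(\mathbf{c},\mathbf{m})\ne 0$) the zero-frequency Poisson main terms coincide and cancel. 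The surviving contribution is therefore concentrated on configurations with $\det(\mathbf{c},\mathbf{n})=0$, which for $\mathbf{n}\ne 0$ forces the primitive $\mathbf{c}$ to equal $\pm\mathbf{n}/\gcd(n_1,n_2)$. Tracing the resulting two contributions through the original expression yields precisely the correction term $\frac{2}{D^2}\sum_q\omega_1(|\mathbf{n}|/qD)\charf{\mathbf{n}\equiv 0\,(q)}$, with the factor of $2$ arising from the two sign choices.

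The main obstacle is controlling the Poisson error terms uniformly for $1\ll D\ll X^{1/2}$ and $|n_i|<X$. Each Poisson application produces a dual sum whose nonzero-frequency contributions must be bounded via repeated integration by parts exploiting the smoothness of $\omega_1$ and $\omega_2$; the range $D\ll X^{1/2}$ is precisely what ensures the dual frequencies sit in a regime where rapid decay applies and errors can be absorbed into $O_A(D^{-A})$ for any $A$. The subtle case is when $\mathbf{c}$ is close to, but not equal to, a primitive multiple of $\mathbf{n}$, so that $\det(\mathbf{c},\mathbf{n})$ is small but nonzero: here the two-term weighted difference $\omega_2(dq/\sqrt{DX})-\omega_2(\det(\mathbf{c},\mathbf{n})/(q\sqrt{DX}))$ is essential, since a single-term weight would leave an uncancelled main contribution rather than producing the desired cancellation.
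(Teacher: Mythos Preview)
Your treatment of $\mathbf{n}=0$ is essentially correct and matches the paper. The gap is in the $\mathbf{n}\neq 0$ case, where the mechanism is misidentified.

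When $\det(\mathbf{c},\mathbf{m})\neq 0$, the $q$-sum is constrained by $q\mid\det(\mathbf{c},\mathbf{m})$ and therefore runs over \emph{divisors}, not over a full arithmetic progression. There is no Poisson summation to perform: the substitution $q\leftrightarrow|\det(\mathbf{c},\mathbf{m})|/q$ is the divisor involution, and it gives \emph{exact} cancellation of the two $\omega_2$-terms, with no error whatsoever. Consequently your entire ``main obstacle'' paragraph --- Poisson tails, dual frequencies, and the ``subtle case when $\mathbf{c}$ is close to, but not equal to, a primitive multiple of $\mathbf{n}$'' --- is a red herring. Either $\det(\mathbf{c},\mathbf{m})=0$ or it does not; in the latter case the contribution vanishes identically, and there is no intermediate regime.

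What does require work is matching the surviving $\det(\mathbf{c},\mathbf{m})=0$ contributions to the correction term, and your ``tracing through'' is not a proof. The paper runs the argument in the opposite direction: for $\mathbf{n}\neq 0$ it begins from the manifestly-zero involution identity
\[
0=\sum_{\substack{\mathbf{c}\,\text{primitive}\\ \langle\mathbf{c},\mathbf{n}\rangle>0}}\ \sum_{dq\mathbf{c}=\mathbf{n}}\Bigl(\omega_1\bigl(|\mathbf{c}|d/D\bigr)-\omega_1\bigl(|\mathbf{n}|/(qD)\bigr)\Bigr),
\]
which after unfolding isolates the correction term and leaves $\tfrac12\sum_{d\mid\mathbf{n}}\sum_{\mathbf{c}\,\text{prim.}}\omega_1(|\mathbf{c}|d/D)\,\charf{\det(\mathbf{c},\mathbf{n}/d)=0}$. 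Only then is the one-dimensional $\delta$-method (Proposition~\ref{theorem:cq5o3smo7r}) applied, with $Q=\sqrt{DX}/d$, to expand this indicator into the $\omega_2$-weighted $q$-sum. The sole error incurred is the $O_A(Q^{-A})$ from that step, and $d\ll D\ll X^{1/2}$ guarantees $Q\gg 1$. Your direction (collapse the RHS directly) can be made to work, but you must replace ``Poisson in $q$'' by the divisor involution for $\det\neq 0$, and then actually carry out the bijection between $\{(\mathbf{c},d):\mathbf{c}\ \text{primitive},\ d\mid\mathbf{n},\ \mathbf{c}\parallel\mathbf{n}/d\}$ and $\{q\ge 1:q\mid\mathbf{n}\}$ --- this is exactly the paper's $(d,q)$-swap involution, run backwards.
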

As an ingredient to the proof of Theorem \ref{theorem:cq5o3o8yaf}, we shall require
the following simple version of the usual $\delta$-method of Duke--Friedlander--Iwaniec~\cite{DFI93}.

\begin{proposition}\label{theorem:cq5o3smo7r}
  Suppose that $\omega\in C_c^\infty(\mathbb{R})$ is even with $\omega(0)=0$ and $\hat\omega(0) = 2$. Then, for any $A>0$, $Q\geq 1$, and $n\in \mathbb{Z}$, we have that
  \begin{equation}
    \charf{n = 0} = \frac{1}{Q}\sum_{q\ge 1} \biggl( \omega \biggl(\frac{q}{Q}\biggr) -
    \omega \biggl(\frac{n}{qQ}\biggr) \biggr)\charf{n\equiv 0 \, (q)} + O_{A, \omega}(Q^{-A}).
  \end{equation}
\end{proposition}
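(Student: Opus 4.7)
The plan is to verify the identity exactly (or up to the claimed error) by splitting into the cases $n=0$ and $n\neq 0$, and using a divisor-swapping symmetry together with Poisson summation.

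For $n\neq 0$, I would argue that the error term is in fact zero on the nose. Indeed, the condition $n\equiv 0\,(q)$ reduces the sum to a finite sum over positive divisors $q$ of $|n|$. The involution $q\mapsto |n|/q$ on divisors, combined with the evenness of $\omega$, gives
\begin{equation*}
  \sum_{q\mid |n|} \omega\biggl(\frac{q}{Q}\biggr) = \sum_{q\mid |n|} \omega\biggl(\frac{|n|/q}{Q}\biggr) = \sum_{q\mid |n|} \omega\biggl(\frac{n}{qQ}\biggr),
\end{equation*}
so the two pieces in the parenthesis cancel termwise after the involution, giving $0$.

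For $n=0$, the condition $n\equiv 0\,(q)$ is automatic and, since $\omega(0)=0$, the second term $\omega(n/(qQ))$ drops out. What remains is to show
\begin{equation*}
  \frac{1}{Q}\sum_{q\ge 1}\omega\biggl(\frac{q}{Q}\biggr) = 1 + O_{A,\omega}(Q^{-A}).
\end{equation*}
Using that $\omega$ is even and $\omega(0)=0$, the half-line sum equals $\tfrac{1}{2}\sum_{q\in\mathbb{Z}}\omega(q/Q)$, and Poisson summation yields $\tfrac{Q}{2}\sum_{k\in\mathbb{Z}}\hat\omega(kQ)$. The $k=0$ term contributes $\tfrac{Q}{2}\hat\omega(0)=Q$ by the normalization $\hat\omega(0)=2$, while the nonzero frequencies contribute $O_A(Q^{-A})$ for any $A$ since $\hat\omega$ is Schwartz. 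Dividing by $Q$ gives the claim.

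There is essentially no obstacle here: the divisor involution handles $n\neq 0$ exactly, and the $n=0$ case is the standard Poisson tail estimate for a Schwartz function. The only bookkeeping point to check is consistency of the evenness usage when $n<0$, where one uses $\omega(|n|/(qQ))=\omega(n/(qQ))$. This proposition is essentially a slimmed-down variant of the classical Duke--Friedlander--Iwaniec $\delta$-identity, tailored here to make the later two-dimensional expansion in Theorem \ref{theorem:cq5o3o8yaf} clean.
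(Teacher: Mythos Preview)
Your proof is correct. The paper does not give an explicit proof of this proposition, citing it instead as a simple version of the Duke--Friedlander--Iwaniec $\delta$-method; your divisor-involution argument for $n\neq 0$ and Poisson-summation argument for $n=0$ is exactly the standard one, and in fact mirrors term-for-term the paper's own proof of the two-dimensional analogue (Theorem~\ref{theorem:cq5o3o8yaf}), where the same involution $(\mathbf{c},d,q)\mapsto(\mathbf{c},q,d)$ and the same Poisson-summation step for the $\mathbf{n}=0$ case appear.
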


\begin{proof}[Proof of Theorem~\ref{theorem:cq5o3o8yaf}]
  We begin by noting for all $\mathbf{n} \neq 0$, there is an involution of
  \begin{equation}
    \set{(\mathbf{c}, d, q) : n = \mathbf{c} dq, \mathbf{c} \text{ primitive}, d, q\ge 1}
  \end{equation}
  given by $(\mathbf{c}, d, q)\mapsto (\mathbf{c}, q, d)$. Therefore, for $\mathbf{n} \neq 0$, we have 
  \begin{align}\label{multline:cq5o3ss851}
    0 = & \ \sum_{\mathbf{c}\text{ primitive} \\ \langle \mathbf{c}, \mathbf{n} \rangle > 0}
    \charf{\det(\mathbf{c}, \mathbf{n}) = 0}
    \sum_{d q\mathbf{c} = \mathbf{n}} \biggl(\omega_1 \biggl(\frac{|\mathbf{c}|d}{D}\biggr)
    - \omega_1 \biggl(\frac{|\mathbf{n}|}{q D}\biggr)\biggr)\nonumber\\
    = & \ \sum_{\mathbf{c}\text{ primitive} \\ \langle \mathbf{c}, \mathbf{n} \rangle > 0}
    \sum_{d | \mathbf{n}}\charf{\det(\mathbf{c}, \mathbf{n}/d)=0} \, 
    \omega_1 \biggl(\frac{|\mathbf{c}|d}{D}\biggr) -\sum_{q | \mathbf{n}} \omega_1 \biggl(\frac{|\mathbf{n}|}{qD}\biggr)  \\
    =& \ \frac{1}{2}\sum_{\mathbf{c} \text{ primitive} } \sum_{d | \mathbf{n} }
       \mathbbm{1}_{ \det(\mathbf{c}, \mathbf{n} / d) = 0 } \,
       \omega_1 \biggl(\frac{|\mathbf{c}|d}{D}\biggr)-\sum_{q | \mathbf{n} } \omega_1 \biggl(\frac{|\mathbf{n}|}{q D}\biggr).
  \end{align}
  At this point, we apply Theorem~\ref{theorem:cq5o3smo7r} with $Q = \sqrt{DX}/d, \omega = \omega_2$ to
  expand out $\charf{\det(\mathbf{c}, \mathbf{n}/d) = 0}$ and obtain that \eqref{multline:cq5o3ss851} equals
  \begin{multline}\label{eqwithinDeltaProp}
    0 = \frac{1}{2}\sum_{d|\mathbf{n}} \sum_{\mathbf{c}\text{ primitive}}\omega_1 \biggl(\frac{|\mathbf{c}|d}{D}\biggr)
          \frac{1}{Q}\sum_{q\ge 1} \biggl( \omega_2 \biggl(\frac{q}{Q} \biggr)
          - \omega_2 \biggl(\frac{\det(\mathbf{c}, \mathbf{n}/d)}{q Q}\biggr)\biggr)\charf{\det( \mathbf{c},\mathbf{n}/d)\equiv 0 \, (q)}\\
         -\sum_{q\ge 1} \charf{\mathbf{n}\equiv 0 \, (q)} \, \omega_1 \biggl(\frac{|\mathbf{n}|}{q D}\biggr)+ O(D^{-A})
  \end{multline}
  for any $A>0$. Here we use that $d\ll D\ll X^{1/2}$ to obtain the error term $O(D^{-A})$. This
  completes the proof for $\mathbf{n}\neq0$.

  For $\mathbf{n} = 0$, the right-hand side of \eqref{eqwithinDeltaProp} equals
  \begin{equation}
    \sum_{d\ge 1} \sum_{\mathbf{c}\text{ primitive}}\omega_1\biggl(\frac{|\mathbf{c}|d}{D}\biggr)
    \frac{1}{Q}\sum_{q\ge 1}\omega_2 \biggl(\frac{q}{Q}\biggr) +O(D^{-A})
    =  D^2 + O(D^{-A}+Q^{-A})
  \end{equation}
  by Poisson summation and the normalizations \eqref{eq:cq5pgpuzx6}.
  The desired result follows.
\end{proof}

\section{Setup for proof of Theorem \ref{theorem:cq5o3jikeq} in the balanced case} \label{sec:outline}

Let $X = X_1X_2$.  Until \S\ref{sec:cuhe1tf31a}, we shall restrict ourselves to the balanced case
\begin{equation}\label{eq:cuhe1tmv46}
  X^{-\frac{1}{20}}\ll \frac{X_1}{X_2}, \frac{X_2}{X_1}\ll X^{\frac{1}{20}},
\end{equation}
for the contents of \S\ref{sec:cuhe1tf31a} will imply the desired result otherwise with a remainder of
$ O(X^2\min(X_1/X_2, X_2/X_1)) $ (in light of the fact that
$\eta = 1/236$, less than the saving in the exponent from \S\ref{sec:cuhe1tf31a} if \eqref{eq:cuhe1tmv46} does not hold).

We begin with some reductions to simplify the notation for the remainder of the paper.
By Fourier inversion, we have that
\begin{equation}
  \Phi^\infty(x_1^\infty, x_2^\infty) = \int_{K_\infty^2} \widehat{\Phi^\infty}(y_1^\infty, y_2^\infty) \psi(-x_1^\infty y_1^\infty - x_2^\infty y_2^\infty)
  \,d y_1^\infty \,d y_2^\infty.
\end{equation}
The decay of $\widehat{\Phi^\infty}$ implied by the derivative bounds of $\Phi^\infty$ implies that
we may suppose (at an acceptable cost absorbed into the $\Omega^{O(1)}$ factor in the final
remainder term) that $\Phi^\infty(x_1^\infty, x_2^\infty) = \phi_1(x_1^\infty)\phi_2(x_2^\infty)$ for some $\phi_1, \phi_2\in C_c^\infty(K_\infty\setminus K_\infty^0)$
upon performing a dyadic partition of unity on $\alpha_1$, $\alpha_2$.

We are therefore reduced to showing that 
\begin{equation}
  \Sigma = \sum_{\alpha_1, \alpha_2\in \mathcal{O}_K \\ \ensuremath{\boldsymbol\ell}(\alpha_1\alpha_2) = 0 \\ \alpha_1 \equiv \beta_1' (M) \\ \alpha_2 \equiv \beta_2'(M) }
  \phi_1 \biggl(\frac{\alpha_1}{X_1}\biggr) \phi_2 \biggl(\frac{\alpha_2}{X_2}\biggr)
  = X^2 \sigma_\infty\prod_{p} \sigma_p + O((M\Omega)^{O(1)}X^{2 - \eta}),
\end{equation}
with
\begin{equation}
  \sigma_\infty = \int_{K_\infty^2} \phi_1( x_1^\infty)\phi_2(x_2^\infty) \delta(\ensuremath{\boldsymbol\ell}(x_1^\infty x_2^\infty))
  \, d x_1^\infty \,d x_2^\infty,
\end{equation}
\begin{equation}
  \sigma_p = \int_{\mathcal{O}_{K, p}} \mathbbm{1}_{ \substack{\beta_1 \equiv \beta_1'\, (M) \\ \beta_2 \equiv \beta_2'\, (M) } }
  \delta(\ensuremath{\boldsymbol\ell}(\beta_1\beta_2)) \, d \beta_1 \, d \beta_2.
\end{equation}

Our proof begins with an application of the 2-dimensional $\delta$-method to detect
the condition $\ensuremath{\boldsymbol\ell} = 0$, followed by an application of Poisson summation.

For the application of the $\delta$-method, we fix two even $\omega_1, \omega_2\in C_c^\infty(\mathbb{R}\setminus \set{0})$ with
\begin{equation}
  1 = \int_{\mathbb{R}^2} \omega_1(|\mathbf{x}|) \, d \mathbf{x}
  = \frac{1}{2}\int_{\mathbb{R}} \omega_2(x) \,d x .
\end{equation}
These will remain fixed throughout the argument, with no dependence on any of the other
parameters.

We now introduce a parameter $1\le L\ll X^{\frac{1}{100}}$ (ultimately, it will be taken to be
$X^{2\eta/3} $) and write
\begin{equation*}
  D := \frac{X^{\frac{1}{3}}}{L}.
\end{equation*}
Then, an application of the 2-dimensional $\delta$-method (Theorem \ref{theorem:cq5o3o8yaf}) and Poisson summation (Proposition \ref{proposition:cq6r4erq4l}) yields the
following.
\begin{proposition}\label{proposition:crb36eebaq}
  We have that
  \begin{equation}
    \Sigma = -2\Sigma_1^{\set{}} + \Sigma_2^{\set{}} - 2\Sigma_1^{\set{1,2}} + \Sigma_2^{\set{1,2}}
    + \sum_{ j=1,2 } (-2\Sigma_1^{\set{j}} + \Sigma_2^{\set{j}}),
  \end{equation}
  where for $S\subset\set{1, 2}$, we write
  \begin{equation}\label{eq:crb5jnfhh8}
    \Sigma_{1}^S = \frac{X^4}{D^2} \sum_{q\ge 1 \\ M | q } \frac{1}{q^5}
    \sum_{\alpha_1, \alpha_2 \\ \alpha_i = 0\iff i\not\in S} (S_1I_1)(\alpha_1, \alpha_2; q),
  \end{equation}
  with
  \begin{equation}\label{eq:crb7tdqkjb}
    S_1(\alpha_1, \alpha_2; q) = \frac{1}{q^3} \sum_{\beta_1, \beta_2\in \mathcal{O}_K/q \mathcal{O}_K \\ \ensuremath{\boldsymbol\ell}(\beta_1\beta_2) \equiv 0(q/(q, M)) \\ \beta_1 \equiv \beta_1' ( (q, M)) \\ \beta_2 \equiv \beta_2' ((q, M)) } \psi_q(\alpha_1\beta_1 + \alpha_2\beta_2),
  \end{equation}
  \begin{equation}\label{eq:crb7ay3tq3}
    I_1(\alpha_1, \alpha_2; q) =   \int_{K_\infty^2} \omega_1 \biggl(\frac{|\ensuremath{\boldsymbol\ell}(x_1^\infty x_2^\infty)|MX}{qD}\biggr)\phi_1(x_1^\infty) \phi_2(x_2^\infty) \psi \biggl( -\frac{X_1x_1^\infty\alpha_1}{q}
    - \frac{X_2x_2^\infty\alpha_2}{q} \biggr) \,d x_1^\infty \,d x_2^\infty,
  \end{equation}
  and
  \begin{equation}\label{Sigma2SDef}
    \Sigma_2^S = \frac{X^4}{D^2} \sum_{ d\ge 1 \\ \mathbf{c}\in \mathbb{Z}^2 \\ (c_1, c_2) = 1 }
    \frac{1}{d^5} \omega_1 \biggl(\frac{|\mathbf{c}|d}{D}\biggr)
    \frac{d}{\sqrt{DX}} \sum_{q\ge 1 \\ M | q } \frac{1}{q^5}
    \sum_{\alpha_1, \alpha_2  \\ \alpha_i = 0\iff i\not\in S} (S_2I_2)(\alpha_1, \alpha_2, \mathbf{c}; d, q),
  \end{equation}
  with
  \begin{equation}\label{S2Def}
    S_2(\alpha_1, \alpha_2, \mathbf{c}; d, q) = \frac{1}{d^3q^3}
    \sum_{ \beta_1, \beta_2\in \mathcal{O}_K/ dq \mathcal{O}_K \\
      \ensuremath{\boldsymbol\ell}(\beta_1\beta_2) \equiv 0(d) \\
      \det(\mathbf{c}, \ensuremath{\boldsymbol\ell}(\beta_1\beta_2))\equiv 0(dq/(q, M))\\
      \beta_1 \equiv \beta_1' ((dq, M)) \\ \beta_2 \equiv \beta_2' ((dq, M))}
    \psi_{dq}(\alpha_1\beta_1 + \alpha_2\beta_2),
  \end{equation}
  \begin{multline}\label{I2Def}
    I_2(\alpha_1, \alpha_2; \mathbf{c}, d, q) 
    = \int_{K_\infty^2} \biggl( \omega_2 \biggl(\frac{d q}{M\sqrt{DX}}\biggr)
    - \omega_2 \biggl(\frac{M\sqrt{X}\det(\mathbf{c}, \ensuremath{\boldsymbol\ell}(x_1^\infty x_2^\infty))}
    {q\sqrt{D}}\biggr)\biggr)\phi_1(x_1^\infty)\phi_2(x_2^\infty) \\ 
    \psi \biggl( -\frac{X_1x_1^\infty\alpha_1}{d q}- \frac{X_2x_2^\infty\alpha_2}{d q} \biggr)
    \,d x_1^\infty \,d x_2^\infty.
  \end{multline}
\end{proposition}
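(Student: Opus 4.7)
The plan is straightforward: apply the two-dimensional $\delta$-method (Theorem \ref{theorem:cq5o3o8yaf}) to detect the condition $\ensuremath{\boldsymbol\ell}(\alpha_1\alpha_2) = 0$, and then apply Poisson summation (Proposition \ref{proposition:cq6r4erq4l}) to the resulting inner sums over $\alpha_1, \alpha_2 \in \mathcal{O}_K$. To justify invoking Theorem \ref{theorem:cq5o3o8yaf} with $\mathbf{n} = \ensuremath{\boldsymbol\ell}(\alpha_1\alpha_2)$, observe that on the support of $\phi_1(\alpha_1/X_1)\phi_2(\alpha_2/X_2)$ we have $|\alpha_j|_v \ll X_j$ at each archimedean place by \eqref{eq:crb7dyjvca}, hence $|\ensuremath{\boldsymbol\ell}(\alpha_1\alpha_2)| \ll X$ (absorbing the $\Omega$-factors per our convention), so the hypothesis $|n_1|, |n_2| < X$ is met.

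Substituting the expansion of $\mathbbm{1}_{\ensuremath{\boldsymbol\ell}(\alpha_1\alpha_2) = 0}$ from \eqref{multline:cq6r39jnv5} into $\Sigma$ splits it into two contributions: a $\Sigma_2$-type piece arising from the $\omega_2$-sum (with parameters $d, \mathbf{c}, q_{\mathrm{old}}$) and a $\Sigma_1$-type piece arising from the $\omega_1$-sum (with parameter $q_{\mathrm{old}}$ alone); the $O(D^{-A})$ error from Theorem \ref{theorem:cq5o3o8yaf}, multiplied by the trivial $O(X^{O(1)})$ count of lattice points, is negligible upon choosing $A$ large. After swapping the order of summation to put the delta-method parameters outside, the inner sum over $\alpha_1, \alpha_2$ carries two families of congruences: the $M$-conditions $\alpha_j \equiv \beta_j' \pmod M$ from $\Phi^f$ and the divisibility conditions produced by the delta method. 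I combine these into a single common modulus; writing $q$ for this common modulus (the least common multiple of $M$ and $q_{\mathrm{old}}$, and similarly $dq$ in the $\Sigma_2$-case) forces $M \mid q$, with the delta-condition surviving modulo $q/(q,M)$ and the $M$-condition modulo $(q, M)$, matching the constraints in \eqref{eq:crb7tdqkjb} and \eqref{S2Def}.

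Next, apply Proposition \ref{proposition:cq6r4erq4l} to the inner sum over $\alpha_1, \alpha_2 \in \mathcal{O}_K$ with $\gamma = q$ in the $\Sigma_1$-case and $\gamma = dq$ in the $\Sigma_2$-case. Each application produces a Jacobian factor $X_j^4/q^2$ (respectively $X_j^4/(dq)^2$), a dual lattice sum over new $\alpha_1, \alpha_2 \in \mathcal{O}_K$, the normalized congruence-indicator transform $S_1$ or $S_2$, and the archimedean oscillatory integral $I_1$ or $I_2$. Finally, partition the resulting dual sum by the set $S = \{j : \alpha_j \neq 0\} \subseteq \{1, 2\}$ to obtain the four pieces $\Sigma_k^S$ appearing in the statement.

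The bulk of the work I expect is bookkeeping: verifying that the prefactors $X^4/D^2$, $d/\sqrt{DX}$, $1/q^5$, and $1/d^5$ assemble correctly from the delta-method prefactors, the two Poisson Jacobians contributing a combined factor $q^{-4}$ (resp.\ $(dq)^{-4}$), and the $1/(dq)^3$ normalization inside $S_1, S_2$; and that the overlap $(q_{\mathrm{old}}, M)$ distributes correctly between the exponential sum and the surviving congruences. I do not anticipate any conceptual obstacle beyond this matching, and the final identity is an exact equality (up to the absorbed $O(D^{-A})$ error) rather than an asymptotic.
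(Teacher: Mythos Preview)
Your approach is essentially identical to the paper's: apply Theorem~\ref{theorem:cq5o3o8yaf} to detect $\ensuremath{\boldsymbol\ell}(\alpha_1\alpha_2)=0$, Poisson-sum in both $\alpha_1,\alpha_2$ via Proposition~\ref{proposition:cq6r4erq4l}, and split by which dual frequencies vanish. One small correction to your bookkeeping: the common modulus for Poisson should be the \emph{product} $Mq_{\mathrm{old}}$ (and $Mdq_{\mathrm{old}}$ in the $\Sigma_2$-case), not the lcm --- the paper simply relabels $qM\mapsto q$, so that the new $q$ ranges over multiples of $M$ bijectively, and then $(q,M)=M$, $q/(q,M)=q_{\mathrm{old}}$ recovers the two congruence levels in \eqref{eq:crb7tdqkjb}; using the lcm would overcount since several $q_{\mathrm{old}}$ can share the same lcm with $M$.
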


\subsection{Proof of Theorem \ref{theorem:cq5o3jikeq}}

In this subsection, we shall prove Theorem \ref{theorem:cq5o3jikeq} assuming bounds
we will show in \S\ref{sec:cq6r430mfy} and \S\ref{sect:Sigma2} for $\Sigma_1^S$ and
$\Sigma_2^S$, respectively. The remainder of this paper will then be devoted to showing these
bounds. We refer the reader to the next subsection, \S\ref{sec:cuhge4dbyc}, for a explicit listing
of the main propositions and what they bound, which may make the present subsection
easier to read.

Putting together Propositions \ref{proposition:cran7rqg6g} and \ref{prop.Sigma2Final} to estimate $\Sigma_1^{\set{}}, \Sigma_2^{\set{}}$, we obtain that
\begin{equation}
  -2\Sigma_1^{\set{}} + \Sigma_2^{\set{}} = X^2 \sigma_\infty\prod_p\sigma_p
  + O \biggl( (M\Omega)^{O(1)}
  X^{2 + o(1)} \biggl( \frac{L^4}{X^{\frac{1}{3}}} +
  \frac{L^2}{X^{\frac{2}{3}}}\biggr) \biggr).
\end{equation}
By Proposition \ref{proposition:cran7w05id}, we have 
\begin{equation}\label{eq:crare1j4im}
  \Sigma_{2}^{\set{1,2}} \ll (M\Omega)^{O(1)}X^{2 + o(1)} \biggl( \frac{1}{L^{\frac{3}{2}}}
  + \frac{L^{\frac{2}{7}}}{X^{\frac{2}{21}}} \biggr),
\end{equation}
and by Proposition \ref{proposition:cq7567d3xk}, we have that 
\begin{equation}\label{eq:crare1jzos}
  \Sigma_1^{\set{1,2}}\ll   (M\Omega)^{O(1)} L^{47/8}X^{2 - 1/48+ o(1)} .
\end{equation}
At this point, we optimize to match the first two terms in \eqref{eq:crare1j4im} and \eqref{eq:crare1jzos}, taking
\begin{equation}
  L = X^{\frac{2}{3}\eta}
\end{equation}
with
\begin{equation}
  \eta = \frac{1}{236} = \frac{1/48}{3/2 + 47/8}\cdot \frac{3}{2}.
\end{equation}

Putting this together with Propositions \ref{proposition:cq9d99egtf} and \ref{proposition:crare1wa9u}, which imply that
\begin{align}
  \Sigma_1^{\set{j}} &\ll \frac{X^{3 + o(1)}}{D X_2^2} \ll X^{2 - 1/10 + o(1)},\\
  \Sigma_2^{\set{j}} &\ll X^{2 + o(1)}\frac{X^{\frac{3}{2}}}{X_2^3D} \ll X^{2 - 1/20 + o(1)},
\end{align}
yields that 
\begin{equation}
  \sum_{j\le 2}(-2\Sigma_1^{\set j} + \Sigma_2^{\set j})
  - 2\Sigma_1^{\set{1,2}} + \Sigma_2^{\set{1,2}}
  \ll (M\Omega)^{O(1)}X^{2 - \eta + o(1)}.
\end{equation}
from which Theorem \ref{theorem:cq5o3jikeq} follows.

\subsection{Structure of paper}\label{sec:cuhge4dbyc}

The paper is organized as follows.
\begin{enumerate}
\item[\S\ref{sec:prelim_lemmas}] contains certain minor lemmas which will be used at various points throughout
  the proof, as well as the statement of Poisson summation we shall use. We also record
  in \S\ref{sec:cuhk0c94z7} all facts about the factorization of cubic Dedekind zeta functions and the bounds
  on the resulting $\GL_2 $ Fourier coefficients.
\item[\S\ref{sec:delta-method}] contains a statement and proof of the two-dimensional delta method we use,
  providing a perturbation of the method of \cite{2024arXiv2411.11355} for our purposes.
\item[\S\ref{sec:cq72jm8eaf}] contains estimates on $S_1$ that will be used to handle the nonzero frequencies
  $\Sigma_1^{\set{1,2}}$ as well as bounds pertaining to the zero frequency which will be used to bound
  $\Sigma_1^{\set{}} $.
\item[\S\ref{sec:cq72jm8woo}] contains upper bounds on $S_2$ germane to the estimation of the nonzero frequency
  contribution $\Sigma_2^{\set{1, 2}}$, as well as a related estimate, Proposition \ref{prop:S20}, which will play
  a significant role in the estimation of the zero frequency $\Sigma_2^{\set{}}$ via \S\ref{sec:cuhge2i6da}.
\item[\S\ref{sec:oscillatory}] contains upper bounds on both the oscillatory integrals and their derivatives
  which will be used to bound the nonzero frequency contributions $\Sigma_j^{\set{1,2}}$.
\item[\S\ref{sec:cuhge2i6da}] contains preparatory lemmas leading up to the computation of the constants
  in front of the main terms of $\Sigma_{j}^{\set{}}$ and the secondary main term of $\Sigma_1^{\set{}}$ (which leads
  to the main term of Theorem \ref{theorem:cq5o3jikeq})
\item[\S\ref{sec:cuhge2k5o6}] bounds the nonzero frequency contribution $\Sigma_1^{\set{1,2}}$ in Proposition
  \ref{proposition:cq7567d3xk}. It is in this section that we apply the argument that culminates in an application
  of the moment argument described in the proof sketch in Proposition \ref{prop:moment_rs_bound}.
  Proposition \ref{prop:moment_rs_bound} is where all of the facts on the factorization of Dedekind $\zeta $-function
  recorded \S\ref{sec:cuhk0c94z7} are used.
\item[\S\ref{sec:cuhge2lp8l}] bounds the partial zero frequency contribution $\Sigma_{1}^{\set{j}}$ in
  Proposition \ref{proposition:cq9d99egtf}.
\item[\S\ref{sec:crb0xrxfh5}] contains an asymptotic for $\Sigma_1^{\set{}}$ in Proposition \ref{proposition:cran7rqg6g}, giving a main term of
  size $X^3/D$ and a secondary main term of size $X^{2}$, the former of which cancels with
  the main term of $\Sigma_2^{\set{}}$.
\item[\S\ref{sec:cuhge2kjb1}] contains a bound on the nonzero frequency contribution $\Sigma_2^{\set{1,2}}$ in
  Proposition \ref{proposition:cran7w05id}.
\item[\S\ref{sec:cuhge2ozhj}] bounds the partial zero frequency contribution $\Sigma_2^{\set{j}}$ in
  Proposition \ref{proposition:crare1wa9u}. 
\item[\S\ref{sec:crao4uslaz}] contains an asymptotic for $\Sigma_2^{\set{}}$ in Proposition \ref{prop.Sigma2Final}, matching
  the appropriate constant in front of $X^3/D$ coming from Proposition \ref{proposition:cran7rqg6g}.
\end{enumerate}

We provide below a dependency graph containing most of the main key bounds and their
dependencies for ease of reading and tracking.

\vspace{20pt}
\begin{tikzpicture}[rounded corners, every node/.style={draw, rectangle, font=\tiny, text height=1.3ex, text depth=.25ex}]
  
  \node[fill=green!00] (cor_div_sum) at (0pt,0pt) {Thm. \ref{theorem:cq5oju0wf8}};
  \node[fill=green!00] (cor_main) at (0pt,30pt) {Thm. \ref{proposition:crb1e3s2w0}};
  \node[fill=green!00] (lem_singular) at (-30pt,140pt) {Lem. \ref{lemma:crb0xx4e28}};
  \node[fill=green!00] (propD1star) at (-10pt,170pt) {Prop. \ref{lemma:cran7rw9t5}};
  \node[fill=green!00] (propSigma2LocalFactorSum) at (-75pt,140pt) {Prop. \ref{proposition:crb7r8dzyl}};
  \node[fill=green!00] (lemSigmap) at (-75pt,170pt) {Lem. \ref{lemma:crb7r6zwci}};
  \node[fill=green!00] (propSigma1FullZero) at (0pt,115pt) {Prop. \ref{proposition:cran7rqg6g}};
  
  \node[fill=green!00] (propS1pEval) at (0pt,200pt) {Prop. \ref{proposition:cq6y0ob20l}};
  \node[fill=green!00] (propS1pkBound) at (45pt,200pt) {Prop. \ref{proposition:cq6y0pcoy5}};
  \node[fill=green!00] (propI1Bound) at (90pt,200pt) {Prop. \ref{proposition:crale1hrji}};
  \node[fill=green!00] (propMomentRS) at (140pt,200pt) {Prop. \ref{prop:moment_rs_bound}};
  
  \node[fill=green!00] (propS2Nonzero) at (-175pt,115pt) {Prop. \ref{prop.S2non0}};
  \node[fill=green!00] (propI2Bound) at (-130pt,115pt) {Prop. \ref{prop.I2}};
  
  \node[fill=green!00] (lemLargeGcd) at (130pt,140pt) {Lem. \ref{lemma:cq9d99fn96}};
  
  \node[fill=green!00] (propSigma1PartialZero) at (94pt,115pt) {Prop. \ref{proposition:cq9d99egtf}};
  \node[fill=green!00] (propSigma1Nonzero) at (47pt,115pt) {Prop. \ref{proposition:cq7567d3xk}};
  \node[fill=green!00] (propSigma2FullZeroFinal) at (-30pt,85pt) {Prop. \ref{prop.Sigma2Final}};
  \node[fill=green!00] (propSigma2Nonzero) at (-110pt,85pt) {Prop. \ref{proposition:cran7w05id}};
  \node[fill=green!00] (propSigma2PartialZero) at (-200pt,85pt) {Prop. \ref{proposition:crare1wa9u}};
  \node[fill=green!00] (lemSigma2Gcd) at (-220pt,115pt) {Lem. \ref{proposition:cravzrp2bh}};
  \node[fill=green!00] (lemLevelDist) at (-85pt,115pt) {Lem. \ref{proposition:cravzd1g45}};
  \node[fill=green!00] (thmMain) at (0pt,55pt) {Thm. \ref{theorem:cq5o3jikeq}};

  \draw (lem_singular) -- (propSigma1FullZero);
  \draw (propD1star) -- (propSigma1FullZero);
  \draw (propSigma1FullZero) -- (thmMain);
  \draw (propS1pEval) -- (propSigma1Nonzero);
  \draw (propS1pkBound) -- (propSigma1Nonzero);
  \draw (propI1Bound) -- (propSigma1Nonzero);
  \draw (propMomentRS) -- (propSigma1Nonzero);
  \draw (propSigma1Nonzero) -- (thmMain);
  \draw (lemLargeGcd) -- (propSigma1PartialZero);
  \draw (propSigma1PartialZero) -- (thmMain);
  \draw (propS2Nonzero) -- (propSigma2Nonzero);
  \draw (propI2Bound) -- (propSigma2Nonzero);
  \draw (lemLevelDist) -- (propSigma2Nonzero);
  \draw (propSigma2Nonzero) -- (thmMain);
  \draw (lemSigma2Gcd) -- (propSigma2PartialZero);
  \draw (propSigma2PartialZero) -- (thmMain);
  \draw (lemSigmap) -- (propSigma2LocalFactorSum);
  \draw (propD1star) -- (propSigma2LocalFactorSum);
  \draw (propSigma2LocalFactorSum) -- (propSigma2FullZeroFinal);
  \draw (lem_singular) -- (propSigma2FullZeroFinal);
  \draw (propSigma2FullZeroFinal) -- (thmMain);
  \draw (thmMain) -- (cor_main);
  \draw (cor_main) -- (cor_div_sum);
\end{tikzpicture}

\subsection{Proof of Proposition \ref{proposition:crb36eebaq}}\label{sec:cuhge19z1e}

In this subsection, we carry out the application of the two dimensional delta method
and the applications of Poisson summation that lead to the decomposition in Proposition
\ref{proposition:crb36eebaq}.
Recall that we are estimating
\begin{equation}
  \Sigma = \sum_{\alpha_1, \alpha_2\in \mathcal{O}_K  \\ \ensuremath{\boldsymbol\ell}(\alpha_1\alpha_2)=  0 \\
    \alpha_1 \equiv \beta_1' \, (M) \\ \alpha_2 \equiv \beta_2' \, (M)}\phi_1 \biggl(\frac{\alpha_1}{X_1}\biggr)
  \phi_2 \biggl(\frac{\alpha_2}{X_2}\biggr).
\end{equation}
Applying Theorem \ref{theorem:cq5o3o8yaf} to decompose $\mathbbm{1}[\ensuremath{\boldsymbol\ell}(\alpha_1\alpha_2) = 0]$ (with $X$, $D$ in Theorem \ref{theorem:cq5o3o8yaf} taken
to be $X=X_1X_2$, $D$ respectively, and any fixed choice of $\omega_1$, $\omega_2$ with $\omega_2$ supported compactly away from $0$), we obtain that
\begin{equation}
  \Sigma = -2\Sigma_1 + \Sigma_2,
\end{equation}
where
\begin{equation}
  \Sigma_1 = \frac{1}{D^2} \sum_{q\ge 1 }
  \sum_{\alpha_1, \alpha_2\in \mathcal{O}_K \\ \alpha_1 \equiv \beta_1' \, (M) \\ \alpha_2  \equiv \beta_2' \, (M)}
  \mathbbm{1}_{ \ensuremath{\boldsymbol\ell}(\alpha_1\alpha_2) \equiv 0\, (q)  } \, 
  \omega_1 \biggl(\frac{|\ensuremath{\boldsymbol\ell}(\alpha_1\alpha_2)|}{q D}\biggr),
\end{equation}
\begin{multline}
  \Sigma_2 = \frac{1}{D^2} \sum_{d\ge 1 \\ \mathbf{c} \in \mathbb{Z}^2 \\ (c_1, c_2) = 1 } \omega_1 \biggl(\frac{|\mathbf{c}|d}{D}\biggr)
  \frac{d}{\sqrt{DX}} \sum_{q\ge 1 } \\ 
  \sum_{\alpha_1,\alpha_2\in \mathcal{O}_K \\ \alpha_1 \equiv \beta_1' \,(M) \\ \alpha_2 \equiv \beta_2' \, (M) }
  \biggl( \omega_2 \biggl(\frac{dq}{\sqrt{DX}}\biggr)
  - \omega_2 \biggl(\frac{\det(\mathbf{c}, \ensuremath{\boldsymbol\ell}(\alpha_1\alpha_2))}{q\sqrt{DX}}\biggr)\biggr)
  \mathbbm{1}_{ \substack{\det(\mathbf{c}, \ensuremath{\boldsymbol\ell}(\alpha_1\alpha_2)) \equiv 0\, (dq) \\
      \ensuremath{\boldsymbol\ell}(\alpha_1\alpha_2) \equiv 0\, (d)} }.
\end{multline}
Then, applying Poisson summation (Proposition \ref{proposition:cq6r4erq4l}) to both the
sums over $\alpha_1$, $\alpha_2$ yields that
\begin{equation}
  \Sigma_1 = \frac{1}{D^2} \sum_{q\ge 1 \\ M|q} \frac{1}{q^5} \sum_{\alpha_1, \alpha_2\in \mathcal{O}_K }
  S_1(\alpha_1, \alpha_2; q) I_1(\alpha_1, \alpha_2; q),
\end{equation}
and
\begin{equation}
  \Sigma_2 = \frac{1}{D^2} \sum_{d\ge 1 \\ \mathbf{c} \in \mathbb{Z}^2 \\ (c_1, c_2) = 1 }\frac{1}{d^5}
  \omega_1 \biggl(\frac{|\mathbf{c}|d}{D}\biggr) \frac{d}{\sqrt{DX}}
  \sum_{q\ge 1 \\M|q} \frac{1}{q^5}
  \sum_{\alpha_1, \alpha_2\in \mathcal{O}_K } S_2(\alpha_1, \alpha_2; \mathbf{c}, d, q) I_2(\alpha_1, \alpha_2; \mathbf{c}, d, q),
\end{equation}
where the $S_j$ and $I_j$ were defined in \eqref{eq:crb7tdqkjb}-\eqref{I2Def}. Note that we have rewritten $qM$ as $q$ above. Splitting based on which of the $\alpha_j$ are $0$ yields the desired result.

\section{Estimating $S_1$}\label{sec:cq72jm8eaf}

In this section, we provide estimates for $S_1(\alpha_1,\alpha_2; q)$ in \eqref{eq:crb7tdqkjb}. It is here that we
obtain what will turn out to be cusp for Fourier coefficients we mentioned in our sketch
at \eqref{eq:cuhgf3r9nr}. This will be exploited in \S\ref{sec:cuhge2k5o6}.

The estimation of $S_1(\alpha_1, \alpha_2; q)$ reduces to the case of $q = p^k$ for $k\ge 1$, for
$S_1(\alpha_1, \alpha_2; q)$ is a multiplicative function in $q$. These cases we resolve with the following
collection of results.

For any $\alpha=n_0+n_1\zeta+n_2\zeta^2+n_3\zeta^3\in\mathcal{O}_K$, we denote
\begin{equation}
  f_\alpha(x) = n_3x^3 + n_2x^2 + n_1x + n_0
\end{equation}
and let $\Delta_\alpha = \mathrm{Disc}(f_\alpha) $. 

The first is a result we will be applying in the general case
\begin{proposition}\label{proposition:cq6y0ob20l}
  Suppose that $p$ is prime such that $p\nmid M$. Then, we have that
  \begin{equation}
    S_1( \alpha_1, \alpha_2; p) = a_{\alpha_1\alpha_2}(p) + r_{\alpha_1\alpha_2}(p)
  \end{equation}
  where
  \begin{equation}\label{eq:cq6y0x7va3}
    a_\alpha(p)
    = -1-\charf{p|\langle\alpha,1\rangle} + \#\set{x(p) : f_\alpha(x)\equiv 0(p)}
  \end{equation}
  and $r_{\alpha}(p)$ satisfies
  \begin{equation}
    r_{\alpha}(p) \ll \frac{1}{p} + \charf{p | N(\alpha)} +
    p\charf{p^2 | N(\alpha)} + p^3\charf{p  | \alpha}.
  \end{equation}
\end{proposition}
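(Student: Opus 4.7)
The plan is a direct Fourier-analytic evaluation of $S_1(\alpha_1,\alpha_2;p)$ over $R := \mathcal{O}_K/p\mathcal{O}_K$. Since $p\nmid M$, the congruences at $(p,M)=1$ in \eqref{eq:crb7tdqkjb} are vacuous and the sum collapses to
\begin{equation*}
S_1(\alpha_1,\alpha_2;p)=\frac{1}{p^3}\sum_{\substack{\beta_1,\beta_2\in R\\ \ensuremath{\boldsymbol\ell}(\beta_1\beta_2)\equiv 0\,(p)}} \psi_p(\alpha_1\beta_1+\alpha_2\beta_2).
\end{equation*}
A short computation with $\delta_K=4\zeta^3$ shows that the sublattice $L:=(\mathbb{Z}+\mathbb{Z}\zeta)/p\subset R$ is self-dual under the trace pairing, which yields
\begin{equation*}
\mathbbm{1}[\ensuremath{\boldsymbol\ell}(x)\equiv 0\,(p)]=\frac{1}{p^2}\sum_{\gamma\in L}\psi_p(\gamma x).
\end{equation*}
Substituting this and executing the $\beta_1$-sum (which forces $\gamma\beta_2+\alpha_1\equiv 0$ in $R$) reduces matters to
\begin{equation*}
S_1=\frac{1}{p}\sum_{\gamma\in L}\sum_{\substack{\beta_2\in R \\ \gamma\beta_2\equiv -\alpha_1\,(p)}}\psi_p(\alpha_2\beta_2).
\end{equation*}

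I would then stratify according to the ring-theoretic nature of $\gamma=a+b\zeta\in L$ in $R$. The case $\gamma=0$ requires both $\alpha_j\equiv 0\,(p)$ and contributes a term of size $\ll p^3\mathbbm{1}[p\mid\alpha]$, absorbed into $r_\alpha$. For $\gamma$ a unit in $R$ (both the scalar case $\gamma=a\in\mathbb{F}_p^\times$ and $\gamma=b(\lambda+\zeta)$ with $\lambda:=a/b$ satisfying $\lambda^4+1\not\equiv 0\,(p)$) the equation $\gamma\beta_2\equiv -\alpha_1$ has a unique solution $\beta_2=-\alpha_1/\gamma$, giving per-$\gamma$ contribution $\psi_p(-\alpha/\gamma)/p$ with $\alpha=\alpha_1\alpha_2$. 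The key algebraic identity
\begin{equation*}
\Bigl\langle\frac{\alpha}{\lambda+\zeta},\,1\Bigr\rangle=-\frac{f_\alpha(-\lambda)}{\lambda^4+1},
\end{equation*}
proven via the formula $1/(\lambda+\zeta)=(\lambda^3-\lambda^2\zeta+\lambda\zeta^2-\zeta^3)/(\lambda^4+1)$ together with a direct trace calculation using $\delta_K = 4\zeta^3$, is what injects the cubic polynomial $f_\alpha$ into the exponential sum. Evaluating a Ramanujan sum in $b\in\mathbb{F}_p^\times$ and then summing over $\lambda\in\mathbb{F}_p$, together with the scalar case producing $c_p(\langle\alpha,1\rangle)$, recovers the claimed main term $a_\alpha(p)$ up to an $O(1/p)$ error coming from normalization and the at most $k_p\leq 4$ excluded values of $\lambda$.

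The remaining zero-divisor stratum, where $\gamma=b(\lambda+\zeta)$ with $\lambda^4+1\equiv 0\,(p)$, corresponds via Lemma \ref{lem.Only1Prime} to the degree-one prime $\mathfrak{p}_\lambda\mid p$ of $\mathcal{O}_K$ sending $\zeta\mapsto -\lambda$. Solutions to $\gamma\beta_2\equiv -\alpha_1$ exist iff $\alpha_1\in\mathfrak{p}_\lambda$, i.e.\ $p\mid f_{\alpha_1}(-\lambda)$; when they do, they form a coset of the one-dimensional kernel $\mathbb{F}_p\cdot Q_\lambda(\zeta)$, where $Q_\lambda(\zeta)=\zeta^3-\lambda\zeta^2+\lambda^2\zeta-\lambda^3$ is the complementary factor of $\zeta+\lambda$ in $\zeta^4+1$. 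Summing $\psi_p(\alpha_2\beta_2)$ along this coset picks up a factor $p\mathbbm{1}[p\mid f_{\alpha_2}(-\lambda)]$, so the contribution is nonzero only when both $\alpha_1,\alpha_2\in\mathfrak{p}_\lambda$, which forces $p^2\mid N(\alpha)$ and gives total size $\ll p$; this is absorbed into the $p\mathbbm{1}[p^2\mid N(\alpha)]$ part of $r_\alpha$, with the $\mathbbm{1}[p\mid N(\alpha)]$ term buffering intermediate cases where $\alpha_1$ alone is divisible by some $\mathfrak{p}_\lambda$ and a Ramanujan sum defect emerges. The main obstacle is establishing the trace identity relating $\langle\alpha/(\lambda+\zeta),1\rangle$ to $f_\alpha(-\lambda)$; this is the step that forces the cubic polynomial $f_\alpha$ out of the exponential sum and is what ultimately ties the proposition to the dihedral automorphic representations of Lemma \ref{proposition:cuhklr5wqk} exploited later in \S\ref{sec:cuhge2k5o6}.
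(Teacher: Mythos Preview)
Your proposal is correct and follows essentially the same route as the paper. Both arguments detect $\ensuremath{\boldsymbol\ell}(\beta_1\beta_2)\equiv 0$ via the dual characters $\gamma=x+y\zeta\in L$, execute one $\beta$-sum to obtain the constraint $\gamma\beta\equiv\alpha_j$, and then stratify by whether $\gamma$ is zero, a zero-divisor, or a unit in $R$.

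The only cosmetic differences are: the paper first averages over dilations $t\in(\mathbb{Z}/p\mathbb{Z})^\times$ to replace $\psi_p$ by indicator functions before splitting cases, whereas you keep the exponentials and collapse Ramanujan sums at the end; and the paper extracts $f_\alpha$ via the factorization count $(x+\zeta)(u+v\zeta+w\zeta^2)\equiv\alpha$ in \eqref{eq:cq6y00oiaj}, whereas you write the equivalent trace identity $\langle\alpha/(\lambda+\zeta),1\rangle=-f_\alpha(-\lambda)/(\lambda^4+1)$ directly. Your formulation is arguably cleaner and makes the connection to $f_\alpha$ more transparent; the paper's dilation-averaging has the minor advantage of turning every exponential into a counting problem at the outset, which slightly streamlines the bookkeeping of the error terms absorbed into $r_\alpha$.
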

In all other cases, we can afford to show the following upper bounds.
\begin{proposition}\label{proposition:cq6y0pcoy5}
  Suppose that $p$ is prime and $k\ge 1$.
  Furthermore, suppose that $p^{s_1} || \alpha_1 $, $p^{s_2} || \alpha_2 $,  $p^{s_3} || \alpha_1\alpha_2p^{-s_1 - s_2} $. Write $s^* = s_1 + s_2 + s_3 $,
  $\alpha = \alpha_1\alpha_2 $, $\alpha' = \alpha_1\alpha_2p^{-s^*} $.
  Then, we have that
  \begin{equation}
    S_1(\alpha_1, \alpha_2; p^k)
    \ll (p^k, M)^8 k p^{3\min(s^*, k)} (p^{k}, \Delta_{\alpha'})^{2/3} \sum_{t\le k - s^* }
    p^{t} \mathbbm{1}_{ p^{2t} | N(\alpha')  }.
  \end{equation}

\end{proposition}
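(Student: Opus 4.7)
The plan is to mimic the evaluation of $S_1(\alpha_1,\alpha_2;p)$ in Proposition \ref{proposition:cq6y0ob20l} but at prime powers, proceeding by Fourier inversion of the condition $\ensuremath{\boldsymbol\ell}(\beta_1\beta_2)\equiv 0\,(p^k/(p^k,M))$, evaluating the inner character sums in $\beta_1,\beta_2$ by orthogonality, and finally invoking Lemma \ref{lem.RoosofPoly} to bound the resulting polynomial root count. First I would deal with the $M$-aspect: write $\beta_i = \beta_i' + (p^k,M)\gamma_i$ with $\gamma_i$ ranging over $\mathcal{O}_K/(p^k/(p^k,M))$. This fixes $\beta_1,\beta_2$ at the level of the $M$-coset, and since we will ultimately bound things trivially in the coset directions, at the cost of the factor $(p^k,M)^{O(1)}\leq (p^k,M)^8$ we reduce to the case $p\nmid M$ at modulus $p^k/(p^k,M)$. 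From here on I assume $p\nmid M$.

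Next, using that $\langle\gamma,1\rangle$ and $\langle\gamma,\zeta\rangle$ extract the $\zeta^3$- and $\zeta^2$-coordinates of $\gamma$ (as computed in \S\ref{sec:notation}), one has
\begin{equation*}
  \mathbbm{1}[\ensuremath{\boldsymbol\ell}(\beta_1\beta_2)\equiv 0\,(p^k)]
  = \frac{1}{p^{2k}}\sum_{\tau\in(\mathbb{Z}\oplus\mathbb{Z}\zeta)/p^k}\psi_{p^k}(\tau\beta_1\beta_2).
\end{equation*}
Inserting this into $S_1$ and executing the $\beta_2$-sum by orthogonality forces $\alpha_2+\tau\beta_1\equiv 0\,(p^k)$, while the remaining $\beta_1$-sum over this congruence class yields a further orthogonality condition that $\alpha_1$ annihilate the kernel of multiplication by $\tau$ mod $p^k$. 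Together these require $\tau$ to simultaneously ``divide'' $\alpha_1$ and $\alpha_2$ $p$-adically. Writing $\alpha=\alpha_1\alpha_2=p^{s^*}\alpha'$, the contribution of the common $p^{s^*}$-factor to the freedom in $\beta_1,\beta_2$ directly produces the prefactor $p^{3\min(s^*,k)}$, while the remaining sum is over admissible $\tau\in\mathbb{Z}\oplus\mathbb{Z}\zeta$ prime to $p^{s^*}$ that divide $\alpha'$ modulo $p^{k-s^*}$.

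To count these $\tau$, I would stratify by $t=v_p(\tau)$ within the sublattice: if $p^t\| \tau$, then the condition $p^k\mid \tau\cdot(\text{cofactor})$ forces $p^{2t}\mid N(\alpha')$ (yielding the $\mathbbm{1}[p^{2t}\mid N(\alpha')]$ indicator), and the free choice of the coset of $\tau$ mod $p^k$ provides the $p^t$ factor. After extracting $p^t$, we reduce to counting primitive $\tau'=a+b\zeta$ modulo $p^{k-s^*-t}$ that divide $\alpha'$; dehomogenizing $\tau'$ by inverting (say) $b$ modulo $p$ and multiplying out $\tau'\alpha'$, the vanishing of the $\zeta^2$- and $\zeta^3$-coordinates forces the $\mathbb{P}^1$-coordinate $x=a/b$ to be a root of the cubic $f_{\alpha'}$ modulo $p^{k-s^*-t}$. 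Lemma \ref{lem.RoosofPoly} then delivers $\ll k(p^k,\Delta_{\alpha'})^{2/3}$ such roots, and summing over $0\le t\le k-s^*$ gives the displayed bound.

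The main technical obstacle is the careful tracking of $p$-adic valuations inside the rank-$2$ sublattice $\mathbb{Z}\oplus\mathbb{Z}\zeta\subset\mathcal{O}_K$: the way $\tau$ ``divides'' $\alpha'$ depends on how $p$ splits in $K$, and it is the rank-$2$ restriction (rather than the full rank-$4$ quartic norm) that causes the \emph{cubic} $f_{\alpha'}$—as opposed to the quartic norm form—to govern the root count. A secondary nuisance is the prime $p=2$, where $\delta_K=4\zeta^3$ makes the trace pairing degenerate modulo $2$ and one must either pay a bounded power of $2$ up front or carry out the orthogonality step with a shifted dual lattice; neither affects the stated bound since $\Omega^{O(1)}$-type losses are tolerated.
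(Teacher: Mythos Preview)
Your outline follows the paper's opening moves (Fourier inversion in $\ensuremath{\boldsymbol\ell}$ to introduce $\tau=x+y\zeta$, execute the $\beta_2$-sum to force $\tau\beta_1\equiv-\alpha_2$), but it then diverges at the crucial point and the argument breaks.

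Your plan is to bound the remaining $\beta_1$-sum by its modulus $N((\tau,p^k))\cdot\mathbbm{1}[(\tau,p^k)\mid\alpha_1]\cdot\mathbbm{1}[(\tau,p^k)\mid\alpha_2]$ and then sum over $\tau$. That identity for the modulus is correct, but summing $\frac{1}{p^k}\sum_{\tau}|A(\tau)|$ throws away all cancellation in $\tau$: for generic $\alpha_1,\alpha_2$ coprime to $p$ one has $(\tau,p^k)=(1)$ for almost every $\tau$, giving $\frac{1}{p^k}\cdot p^{2k}=p^k$, whereas the proposition demands $O(k)$ in this case. Nothing in your subsequent stratification can recover this lost $p^k$, and in particular no cubic congruence on $x$ ever appears from the bare conditions $(\tau,p^k)\mid\alpha_i$.

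The paper recovers the cancellation by averaging over the diagonal $(\mathbb{Z}/p^k\mathbb{Z})^\times$-action $(x,y,\beta_1)\mapsto(x/t,y/t,t\beta_1)$ \emph{before} estimating in $\tau$. This replaces the phase $\psi_{p^k}(\alpha_1\beta_1)$ by (essentially) the indicator $\mathbbm{1}[\langle\alpha_1,\beta_1\rangle\equiv 0\pmod{p^{k-\eta}}]$ via the Ramanujan sum in $t$. One then eliminates $\beta_1$ between this single linear constraint and $(x+\zeta)\beta_1\equiv\alpha_2 p^{-s}$, obtaining $\langle\alpha_1\alpha_2 p^{-s},(x+\zeta)^*\rangle\equiv 0$, which is exactly $f_{\alpha_1\alpha_2 p^{-s}}(x)\equiv 0$; this is what Lemma~\ref{lem.RoosofPoly} is applied to. Your proposed source of the cubic---``multiplying out $\tau'\alpha'$ and setting the $\zeta^2,\zeta^3$-coordinates to zero''---gives the two \emph{linear} equations $xn_3+n_2=0$, $xn_2+n_1=0$, not a cubic, so that step is simply wrong as written.
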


At the zero frequency $\alpha_1=\alpha_2=0$, we renormalize slightly, defining.
\begin{equation}\label{N1TDef}
  \tilde{N}_1(q) := \frac{1}{q^3}S(0,0;q)=\frac{1}{q^6}\sum_{\beta_1, \beta_2\in \mathcal{O}_K/q\mathcal{O}_K \\ \ensuremath{\boldsymbol\ell}(\beta_1\beta_2)\equiv 0(q) \\ \beta_i\equiv \beta_i'\, ((q,M))} 1. 
\end{equation}
We will require only the below bounds at the zero frequency.
\begin{proposition}\label{prop.N1}
  Let $p$ be a prime and $k\geq m_p$. Then we have
  \begin{equation}
    \tilde{N}_1(p^k)\ll p^{2m_p}.
  \end{equation}
  Moreover, we have
  \begin{equation}
    |\tilde{N}_1(p^{k+1})-\tilde{N}_1(p^k)|\ll p^{4m_p-2k-2}.
  \end{equation}
\end{proposition}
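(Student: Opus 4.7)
The plan is to reduce the first bound to the second via telescoping, and to prove the second bound by inducting on $m_p$, with the base case $m_p = 0$ handled directly.

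For the telescoping reduction, note that at $k = m_p$ the congruence constraint forces $\beta_i = \beta_i'$ in $\mathcal{O}_K/p^{m_p}\mathcal{O}_K$, so $\tilde N_1(p^{m_p}) \leq p^{-6m_p}$. Assuming the difference bound, telescoping gives
\[
\tilde N_1(p^k) \leq \tilde N_1(p^{m_p}) + \sum_{j = m_p}^{k-1} O(p^{4 m_p - 2j - 2}) \ll p^{-6m_p} + p^{2 m_p - 2} \ll p^{2 m_p}.
\]

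For the difference bound, I would analyze the lift of a valid pair $(\beta_1, \beta_2) \in (\mathcal{O}_K/p^k\mathcal{O}_K)^2$ to level $p^{k+1}$. Writing $\ell(\beta_1 \beta_2) \equiv p^k \eta \pmod{p^{k+1}}$ with $\eta \in \mathbb{F}_p^2$, the valid lifts $(\beta_1 + p^k \delta_1, \beta_2 + p^k \delta_2)$ are parametrized by the solutions to the affine $\mathbb{F}_p$-linear system $\phi(\delta_1, \delta_2) := \ell(\delta_1 \beta_2 + \beta_1 \delta_2) \equiv -\eta \pmod p$ in $(\delta_1, \delta_2) \in (\mathcal{O}_K/p\mathcal{O}_K)^2$. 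The key observation is that if either of $\beta_1, \beta_2 \pmod p$ is a unit in $\mathcal{O}_K/p\mathcal{O}_K$, multiplication by it is a bijection of $\mathcal{O}_K/p\mathcal{O}_K$, and composing with the surjective $\ell : \mathcal{O}_K/p\mathcal{O}_K \to \mathbb{F}_p^2$ forces $\phi$ to have rank $2$ and yields exactly $p^6$ lifts. Hence when $m_p \geq 1$ and at least one $\beta_i' \pmod p$ is a unit, every valid pair admits $p^6$ lifts, so $N_1(p^{k+1}) = p^6 N_1(p^k)$ and the difference vanishes.

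The remaining case is $m_p \geq 1$ with both $\beta_1', \beta_2' \equiv 0 \pmod p$. Writing $\beta_i = p \tilde\beta_i$ and $\beta_i' = p \tilde\beta_i'$ with $\tilde\beta_i \in \mathcal{O}_K/p^{k-1}\mathcal{O}_K$ and $\tilde\beta_i' \in \mathcal{O}_K/p^{m_p - 1}\mathcal{O}_K$, the constraints transform to $\tilde\beta_i \equiv \tilde\beta_i' \pmod{p^{m_p - 1}}$ and $\ell(\tilde\beta_1 \tilde\beta_2) \equiv 0 \pmod{p^{k - 2}}$. Counting lifts of this weakened $\ell$ constraint from level $p^{k-2}$ up to level $p^{k-1}$ yields the clean recursion
\[
\tilde N_1(p^k; m_p) = p^{-4}\, \tilde N_1(p^{k-2}; m_p - 1), \qquad k \geq m_p + 1,
\]
and iterating reduces either to a configuration where the rank-$2$ argument applies (giving zero difference), or to the unconstrained base case $m_p = 0$ at level $p^{k - 2 m_p}$. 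The main obstacle will then be this base case: one must show $|\tilde N_1^{(0)}(p^{k+1}) - \tilde N_1^{(0)}(p^k)| \ll p^{-2k - 2}$. Here the rank-$2$ argument reduces the difference to a count of $(\beta_1, \beta_2) \in (\mathcal{O}_K/p^k\mathcal{O}_K)^2$ with $\ell(\beta_1\beta_2) \equiv 0 \pmod{p^k}$ and both $\beta_i \pmod p$ non-units, each contributing $O(p^8)$; I would bound this degenerate count by $O(p^{4k - 4})$ by partitioning according to the primes $\mathfrak{p}_i \mid p$ containing $\beta_i$, extracting uniformizers to descend to a lower-level instance of the same problem, and invoking Lemma \ref{lem.sumNorm} to handle the totally ramified prime $p = 2$ in $K = \mathbb{Q}(\zeta_8)$ where $\mathcal{O}_K/2\mathcal{O}_K$ lacks a product-of-fields structure.
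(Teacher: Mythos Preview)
Your telescoping reduction and the rank-$2$ lifting argument are sound when one of the $\beta_i$ is a unit mod $p$, but the case analysis for $m_p \geq 1$ has a gap: in $\mathcal{O}_K/p\mathcal{O}_K$ (which for odd $p$ is a product of two or four finite fields), an element can be a nonzero zero-divisor. So ``neither $\beta_i'$ is a unit'' does not imply ``both $\beta_i' \equiv 0 \pmod p$'', and your dichotomy misses, for instance, the case where $\beta_1', \beta_2'$ both lie in a common prime $\mathfrak{p} \mid p$ without being divisible by $p$. In such configurations the rank of $\phi$ can genuinely drop below $2$ (e.g.\ when both lie in $\mathfrak{p}_1\mathfrak{p}_2\mathfrak{p}_3$ for a totally split prime), so neither your zero-difference argument nor your $p$-divisibility recursion applies.

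More seriously, the base-case bound is false as stated. You claim the count of level-$k$ pairs with $\ell(\beta_1\beta_2) \equiv 0 \pmod{p^k}$ and both $\beta_i$ non-units mod $p$ is $O(p^{4k-4})$, but already at $k = 1$ the choice $\beta_1 = 0$ together with any non-unit $\beta_2$ satisfies both constraints, giving $\gg p^3$ such pairs rather than $O(1)$. The issue is that ``both non-units'' is far too coarse a proxy for ``$\mathrm{rank}\,\phi < 2$'': many non-unit pairs still have $\phi$ surjective and contribute exactly $p^6$ lifts, hence nothing to the difference. To salvage the approach you would have to isolate precisely the pairs with $\ell(\beta_1\mathcal{O}_K + \beta_2\mathcal{O}_K) \subsetneq \mathbb{F}_p^2$, track whether the obstruction $\eta$ lands in the image, and bound that much smaller set---considerably more delicate than what you outline, and the descent-by-uniformizers you propose does not obviously control it. The paper sidesteps the Hensel analysis entirely: it passes to the $(x,y)$-representation from \eqref{multline:crb5zuo9x1}, observes that the $p \mid (x,y)$ contribution to $\tilde N_1(p^{k+1})$ exactly reproduces $\tilde N_1(p^k)$, and bounds the remaining primitive-$(x,y)$ sum via Lemma~\ref{lem.sumNorm}.
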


The proofs of Propositions \ref{proposition:cq6y0ob20l}, \ref{proposition:cq6y0pcoy5} and \ref{prop.N1} begin with the observation that by
orthogonality,
\begin{multline}
  S_1(\alpha_1, \alpha_2; q) \\ 
  = 
  \frac{1}{(q, M)^8} \sum_{\gamma_1, \gamma_2 ((q, M)) } \psi_{(q, M)}(-\gamma_1\beta_1' - \gamma_2\beta_2')
  S_1^*\biggl( \alpha_1 + \frac{q}{(q, M)}\gamma_1, \alpha_2 + \frac{q}{(q, M)}\gamma_2; q \biggr),
\end{multline}
where
\begin{equation}
  S_1^*(\alpha_1, \alpha_2; q)
  = \frac{1}{q^3}\sum_{\beta_1, \beta_2 (q) \\ \ensuremath{\boldsymbol\ell}(\beta_1\beta_2)\equiv 0(q/(q, M))} \psi_{q}(\alpha_1\beta_1 + \alpha_2\beta_2).
\end{equation}
Then, by a couple more applications of orthogonality, we have 
\begin{multline}\label{multline:crb5zuo9x1}
  S_1^*(\alpha_1, \alpha_2; q) = \frac{1}{q^5}\sum_{x, y\, (q)}\sum_{\beta_1, \beta_2 \, (q)}
  \psi_q(\alpha_1\beta_1 + \alpha_2\beta_2 - (q, M)(x + y\zeta)\beta_1\beta_2)\\
  =  \frac{1}{q}\sum_{x, y\, (q)}\sum_{\beta_1 \, (q)\\ (q, M)(x + y\zeta)\beta_1\equiv \alpha_2\, (q)} \psi_q(\alpha_1\beta_1) \\ 
  = \frac{(q, M)^2}{q} \sum_{x, y\, (q) \\ (q, M) | x, y}
  \sum_{\beta_1\, (q) \\ (x + y\zeta)\beta_1 \equiv \alpha_2\, (q)} \psi_{q}(\alpha_1\beta_1).
\end{multline}
Armed with this, we shall now show the three propositions. 
\begin{proof}[Proof of Proposition \ref{proposition:cq6y0ob20l}]
  Because $p\nmid M$, $S_1^* = S_1$, so we shall proceed starting from \eqref{multline:crb5zuo9x1}.
  Scaling $\beta_1$ by $t\in (\mathbb{Z}/p \mathbb{Z})^\times$ and $x, y$ by $t^{-1}$ and averaging over $t$, we obtain 
  \begin{align}\label{multline:cq6y00ffq8}
    S_1(\alpha_1, \alpha_2; p) =& \frac{1}{p(p - 1)}\mathop{\sum_{x,y\, (p)}\sum_{\beta_1 \, (p)}}_{(x + y\zeta)\beta_1\equiv \alpha_2 \, (p)}
    \sumCp_{t \, (p)} \psi_p(t\alpha_1\beta_1)\\
    =&\ \frac{1}{(p - 1)}\mathop{\sum_{x,y\,(p)}\sum_{\beta_1 \, (p)}}_{\substack{\langle \alpha_1, \beta_1 \rangle\equiv 0 \, (p) \\ (x + y\zeta)\beta_1 \equiv \alpha_2 \, (p)}} 1 -
    \frac{1}{p(p - 1)}\mathop{\sum_{x,y\,(p)}\sum_{\beta_1 \, (p)}}_{(x + y\zeta)\beta_1\equiv \alpha_2 \, (p)} 1.
  \end{align}

  We claim that the second sum is
  \begin{equation}\label{eq:cq6y00fq19}
    \frac{1}{p(p - 1)}\mathop{\sum_{x,y\,(p)}\sum_{\beta_1 \, (p)}}_{(x + y\zeta)\beta_1\equiv \alpha_2 \, (p)} 1
    = 1 + O \biggl(\frac{1}{p} + \charf{p | N(\alpha_2)} + p^2\charf{p | \alpha_2}\biggr).
  \end{equation}
  To see this precisely, note that the contribution of $x\equiv y\equiv 0(p)$ is given by
  \begin{equation}
    \frac{1}{p(p - 1)}\sum_{\beta_1 \, (p)} \charf{p|\alpha_2} \ll p^2\charf{p|\alpha_2}.
  \end{equation}
  On the other hand, the contribution of $x^4+y^4\equiv0 \, (p)$ with $(x, y)\not\equiv (0, 0) \, (p)$ is given by
  \begin{equation}
    \frac{1}{p(p - 1)}\mathop{\sumCp_{x,y\, (p)}\sum_{\beta_1 \, (p)}}_{\substack{(x/y)^4+1\equiv 0 \, (p)\\ (x + y\zeta)\beta_1\equiv \alpha_2 \, (p)}} 1\ll \charf{p|N(\alpha_2)}.
  \end{equation}
  Since $(x+y\zeta,p)=1 \Leftrightarrow x^4+y^4\not\equiv 0 \, (p)$, we have
  \begin{align}
    \frac{1}{p(p - 1)}\mathop{\sum_{x,y\,(p)}\sum_{\beta_1 \, (p)}}_{(x + y\zeta)\beta_1\equiv \alpha_2 \, (p)} 1
    =&\  \frac{1}{p(p - 1)}\sum_{x,y \, (p)\\ x^4+y^4\not\equiv0 \, (p)} 1 + O \biggl(\charf{p | N(\alpha_2)} + p^2\charf{p | \alpha_2}\biggr)\nonumber\\
    =&\ 1 + O \biggl(\frac{1}{p} + \charf{p | N(\alpha_2)} + p^2\charf{p | \alpha_2}\biggr).
  \end{align}

  It remains now to estimate the first sum in \eqref{multline:cq6y00ffq8}, which is given by
  \begin{equation}\label{eq:cq6y0utwor}
    \frac{1}{p - 1}\mathop{\sum_{x,y\, (p)}\sum_{\beta_1\, (p)}}_{\substack{\langle \alpha_1, \beta_1 \rangle\equiv 0\, (p) \\ (x + y\zeta)\beta_1\equiv \alpha_2\, (p)}} 1.
  \end{equation}
  The contribution of $y\equiv 0 \, (p)$ to \eqref{eq:cq6y0utwor} is
  \begin{align}\label{eq:cq6y00f62q}
    \frac{1}{p - 1}\mathop{\sum_{x\, (p)}\sum_{\beta_1\, (p)}}_{\substack{\langle \alpha_1,\beta_1\rangle\equiv0 \, (p)\\ x\beta_1\equiv \alpha_2 \, (p)}} 1
    =&\ \frac{1}{p - 1}\mathop{\sumCp_{x\, (p)}\sum_{\beta_1 \, (p)}}_{\substack{\langle \alpha_1,\beta_1\rangle\equiv0 \, (p) \\ x\beta_1\equiv \alpha_2 \, (p) }} 1
    + \frac{1}{p-1}\sum_{\beta_1(p) \\ \langle \alpha_1,\beta_1\rangle\equiv0 \, (p)} \charf{p|\alpha_2}\nonumber\\
    =&\ \frac{1}{p - 1}\sumCp_{\substack{x \, (p)\\ \langle \alpha_1\alpha_2,x\rangle\equiv0 \, (p)}} 1+O\left(p^3\charf{p|\alpha_2}\right)=\charf{p|\langle\alpha_1,\alpha_2\rangle}+O\left(p^3\charf{p|\alpha_2}\right).
  \end{align}
  Hence we have
  \begin{align}\label{multline:cq6y00g98s}
    \frac{1}{p - 1}\mathop{\sum_{x,y\, (p)}\sum_{\beta_1 \, (p)}}_{\substack{\langle \alpha_1, \beta_1 \rangle\equiv 0 \, (p) \\ (x + y\zeta)\beta_1\equiv \alpha_2 \, (p)}} 1
    =&\ \frac{1}{p - 1}\mathop{\sum_{x\, (p)}\sumCp_{y\, (p)}\sum_{\beta_1 \, (p)}}_{\substack{\langle \alpha_1, \beta_1 \rangle\equiv 0 \, (p) \\ (x + y\zeta)\beta_1\equiv \alpha_2 \, (p)}} 1
    + \charf{p|\langle\alpha_1,\alpha_2\rangle}+O\left(p^3\charf{p|\alpha_2}\right)\nonumber\\ 
    =&\ \mathop{\sum_{x\, (p)}\sum_{\beta_1 \, (p)}}_{\substack{\langle \alpha_1, \beta_1 \rangle\equiv 0 \, (p) \\ (x + \zeta)\beta_1\equiv \alpha_2 \, (p)}} 1+ \charf{p|\langle\alpha_1,\alpha_2\rangle}
    + O \biggl( p^3\charf{p | \alpha_2} \biggr),
  \end{align}
  By the symmetry $S_1(\alpha_1, \alpha_2; p^k) = S_1(\alpha_2, \alpha_1; p^k)$, we may suppose without loss of generality that $N((\alpha_2, p)) | N((\alpha_1, p))$.
  Then, the contribution of $x$ with $(x + \zeta,p)>1$ $\Leftrightarrow$ $x^4 + 1\equiv 0(p)$ is
  \begin{equation}\label{eq:cq6y00hydq}
    \mathop{\sum_{x\, (p)}\sum_{\beta_1 \, (p)}}_{\substack{x^4 + 1\equiv 0 \, (p)\\ \langle \alpha_1, \beta_1 \rangle\equiv 0 \, (p) \\ (x + \zeta)\beta_1\equiv \alpha_2 \, (p)}} 1\le \mathop{\sum_{x\, (p)}\sum_{\beta_1 \, (p)}}_{\substack{x^4 + 1\equiv 0 \, (p) \\ (x + \zeta)\beta_1\equiv \alpha_2 \, (p)}} 1
    \le 4p \charf{p | N(\alpha_2)}\le 4p\charf{p^2 | N(\alpha_1\alpha_2)}.
  \end{equation}
  As for the remaining contribution, we have that
  \begin{equation}\label{eq:cq6y0zpieo}
    \mathop{\sum_{x\, (p)}\sum_{\beta_1 \, (p)}}_{\substack{(x+\zeta,p)=1  \\ \langle \alpha_1, \beta_1 \rangle\equiv 0 \, (p) \\ (x+ \zeta)\beta_1\equiv \alpha_2 \, (p)}} 1
    = \sum_{x \, (p) \\  (x+\zeta,p)=1  \\ \langle \alpha_1\alpha_2,\overline{x+\zeta} \rangle\equiv 0 \, (p) } 1
    = \sum_{u, v, w, x \, (p) \\ (x+\zeta,p)=1 \\ (x + \zeta)(u + v\zeta + w\zeta^2)\equiv \alpha_1\alpha_2 \, (p)} 1.
  \end{equation}
  The contribution of $\alpha_1\alpha_2\equiv 0(p)$ is then at most
  \begin{equation}\label{eq:cq6y00mb7q}
    \le p\charf{p | \alpha_1\alpha_2},
  \end{equation}
  so we shall from now on focus on the case of $p\nmid \alpha_1\alpha_2$.
  In this case, we have that \eqref{eq:cq6y0zpieo} equals 
  \begin{multline}\label{multline:cq6y0z6u7k}
    \sum_{u, v, w, x \, (p) \\ (x+\zeta,p)=1 \\ 
      (x + \zeta)(u + v\zeta + w\zeta^2)\equiv \alpha_1\alpha_2 \, (p)} 1\\
    =  \sum_{u, v, w, x \, (p) \\ (x + \zeta)(u + v\zeta + w\zeta^2) \equiv \alpha_1\alpha_2 \, (p)} 1
    + O\biggl( \sum_{u, v, w, x \, (p) \\ (x + \zeta) (u + v\zeta + w\zeta^2)\equiv \alpha_1\alpha_2 \, (p)} \charf{p | N(\alpha_1\alpha_2)}\biggr).
  \end{multline}
  Note that
  \begin{equation}\label{eq:cq6y00oiaj}
    \sum_{u, v, w, x \, (p) \\ (x + \zeta)( u + v\zeta + w\zeta^2)\equiv \alpha_1\alpha_2 \, (p)} 1 =
    \#\set{x(p) : n_0 + n_1x + n_2x^2 + n_3x^3\equiv 0 \, (p)}\le 3,
  \end{equation}
  writing $\alpha_1\alpha_2 =n_0 + n_1\zeta + n_2\zeta^2 + n_3\zeta^3$, with the upper bound of $3$ following
  from the assumption that $p\nmid \alpha_1\alpha_2$.
  Therefore, it follows that \eqref{multline:cq6y0z6u7k} equals
  \begin{equation}
    \#\set{x(p) : n_0 + n_1x + n_2x^2 + n_3x^3\equiv 0 \, (p)} + O(\charf{p | N(\alpha_1\alpha_2)}).
  \end{equation}
  Combining \eqref{multline:cq6y00ffq8}, \eqref{eq:cq6y00fq19}, \eqref{multline:cq6y00g98s}, \eqref{eq:cq6y00hydq}, \eqref{eq:cq6y0zpieo}, \eqref{eq:cq6y00mb7q}, \eqref{multline:cq6y0z6u7k}, and \eqref{eq:cq6y00oiaj}, the desired result follows.
\end{proof}

\begin{proof}[Proof of Proposition \ref{proposition:cq6y0pcoy5}]

  By the symmetry $S_1(\alpha_1, \alpha_2; p^k) = S_1(\alpha_2, \alpha_1; p^k)$, so we may suppose without loss of
  generality that
  \begin{equation}\label{S1symmetry}
    v_p(N(\alpha_2))\leq v_p(N(\alpha_1)).
  \end{equation}
 Starting from \eqref{multline:crb5zuo9x1}, we put absolute value inside the $x,y$-sum to get
  \begin{equation}\label{eq:cq61srf78y}
    (p^k,M)^{-2}|S_1^*(\alpha_1,\alpha_2;p^k)|\leq \frac{1}{p^k}\sum_{x ,y\, (p^k)} \bigg|\sum_{\beta_1 \, (p^k) \\ (x +y\zeta)\beta_1 \equiv \alpha_2 \, (p^k) }
    \psi_{p^k}(\alpha_1\beta_1)\bigg|.
  \end{equation}
  As in the proof of Proposition \ref{proposition:cq6y0ob20l}, we average over dilations of $\beta_1$ by $(\mathbb{Z}/p^k \mathbb{Z})^{\times}$.
  We may also pull out the factors of $p$ from $(x,y)$, after which we obtain that \eqref{eq:cq61srf78y} is
  \begin{align}\label{eq:RHS_primepow_div}
    \leq p^{3k}\charf{p^k | \alpha_2}
    + \frac{p}{p-1}\sum_{0\le s < k \\ p^s | \alpha_2} \sum_{\eta=0,1} \frac{1}{p^{k + \eta}}
    \sum_{x, y \, (p^{k-s}) \\ (x, y, p) = 1} \sum_{\beta_1 \, (p^k) \\
    \langle \alpha_1, \beta_1 \rangle \equiv 0 \, (p^{k - \eta})\\
    (x + y\zeta)\beta_1 \equiv \alpha_2p^{-s} \, (p^{k - s}) }
    1.
  \end{align}

  We shall now bound the contribution of some fixed $\eta=0,1$ and $0\le s < k$ to \eqref{eq:RHS_primepow_div}, which is 
  \begin{multline}\label{eq:crastl6787}
    \ll \frac{1}{p^{k+\eta}} \sum_{x, y \, (p^{k - s}) \\ (x,y, p) = 1 }
    \sum_{\beta_1 \, ( p^{k}) \\ \langle \alpha_1, \beta_1 \rangle \equiv 0\, (p^{k -\eta}) \\ (x + y\zeta)\beta_1 \equiv \alpha_2p^{-s}\,(p^{k-s}) }1 \\ 
    \le p^{3s-\eta}\mathop{\sum_{x \, (p^{k - s})}
    \sum_{\beta_1 \, ( p^{k - s})}}_{\substack{\langle \alpha_1, \beta_1 \rangle \equiv 0\, (p^{k - \max\{\eta,s\}}) \\ (x + \zeta)\beta_1 \equiv \alpha_2p^{-s} \,(p^{k-s})}}1
    + p^{3s-\eta} \mathop{\sum_{y \, (p^{k - s})}
    \sum_{\beta_1 \, ( p^{k - s})}}_{\substack{\langle \alpha_1, \beta_1 \rangle \equiv 0\, (p^{k - \max\{\eta,s\}}) \\ (1 + y\zeta)\beta_1 \equiv \alpha_2p^{-s}\, (p^{k-s}) }}1,
  \end{multline}
  where the second bound has followed by separating the cases $(y, p) = 1$ and $(x, p)= 1$.
  It thus suffice to bound only the first sum on the RHS of \eqref{eq:crastl6787}, as the treatment of its counterpart is identical. 

  We shall split based on the power $p^t= (x^4 + 1, p^{k - s})$.
  For a given $t \leq k-s$, we have that the condition $(x + \zeta)\beta_1 \equiv \alpha_2p^{-s} \,(p^{k - s})$ implies
  $p^t|N(\alpha_2p^{-s})$ and determines $\beta_1$ for a given $x$ up to $p^t$ possibilities.
  Furthermore, the conditions in the sum over
  $\beta_1$ imply that
  \begin{multline}
    \biggl\langle \frac{\alpha_1\alpha_2p^{-s}}{x + \zeta}, \beta_1 \biggr\rangle \equiv 0\, (p^{k - t-\max\{\eta,s\}})
    \\
    \implies f_{\alpha_1\alpha_2p^{-s}}(x) := \langle \alpha_1\alpha_2p^{-s}, (x + \zeta^3)(x + \zeta^5)(x + \zeta^7) \rangle \equiv 0\,(p^{k -\max\{\eta,s\}}).
  \end{multline}
  Therefore, the contribution of some $0\le t\le k - s$ with $p^t|N(\alpha_2p^{-s})$ to the first sum on the RHS of \eqref{eq:crastl6787} is at most
  \begin{multline}\label{eq:crastnglb3}
    p^{3s+t-\eta} \sum_{x \, (p^{k - s}) \\ p^t|x^4+1 } \mathbbm{1}[f_{\alpha_1\alpha_2p^{-s}}(x)\equiv 0 \, (p^{k -\max\{\eta,s\}}) ] \\ 
    \leq 3p^{3s+t} p^{\min(s^*, k) - s} (p^{k - \min(s^*, k)}, \Delta_{\alpha'})^{2/3} \le 3p^{\min(s^*, k) + 2s+ t} (p^{k}, \Delta_{\alpha'})^{2/3}.
  \end{multline}
  by an application of Lemma \ref{lem.RoosofPoly}.
  Finally, $p^t|N(\alpha_1p^{-s})$ implies $p^{2t}|N(\alpha_1\alpha_2p^{-s})$ by \eqref{S1symmetry}.
  Thus, we have shown an upper bound of
  \begin{equation}\label{eq:cuhk3q3fyt}
    \ll (p^{k}, \Delta_{\alpha'})^{2/3}p^{\min(s^*, k)}\sum_{s + t\le k } p^{2s + t} \mathbbm{1}_{ \substack{p^s | \alpha_1\alpha_2 \\ p^{2t } | N(\alpha_1\alpha_2p^{-s}) }}.
  \end{equation}
  Now, consider some $s, t $ for which $p^s | \alpha_1\alpha_2 $ and $p^{2t} | N(\alpha_1\alpha_2p^{-s}) $. Then, if $p^{s + 1} | \alpha_1\alpha_2 $, we have
  that
  \begin{equation}
    p^{2s + t} \mathbbm{1}_{ \substack{p^s | \alpha_1\alpha_2 \\ p^{2t} | N(\alpha_1\alpha_2p^{-s})} }
    \le p^{2(s + 1) + (t-  2)}
    \mathbbm{1}_{ \substack{p^{s + 1} | \alpha_1\alpha_2 \\ p^{2(t - 2)} | N(\alpha_1\alpha_2p^{-s - 1})} }.
  \end{equation}
  Therefore, we have that \eqref{eq:cuhk3q3fyt} is at most
  \begin{equation}
    k(p^k, \Delta_{\alpha'})^{2/3} p^{3\min(s^*, k)} \sum_{t\le k - \min(s^*, k)}
    p^{t} \mathbbm{1}_{ p^{2t } | N(\alpha')},
  \end{equation}
  as desired. 
\end{proof}


\begin{proof}[Proof of Proposition \ref{prop.N1}]
  Write $m=m_p:=v_p(M)$ for simplicity. We first prove the second statement. Starting with \eqref{multline:crb5zuo9x1}, we have
  \begin{equation}
    \tilde{N}_1(p^{k+1})=\frac{1}{p^{4k+4+2m}}\sum_{\gamma_2\, (p^m)}\psi_{p^m}(-\beta_2'\gamma_2)\mathop{\sum_{x,y\, (p^{k+1-m})}\sum_{\beta_1 \, (p^{k+1})}}_{\substack{p^m(x+y\zeta)\beta_1\equiv p^{k+1-m}\gamma_2 \, (p^{k+1})\\ \beta_1\equiv \beta_1'\, (p^m)}} 1.
  \end{equation}
  The contribution of $p|x,y$ is given by
  \begin{multline}
    \frac{1}{p^{4k+4+2m}}\sum_{\gamma_2\, (p^m)}\psi_{p^m}(-\beta_2'\gamma_2)\mathop{\sum_{x,y\, (p^{k-m})}\sum_{\beta_1 \, (p^{k+1})}}_{\substack{p^{m+1}(x+y\zeta)\beta_1\equiv p^{k+1-m}\gamma_2 \, (p^{k+1})\\ \beta_1\equiv \beta_1'\, (p^m)}} 1 \\
    = \frac{1}{p^{4k+2m}}\sum_{\gamma_2\, (p^m)}\psi_{p^m}(-\beta_2'\gamma_2)\mathop{\sum_{x,y\, (p^{k-m})}\sum_{\beta_1 \, (p^{k})}}_{\substack{p^{m}(x+y\zeta)\beta_1\equiv p^{k-m}\gamma_2 \, (p^{k})\\ \beta_1\equiv \beta_1'\, (p^m)}} 1=\tilde{N}_1(p^k).
  \end{multline}
  Hence we deduce
  \begin{align}
    D_1(p^k):=&\ \tilde{N}_1(p^{k+1})-\tilde{N}_1(p^k) \nonumber\\
    =&\ \frac{1}{p^{4k+4+2m}}\sum_{\gamma_2\, (p^m)}\psi_{p^m}(-\beta_2'\gamma_2)\sum_{x,y\, (p^{k+1-m})\\(x,y,p)=1}\sum_{\beta_1 \, (p^{k+1})\\ p^m(x+y\zeta)\beta_1\equiv p^{k+1-m}\gamma_2 \, (p^{k+1})\\ \beta_1\equiv \beta_1'\, (p^m)} 1.
  \end{align}

  Since $m\leq k$, $\gamma_2$ is completely determined by the other variables, giving us
  \begin{equation}
    D_1(p^k) \leq \frac{1}{p^{4k+2m+4}}\sum_{x,y\, (p^{k+1-m})\\ (x,y,p)=1}\sum_{\beta_1 \, (p^{k+1})\\ p^m(x+y\zeta)\beta_1\equiv 0 \, (p^{k+1-m})} 1.
  \end{equation}

  For $k+1\leq 2m$, we have
  \begin{equation}
    D_1(p^k)\leq \frac{1}{p^{4k+2m+4}}p^{2(k+1-m)}p^{4(k+1)}=p^{2k-4m+2}\leq p^{6m-3k-3}.
  \end{equation}

  For $k+1>2m$, we have
  \begin{align}
    D_1(p^k)\leq&\  \frac{1}{p^{4k-8m+4}}\sum_{x,y\, (p^{k+1-2m})\\(x,y,p)=1}\sum_{\beta_1 \, (p^{k+1-2m})\\ (x+y\zeta)\beta_1\equiv 0 \, (p^{k+1-2m})} 1\nonumber\\
    =&\ \frac{1}{p^{4k-8m+4}}\sum_{x,y\, (p^{k+1-2m})\\(x,y,p)=1}N((x+y\zeta,p^{k+1-2m})).
  \end{align}
  Changing the variable $x\mapsto xy$ if $(y,p)=1$ and $y\mapsto xy$ if $(x,p)=1$, we obtain
  \begin{equation}
    D_1(p^k)\leq \frac{1}{p^{3k-6m+3}}\sumCp_{x\, (p^{k+1-2m})}\left(N((x+\zeta,p^{k+1-2m}))+N((1+x\zeta,p^{k+1-2m}))\right).
  \end{equation}
  Lemma \ref{lem.sumNorm} then implies the desired bound for $D_1(p^k)$, which concludes the proof of the second statement.

  Finally, we bound $\tilde{N}_1(p^k)$. For $k=m$, we have
  \begin{equation}
    \tilde{N}_1(p^m)=\frac{1}{p^{6m}}\charf{\ensuremath{\boldsymbol{\ell}(\beta_1'\beta_2'\equiv0\, (p^m))}}\leq p^{-6m}.
  \end{equation}
  For any $k>m$, we have
  \begin{equation}
    \tilde{N}_1(p^k)=\tilde{N}_1(p^m)+\sum_{j=m}^{k-1}D_1(p^j)\ll p^{-6m}+\sum_{j=m}^{k-1}p^{4m-2j-2}\ll p^{2m}.
  \end{equation}
  This concludes the proof.
\end{proof}

\section{Bounding $S_2$}\label{sec:cq72jm8woo}

Since we will choose the parameter $D$ in a way that shifts the burden of the trivial
bound  from $\Sigma_2$ onto $\Sigma_1$, we require only an upper bound for $S_2$ for $\alpha_1\alpha_2\neq0$.
We shall show a somewhat less complete bound than we did in \S\ref{sec:cq72jm8eaf},
opting to use the divisor bound from the sums over $d, q$ in \S\ref{sec:cuhge2kjb1} to yield the
final bound on $\Sigma_2^{\set{1,2 }}$.

Recall the definition of $S_2$ in \eqref{S2Def}, that
\begin{equation}
  S_2(\alpha_1,\alpha_2;\mathbf{c}, d,q)=\frac{1}{d^3q^3}\sum_{\beta_1, \beta_2\in \mathcal{O}_K/dq\mathcal{O}_K \\
    \det(\mathbf{c}, \ensuremath{\boldsymbol\ell}(\beta_1\beta_2))\equiv 0 \, (d q/(q, M)) \\
    \ensuremath{\boldsymbol\ell} (\beta_1\beta_2)\equiv 0 \, (d) \\
    \beta_1 \equiv \beta_1' \, ((dq, M)) \\
    \beta_2 \equiv \beta_2' \, ((dq, M))
  } \psi_{d q}(\alpha_1\beta_1 + \alpha_2\beta_2).
\end{equation}

We begin by using orthogonality to detect $\beta_i \equiv \beta_i' \, ((dq, M))$, which yields that
\begin{multline}\label{S2DetectM}
  S_2(\alpha_1, \alpha_2;\mathbf{c}, d, q)  
  = \frac{1}{(dq, M)^8}\sum_{\gamma_1, \gamma_2 \, ((dq, M)) }\psi_{(dq, M)}(-\gamma_1\beta_1'-\gamma_2\beta_2')
  S_2^*(\tilde \alpha_1, \tilde \alpha_2; \mathbf{c}, d, q) 
\end{multline}
where
\begin{equation}
  S_2^*(\alpha_1, \alpha_2; \mathbf{c}, d, q)
  = \frac{1}{d^3q^3}\sum_{\beta_1, \beta_2\in \mathcal{O}_K/dq\mathcal{O}_K \\
    \det(\mathbf{c}, \ensuremath{\boldsymbol\ell}(\beta_1\beta_2)) \equiv 0 \, (dq / (q, M)) \\
    \ensuremath{\boldsymbol\ell}(\beta_1\beta_2) \equiv 0 \, (d)} \psi_{dq}(\alpha_1\beta_1 + \alpha_2\beta_2),
\end{equation}
with 
\begin{equation}
  \tilde \alpha_i =  \alpha_i + \frac{dq}{(dq, M)}\gamma_i.
\end{equation}
As we do not care for losses of $M^{O(1)}$, we will be fixing $\gamma_1, \gamma_2$ from now on and
just bounding $S_2^*$.
Write
\begin{equation}
  \gamma_{\mathbf{c}}(x, y, z; q) := (q,M)x(c_2 - c_1\zeta) + q(y + z\zeta).
\end{equation}
By orthogonality, we have
\begin{align}\label{S2AfterOrthogonality}
  S_2^*(\alpha_1,\alpha_2;\mathbf{c}, d,q) 
  =&\ \frac{1}{d^6q^4}\sum_{x\, (dq)\\ y, z \, (d)}\sum_{\beta_1, \beta_2\in \mathcal{O}_K/dq\mathcal{O}_K} \nonumber
  \psi_{d q}(\alpha_1\beta_1 + \alpha_2\beta_2- \gamma_{\mathbf{c}}(x, y, z; q)\beta_1\beta_2 )\nonumber\\
  =&\ \frac{1}{d^2} \sum_{x \, (dq) \\ y, z \, (d) } \sum_{\beta_1 \in \mathcal{O}_K/ dq\mathcal{O}_K \\ \gamma_{\mathbf{c}}(x,y,z; q)\beta_1 \equiv \alpha_2 \, (dq)}\psi_{dq} (\alpha_1\beta_1).
\end{align}

We shall show the following bound on $S_2^*$.
\begin{proposition} \label{prop.S2non0}
  We have that
  \begin{multline}
    |S_2^*(\alpha_1, \alpha_2; \mathbf{c}, d, q)| \\
    \ll (dq)^{o(1)}
    \sum_{g'h' | d \\ g''h'' | q \\ g'g'' | \alpha_1\alpha_2 \\ (h'h'')^2 | N(\alpha_1\alpha_2/g) }
    \frac{g^4h}{(dq)^2}\sum_{x, y, z \, (dq) \\g | \gamma_{\mathbf{c}}(x,y ,z; q)  }
    \sum_{r | dq } \frac{1}{r}\mathbbm{1}[Y_\mathbf{c}(\alpha_1\alpha_2, x, y, z; q)  \equiv 0 \, (dq/r)],
  \end{multline}
  where we write $g = g'g'', h = h'h''$ and
  \begin{align}
    Y_\mathbf{c}(\alpha, x, y, z; q) &= \langle \alpha, \gamma_{\mathbf{c}}(x, y, z; q)^* \rangle.
  \end{align}
  Here $\gamma^* = N(\gamma)/\gamma$ for $\gamma\in \mathcal{O}_K$, so $Y_\mathbf{c}(\alpha, x,y ,z; q)$ is a cubic form in $x, y, z$. In particular, we have
  \begin{multline}
    |S_2(\alpha_1, \alpha_2; \mathbf{c}, d, q)| \\
    \ll (dq)^{o(1)}
    \sum_{g'h' | d \\ g''h'' | q \\ g'g'' | M\alpha_1\alpha_2 \\ h'^2h''^2 | M^2N(\frac{\alpha_1\alpha_2}{g'g''}) }
    \frac{g^4h}{(dq)^2}\sum_{x, y, z \, (dq) }
    \sum_{r | dq }
    \frac{1}{r}\mathbbm{1}[MY_\mathbf{c}(\alpha_1\alpha_2, x, y, z; q)  \equiv 0\, (dq/r)].
  \end{multline}
\end{proposition}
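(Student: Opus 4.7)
The plan is to analyze the innermost $\beta_1$-sum in \eqref{S2AfterOrthogonality} pointwise in $(x,y,z)$; once that is bounded, the proposition follows by assembling terms. For $\gamma := \gamma_{\mathbf{c}}(x,y,z;q)$, set
\[
  T(\gamma) := \sum_{\substack{\beta_1\in\mathcal{O}_K/(dq) \\ \gamma\beta_1\equiv\alpha_2\,(dq)}} \psi_{dq}(\alpha_1\beta_1).
\]
First I would enlarge the $y,z$-sum from $(d)$ to $(dq)$, at the cost of an extra factor $q^2$ that produces the $(dq)^2$ in the denominator, so as to work uniformly in $\mathcal{O}_K/(dq)$. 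Let $K_\gamma := \{\beta\in\mathcal{O}_K/(dq):\gamma\beta\equiv 0\,(dq)\}$. Whenever $\gamma\beta_1\equiv\alpha_2$ is solvable, $T(\gamma) = \psi_{dq}(\alpha_1\beta_1^{(0)})\sum_{\beta\in K_\gamma}\psi_{dq}(\alpha_1\beta)$, and the character sum over $K_\gamma$ equals $|K_\gamma|$ or $0$ according to whether $\alpha_1$ lies in the $\psi_{dq}$-annihilator $K_\gamma^\perp$. Hence $|T(\gamma)|\le |K_\gamma|$ times the indicators of solvability and of $\alpha_1\in K_\gamma^\perp$.

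The main analytic effort is then a prime-by-prime description of $K_\gamma$ and of these two conditions. Factoring $(\gamma)$ in the PID $\mathcal{O}_K$, I would take $g$ to be an integer with $g\mid\gamma$ and $g\mid dq$, split as $g=g'g''$ with $g'\mid d$, $g''\mid q$ to match the form of $\gamma_{\mathbf{c}}$, and take $h=h'h''$ with $h'\mid d/g'$, $h''\mid q/g''$ to capture the part of $N(\gamma/g)$ sharing a factor with $dq/g$, so that $h^2\mid N(\gamma/g)$. A local count at each prime then gives $|K_\gamma|\ll g^4h$. Solvability of $\gamma\beta_1\equiv\alpha_2$ and the annihilator condition $\alpha_1\in K_\gamma^\perp$ — both of which, after multiplying the defining equation by $\gamma^*$ and tracing, become divisibility conditions — force asymmetric divisibility on $\alpha_1$ and $\alpha_2$ separately, but in particular imply the symmetric weaker conditions $g\mid\alpha_1\alpha_2$ and $h^2\mid N(\alpha_1\alpha_2/g)$ that appear in the statement.

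To identify the role of $Y_{\mathbf{c}}$: multiplying $\gamma\beta_1\equiv\alpha_2$ by $\gamma^*$ yields $N(\gamma)\beta_1\equiv\gamma^*\alpha_2\,(dq)$, and tracing against $\alpha_1$ produces $\langle\alpha_1\alpha_2,\gamma^*\rangle=Y_{\mathbf{c}}(\alpha_1\alpha_2,x,y,z;q)$. Imposing $\alpha_1\in K_\gamma^\perp$ prime-by-prime forces $Y_{\mathbf{c}}$ to be divisible by $dq$ up to $\mathfrak{p}$-local losses, and these are absorbed by the divisor sum $\sum_{r\mid dq}r^{-1}\mathbbm{1}\{Y_{\mathbf{c}}\equiv 0\,(dq/r)\}$, whose total weight is $\ll (dq)^{o(1)}$. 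Inserting into \eqref{S2AfterOrthogonality} gives the first display. For the second display, apply the first to $\tilde\alpha_i:=\alpha_i+\tfrac{dq}{(dq,M)}\gamma_i$ in \eqref{S2DetectM}: since $\tilde\alpha_1\tilde\alpha_2\equiv\alpha_1\alpha_2\,(dq/(dq,M))$ and any prime factor of the discrepancy divides $(dq,M)\mid M$, the divisibilities by $g$, $h$ and the condition on $Y_{\mathbf{c}}(\tilde\alpha_1\tilde\alpha_2,\cdot)$ degrade to the stated conditions on $M\alpha_1\alpha_2$ and $MY_{\mathbf{c}}(\alpha_1\alpha_2,\cdot)$, while the $\gamma_1,\gamma_2$-average costs only $(dq,M)^{O(1)}\le M^{O(1)}(dq)^{o(1)}$.

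The main obstacle will be the prime-by-prime analysis of $K_\gamma$ and $K_\gamma^\perp$ described above: one must handle ramified primes (notably $p=2$, totally ramified in $K=\mathbb{Q}(\zeta)$), split, and inert primes, match the $d$/$q$-split imposed by $\gamma_{\mathbf{c}}$, and verify that the convenient symmetric conditions $g\mid\alpha_1\alpha_2$, $h^2\mid N(\alpha_1\alpha_2/g)$ — obtained by discarding some sharpness relative to the true asymmetric conditions on $\alpha_1,\alpha_2$ separately — suffice with the slack absorbed by the $(dq)^{o(1)}$ factor and the divisor sum over $r$.
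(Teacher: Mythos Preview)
Your approach has a genuine gap: bounding $|T(\gamma)|$ pointwise by $|K_\gamma|\cdot\mathbbm{1}[\alpha_1\in K_\gamma^\perp]\cdot\mathbbm{1}[\text{solvable}]$ and then summing over $x,y,z$ loses exactly the cancellation that produces the $Y_{\mathbf{c}}$ divisibility condition. Concretely, take $d=1$, $q=p$ with $p\nmid M(c_1^4+c_2^4)$ and $p\nmid N(\alpha_1\alpha_2)$. Then for each $x$ with $(x,p)=1$ the element $\gamma=x(c_2-c_1\zeta)$ is a unit mod $p$, so $K_\gamma=\{0\}$, the annihilator $K_\gamma^\perp$ is all of $\mathcal{O}_K/(p)$, solvability is automatic, and your bound gives $|T(\gamma)|\le 1$. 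Summing over $x$ yields $p-1$, whereas the proposition requires $\ll p^{o(1)}$ in the generic case $p\nmid Y_{\mathbf{c}}(\alpha_1\alpha_2,1,0,0;p)$. In particular your claim that ``$\alpha_1\in K_\gamma^\perp$ forces $Y_{\mathbf{c}}$ to be divisible by $dq$ up to $\mathfrak{p}$-local losses'' fails here: the annihilator condition is vacuous and imposes nothing on $Y_{\mathbf{c}}$.

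What the paper does instead is the dilation-averaging trick already used for $S_1$ in Proposition~\ref{proposition:cq6y0ob20l}. One scales $(x,y,z)\mapsto (x/t,y/t,z/t)$ and $\beta_1\mapsto t\beta_1$ for $(t,dq)=1$ and averages over $t$; this leaves $S_2^*$ unchanged but replaces $\psi_{dq}(\alpha_1\beta_1)$ by a Ramanujan sum, producing the identity
\[
S_2^*=\frac{dq}{\varphi(dq)}\sum_{r\mid dq}\frac{\mu(r)}{(dq)^2 r}\sum_{x,y,z\,(dq)}\#\bigl\{\beta_1:\ \langle\alpha_1,\beta_1\rangle\equiv 0\,(dq/r),\ \gamma_{\mathbf{c}}\beta_1\equiv\alpha_2\,(dq)\bigr\}.
\]
Now the condition $\langle\alpha_1,\beta_1\rangle\equiv 0\,(dq/r)$ is present for the \emph{particular} solution $\beta_1$, and combining it with $\gamma\beta_1\equiv\alpha_2$ (multiply by $\gamma^*$ and trace) gives $Y_{\mathbf{c}}\equiv 0\,(dq/r)$ directly; the $r$-sum is a feature of the identity, not an upper-bound device. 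The count of $\beta_1$ is then bounded by $g^4h$ as you describe, and the symmetric conditions $g\mid\alpha_1\alpha_2$, $h^2\mid N(\alpha_1\alpha_2/g)$ come from solvability together with the WLOG assumption $v_p(N(\alpha_1))\ge v_p(N(\alpha_2))$ (using the $\alpha_1\leftrightarrow\alpha_2$ symmetry of $S_2^*$). Your kernel/annihilator framework correctly identifies $g^4h$ as the size of the $\beta_1$-fibre and the divisibility constraints on $\alpha_1\alpha_2$, but the $Y_{\mathbf{c}}$ condition is not a consequence of $\alpha_1\in K_\gamma^\perp$; it requires the averaging step.
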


\begin{proof}

  As $S_2^*$ is multiplicative in $d, q$, we may suppose throughout the proof that
  $q$ and $d$ are both powers of some prime $p$.
  
  Starting from \eqref{S2AfterOrthogonality}, we scale and average over $(t, dq) = 1$ and then apply orthogonality, in the same manner as in the proof of Proposition \ref{proposition:cq6y0ob20l}. This gives us
  \begin{equation}
    S_2^*(\alpha_1, \alpha_2; \mathbf{c}, d, q) = \frac{dq}{\varphi(dq)} \sum_{r | dq }
    \frac{\mu(r)}{(dq)^2r} \sum_{x, y, z \, (dq) } \sum_{\beta_1 \, (dq) \\ \langle \alpha_1, \beta_1 \rangle \equiv 0 \, (dq/r) \\ \gamma_{\mathbf{c}}(x, y, z; q)\beta_1 \equiv \alpha_2 \, (dq) } 1.
  \end{equation}
  Note that
  \begin{equation}\label{eq:cratrvm311}
    \begin{cases}\langle \alpha_1, \beta_1 \rangle \equiv 0\, (dq/r)\\ \gamma_\mathbf{c}(x, y, z; q)\beta_1 \equiv \alpha_2\, (dq) \end{cases}
    \implies \langle \alpha_1\alpha_2, \gamma_{\mathbf{c}}(x, y, z; q)^* \rangle \equiv 0\, (dq/r).
  \end{equation}
  By the symmetry of $\alpha_1, \alpha_2$ in $S_2$, we shall suppose without loss of generality that
  \begin{equation}\label{eq:crat5mf5qz}
    v_p(N(\alpha_1))\ge v_p(N(\alpha_2)).
  \end{equation}
  Suppose that $(\gamma_\mathbf{c}(x, y, z; q), dq) = (\eta)g$ for some $g\ge 1$ and some primitive (not divisible by
  any rational prime) $\eta\in \mathcal{O}_K$ with $N(\eta) = h$, say.
  Then, the congruence condition on $\alpha_2$ implies that $g\eta | \alpha_2$.
  So by \eqref{eq:crat5mf5qz}, we have
  $h^2 | N(\alpha_1\alpha_2/g)$. We furthermore have that $h g | dq$, and we may split so that $h = h'h''$
  and $g = g'g''$ with $h'g' | d, h''g'' | q$.
  
  Note that the number of $\beta_1$ satisfying the congruence condition is bounded by $g^4h$. Combined with \eqref{eq:cratrvm311}, we obtain that
  \begin{multline}
    |S_2^*(\alpha_1,\alpha_2, \mathbf{c}, d, q)| \\ 
    \ll \frac{dq}{\varphi(dq)}
    \sum_{h'g' | d \\ h''g'' | q \\ g'g'' | \alpha_1\alpha_2 \\   h'h'' | N(\alpha_1\alpha_2/g)} 
    \frac{g^4h}{(dq)^2} \sum_{x, y, z \, (dq) \\ g | \gamma_{\mathbf{c}}(x,y ,z; q) }
    \sum_{r | dq } \frac{1}{r}\mathbbm{1}[\langle \alpha_1\alpha_2, \gamma_{\mathbf{c}}(x,y ,z; q)^* \rangle \equiv 0 \, (dq/r)]
  \end{multline}
  with the notation $g=g'g''$ and $h=h'h''$, as desired.
\end{proof}

For clarity, we renormalize
at this point, taking 
\begin{equation}\label{N2Def}
  \tilde{N}_2(\mathbf{c},d;q)=\frac{1}{d^3q^4}S_2(0,0;d,q)=\frac{1}{d^6q^7}
  \sum_{\beta_1,\beta_2\in \mathcal{O}_K/ dq\mathcal{O}_K \\ \det(\mathbf{c},\ensuremath{\boldsymbol\ell}(\beta_1\beta_2))\equiv 0 \, (dq/(q, M)) \\ \ensuremath{\boldsymbol\ell}(\beta_1\beta_2)\equiv 0 \, (d) \\ \beta_i\equiv \beta_i'\, ((dq,M))} 1.
\end{equation}
Notice that $\tilde{N}_2(\mathbf{c},d,q)$ is multiplicative in $d$ and $q$. Recall the notation that we take
$m_p = v_p(M)$.

\begin{proposition}\label{prop:S20}
  Let $\mathbf{c}=(c_1,c_2)$ with $c_1,c_2$ being positive integers coprime with each other.
  Let $p$ be prime and take integers $h\ge 0$ and $k\ge m_p$. Then we have the bound
  \begin{equation}\label{eq:crb7a0eq6k}
    \tilde{N}_2(\mathbf{c},p^h;p^k)\ll (h+k+1)p^{2m_p}.
  \end{equation}
  Write $p^\ell=(c_1^4+c_2^4, p^{h + k+1})$. Then we have
  \begin{equation}\label{eq:cuhe1uxr7w}
    |\tilde{N}_2(\mathbf{c},p^h;p^{k+1})-\tilde{N}_2(\mathbf{c},p^h;p^k)|\ll (h+k+1)p^{4m_p-2h-3k-3+\ell}.
  \end{equation}
\end{proposition}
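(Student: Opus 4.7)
My approach would mirror the proof of Proposition~\ref{prop.N1}, tracking the additional variables $y, z$ and the parameter $\mathbf{c}$. Starting from \eqref{S2DetectM} combined with \eqref{S2AfterOrthogonality} at $\alpha_1=\alpha_2=0$, the inner $\beta_1$-count is $N(\gcd((p^{h+k+1}),\gamma_{\mathbf{c}}(x,y,z;p^{k+1})))$, so that $\tilde N_2(\mathbf{c},p^h;p^{k+1})$ becomes a normalized average of such gcd-norms over $x\pmod{p^{h+k+1}}$, $y,z\pmod{p^h}$, twisted by Fourier phases in the detection variables $\gamma_1,\gamma_2\pmod{p^{m_p}}$.

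The key recursion comes from splitting the $x$-sum based on $p\mid x$. Substituting $x = px'$ and using the identity $\gamma_{\mathbf{c}}(px',y,z;p^{k+1})=p\cdot\gamma_{\mathbf{c}}(x',y,z;p^k)$, one has $\gcd((p^{h+k+1}),p\gamma')=p\cdot\gcd((p^{h+k}),\gamma')$, whose norm picks up a factor $p^4$ that exactly matches the normalization ratio between levels $p^{k+1}$ and $p^k$. Hence the $p\mid x$ piece reproduces $\tilde N_2(\mathbf{c},p^h;p^k)$, so that $D_2(\mathbf{c},p^h;p^k):=\tilde N_2(\mathbf{c},p^h;p^{k+1})-\tilde N_2(\mathbf{c},p^h;p^k)$ is exactly the $(x,p)=1$ contribution.

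To bound $D_2$, I factor $\gamma_{\mathbf{c}}=p^{m_p}\bigl(x(c_2-c_1\zeta)+p^{k+1-m_p}(y+z\zeta)\bigr)$ using $k\ge m_p$, pulling $p^{4m_p}$ out of the gcd-norm and leaving a gcd with $p^{h+k+1-m_p}$. For $(x,p)=1$, reducing modulo $p^{k+1-m_p}$ leaves $x(c_2-c_1\zeta)$; the identity $N(c_2-c_1\zeta)=c_1^4+c_2^4$ then caps the residual gcd-norm by $p^{\min(\ell,k+1-m_p)}$, producing the claimed $p^\ell$ saving. Combining with the summation ranges $p^{2h}$ over $y,z$ and $\sim p^{h+k}$ over primitive $x$, together with the overall normalization $p^{-6h-7k-7}$, yields $|D_2|\ll (h+k+1)p^{4m_p-2h-3k-3+\ell}$; the logarithmic factor absorbs the residual contribution from subcases where $\gamma_{\mathbf{c}}$ degenerates further at the prime factors of $p$ in $\mathcal{O}_K$, bounded via a variant of Lemma~\ref{lem.sumNorm}.

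The first bound \eqref{eq:crb7a0eq6k} follows by telescoping: $\tilde N_2(\mathbf{c},p^h;p^k)=\tilde N_2(\mathbf{c},p^h;p^{m_p})+\sum_{j=m_p}^{k-1}D_2(\mathbf{c},p^h;p^j)$. The base case $\tilde N_2(\mathbf{c},p^h;p^{m_p})\ll p^{2m_p}$ is immediate from direct counting (the $\beta_i$-congruences pin $\beta_i\pmod{p^{m_p}}$ and the $\det$ condition is vacuous at $k=m_p$). The telescoped sum is dominated by the geometric series $\sum_j p^{-3j}$ times the bounded factor $p^\ell$, yielding $O((h+k+1)p^{2m_p})$. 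The principal obstacle I anticipate is the careful case analysis required for $D_2$: uniformly extracting the exact $p^\ell$ factor demands tracking the joint configurations of $v_p(x)$, $v_p(y+z\zeta)$, and the splitting type of $p$ in $\mathcal{O}_K=\mathbb{Z}[\zeta]$ (which depends on $p\bmod 8$), to ensure the final bound holds uniformly across all cases.
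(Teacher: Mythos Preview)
Your plan follows essentially the same architecture as the paper's proof: the telescoping recursion via the $p\mid x$ substitution, bounding the difference $D_2$ through gcd-norm estimates reducing to Lemma~\ref{lem.sumNorm}, and then summing the geometric series in $k$ over the base case $k=m_p$.

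There is one genuine gap. Your claim that the base case $\tilde N_2(\mathbf{c},p^h;p^{m_p})\ll p^{2m_p}$ is ``immediate'' is not correct. You are right that the $\det$ condition becomes redundant at $k=m_p$ (it is implied by $\ensuremath{\boldsymbol\ell}(\beta_1\beta_2)\equiv 0\pmod{p^h}$), but pinning $\beta_i\pmod{p^{m_p}}$ still leaves $p^{8h}$ pairs in $(\mathcal{O}_K/p^{h+m_p})^2$, and against the normalization $p^{-6h-7m_p}$ this gives only $p^{2h-7m_p}$, which blows up with $h$. The $\ensuremath{\boldsymbol\ell}$-condition is essential here and requires the same gcd-norm analysis you sketched for $D_2$; the paper spends roughly a page establishing $\tilde N_2(\mathbf{c},p^h;p^{m_p})\ll (h+1)p^{-3m_p}$ by passing through \eqref{S2AfterOrthogonality}, changing variables to eliminate $\mathbf{c}$, and invoking Lemma~\ref{lem.sumNorm}.

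A smaller point: the paper's case analysis for $D_2$ is organized around whether $p\mid c_1$ or $p\mid c_2$ (not the splitting type of $p$ in $\mathcal{O}_K$), using an explicit linear change of variables in $(y,z)$ to decouple $\mathbf{c}$ from the gcd. Your anticipated split by $p\bmod 8$ would work in principle but is less direct; the coprimality split avoids tracking the ramification of $p$ altogether.
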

\begin{proof}
  Write $m=m_p$ for simplicity. We first prove the second statement.
  From \eqref{S2DetectM} to \eqref{S2AfterOrthogonality} (summing over $\gamma_1$ and relabelling $\beta_1$ and $\gamma_2$ as $\beta$ and $\gamma$ respectively), we have that
  \begin{equation}\label{eq.prop9.2temp}
    \tilde{N}_2(\mathbf{c},p^h; p^{k+1}) = \frac{1}{p^{4m+5h+4k+4}}\sum_{\gamma\, (p^m)}\psi_{p^m}(-\gamma\beta_2')\mathop{\sum_{x\, (p^{h+k+1})}\sum_{y, z\, (p^h)}\sum_{\beta\, (p^{h+k+1})}}_{\substack{\gamma_{\mathbf{c}}(x,y,z; p^{k+1})\beta\equiv p^{h+k+1-m}\gamma\, (p^{h+k+1})\\ \beta\equiv \beta_1'\, (p^m)}} 1.
  \end{equation}
  Notice that the contribution of $p|x$ to above is given by
  \begin{align}
    &\ \frac{1}{p^{4m+5h+4k+4}}\sum_{\gamma\, (p^m)}\psi_{p^m}(-\gamma\beta_2')\mathop{\sum_{x\, (p^{h+k})}\sum_{y, z\, (p^h)}\sum_{\beta\, (p^{h+k+1})}}_{\substack{\gamma_{\mathbf{c}}(px,y,z; p^{k+1})\beta\equiv p^{h+k+1-m}\gamma\, (p^{h+k+1})\\ \beta\equiv \beta_1'\, (p^m)}} 1\nonumber\\
    &\ = \frac{1}{p^{4m+5h+4k}}\sum_{\gamma\, (p^m)}\psi_{p^m}(-\gamma\beta_2')\mathop{\sum_{x\, (p^{h+k})}\sum_{y, z\, (p^h)}\sum_{\beta \, (p^{h+k})}}_{\substack{\gamma_{\mathbf{c}}(x,y,z;p^k)\beta \equiv p^{h+k-m}\gamma \, (p^{h+k})\\ \beta \equiv \beta_1'\, (p^m)}} 1=\tilde{N}_2(\mathbf{c},p^h;p^k).
  \end{align}
  Hence the difference is given by
  \begin{align}
    D(\mathbf{c},p^h;p^k):=&\ \tilde{N}_2(\mathbf{c},p^{h};p^{k+1})-\tilde{N}_2(\mathbf{c},p^h;p^k)\nonumber\\
    =&\ \frac{1}{p^{4m+5h+4k+4}}\sum_{\gamma\, (p^m)}\psi_{p^m}(-\gamma\beta_2')\mathop{\sumCp_{x\, (p^{h+k+1})}\sum_{y, z\, (p^h)}\sum_{\beta \,(p^{h+k+1})}}_{\substack{\gamma_{\mathbf{c}}(x,y,z; p^{k+1})\beta\equiv p^{h+k+1-m}\gamma\, (p^{h+k+1})\\ \beta\equiv \beta_1'\, (p^m)}}  1.
  \end{align}
  Note that $\gamma$ is completely determined by the other variables, giving us
  \begin{align}
    D(\mathbf{c},p^h;p^k)\ll &\ \frac{1}{p^{4m+5h+4k+4}}\mathop{\sumCp_{x\, (p^{h+k+1})}\sum_{y, z\, (p^h)}\sum_{\beta \, (p^{h+k+1})}}_{\substack{\gamma_{\mathbf{c}}(x,y,z; p^{k+1})\beta\equiv 0\, (p^{h+k+1-m})}}  1,
  \end{align}
  which is equal to 
  \begin{equation}\label{abiggerCase}
    \frac{1}{p^{3h+4k+4-m}}\mathop{\sumCp_{x\, (p^{h+k+1-m})}\sum_{\beta\, (p^{h+k+1-m})}}_{\substack{p^m x(c_2-c_1\zeta)\beta\equiv 0\, (p^{h+k+1-m})}}1
  \end{equation}
  if $m>h$, and
  \begin{equation}\label{tempaleqCase}
    \frac{1}{p^{5h+4k+4-3m}}\mathop{\sumCp_{x\, (p^{h+k+1-m})}\sum_{y, z\, (p^{h-m})}\sum_{\beta\, (p^{h+k+1-m})}}_{\substack{\gamma_{\mathbf{c}}(x,y,z; p^{k+1})\beta\equiv 0\, (p^{h+k+1-m})}}1.
  \end{equation}
  if $m\leq h$.

  For $m>h$, counting the number of $\beta$, Lemma \ref{lem.Only1Prime} with $(c_1,c_2)=1$ implies that the sum in \eqref{abiggerCase} is
  \begin{multline}\label{tempabiggerConclusion}
    \leq \frac{1}{p^{2h+3k+3}}\sum_{\beta\, (p^{h+k+1-m})\\ (c_2-c_1\zeta)\beta\equiv 0\, (p^{\max\{0,h+k+1-2m\}})}1\leq \frac{1}{p^{2h+3k+3-4m}}N((c_2-c_1\zeta,p^{\max\{0,h+k+1-2m\}}))\\
    =\frac{1}{p^{2h+3k+3-4m}}(c_1^4+c_2^4,p^{\max\{0,h+k+1-2m\}})\leq p^{4m-2h-3k-3+\ell}.
  \end{multline}

  For $m\leq h$, counting the number of $\beta$, \eqref{tempaleqCase} is equal to
  \begin{equation}
    \frac{1}{p^{5h+4k+4-3m}}\sumCp_{x\, (p^{h+k+1-m})}\sum_{y, z\, (p^{h-m})} N((\gamma_{\mathbf{c}}(x,y,z;p^{k+1}), p^{h+k+1-m})).
  \end{equation}

  \underline{\textbf{Case 1:}} Suppose $(c_2,p)=1$, then we can perform a change of variable
  $x\mapsto \overline{c_2}(x-p^{k+1-m}y)$ and $z\mapsto z-c_1\overline{c_2}y$ to get
  \begin{multline}\label{S200temp1}
    D(\mathbf{c},p^h;p^k)\ll \frac{1}{p^{4h+4k+4-2m}}\sumCp_{x\, (p^{h+k+1-m})}\sum_{z\, (p^{h-m})}\\
    N((p^m x+(p^{k+1}z-c_1\overline{c_2}p^m x)\zeta,p^{h+k+1-m})).  
  \end{multline}
  Write $c_1=p^bc_1'$ for some $b\geq0$ and $(c_1',p)=1$. 

  If $b\geq 1$, we have
  \begin{equation}\label{Case1.1}
    D(\mathbf{c},p^h;p^k)\ll p^{2m-4h-4k-4}\sumCp_{x\, (p^{h+k+1-m})}\sum_{z\, (p^{h-m})}p^{4m}\ll p^{4m-2h-3k-3}.
  \end{equation}

  On the other hand, if $b=0$, we have
  \begin{multline}
    D(\mathbf{c},p^h;p^k)\ll \frac{1}{p^{4h+4k+4-6m}}\sumCp_{x\, (p^{h+k+1-m})}\sum_{g=0}^{h-m}\sumCp_{z\, (p^{h-m-g})}\\
    N((x+(p^{k+g+1-m}z-\overline{c_2}c_1x)\zeta,p^{h+k+1-2m})).
  \end{multline}
  Lemma \ref{lem.Only1Prime} then implies that this is equal to
  \begin{align}\label{Case1.3temp}
    \sum_{g=0}^{h-m}\frac{1}{p^{4h+4k+4-6m}}\sumCp_{x \, (p^{h+k+1-m})}\sumCp_{z\, (p^{h-m-g})}(x^4+(p^{k+g+1-m}z-\overline{c_2}c_1x)^4,p^{h+k+1-2m}).
  \end{align}
  Expanding the fourth power, Hensel's lemma (similar to Lemma \ref{lem.sumNorm}) then implies that
  \begin{align}\label{Case1.2}
    D(\mathbf{c},p^h;p^k)\ll &\ \sum_{g=0}^{h-m}\frac{h+k+1}{p^{4h+4k+4-6m}}\sumCp_{z\, (p^{h-m-g})}p^{h+k+1-m+\min\{\ell,k+g+1-m\}}\nonumber\\
    \ll&\ (h+k+1)p^{4m-2h-3k-3+\ell}.
  \end{align}

  \underline{\textbf{Case 2:}} Suppose $p|c_2$, then $(c_1,p)=1$ as $(c_1,c_2)=1$. Write $p^b||c_2$ for some
  $b\geq1$. Performing a change of variable $x\mapsto \overline{c_1}(x+p^{k+1-m}z)$ and $y\mapsto y-\overline{c_1}c_2z$ yields
  \begin{align}\label{Case2Prop9.2}
    D(\mathbf{c},p^h;p^k)\ll &\ \frac{1}{p^{4h+4k+4-2m}}\sumCp_{x \, (p^{h+k+1-m})}\sum_{y \, (p^{h-m})}N((\overline{c_1}c_2p^mx+p^{k+1}y-p^mx\zeta,p^{h+k+1-m}))\nonumber\\
    =&\ \frac{1}{p^{4h+4k+4-2m}}\sumCp_{x \, (p^{h+k+1-m})}\sum_{y \, (p^{h-m})}p^{4m}\ll p^{4m-2h-3k-3}.  
  \end{align}

  Observe that \eqref{tempabiggerConclusion}, \eqref{Case1.1} and \eqref{Case2Prop9.2} are all smaller than \eqref{Case1.2}. Hence \eqref{Case1.2}
  holds in all cases.

  Finally, we have to show the bound for $\tilde{N}_2$. Starting again from \eqref{eq.prop9.2temp}, with $\gamma$
  completely determined by the other variables, we have
  \begin{align}
    \tilde{N}_2(\mathbf{c},p^h;p^m)\leq \frac{1}{p^{8m+5h}}\mathop{\sum_{x\, (p^{h+m})}\sum_{y,z\, (p^h)}\sum_{\beta\, (p^{h+m})}}_{(x(c_2-c_1\zeta)+y+z\zeta)\beta\equiv0\, (p^h)}1= \frac{1}{p^{3m+5h}}\mathop{\sum_{x, y,z\, (p^h)}\sum_{\beta\, (p^{h})}}_{(x(c_2-c_1\zeta)+y+z\zeta)\beta\equiv0\, (p^h)}1.
  \end{align}
  Performing change of variables $y\mapsto y-c_2x$ and $z\mapsto z+c_1x$ gives us
  \begin{equation}
    \tilde{N}_2(\mathbf{c},p^h;p^m)\leq \frac{1}{p^{3m+4h}}\mathop{\sum_{y,z\, (p^h)}\sum_{\beta\, (p^h)}}_{(y+z\zeta)\beta\equiv0\, (p^h)}1=\frac{1}{p^{3m+4h}}\sum_{y,z\, (p^h)}N((y+z\zeta,p^h)).
  \end{equation}
  Pulling out the powers of $p$ from $y$ and $z$, we have
  \begin{align}
    \tilde{N}_2(\mathbf{c},p^h;p^m)\leq &\ \frac{1}{p^{3m+4h}}\sum_{0\leq s,t\leq h}\sumCp_{y\, (p^{h-s})}\sumCp_{z\, (p^{h-t})}N((p^sy+p^tz\zeta,p^h))\nonumber\\
    \leq&\ \frac{1}{p^{3m+4h}}\sum_{0\leq s, t\leq h \\ s\neq t}p^{2h-s-t+4\min\{s,t\}}+\frac{1}{p^{3m+4h}}\sum_{0\leq s\leq h} \sumCp_{y,z\, (p^{h-s})} p^{4s}N((y+z\zeta,p^{h-s})).
  \end{align}
  Note that
  \begin{align}
    \frac{1}{p^{3m+4h}}\sum_{0\leq s, t\leq h\\ s\neq t}p^{2h-s-t+4\min\{s,t\}}\leq &\ \frac{2}{p^{3m+2h}}\sum_{0\leq s<t\leq h}p^{2s}\ll \frac{h+1}{p^{3m}}.
  \end{align}
  On the other hand, a change of variable $y\mapsto yz$ gives
  \begin{align}
    \frac{1}{p^{3m+4h}}\sum_{0\leq s\leq h} p^{4s}\sumCp_{y,z\, (p^{h-s})} N(y+z\zeta,p^{h-s})\leq &\ \sum_{0\leq s\leq h}\frac{1}{p^{3m+3h-3s}} \sumCp_{y\, (p^{h-s})} N(y+\zeta,p^{h-s}).
  \end{align}
  Lemma \ref{lem.sumNorm} gives us that this is $\ll (h+1)p^{-3m}$. Hence we have the bound
  \begin{equation}
    \tilde{N}_2(\mathbf{c},p^h;p^m)\ll \frac{h+1}{p^{3m}}.
  \end{equation}
  Together with \eqref{Case1.2} which holds for all cases, we have for $k\geq m$,
  \begin{align}
    \tilde{N}_2(\mathbf{c},p^h;p^k)= \tilde{N}_2(\mathbf{c},p^h;p^m)+\sum_{j=m}^{k - 1}D(\mathbf{c},p^h;p^j)\ll (h+k+1)p^{2m}.
  \end{align}
  This concludes the proof.
\end{proof}

\section{Oscillatory integral estimation}\label{sec:oscillatory}

In this section, we carry out the estimation of the oscillatory integrals $I_1, I_2$ defined
in \eqref{eq:crb7ay3tq3}, \eqref{I2Def}, respectively. 
Their treatments will be very similar to the applications of orthogonality that went into
the treatments of the exponential sums $S_1$, $S_2$ in \S\ref{sec:cq72jm8eaf}, \ref{sec:cq72jm8woo}, though matters will be
simpler in this archimedean analogue for we only seek an upper bound.

\subsection{Estimation of $I_1$}
The main result of this subsection is the following.
\begin{proposition}\label{proposition:crale1hrji}
  For any $A, k\ge 0$, we have that
  \begin{equation}\label{eq:cq9j69pj16}
    q^k\frac{d^k}{dq^k} I_1(\alpha_1, \alpha_2; q)
    \ll_{A,k} 
    |\tilde\alpha_1\tilde\alpha_2|_{\sup}^{-\frac{5}{6}} \, T^{-13/3+o(1)}\biggl( 1 + |\tilde{\alpha}_1|_{\sup}
    + |\tilde{\alpha}_2|_{\sup}\biggr)^{-A},
  \end{equation}
  where we write
  \begin{equation}
    \tilde \alpha_i = \frac{\alpha_i D X_i}{MX} \quad \text{ and } \quad T = \frac{MX}{q D}.
  \end{equation}
\end{proposition}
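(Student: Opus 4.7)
The plan is to reduce to the case $k=0$ and then bound $I_1$ via stationary phase combined with integration by parts. First, I would note that each $q\,d/dq$ applied to the integrand hits either the cutoff $\omega_1(|\ensuremath{\boldsymbol\ell}(x_1^\infty x_2^\infty)|T)$ or the exponential $\psi(-T\langle \tilde\alpha_j x_j^\infty,1\rangle)$, producing an amplitude of the same shape multiplied by a factor of size $O(1+T|\tilde\alpha_1|_{\sup}+T|\tilde\alpha_2|_{\sup})$. Since the subsequent analysis will apply uniformly to all of these amplitudes, these extra factors can be absorbed into the rapid-decay term by enlarging $A$, so it suffices to prove the bound for $k=0$.

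For $k=0$, since $\phi_1$ is supported away from $K_\infty^0$, the element $x_1^\infty$ is invertible in $K_\infty$ on the support, and I would substitute $y=x_1^\infty x_2^\infty$ with Jacobian $|N(x_1^\infty)|^{-1}$. The cutoff then becomes $\omega_1(|\ensuremath{\boldsymbol\ell}(y)|T)$, localizing $y$ to a $1/T$-neighborhood of the $2$-dimensional subspace $\mathbb{R}\langle 1,\zeta\rangle\subset K_\infty$ (the kernel of $\ensuremath{\boldsymbol\ell}$ extended to $K_\infty$). Splitting $y=y_0+y_1$ with $y_0$ in this subspace and $y_1$ in a fixed $2$-dimensional complement, the $y_1$-integral contributes a factor of $T^{-2}$ (up to corrections handled by standard integration by parts when $|\tilde\alpha_2|_{\sup}$ is large), reducing the problem to a $6$-dimensional integral in $(x_1^\infty,y_0)$ with phase $-T\langle \tilde\alpha_1 x_1^\infty+\tilde\alpha_2 y_0/x_1^\infty,1\rangle$.

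On this $6$-dimensional integral I would apply stationary phase or integration by parts. Away from the critical set, integration by parts in $x_1^\infty$ or $y_0$ gains a factor of $|\tilde\alpha_j|_{\sup}^{-1}$ per step — the phase gradient and the derivatives of $\omega_1$ both scale like $T$, so their ratio is $|\tilde\alpha_j|_{\sup}^{-1}$ — and yields the rapid decay $(1+|\tilde\alpha_j|_{\sup})^{-A}$. At the critical set (determined by $\tilde\alpha_1(x_1^\infty)^2=\tilde\alpha_2 y_0$), the Hessian of the phase in $x_1^\infty$ is controlled by the norm form of $K$ evaluated on $\tilde\alpha_1,\tilde\alpha_2$; a careful van der Corput style accounting of the stationary-phase contribution, combined with the factor $T^{-2}$ already extracted, should give the stated $T^{-13/3+o(1)}|\tilde\alpha_1\tilde\alpha_2|_{\sup}^{-5/6}$.

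The hard part will be the $6$-dim stationary-phase analysis at and near the critical set: the Hessian can degenerate along subvarieties where $\tilde\alpha_1\tilde\alpha_2$ becomes small in some archimedean embedding of $K$, and obtaining the precise exponents $-13/3$ and $-5/6$ requires a delicate interpolation — sharp stationary-phase decay in the non-degenerate directions and weaker integration-by-parts decay along the degenerate ones — exploiting the specific homogeneity of the norm form of $K=\mathbb{Q}(\zeta)$ that governs the geometry of the critical locus.
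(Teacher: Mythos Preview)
Your reduction to $k=0$ matches the paper's, but the approaches diverge at the next step. The paper does \emph{not} substitute $y=x_1^\infty x_2^\infty$; instead it applies Fourier inversion to $\omega_1$, writing
\[
\omega_1\bigl(T\ensuremath{\boldsymbol\ell}(x_1^\infty x_2^\infty)\bigr)=\int_{\mathbb{R}^2}\widehat{\omega_1}(y_1,y_2)\,e\bigl(T\langle x_1^\infty x_2^\infty,\,y_1+y_2\zeta\rangle\bigr)\,dy_1\,dy_2,
\]
and then integrates over $x_2^\infty$ to obtain $\widehat{\Phi}_2\bigl(T(\tilde\alpha_2 - x_1^\infty(y_1+y_2\zeta))\bigr)$. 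The change of variables $\eta_\infty=T(\tilde\alpha_2 - x_1^\infty(y_1+y_2\zeta))$ has Jacobian $T^{-4}(y_1^4+y_2^4)^{-1}$, and the rapid decay of $\widehat{\Phi}_2$ freezes $\eta_\infty$ near $0$. The support of $\Phi_1$ then forces $|\mathbf{y}|\asymp|\tilde\alpha_2|_{\sup}$, leaving only a \emph{two}-dimensional integral in $(y_1,y_2)$.

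The decisive maneuver --- which replaces your six-dimensional stationary-phase analysis entirely --- is a dilation trick: insert $\int\phi(t)\,dt=1$, substitute $(y_1,y_2)\mapsto(y_1/t,y_2/t)$, and execute the $t$-integral first. Its decay bounds the $(y_1,y_2)$-integral by the measure of the sublevel set
\[
\Bigl\{|\mathbf{y}|\asymp 1:\ \bigl|u_0y_2^3+u_1y_1y_2^2+u_2y_1^2y_2+u_3y_1^3\bigr|\le |\tilde\alpha_1|_{\sup}\,T^{\varepsilon-1}\Bigr\},
\]
where the $u_j$ are the coordinates of $\tilde\alpha_1\tilde\alpha_2/(|\tilde\alpha_1|_{\sup}|\tilde\alpha_2|_{\sup})$. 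Since $\max_j|u_j|\gg 1$, this measure is $\ll(|\tilde\alpha_1|_{\sup}T^{-1})^{1/3}$. Combining with $T^{-4}|\tilde\alpha_2|_{\sup}^{-2}$ already extracted, and using $|\tilde\alpha_1|_{\sup}\le|\tilde\alpha_2|_{\sup}$, gives precisely $T^{-13/3+o(1)}|\tilde\alpha_1\tilde\alpha_2|_{\sup}^{-5/6}$.

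So the exponents $-13/3$ and $-5/6$ are not outputs of a Hessian computation but of a binary-cubic sublevel-set estimate: $-13/3=-4-1/3$ from the Jacobian and the cubic root, and $-5/6$ from $|\tilde\alpha_2|_{\sup}^{-2}|\tilde\alpha_1|_{\sup}^{1/3}\le|\tilde\alpha_1\tilde\alpha_2|_{\sup}^{-5/6}$. Your ``hard part'' --- degenerate-Hessian analysis in six dimensions --- is exactly what this reduction circumvents. Your outline may be salvageable, but as written it does not supply a mechanism that produces these specific exponents; the vague appeal to van der Corput on a degenerate critical set is the gap.
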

\begin{proof}
  Differentiating under the integral, it is sufficient to bound, for any
  $\omega^*\in C_c^\infty(\mathbb{R}^2\setminus\set{0}) $ and $\Phi_1, \Phi_2\in C_c^\infty(K_\infty\setminus\set{0})$, the integral
  \begin{equation}\label{eq:cq9j672xse}
    I_T(\omega^*, \Phi_1, \Phi_2) := \int_{K_\infty^2} \omega^* ( T\ensuremath{\boldsymbol\ell}(x_1^\infty x_2^\infty )) \Phi_1(x_1^\infty)\Phi_2(x_2^\infty)
    e(-T(\langle x_1^\infty, \tilde\alpha_1 \rangle + \langle x_2^\infty, \tilde\alpha_2 \rangle)) \,d x_1^\infty \,d x_2^\infty.
  \end{equation}
  Repeated integration by parts implies that
  \begin{equation}
    I_T(\omega^*, \Phi_1, \Phi_2)\ll (1 + |\tilde{\alpha}_1|_{\sup}
    +|\tilde{\alpha}_2|_{\sup})^{-A},
  \end{equation}
  This immediately gives the statement if $T\ll X^\varepsilon$. So we'll
  suppose from now on that $T\gg X^\varepsilon$ and $|\tilde\alpha_i|_{\sup}\ll T^{o(1)}$.
  
  We shall suppose without loss of generality that $|\tilde\alpha_1|_{\sup} \le |\tilde\alpha_2|_{\sup}$. Recalling
  $\ensuremath{\boldsymbol\ell}(x_\infty) = (\langle x_\infty, 1 \rangle, \langle x_\infty, \zeta \rangle)$, Fourier inversion implies that \eqref{eq:cq9j672xse}
  equals
  \begin{multline}
    \int_{\mathbb{R}^2} \widehat{\omega^*}(y_1, y_2) \int_{K_\infty^2} \Phi_1(x_1^\infty) \Phi_2(x_2^\infty)\\
    \cdot e(-T(\langle x_1^\infty, \tilde\alpha_1 \rangle  + \langle x_2^\infty, \tilde\alpha_2 \rangle - \langle x_1^\infty x_2^\infty, y_1 + y_2\zeta \rangle  ))
    \,d x_1^\infty \,d x_2^\infty \,d y_1 \,d y_2 \\
    = \int_{\mathbb{R}^2} \widehat{\omega^*}(y_1, y_2) \int_{K_\infty} \Phi_1(x_1^\infty) \widehat{\Phi}_2( T( \tilde\alpha_2 - x_1^\infty(y_1 + y_2\zeta)))
    e(-T \langle x_1^\infty, \tilde\alpha_1 \rangle)
    \,d x_1^\infty \,d y_1 \,d y_2.
  \end{multline}
  We suppose from now on that 
  \begin{equation}\label{alpha2assumption}
    |\tilde\alpha_2|_{\sup} \gg \frac{1}{T^{1 - 2\varepsilon}}, 
  \end{equation}
  for otherwise the statement immediately holds with $|\tilde\alpha_1\tilde\alpha_2|_{\sup}\leq |\tilde\alpha_1|_{\sup}|\tilde\alpha_2|_{\sup}\ll T^{-2+4\varepsilon}$.
  Let $\eta_\infty = T(\tilde\alpha_2 - x_1^\infty(y_1 + y_2\zeta))$ so that by the change of variables
  \begin{equation}
    \,d x_1^\infty = \frac{\, d \eta_\infty}{T^4(y_1^4 + y_2^4)},
  \end{equation}
  we have that $I_T(\omega^*, \Phi_1, \Phi_2)$ is equal to 
  \begin{equation}
    \frac{1}{T^4}
    \int_{\mathbb{R}^2} \frac{\widehat{\omega^*}(y_1, y_2)}{y_1^4 + y_2^4}
    \int_{K_\infty} \widehat{\Phi}_2(\eta_\infty) \Phi_1 \biggl(\frac{\tilde\alpha_2 - \eta_\infty/T}{y_1 + y_2\zeta}\biggr)
    e\bigg(- \bigg\langle \frac{ T\tilde \alpha_2 - \eta_\infty }{y_1 + y_2\zeta},
    \tilde\alpha_1 \bigg\rangle\bigg)\, d \eta_\infty \, dy_1 \, dy_2.
  \end{equation}
  The contribution of $|\eta_\infty|_{\sup} > T^{\varepsilon}$ is $\ll_A T^{-A}$ from the decay of $\widehat\Phi_2$.
  Also, from the support of $\Phi_1$ and the fact that both embeddings of $y_1 + y_2\zeta$ are
  $\asymp |y_1^4 + y_2^4|^{1/4}\asymp |\mathbf{y}|$ in magnitude, we have that in the support of the integrand above
  with $|\eta_\infty|_{\sup}\le T^{\varepsilon}$ and \eqref{alpha2assumption}, 
  \begin{equation}
    1\asymp \biggl|\frac{\tilde\alpha_2 - \eta_\infty/T}{y_1 + y_2\zeta}\biggr|_{\sup} \asymp
    \biggl| \frac{\tilde\alpha_2}{y_1 + y_2\zeta} \biggr|_{\sup}
    \asymp \frac{|\tilde\alpha_2|_{\sup}}{|\mathbf{y}|}.
  \end{equation}
  In particular, it suffices to bound for $|\eta_\infty|_{\sup}\le T^{\varepsilon}$, some function
  $\Psi : \mathbb{R} \times \mathbb{R} \times K_\infty\to \mathbb{C}$ with fixed compact support, compactly supported away from $0$ on the third coordinate and any derivatives of order $j$ being $O_j(1)$, and
  some $\tilde\alpha_i' \, (=\tilde \alpha_i - \eta_\infty/T)$ satisfying
  \begin{equation}\label{eq:3}
    |\tilde\alpha_i'|_{\sup}\asymp |\tilde\alpha_i|_{\sup}\asymp |\tilde\alpha_i'|_{\infty}^{\frac{1}{4}}
  \end{equation}
  the quantity (after a change of variable $y_i\mapsto |\tilde\alpha_2'|_{\sup} y_i$)
  \begin{equation}\label{eq:cran51w4hy}
    \frac{|\tilde\alpha_2'|_{\sup}^{-2}}{T^4}\int_{|\mathbf{y}|\asymp 1}
    \Psi \biggl( y_1, y_2, \frac{\tilde\alpha_2'/|\tilde\alpha_2'|_{\sup}}{y_1 + y_2\zeta} \biggr)
    e \biggl( - T|\tilde\alpha_1'|_{\sup}
    \biggl\langle \frac{\tilde\alpha_1'\tilde\alpha_2'|\tilde\alpha_1'|_{\sup}^{-1}
      |\tilde\alpha_2'|_{\sup}^{-1} }{y_1 + y_2\zeta}, 1 \biggr\rangle \biggr)
    \,d y_1 \,d y_2.
  \end{equation}
  It now remains to execute the $y_1, y_2$ integrals. Taking any fixed $\phi\in C_c^\infty((1, 2))$ with
  \begin{equation}
    \int_{\mathbb{R}} \phi(t) \,d t = 1,
  \end{equation}
  applying a change of variables $(y_1, y_2)\to(y_1/t, y_2/t)$, and integrating against
  $\phi(t) \,d t$ yields that \eqref{eq:cran51w4hy} equals
  \begin{equation}
    \frac{|\tilde\alpha_2'|_{\sup}^{-2}}{T^4}\int_{\mathbb{R}} \frac{\phi(t)}{t^2}
    \int_{|\mathbf{y}|\asymp 1} \Psi \biggl( y_1/t, y_2/t, t\frac{\tilde\alpha_2'/|\tilde\alpha_2'|_{\sup} }{y_1 + y_2\zeta} \biggr) e \biggl( -Tt|\tilde\alpha_1'|_{\sup} \biggl\langle \frac{\tilde\alpha_i'\tilde\alpha_2'|\tilde\alpha_1'|_{\sup}^{-1}
      |\tilde\alpha_2'|_{\sup}^{-1} }{y_1 + y_2\zeta}, 1 \biggr\rangle\biggr) \,d y_1 \,d y_2 \,d t.
  \end{equation}
  Executing the $t$-integral and noting the decay of the Fourier transform in $t$
  along with a change of variable $y_2\mapsto -y_2$ reduces us to bounding
  \begin{equation}\label{eq:cran52wce2}
    \frac{|\tilde\alpha_2|_{\sup}^{-2}}{T^4} \int_{|\mathbf{y}|\asymp 1}
    \mathbbm{1}\biggl[u_0 y_2^3 + u_1 y_1y_2^2 + u_2 y_1^2y_2 + u_3y_1^3
    \le |\tilde\alpha_1'|_{\sup}\frac{T^{\varepsilon} }{T}\biggr] \,d y_1 \,d y_2
  \end{equation}
  where we write
  \begin{equation}
    \frac{\tilde\alpha_1'\tilde\alpha_2'}{|\tilde\alpha_1'|_{\sup}|\tilde\alpha_2'|_{\sup}}
    = u_0 + u_1\zeta + u_2\zeta^2 + u_3\zeta^3.
  \end{equation}
  The bounds \eqref{eq:3} imply that $\max_{0\le i\le 3}(|u_i|)\gg 1$ and so we obtain that
  \eqref{eq:cran52wce2} is 
  \begin{equation}
    \ll \frac{1}{T^4} |\tilde \alpha_2'|_{\sup}^{-2}|\tilde\alpha_1'|_{\sup}^{\frac{1}{3}} \,
    T^{-\frac{1}{3} + \frac{\varepsilon}{3}}
    \ll \frac{1}{T^5} |\tilde\alpha_1\tilde\alpha_2|_{\sup}^{-\frac{5}{6}} \, T^{\frac{2}{3} + \frac{\varepsilon}{3}}.
  \end{equation}
  The desired result follows.
\end{proof}

\subsection{Estimation of $I_2$}

The estimate for $I_2$ we show is as follows.
\begin{proposition}\label{prop.I2}
  For $|\mathbf{c}|d\asymp D$, we have that $I_2=0$ unless $q\ll \sqrt{DX}/d$, in which case
  we have the bound
  \begin{equation}\label{eq:cranqug9hv}
    I_2(\alpha_1, \alpha_2; \mathbf{c}, d, q) 
    \ll_A |\tilde\alpha_1|_{\sup}^{-3/2}|\tilde\alpha_2|_{\sup}^{-3/2}
    T^{-4+o(1)}\biggl( 1 + |\tilde{\alpha}_1|_{\sup}
    + |\tilde{\alpha}_2|_{\sup}\biggr)^{-A},
  \end{equation}
  where
  \begin{equation}
    T = \frac{M\sqrt{DX}}{dq} \quad \text{ and } \quad  \tilde \alpha_i = \frac{\alpha_iX_i}{M\sqrt{DX}}.
  \end{equation}
\end{proposition}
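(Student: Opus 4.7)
The plan is to mirror the strategy used for Proposition~\ref{proposition:crale1hrji}: Fourier-invert the $\omega_2$-factor, integrate out one of the $x_j^\infty$'s to produce a $\widehat\phi_j$-factor, change variables, and bound trivially. For the vanishing claim, if $q\gg\sqrt{DX}/d$ (equivalently $T\ll 1$), then the first $\omega_2$-factor $\omega_2(1/T)$ and the second $\omega_2$-factor (whose argument $M\sqrt{X}\det(\mathbf{c},\ensuremath{\boldsymbol\ell}(x_1^\infty x_2^\infty))/(q\sqrt{D})\ll T$ on the support of $\phi_1\phi_2$ via $|\det(\mathbf{c},\ensuremath{\boldsymbol\ell}(\cdot))|\ll|\mathbf{c}|$ and $M\sqrt{X}|\mathbf{c}|/(q\sqrt{D})\asymp T$) both miss the support of $\omega_2$, which is compactly supported away from $0$. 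Hence $I_2=0$ in this range (absorbing the constants into an $M^{O(1)}$).

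In the range $T\gg 1$, set $\mu=M\sqrt{X}/(q\sqrt{D})=Td/D$ (so $\mu|\mathbf{c}|\asymp T$) and use the identity $\det(\mathbf{c},\ensuremath{\boldsymbol\ell}(\beta))=\langle\beta,c_1\zeta-c_2\rangle$. Split $I_2=I_2^{(1)}-I_2^{(2)}$; here $I_2^{(1)}=\omega_2(1/T)\widehat\phi_1(T\tilde\alpha_1)\widehat\phi_2(T\tilde\alpha_2)$ is nonzero only for $T\asymp 1$ and the Schwartz decay of the $\widehat\phi_j$ handles it within the target. For $I_2^{(2)}$, Fourier-invert $\omega_2$, integrate out $x_2^\infty$ to produce $\widehat\phi_2(T\tilde\alpha_2-wx_1^\infty)$ with $w=y\mu(c_1\zeta-c_2)$ and $|N(w)|=y^4\mu^4(c_1^4+c_2^4)\asymp y^4 T^4$, and then change variables $x_1^\infty\mapsto\eta_\infty=T\tilde\alpha_2-wx_1^\infty$ to arrive at
\begin{equation*}
I_2^{(2)}\asymp T^{-4}\int_{\mathbb{R}}\frac{\widehat\omega_2(y)}{y^4}\int_{K_\infty}\widehat\phi_2(\eta_\infty)\,\phi_1\!\left(\frac{T\tilde\alpha_2-\eta_\infty}{w}\right)e\!\left(-T\left\langle\frac{T\tilde\alpha_2-\eta_\infty}{w},\tilde\alpha_1\right\rangle\right)d\eta_\infty\,dy.
\end{equation*}
Bounding trivially, $\widehat\phi_2$ effectively restricts to $|\eta_\infty|_{\sup}\ll T^{o(1)}$; the support of $\phi_1$ then forces $|w|_v\asymp T|\tilde\alpha_2|_v$ at each place $v\mid\infty$ (so $|y|\asymp|\tilde\alpha_2|_v\asymp|\tilde\alpha_2|_{\sup}$ uniformly in $v$, or the integrand vanishes); and $\widehat\omega_2$ gives rapid decay in $|y|\asymp|\tilde\alpha_2|_{\sup}$. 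Combining yields $|I_2^{(2)}|\ll T^{-4+o(1)}|\tilde\alpha_2|_{\sup}^{-3}(1+|\tilde\alpha_2|_{\sup})^{-A}$.

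By the symmetry of $I_2$ in $(x_1^\infty,\alpha_1)\leftrightarrow(x_2^\infty,\alpha_2)$, integrating out $x_1^\infty$ first yields the analogous bound with $\tilde\alpha_1$ in place of $\tilde\alpha_2$. Taking the minimum of the two bounds and invoking $\min(a^{-3},b^{-3})\le(ab)^{-3/2}$ together with $\min_j(1+|\tilde\alpha_j|_{\sup})^{-A}\ll(1+|\tilde\alpha_1|_{\sup}+|\tilde\alpha_2|_{\sup})^{-A}$ gives the claimed estimate. The main technical subtlety is the behavior near $y=0$, where $y^{-4}$ is singular and the change of variables degenerates; this corresponds to the regime $|\tilde\alpha_j|_{\sup}\ll T^{-1+o(1)}$ for some $j$, in which case the target is already weaker than the trivial bound $|I_2^{(2)}|\ll 1/T$ obtained from the $O(1/T)$-measure of the support of $\omega_2(\mu\langle x_1^\infty x_2^\infty,c_1\zeta-c_2\rangle)$ in $K_\infty^2$; this degenerate case is split off and handled by the trivial bound.
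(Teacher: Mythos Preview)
Your proposal is correct and follows essentially the same route as the paper's proof: split off the first $\omega_2$-term, Fourier-invert the second, integrate out one $x_j^\infty$, change variables $\eta_\infty = T\tilde\alpha_2 - wx_1^\infty$, use the support of $\phi_1$ to localize $|y|\asymp|\tilde\alpha_2|_{\sup}$, bound trivially, and finally exploit the $\alpha_1\leftrightarrow\alpha_2$ symmetry. The only cosmetic difference is that the paper obtains the factor $(1+|\tilde\alpha_1|_{\sup}+|\tilde\alpha_2|_{\sup})^{-A}$ upfront via repeated integration by parts and then assumes $|\tilde\alpha_i|_{\sup}\ll X^{o(1)}$, whereas you extract it from the decay of $\widehat\omega_2$ once $|y|\asymp|\tilde\alpha_j|_{\sup}$ is forced; both are valid.

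One small imprecision: your claim that the trivial bound $|I_2^{(2)}|\ll 1/T$ already beats the target whenever $|\tilde\alpha_j|_{\sup}\ll T^{-1+o(1)}$ for \emph{some} $j$ is not quite right. If only one of the $|\tilde\alpha_j|_{\sup}$ is small (say $|\tilde\alpha_2|_{\sup}\ll T^{-1}$ but $|\tilde\alpha_1|_{\sup}\asymp 1$), the target is $\asymp T^{-5/2}$, which $1/T$ does not meet. The fix is immediate: in that case the $\tilde\alpha_1$-version of your bound, $T^{-4+o(1)}|\tilde\alpha_1|_{\sup}^{-3}$, is still available and already dominates the target (since $|\tilde\alpha_1|_{\sup}\ge|\tilde\alpha_2|_{\sup}$ gives $|\tilde\alpha_1|_{\sup}^{-3}\le(|\tilde\alpha_1|_{\sup}|\tilde\alpha_2|_{\sup})^{-3/2}$). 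The trivial $1/T$ bound is only needed when \emph{both} $|\tilde\alpha_j|_{\sup}\ll T^{-1+o(1)}$, where the target is indeed $\gg T^{-1+o(1)}$.
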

\begin{proof} 

  We have that
  \begin{equation}
    I_2(\alpha_1, \alpha_2 ; \mathbf{c}, d, q) =
    \omega_2 \biggl(\frac{1}{T}\biggr)\hat\phi_1 \biggl(T\tilde\alpha_1\biggr)
    \hat\phi_2 \biggl(T\tilde\alpha_2\biggr) -I,
  \end{equation}
  where \begin{equation}
    I :=  \int_{K_\infty^2} \omega_2(T \det(\tilde{\mathbf{c}}, \ensuremath{\boldsymbol\ell} (x_1^\infty x_2^\infty)))
    \phi_1(x_1^\infty)\phi_2(x_2^\infty)
    e (- T(\langle x_1^\infty, \tilde\alpha_1 \rangle + \langle x_2^\infty, \tilde\alpha_2 \rangle)) \,d x_1^\infty \,d x_2^\infty,
  \end{equation}
  with the notation $\tilde{\mathbf{c}} = \mathbf{c} d/D$. The first term may be absorbed into the bound \eqref{eq:cranqug9hv}, so it remains
  to bound $I$.
  Since $|\tilde{\mathbf{c}}|\asymp 1$, repeated integration by parts gives us $I\ll (1+| \tilde{\alpha}_1|_{\sup}+| \tilde{\alpha}_2|_{\sup})^{-A}$.
  We shall suppose from now on that $|\tilde\alpha_i|_{\sup}\ll X^{o(1)}$.
  
  As before, we begin with Fourier inversion. Noting that
  \begin{equation}
    \det(\tilde{\mathbf{c}}, \ensuremath{\boldsymbol\ell}(x_1^\infty x_2^\infty))
    = \langle \tilde c_2 - \tilde c_1\zeta, x_1^\infty x_2^\infty \rangle,
  \end{equation}
  we have
  \begin{multline}
    I =
    \int_{\mathbb{R}} \widehat\omega_2(y) \int_{K_\infty^2} \phi_1( x_1^\infty)\phi_2(x_2^\infty)
    e(-T (\langle x_1^\infty, \tilde\alpha_1 \rangle + \langle x_2^\infty, \tilde\alpha_2 \rangle
    + \langle x_1^\infty x_2^\infty, y(\tilde c_2 - \tilde c_1\zeta) \rangle))
    \,d x_1^\infty \,d x_2^\infty \\
    = \int_{\mathbb{R}} \widehat\omega_2(y) \int_{K_\infty} \phi_1(x_1^\infty)
    \hat\phi_2(T(\tilde\alpha_2 + y(\tilde c_2 - \tilde c_1\zeta)x_1^\infty))
    e(-T \langle x_1^\infty, \tilde\alpha_1 \rangle)\,d x_1^\infty.
  \end{multline}
  Similar to before, we make the change of variables
  \begin{equation}
    \eta_\infty = T(\tilde\alpha_2 + y( \tilde c_2 - \tilde c_1\zeta)x_1^\infty)
  \end{equation}
  so that
  \begin{equation}
    I = \frac{1}{T^4(\tilde c_1^4 + \tilde c_2^4)} \int_{\mathbb{R}} \frac{\widehat\omega_2(y)}{y^4}
    \int_{K_\infty} \phi_1 \biggl( \frac{-\tilde \alpha_2 + \eta_\infty/T}{y(\tilde c_2 - \tilde c_1\zeta)} \biggr)
    \hat\phi_2( \eta_\infty) e \biggl( -T \biggl\langle \frac{-\tilde\alpha_2 + \eta_\infty/T}{y(\tilde c_2 - \tilde c_1\zeta)}, \tilde\alpha_1\biggr\rangle \biggr) \, d \eta_\infty \,d y .
  \end{equation}
  As in the proof of Proposition \ref{proposition:crale1hrji}, we may restrict to the
  case of $|\eta_\infty|_{\sup} < T^\varepsilon, |\tilde\alpha_2|_{\sup}\gg T^{2\varepsilon - 1}$ and fix an $\eta_\infty$. Then, it is forced that
  $|y|\asymp |\tilde\alpha_2|_{\sup}$. A trivial bound then gives us
  \begin{equation}
    I\ll (1+| \tilde{\alpha}_1|_{\sup}+| \tilde{\alpha}_2|_{\sup})^{-A}T^{-4+o(1)}|\tilde{\alpha}_2|_{\sup}^{-3}.
  \end{equation}
  Noticing that the same bound holds with $\tilde{\alpha}_1$ by swapping the roles of $\alpha_1$ and $\alpha_2$, the
  desired result follows.
  
\end{proof}

\section{Local density estimates}\label{sec:cuhge2i6da}

In this section, we record a number of facts about local densities, their averages,
and associated objects, which will be of use in the following two subsections when
analyzing zero frequencies.

\subsection{Singular series estimates}\label{sec:crb7rws6oy}

Write \begin{equation}
  V_q = \set{(\beta_1, \beta_2)\in (\mathcal{O}_K/q\mathcal{O}_K)^2 : \beta_i \equiv \beta_i' \, (M) \text{ for } i=1, 2},
\end{equation}
and for $d,q\geq1$, $\mathbf{c}=(c_1,c_2)\in \mathbb{Z}^2$ with $(c_1,c_2)=1$, let 
\begin{align}
  \tilde N_1(q) = \frac{1}{q^6} \sum_{(\beta_1, \beta_2)\in V_q \\ \ensuremath{\boldsymbol\ell}(\beta_1\beta_2)  \equiv 0 \, (q/(q, M)) } 1,
  && \tilde N_2(\mathbf{c}, d; q) =
     \frac{1}{d^6q^7}\sum_{(\beta_1, \beta_2)\in V_{dq} \\ \det(\mathbf{c}, \ensuremath{\boldsymbol\ell}(\beta_1\beta_2)) \equiv 0 \, (dq/(q, M)) \\ \ensuremath{\boldsymbol\ell}(\beta_1\beta_2) \equiv 0 \, (d)} 1.
\end{align}
Define the associated Dirichlet series
\begin{align}
  \mathcal{D}_1(s) = \sum_{q\ge 1 \\ M | q } \frac{\tilde N_1(q)}{q^s}
  , && \mathcal{D}_2(s; \mathbf{c}, d) = \sum_{q\ge 1 \\ M | q } \frac{\tilde N_2(\mathbf{c}, d; q)}{q^s}.
\end{align}
We also write $\mathcal{D}_1^*(s) = \mathcal{D}_1(s)/\zeta(s)$, $\mathcal{D}_2^*(s; \mathbf{c}, d) = \mathcal{D}_2(s; \mathbf{c}, d)/\zeta(s)$ so that 
\begin{equation}\label{align:crb7s6n8bp}
  \mathcal{D}_1^*(s) = \sum_{q\ge 1 \\ M | q } \frac{N_1^*(q)}{q^s},\quad \text{ where } \quad  N_1^*(q) = \sum_{b | q/M } \mu(b) \tilde N_1(q/b)
\end{equation}
and
\begin{equation}
  \mathcal{D}_2^*(s; \mathbf{c}, d) = \sum_{q\ge 1 \\ M | q } \frac{N_2^*(\mathbf{c}, d;q)}{q^s},
  \quad \text{ where}\quad  N_2^*(\mathbf{c}, d ;q) = \sum_{b | q/M }
  \mu(b) \tilde N_2(\mathbf{c}, d; q/b).
\end{equation}

The absolute convergence of $\mathcal{D}_1(s)$, $\mathcal{D}_2(s; \mathbf{c}, d)$ for $\Re(s) > 1$ is clear. 
Further analytic properties of these two relevant to us are contained in the following
two propositions.

\begin{proposition}\label{lemma:cran7rw9t5}
  We have that 
  \begin{equation}\label{eq:crao4ulrs0}
    N_1^*(q) \ll \frac{M^4}{q^{2 - o(1)}}.
  \end{equation}
  In particular, we have that $\mathcal{D}_1^*(s)$ is absolutely convergent for $\Re(s) > -1$ so
  that $\mathcal{D}_1(s)$ has a meromorphic continuation to $\Re(s) > -1$ with
  \begin{equation}\label{eq:crb6w3pal3}
    \mathcal{D}_1^*(0) = M^2\prod_p \lim_{k\to\infty } \frac{1}{p^{6k}}
    \sum_{\beta_1, \beta_2 \, (p^k) \\ \ensuremath{\boldsymbol\ell}(\beta_1\beta_2) \equiv 0 \, (p^k) \\ \beta_i \equiv \beta_i' \, (p^{m_p})} 1 = M^2\prod_p \sigma_p.
  \end{equation}
  Furthermore, for $\sigma > \sigma_0 > -1$, we have that $|\mathcal{D}_1^*(s)| \ll_{\sigma_0} 1$.
\end{proposition}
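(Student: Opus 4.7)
The plan is to exploit the multiplicativity of $\tilde N_1$ to reduce everything to local computations at each prime, with Proposition~\ref{prop.N1} as the sole analytic input. A double application of the Chinese Remainder Theorem (once to the residue condition $\beta_i\equiv\beta_i'(M)$, once to the congruence $\ensuremath{\boldsymbol\ell}(\beta_1\beta_2)\equiv 0\,(q/(q,M))$) shows that $\tilde N_1$ factors as a product over primes, with the local factor at $p$ depending only on $m_p$ and the $p$-part of $\beta_i'$. Möbius inversion then yields $N_1^*(p^{m_p})=\tilde N_1(p^{m_p})$ and $N_1^*(p^k)=\tilde N_1(p^k)-\tilde N_1(p^{k-1})$ for $k>m_p$, and this is again multiplicative.

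With this in hand, the first assertion of Proposition~\ref{prop.N1} gives $|N_1^*(p^{m_p})|\ll p^{2m_p}$ and the second gives $|N_1^*(p^k)|\ll p^{4m_p-2k}$ for $k>m_p$; these unify to $|N_1^*(p^k)|\ll p^{4m_p-2k}$ for all $k\geq m_p$. Multiplicatively,
\begin{equation*}
|N_1^*(q)|\ll C^{\omega(q)}\prod_p p^{4m_p-2v_p(q)}=q^{o(1)}\frac{M^4}{q^2},
\end{equation*}
which is \eqref{eq:crao4ulrs0}. Comparison with the convergent series $\sum_q q^{-2-\sigma_0+o(1)}$ immediately gives absolute convergence of $\mathcal{D}_1^*(s)$ in $\Re s>-1$ together with the uniform bound $|\mathcal{D}_1^*(s)|\ll_{\sigma_0} 1$ on $\Re s\geq\sigma_0>-1$ (the $M^4$ factor being absorbed via the paper's $M^{O(1)}$ convention).

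For the identification of $\mathcal{D}_1^*(0)$, multiplicativity together with telescoping gives
\begin{equation*}
\mathcal{D}_1^*(0)=\prod_p\sum_{k\geq m_p}N_1^*(p^k)=\prod_p\lim_{K\to\infty}\tilde N_1(p^K).
\end{equation*}
The main computation is then to identify the local limit with $p^{2m_p}\sigma_p$. The plan is to take $K\geq 2m_p$, parametrize $\beta_i=\beta_i'+p^{m_p}\gamma_i$ with $\gamma_i\in\mathcal{O}_K/p^{K-m_p}\mathcal{O}_K$, and observe that $\ensuremath{\boldsymbol\ell}(\beta_1\beta_2)\bmod p^{K-m_p}$ depends only on the $\gamma_i\bmod p^{K-m_p}$. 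Hence each solution modulo $p^{K-m_p}$ lifts in $p^{8m_p}$ ways to one modulo $p^K$, so
\begin{equation*}
\tilde N_1(p^K)=p^{-6K}\cdot p^{8m_p}\cdot\#\{\bar\beta_i\,(p^{K-m_p}):\ensuremath{\boldsymbol\ell}(\bar\beta_1\bar\beta_2)\equiv 0\,(p^{K-m_p}),\,\bar\beta_i\equiv\beta_i'\,(p^{m_p})\},
\end{equation*}
and passing to the limit using the standard definition of the local density produces the factor $p^{8m_p-6m_p}\sigma_p=p^{2m_p}\sigma_p$, which assembles into $\prod_p p^{2m_p}\sigma_p=M^2\prod_p\sigma_p$.

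The only substantive obstacle is bookkeeping: ensuring the Euler product converges well enough to justify the interchange of limit and product at $s=0$, which is clean given the geometric decay $|N_1^*(p^k)|\ll p^{4m_p-2k}$, and verifying that the condition $\ensuremath{\boldsymbol\ell}(\beta_1\beta_2)\equiv 0\,(p^{K-m_p})$ genuinely descends to $\gamma_i$ modulo $p^{K-m_p}$, which is just a direct expansion using $\beta_i=\beta_i'+p^{m_p}\gamma_i$.
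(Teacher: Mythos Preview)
Your proof is correct and follows essentially the same approach as the paper's: both derive \eqref{eq:crao4ulrs0} from Proposition~\ref{prop.N1} via multiplicativity, telescope to get $\mathcal{D}_1^*(0)=\prod_p\lim_K\tilde N_1(p^K)$, and identify the local limit with $p^{2m_p}\sigma_p$ by the $p^{8m_p}$-to-one lifting from $\bmod\,p^{K-m_p}$ to $\bmod\,p^K$. Your write-up simply fills in the multiplicativity and lifting details that the paper leaves implicit.
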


In the computation of the local densities, we will encounter the following p-adic integral, 
\begin{align}\label{sigmapDef}
  \sigma_p(\mathbf{c}, d) =
  &\  \int_{\mathcal{O}_{K, p}^2} \mathbbm{1}_{ \substack{(\beta_1, \beta_2)\in V_{p^k}
  \\ d | \ensuremath{\boldsymbol\ell}(\beta_1\beta_2) }}
  \delta(\det(\mathbf{c}, \ensuremath{\boldsymbol\ell}(\beta_1\beta_2)))
  \, d \beta_1 \, d \beta_2\nonumber\\
  =&\ \lim_{k\rightarrow\infty} \frac{1}{p^{7k}} \sum_{\beta_1, \beta_2 \in V_{p^k}\\ p^k| \det(\mathbf{c}, \ensuremath{\boldsymbol\ell}(\beta_1\beta_2))\\ p^{d_p}| \ensuremath{\boldsymbol\ell}(\beta_1\beta_2) }1.
\end{align}
Here $d_p = v_p(d)$.

\begin{proposition}\label{prop:D_2_props}
  For $\mathbf{c}$ primitive and $d\ge 1$, $\mathcal{D}_2^*(s; \mathbf{c}, d)$ has an analytic continuation to $\Re(s) > -2$.
  Furthermore, for $\delta > 0$, all $\Re(s) > -2 + \delta$ satisfy the bound
  \begin{equation}\label{eq:cuhe1u1b0j}
    \mathcal{D}_2^*(s; \mathbf{c}, d) \ll_\delta (|\mathbf{c}| d)^{o(1)} M^4(c_1^4 + c_2^4, d).
  \end{equation}
  Furthermore, we have that
  \begin{equation}\label{eq:cuhe1zqnz0}
    \mathcal{D}_2^*(0; \mathbf{c} ,d) = M\prod_p \sigma_p(\mathbf{c}, d).
  \end{equation}

\end{proposition}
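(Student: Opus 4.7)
The plan is to exploit the multiplicativity of $\tilde N_2(\mathbf{c}, d; q)$ in $q$ to factor $\mathcal{D}_2^*(s; \mathbf{c}, d)$ as an Euler product $\prod_p L_p(s)$, where each local factor
\[
L_p(s) = (1 - p^{-s})\sum_{k \ge m_p} \tilde N_2(\mathbf{c}, p^{d_p}; p^k) p^{-ks}
\]
is analyzed separately, with $d_p = v_p(d)$ and $m_p = v_p(M)$. This mirrors the strategy used for $\mathcal{D}_1^*$ in Proposition \ref{lemma:cran7rw9t5}, but now the bounds from Proposition \ref{prop:S20} and the interaction with the greatest common divisor $(c_1^4 + c_2^4, d)$ play the central role.

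First I would perform an Abel summation on $L_p(s)$, rewriting it as
\[
L_p(s) = p^{-m_p s}\tilde N_2(\mathbf{c}, p^{d_p}; p^{m_p}) + \sum_{k \ge m_p} p^{-(k+1)s}\Delta_k,
\]
where $\Delta_k := \tilde N_2(\mathbf{c}, p^{d_p}; p^{k+1}) - \tilde N_2(\mathbf{c}, p^{d_p}; p^k)$. The second bound of Proposition \ref{prop:S20} gives $|\Delta_k| \ll (d_p + k + 1)p^{4m_p - 2d_p - 3k - 3 + \ell}$ with $p^{\ell} = (c_1^4 + c_2^4, p^{d_p + k + 1})$. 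In the worst-case regime where $\ell = d_p + k + 1$ still grows linearly with $k$, the ratio between successive summands is $p^{-2 - \Re(s)}$; once $\ell$ stabilizes at $t_p := v_p(c_1^4 + c_2^4)$, the ratio improves to $p^{-3 - \Re(s)}$. Either way, absolute convergence of the Abel series holds for $\Re(s) > -2$, yielding the meromorphic continuation.

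For the bound \eqref{eq:cuhe1u1b0j}, at $\Re(s) \ge -2 + \delta$ I would use $|p^{-(k+1)s}| \le p^{(2-\delta)(k+1)}$ to bound each term. The first term in the Abel expansion is $\ll (d_p + m_p + 1)p^{4m_p}$ by the first estimate of Proposition \ref{prop:S20}, and the dominant summand can be analyzed in two regimes, $k + 1 \le t_p - d_p$ and $k + 1 > t_p - d_p$, to show the total is $\ll p^{4m_p + \min(t_p, d_p)} (d + |\mathbf{c}|)^{o(1)}$. At primes $p \nmid Mcd$, the same computation shows $L_p(s) = 1 + O(p^{-1-\delta})$, so the Euler product converges absolutely. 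Multiplying over $p$ gives $M^4 \cdot (c_1^4 + c_2^4, d)$ times polylog factors, which are absorbed into $(|\mathbf{c}|d)^{o(1)}$.

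Finally, at $s = 0$ the Abel telescoping collapses to $L_p(0) = \lim_{k \to \infty} \tilde N_2(\mathbf{c}, p^{d_p}; p^k)$. A $p$-adic volume computation analogous to that in the proof of Proposition \ref{lemma:cran7rw9t5} relates this limit to $\sigma_p(\mathbf{c}, d)$, where one must carefully account for the discrepancy between the condition $p^{d_p + k - m_p} | \det$ appearing inside $\tilde N_2$ and the condition $p^k | \det$ appearing in the definition of $\sigma_p(\mathbf{c}, d)$, as well as the $p^{6d_p}$ factor in the normalization; taking the product over $p$ then yields \eqref{eq:cuhe1zqnz0}. The main technical obstacle is the case analysis in the bound \eqref{eq:cuhe1u1b0j}: since $(c_1^4 + c_2^4, d) = \prod_p p^{\min(t_p, d_p)}$, one needs to extract exactly the factor $p^{\min(t_p, d_p)}$ out of the two competing regimes in the $\ell$ dependence, without accruing additional losses in either the $t_p$ or $d_p$ direction, which would break the product structure.
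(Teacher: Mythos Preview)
Your proposal is correct and takes essentially the same approach as the paper, which simply cites the difference bound \eqref{eq:cuhe1uxr7w} from Proposition~\ref{prop:S20} for the analytic continuation and the bound, and refers back to the proof of \eqref{eq:crb6w3pal3} for the evaluation at $s=0$. Your Abel summation of $L_p(s)$ is exactly the local version of the coefficient identity $N_2^*(\mathbf{c},d;p^k) = \tilde N_2(\mathbf{c},p^{d_p};p^k) - \tilde N_2(\mathbf{c},p^{d_p};p^{k-1})$, and your two-regime analysis of $\ell$ is the right way to read off convergence from \eqref{eq:cuhe1uxr7w}; if anything, the case analysis you flag as the main obstacle is less delicate than you fear, since the exponent $4m_p - 2d_p - 3k - 3 + \ell$ in \eqref{eq:cuhe1uxr7w} already carries a spare factor of $p^{-2d_p}$ that absorbs the worst case $\ell = d_p + k + 1$ with room to spare.
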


\begin{lemma}\label{lemma:crb7r6zwci}
  We have 
  \begin{equation}\label{eq:crb7r7g0xm}
    \sigma_p(\mathbf{c}, d) = \tau_p(\mathbf{c} d),
  \end{equation}
  where
  \begin{equation}
    \tau_p(\mathbf{v}) :=  \frac{1}{(v_1,v_2,p^\infty)}\sum_{k\ge 0} S_p(\mathbf{v};k)
  \end{equation}
  for $\mathbf{v} = (v_1, v_2)\in \mathbb{Z}^2\setminus\{(0,0)\}$, and 
  \begin{equation}\label{SDef}
    S_p(\mathbf{v};k) := \frac{1}{p^{9k+8\max\{0,m_p-k\}}} \sum_{\beta_1, \beta_2 \in V_{p^k}}\sum_{w\, (p^k)}
    \sum_{\mathbf{a}\, (p^k) \\ (a_1, a_2, p) = 1 } e_{p^k} (\langle \ensuremath{\boldsymbol\ell}(\beta_1\beta_2) - \mathbf{v} w,  \mathbf{a} \rangle)
  \end{equation}
  satisfies
  \begin{equation}\label{eq:crb7r7lmsp}
    S_p(\mathbf{v};k)\ll (k + 1) (v_1^4 + v_2^4, p^k)p^{-3k}.
  \end{equation}
  
\end{lemma}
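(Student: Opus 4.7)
The plan is to establish the two claims of the lemma separately, using orthogonality for the identity $\sigma_p(\mathbf{c}, d) = \tau_p(\mathbf{c} d)$ and combining orthogonality with the factorization results of \S\ref{sec:number-fields} for the bound on $S_p$.

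For the identity, I would start from the finite-level approximation
\[\sigma_p(\mathbf{c}, d) = \lim_{k \to \infty} \frac{1}{p^{7k}} \#\bigl\{(\beta_1, \beta_2) \in V_{p^k} : p^{d_p} \mid \ensuremath{\boldsymbol\ell}(\beta_1\beta_2),\ p^k \mid \det(\mathbf{c}, \ensuremath{\boldsymbol\ell}(\beta_1\beta_2))\bigr\}.\]
Because $\gcd(c_1, c_2) = 1$, the two conditions on $\ensuremath{\boldsymbol\ell}(\beta_1 \beta_2)$ combine for $k$ large into the single condition that $\ensuremath{\boldsymbol\ell}(\beta_1 \beta_2) \equiv \mathbf{v} w \pmod{p^k}$ for some $w$, with $\mathbf{v} = \mathbf{c} d$, and each valid $(\beta_1, \beta_2)$ admits exactly $(v_1, v_2, p^k) \to (v_1, v_2, p^\infty)$ choices of $w$, which produces the leading factor of $\tau_p$. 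Detecting this congruence via additive characters and decomposing the dual variable $\mathbf{a} = p^{k - k'} \mathbf{a}'$ with $\mathbf{a}'$ primitive modulo $p$ converts the count precisely into $\sum_{k' \ge 0} S_p(\mathbf{v}; k')$; the factor $p^{8 \max(0, m_p - k')}$ in the normalization of $S_p$ is calibrated to absorb the lifts of $(\beta_1, \beta_2)$ from $V_{p^{k'}}$ into $V_{p^k}$ in either regime $k' \ge m_p$ or $k' < m_p$.

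For the bound on $S_p(\mathbf{v}; k)$, I first execute the $w$-sum, producing the factor $p^k \mathbbm{1}_{p^k \mid \langle \mathbf{a}, \mathbf{v}\rangle}$. Rewriting the exponent as $\langle \mathbf{a}, \ensuremath{\boldsymbol\ell}(\beta_1 \beta_2)\rangle = \langle \beta_1 \beta_2, \gamma\rangle$ with $\gamma = a_1 + a_2 \zeta$, the inner sum becomes an $\mathcal{O}_K$-valued Gauss-type sum $\sum_{(\beta_1, \beta_2) \in V_{p^k}} \psi_{p^k}(\gamma \beta_1 \beta_2)$. Summing over $\beta_2$ via orthogonality of $\psi$ on $\mathcal{O}_K/p^k\mathcal{O}_K$ (a perfect pairing because $\delta_K$ generates the different) collapses this to a count of $\beta_1 \in V_{p^k}$ for which $\gamma \beta_1$ is divisible by a suitable power of $p$ in $\mathcal{O}_K$, and since $\mathbf{a}$ is primitive modulo $p$, Lemma \ref{lem.Only1Prime} identifies this count with a multiple of $(N(\gamma), p^k) = (a_1^4 + a_2^4, p^k)$, the multiple being cancelled by the compensating normalization in $S_p$. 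The estimation of $S_p$ thus reduces to bounding
\[\sum_{\substack{\mathbf{a} \text{ prim.}(p^k) \\ p^k \mid \langle \mathbf{a}, \mathbf{v}\rangle}} (a_1^4 + a_2^4, p^k),\]
which I would attack by stratifying on $s = \min(k, v_p(a_1^4 + a_2^4))$: the set $\{v_p(a_1^4 + a_2^4) \ge s\}$ consists of at most $4$ lines through the origin modulo $p^s$ (corresponding to 4th roots of $-1 \pmod p$, which vanish unless $p \equiv 1 \pmod 8$), and intersecting with the linear constraint $\langle \mathbf{a}, \mathbf{v}\rangle \equiv 0 \pmod{p^k}$ produces the factor $(v_1^4 + v_2^4, p^k)$; summing over the $O(k)$ values of $s$ yields the $(k+1)$.

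The main obstacle will be the bookkeeping in this final count: the interaction between the linear constraint from $\langle \mathbf{a}, \mathbf{v}\rangle$ and the quartic constraint from $(a_1^4 + a_2^4, p^k)$ depends on the splitting type of $p$ in $K$ (split, partially split, inert, or ramified at $p = 2$) as well as on the valuations of the components of $\mathbf{v}$, requiring case analysis to show the overlap is controlled by $(v_1^4 + v_2^4, p^k)$ and no worse. The auxiliary complication is the $V_{p^k}$ condition when $k < m_p$, which forces the inner $\beta$-sum to collapse to a single evaluation at $(\beta_1', \beta_2')$; the compensating factor $p^{8(m_p - k)}$ in the definition of $S_p$ is precisely what is needed to maintain the uniform bound in that regime.
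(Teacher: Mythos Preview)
Your proposal is correct and follows essentially the same route as the paper. The paper makes the reduction of the two conditions on $\ensuremath{\boldsymbol\ell}(\beta_1\beta_2)$ to the single condition $\ensuremath{\boldsymbol\ell}(\beta_1\beta_2)\equiv \mathbf{c}d\,w\ (p^k)$ explicit via an $\SL_2(\mathbb{Z})$ change of variables (rather than just invoking $(c_1,c_2)=1$), and then recovers $\sum_k S_p(\mathbf{v};k)$ by observing that the sum telescopes rather than by Fourier-decomposing $\mathbf{a}$ by its $p$-content; for the bound, the paper detects $\beta_2\equiv\beta_2'\ (p^{m_p})$ with characters before summing (equivalent to your direct coset sum) and organizes the final count by the four cases on $(v_p(v_1),v_p(v_2))$ rather than stratifying on $s=v_p(a_1^4+a_2^4)$, but the substance is the same and your anticipated obstacles are exactly where the paper's case analysis sits.
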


\begin{proof}[Proof of Proposition \ref{lemma:cran7rw9t5}]
  The absolute convergence follows immediately from the bound \eqref{eq:crao4ulrs0}, which itself
  follows from Proposition \ref{prop.N1}.

  Now, for the product of local factors, note that $\mathcal{D}_1^*(0)$, due to the absolute
  convergence implied by \eqref{eq:crao4ulrs0}, satisfies
  \begin{align}
    \mathcal{D}_1^*(0) = \prod_p \biggl( \sum_{k\ge v_p(M) } N_1^*(p^k) \biggr)
    &= \prod_p \lim_{k\to\infty} \tilde N_1(p^k)
    = \prod_p \lim_{k\to\infty}\frac{1}{p^{6k}}\sum_{ \beta_1, \beta_2(p^k) \\ \ensuremath{\boldsymbol\ell}(\beta_1\beta_2) \equiv 0(p^{k - v_p(M)})} 1 \\ 
    &= M^2\prod_p \lim_{k\to\infty}\frac{1}{p^{6k}}\sum_{ \beta_1, \beta_2(p^k) \\ \ensuremath{\boldsymbol\ell}(\beta_1\beta_2) \equiv 0(p^{k})} 1
    = M^2\prod_p \sigma_p,
  \end{align}
  as desired.
\end{proof}

\begin{proof}[Proof of Proposition \ref{prop:D_2_props}]
  All of these straightforwardly follow from the bounds of Proposition \ref{prop:S20}, for
  \eqref{eq:cuhe1uxr7w} implies the absolute convergence of $\mathcal{D}_2^*(s; \mathbf{c}, d)$ for
  $\Re(s) > -2$. The bound \eqref{eq:cuhe1u1b0j} also follows from \eqref{eq:cuhe1uxr7w}.

  Finally, \eqref{eq:cuhe1zqnz0}, that $\mathcal{D}_2^*(0; \mathbf{c}, d)$  is a product of local densities, follows identically
  to the proof of \eqref{eq:crb6w3pal3}.
\end{proof}

\begin{proof}[Proof of Lemma \ref{lemma:crb7r6zwci}]

  What follows can be naturally written in terms of $p$-adic integrals, though we shall
  proceed equivalently by taking limits of counts in $V_{p^k}$.
  
  By the limit characterization of $\sigma_p(\mathbf{c}, d)$ in \eqref{sigmapDef}, we can
  perform a shift of $k$ to $d_p+k$ to write 
  \begin{equation}
    \sigma_p(\mathbf{c}, d) = \lim_{k\rightarrow\infty} \frac{1}{p^{7(d_p+k)}} \sum_{\beta_1, \beta_2 \in V_{p^{d_p+k}}\\ p^{d_p+k}| \det(\mathbf{c}, \ensuremath{\boldsymbol\ell}(\beta_1\beta_2))\\ p^{d_p}| \ensuremath{\boldsymbol\ell}(\beta_1\beta_2) }1.
  \end{equation}
  Taking $d \mathbf{u}  = \ensuremath{\boldsymbol\ell}(\beta_1\beta_2)$, we have
  \begin{equation}
    \sigma_p(\mathbf{c}, d) = \lim_{k\rightarrow\infty} \frac{1}{p^{7(d_p+k)}} \mathop{\sum_{\beta_1, \beta_2 \in V_{p^{d_p+k}}}\sum_{\mathbf{u}\, (p^k)}}_{\substack{\ensuremath{\boldsymbol\ell}(\beta_1\beta_2)\equiv d\mathbf{u}\, (p^{d_p+k})\\ c_1u_2\equiv c_2u_1\, (p^k)}}1.
  \end{equation}
  Take integers $a, b$ such that $a c_1 + b c_2 = 1$, which exist since $(c_1,c_2)=1$.
  Performing the change of variables 
  \begin{equation}
    \binom{w_1}{w_2}=\left(\begin{array}{cc}
      a & b \\
      -c_2 & c_1
    \end{array}\right)\binom{u_1}{u_2} \Longleftrightarrow\binom{u_1}{u_2}=\left(\begin{array}{cc}
      c_1 & -b \\
      c_2 & a
    \end{array}\right)\binom{w_1}{w_2}
\end{equation}
  yields that

  \begin{align}
    \sigma_p(\mathbf{c}, d) = &\ \lim_{k\rightarrow\infty} \frac{1}{p^{7(d_p+k)}} \mathop{\sum_{\beta_1, \beta_2 \in V_{p^{d_p+k}}}\sum_{w_1,w_2\, (p^k)}}_{\substack{\langle \beta_1\beta_2,1\rangle\equiv d(c_1w_1-bw_2)\, (p^{d_p+k})\\ \langle \beta_1\beta_2,\zeta\rangle\equiv d(c_2w_1+aw_2)\, (p^{d_p+k})\\ w_2\equiv 0\, (p^k)}} 1\nonumber\\
    =&\ \lim_{k\rightarrow\infty} \frac{1}{p^{7(d_p+k)}} \mathop{\sum_{\beta_1, \beta_2 \in V_{p^{d_p+k}}}\sum_{w\, (p^k)}}_{\ensuremath{\boldsymbol\ell}(\beta_1\beta_2)\equiv \mathbf{c}dw \, (p^{d_p+k})} 1.
  \end{align}
  Reindexing $w$, we conclude
  \begin{equation}
    \sigma_p(\mathbf{c}, d) =\lim_{k\rightarrow\infty} \frac{1}{p^{8d_p+7k}} \mathop{\sum_{\beta_1, \beta_2 \in V_{p^{d_p+k}}}\sum_{w\, (p^{d_p+k})}}_{\ensuremath{\boldsymbol\ell}(\beta_1\beta_2)\equiv \mathbf{c}dw \, (p^{d_p+k})} 1=\lim_{k\rightarrow\infty} \frac{1}{p^{d_p+7k}} \mathop{\sum_{\beta_1, \beta_2 \in V_{p^{k}}}\sum_{w\, (p^{k})}}_{\ensuremath{\boldsymbol\ell}(\beta_1\beta_2)\equiv \mathbf{c}dw \, (p^{k})} 1.
  \end{equation}

  To complete the proof of \eqref{eq:crb7r7g0xm}, we use the telescopic nature of $S(\mathbf{c}d;p^k)$.
  Precisely, for $k\geq 1$, we have 
  \begin{align}
    S_p(\mathbf{c}d; k ) = &\ \sum_{j=0,1}\frac{(-1)^j}{p^{9k+8\max\{0,m_p-k\}}} \sum_{\beta_1, \beta_2 \in V_{p^k}}\sum_{w\, (p^k)}
                             \sum_{\mathbf{a} \, (p^{k-j})} e_{p^{k-j}} (\langle \ensuremath{\boldsymbol\ell}(\beta_1\beta_2) - \mathbf{c}d w,  \mathbf{a} \rangle)\nonumber\\
    =&\ \sum_{j=0,1}\frac{(-1)^j}{p^{9k-9j+8(\max\{0,m_p-k\}+j\charf{k\leq m_p})}} \sum_{\beta_1, \beta_2 \in V_{p^{k-j}}}\sum_{w\, (p^{k-j})}\nonumber\\
                           &\ \quad \quad\sum_{\mathbf{a} \, (p^{k-j})} e_{p^{k-j}} (\langle \ensuremath{\boldsymbol\ell}(\beta_1\beta_2) - \mathbf{c}d w,  \mathbf{a} \rangle).
  \end{align}
  With
  \begin{equation}
    p^{9k+8\max\{0,m_p-k\}}=p^{9k+9-9+8(\max\{0,m_p-k-1\}+\charf{k+1\leq m_p})},
  \end{equation}
  the $j=0$ term in $S_p(\mathbf{c}d;k+1)$ cancels with the $j=1$ term in $S_p(\mathbf{c}d;k)$, so that
  the telescopic sum leads to
  \begin{align}
    \sum_{k\ge 0} S_p(\mathbf{c}d; k) =&\ \lim_{k\to\infty} \frac{1}{p^{9k}} \sum_{\beta_1, \beta_2 \in V_{p^k}}\sum_{w\, (p^k)}\sum_{\mathbf{a} \, (p^k)} e_{p^k} (\langle \ensuremath{\boldsymbol\ell}(\beta_1\beta_2) - \mathbf{c}d w,  \mathbf{a} \rangle)\nonumber\\
    =&\ \lim_{k\to\infty} \frac{1}{p^{7k}} \mathop{\sum_{\beta_1, \beta_2 \in V_{p^k}}\sum_{w\, (p^k)}}_{\ensuremath{\boldsymbol\ell}(\beta_1\beta_2)\equiv \mathbf{c}dw \, (p^k)} 1 = p^{d_p}\sigma_p(\mathbf{c}, d),
  \end{align}
  so we obtain \eqref{eq:crb7r7g0xm}.

  For the bound \eqref{eq:crb7r7lmsp}, write $p^h=(v_1^4+v_2^4,p^k)$ and write $\eta=\min\{m_p,k\}$.
  Recalling that $\ensuremath{\boldsymbol\ell}(\beta)= (\langle \beta,1\rangle, \langle \beta,\zeta\rangle)$ and detecting $\beta_2\equiv \beta_2'$ with additive
  characters, we have
  \begin{multline}
    S_p(\mathbf{v};k) = \frac{1}{p^{9k+4\eta}}\sum_{\gamma\in \mathcal{O}_K/ p^{\eta}\mathcal{O}_K} \sum_{\beta_1, \beta_2 \in \mathcal{O}_K/p^{k}\mathcal{O}_K\\ \beta_1\equiv \beta_1'\, (p^{\eta})}\sum_{w\, (p^k)}
    \sum_{a_1, a_2\, (p^k) \\ (a_1, a_2, p) = 1 } \\
    e_{p^k} (\langle \beta_1\beta_2,a_1+a_2\zeta\rangle+p^{k-\eta}\langle\beta_2'-\beta_2,\gamma\rangle -a_1v_1w - a_2v_2w).
  \end{multline} 
  Summing over $\beta_2$ and $w$, we get
  \begin{equation}
    S_p(\mathbf{v};k) = \frac{1}{p^{4(k+\eta)}}\sum_{\gamma\in \mathcal{O}_K/ p^{\eta}\mathcal{O}_K}\psi_{p^{\eta}}(\gamma\beta_2') \mathop{\sum_{\beta\in \mathcal{O}_K/ p^k\mathcal{O}_K \\ \beta\equiv\beta_1'\, (p^{\eta})}
      \sum_{a_1, a_2\, (p^k) \\ (a_1, a_2, p) = 1 }}_{\substack{(a_1+a_2\zeta)\beta\equiv p^{k-\eta}\gamma\, (p^k)\\ a_1v_1\equiv -a_2v_2\, (p^k)}} 1.
  \end{equation}
  With $\gamma$ completely determined by the other variables, we have
  \begin{equation}
    |S_p(\mathbf{v};k)| \leq \frac{1}{p^{4(k+\eta)}}\mathop{\sum_{\beta\in \mathcal{O}_K/ p^k\mathcal{O}_K}
      \sum_{a_1, a_2\, (p^k)}}_{\substack{(a_1, a_2, p) = 1\\ (a_1+a_2\zeta)\beta\equiv 0\, (p^{k-\eta})\\ a_1v_1\equiv -a_2v_2\, (p^k)}} 1\leq \frac{1}{p^{4k}}
    \sum_{a_1, a_2\, (p^k)\\ (a_1, a_2, p) = 1\\ a_1v_1\equiv -a_2v_2\, (p^k)} N((a_1+a_2\zeta,p^{k})).
  \end{equation}
  Write $v_1=p^s v_1'$ and $v_2=p^tv_2'$ with $(v_1'v_2', p)=1$. We separate into four cases.

  \underline{Case 1:} $s, t\geq k$. Then
  \begin{equation}
    |S_p(\mathbf{v};k)| \leq \frac{1}{p^{4k}}\sum_{a_1, a_2\, (p^k)\\ (a_1, a_2, p) = 1} N((a_1+a_2\zeta,p^{k})) \ll p^{-2k}
  \end{equation}
  by Lemma \ref{lem.sumNorm}.

  \underline{Case 2:} $s=t\leq k$. In this case, the congruence condition with $(a_1, a_2, p)=1$ forces $(a_1a_2,p)=1$. Hence we have 
  \begin{align}
    |S_p(\mathbf{v};k)| \leq &\ \frac{1}{p^{4 k}} \sumCp_{a_1, a_2 \, (p^k)} N((a_1+a_2 \zeta, p^k)) \\
    = & \ \frac{1}{p^{4 k}} \sumCp_{a_1 \, (p^k)} \sumCp_{z \, (p^s)} N((a_1+(-v_1' \overline{v_2'} a_1+z p^{k-s}) \zeta, p^k)) \\
    = &\  \frac{1}{p^{4 k}} \sum_{d=0}^s \sumCp_{a_1 \, (p^k)} \sumCp_{z\, (p^d)} N((a_1+(-v_1' \overline{v_2'} a_1+z p^{k-d}) \zeta, p^k)).
  \end{align}
  Lemma \ref{lem.Only1Prime} implies that this is equal to 
  \begin{equation}
    \frac{1}{p^{4 k}} \sum_{d=0}^s \sumCp_{a_1 \, (p^k)} \sumCp_{z\, (p^d)} (a_1^4+(-v_1' \overline{v_2'} a_1+z p^{k-d})^4, p^k).
  \end{equation}
  Applying a change of variable $z\mapsto a_1z$, this is equal to
  \begin{equation}
    \frac{1}{p^{4 k}} \sum_{d=0}^s \varphi(p^d) \sumCp_{a_1 \, (p^k)} (a_1^4+(-v_1' \overline{v_2'} a_1+ p^{k-d})^4, p^k).
  \end{equation}
  Lemma \ref{lem.sumNorm} then gives us the bound
  \begin{equation}
    |S_p(\mathbf{v};k)| \ll (s+1)p^{s+\ell-3k},
  \end{equation}
  with $\ell$ defined by $p^\ell || (v_1'^4+v_2'^4)$.

  \underline{Case 3:} $t<s<k$. In this case, the congruence condition with $(a_1, a_2, p)=1$ forces $(a_1, p)=1$ and $p^{s-t}||a_2$. Hence we have
  \begin{equation}
    |S_p(\mathbf{v};k)| \leq \frac{1}{p^{4k}} \mathop{\sumCp_{a_1 \, (p^k)} \sumCp_{a_2 \, (p^{k-s+t})}}_{a_1v_1'\equiv -a_2v_2' \, (p^{k-s})} N((a_1 + a_2 p^{s-t} \zeta, p^k))\leq p^{t-3k}.
  \end{equation}

  \underline{Case 4:} $s<t<k$. This case is symmetric to Case 3 and we have the bound
  \begin{equation}
    |S_p(\mathbf{v};k)| \leq p^{s-3k}.
  \end{equation}
  Combining all four cases, we obtain the desired bound \eqref{eq:crb7r7lmsp}.
\end{proof}

It will be useful in later sections to write, for $q\geq 1$, 
\begin{equation}\label{SvqDef}
  S(\mathbf{v}; q) = \prod_{p^k || q} S_p(\mathbf{v};k).
\end{equation}

\subsection{Archimedean local densities}
We shall require the following integral computation in \S\ref{sec:crao4uslaz}, and can be thought of
as amounting roughly to an archimedean analogue of some of the content of Lemma \ref{lemma:crb7r6zwci}.
\begin{lemma}\label{lemma:crb8be0e8e}
  For $\omega\in C_c^{\infty}(\mathbb{R}_{> 0})$, $\Phi\in C_c^\infty(K_\infty^2\setminus\set{(0, 0)})$, we have that
  \begin{multline}
    \int_{\mathbb{R}^2} \omega(|\mathbf{v}|)
    \int_{K_\infty^2} \Phi(x_1^\infty, x_2^\infty) \delta(\langle \mathbf{v}, \ensuremath{\boldsymbol\ell}(x_1^\infty x_2^\infty) \rangle)
    \,d x_1^\infty \,d x_2^\infty \, d \mathbf{v} \\ 
    = 2\tilde\omega(1) \int_{K_\infty^2} \Phi(x_1^\infty, x_2^\infty) |\ensuremath{\boldsymbol\ell}(x_1^\infty x_2^\infty)|^{-1}
    \,d x_1^\infty  \,d x_2^\infty.
  \end{multline}
\end{lemma}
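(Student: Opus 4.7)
The plan is to exchange the order of integration, fixing the variables $(x_1^\infty, x_2^\infty)$ and first performing the $\mathbf{v}$-integral. Writing $\mathbf{w} = \ensuremath{\boldsymbol\ell}(x_1^\infty x_2^\infty) \in \mathbb{R}^2$, which is nonzero almost everywhere on the support of $\Phi$ (the exceptional set being a measure zero subvariety that does not contribute), the inner integral becomes
\begin{equation*}
  J(\mathbf{w}) := \int_{\mathbb{R}^2} \omega(|\mathbf{v}|)\,\delta(\langle \mathbf{v}, \mathbf{w} \rangle)\,d\mathbf{v}.
\end{equation*}

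To evaluate $J(\mathbf{w})$, I would perform the orthogonal change of variables $\mathbf{v} = a\mathbf{w} + b\mathbf{w}^\perp$, which has Jacobian $|\mathbf{w}|^2$ and satisfies $|\mathbf{v}|^2 = (a^2+b^2)|\mathbf{w}|^2$ and $\langle \mathbf{v},\mathbf{w}\rangle = a|\mathbf{w}|^2$. Using the scaling property $\delta(a|\mathbf{w}|^2) = |\mathbf{w}|^{-2}\delta(a)$, one gets
\begin{equation*}
  J(\mathbf{w}) = \int_{\mathbb{R}^2} \omega\bigl(|\mathbf{w}|\sqrt{a^2+b^2}\bigr)\,\delta(a)\,da\,db = \int_{\mathbb{R}} \omega(|\mathbf{w}||b|)\,db = \frac{2}{|\mathbf{w}|}\int_0^\infty \omega(u)\,du = \frac{2\tilde\omega(1)}{|\mathbf{w}|},
\end{equation*}
since $\tilde\omega(1) = \int_0^\infty \omega(u)\,du$.

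Substituting back and using Fubini (justified by compact support of $\Phi$ away from $K_\infty^0$ and of $\omega$ away from $0$, which ensures integrability of $\Phi(x_1^\infty,x_2^\infty)|\ensuremath{\boldsymbol\ell}(x_1^\infty x_2^\infty)|^{-1}$ after verifying that the singular locus $\ensuremath{\boldsymbol\ell}(x_1^\infty x_2^\infty) = 0$ contributes negligibly), the outer integral becomes
\begin{equation*}
  \int_{K_\infty^2} \Phi(x_1^\infty, x_2^\infty)\cdot\frac{2\tilde\omega(1)}{|\ensuremath{\boldsymbol\ell}(x_1^\infty x_2^\infty)|}\,dx_1^\infty\,dx_2^\infty,
\end{equation*}
which is the claimed identity. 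The only mild subtlety — really the one thing to check carefully rather than a true obstacle — is that the integrand $\Phi/|\ensuremath{\boldsymbol\ell}(x_1^\infty x_2^\infty)|$ is genuinely integrable near the locus where $\ensuremath{\boldsymbol\ell}(x_1^\infty x_2^\infty) = 0$, which follows from the fact that this locus has codimension $2$ in $K_\infty^2$ while the singularity is only of order $1$, together with the fact that $\Phi$ is smooth and compactly supported.
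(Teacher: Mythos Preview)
Your proof is correct and follows essentially the same approach as the paper: both reduce to computing $\int_{\mathbb{R}^2}\omega(|\mathbf{v}|)\delta(\langle\mathbf{v},\mathbf{w}\rangle)\,d\mathbf{v}$ for fixed nonzero $\mathbf{w}$, and both evaluate this via a coordinate change---the paper uses the $\SO(2)$ action to rotate $\mathbf{w}$ onto an axis, while you decompose $\mathbf{v}$ in the orthogonal frame $\{\mathbf{w},\mathbf{w}^\perp\}$, which amounts to the same thing.
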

\begin{proof}
  It will be sufficient to show that for any  $\mathbf{w}\in \mathbb{R}^2\setminus\set{(0, 0)}$, we have
  \begin{equation}\label{eq:crb7sc2j4j}
    \int_{\mathbb{R}^2} \omega(|\mathbf{v}|)\delta(\langle \mathbf{v}, \mathbf{w} \rangle) \, d \mathbf{v}
    = 2\tilde\omega(1) |\mathbf{w}|^{-1}.
  \end{equation}
  First, note that this is invariant under the action of $\SO(2)$, so we may suppose
  without loss of generality that $\mathbf{w} = (|w|, 0)$. Letting $u = (u_1, u_2)$, we obtain that
  the LHS of \eqref{eq:crb7sc2j4j} is
  \begin{equation}
    |\mathbf{w}|^{-1}\int_{\mathbb{R}^2} \omega(|u|) \delta(u_1)  \,d u = |w|^{-1} \int_{\mathbb{R}} \omega(u_2) \, d u_2
    = 2\tilde\omega(1)|w|^{-1},
  \end{equation}
  as desired.
\end{proof}
At a couple points, in our computation of the zero frequencies in
\S\ref{sec:crb0xrxfh5}, \S\ref{sec:crao4uslaz}, we will encounter the following
Riesz-type integral. 
\begin{lemma}\label{lemma:crb0xx4e28}
  Suppose that $w\in C_c^{\infty}(\mathbb{R}^n)$ and that $F : \mathbb{R}^n\to \mathbb{R}^m$ is such that $0$ is a regular value of
  $F|_{\mathrm{supp}\,w}$\footnote{For any $a$ in $\mathrm{supp}(w)$ such that $F(a)=0$, the matrix $(\frac{\partial}{\partial x_i}F_j(x)|_{x=a})$ has full rank, here $F|_{\mathrm{supp}\,w}=(F_j)_{j\le m}$.}. Then, we have that the function $U(s)$, defined for $\Re(s) > -m$ as
  \begin{equation}
    U(s) = \int_{\mathbb{R}^n} w(x) |F(x)|^{s} \,d x,
  \end{equation}
  has a meromorphic continuation to $\mathbb{C}$ with simple poles at $-m - 2k$ for $k\ge 0$.
  Furthermore, we have that
  \begin{equation}
    \underset{s = -m - 2k}{\res} U(s) =
    \frac{2\pi^{m / 2}}{4^k k!\Gamma(m/2 + k)} \int_{\mathbb{R}^n} \Delta_F^k w(x) \delta(F(x)) \,d x,
  \end{equation}
  where $\Delta_F$ satisfies $\Delta_F(\phi\circ F) = (\Delta_{\mathbb{R}^m}\phi)\circ F$.
  Furthermore, for any $k\ge 0$, $-m - 2k - 2 < \sigma_1 < \sigma_2 < -m - 2k$, and
  $\sup_{|x|\le 1} |F(x)|\le 1$, we have that $s$ with $\Re(s)\in [\sigma_1, \sigma_2]$ satisfy
  \begin{equation}
    U(s) \ll_{\sigma_1, \sigma_2} 1. 
  \end{equation}
\end{lemma}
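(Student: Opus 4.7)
The plan is to reduce to the model integral $\int_{\mathbb{R}^m} \tilde w(y) |y|^s \, dy$ via a local straightening of $F$, and then apply the standard Riesz distribution analysis. First I would choose a smooth partition of unity $w = w_0 + \sum_{j\ge 1} w_j$ subordinate to a cover of $\mathrm{supp}(w)$, where $w_0$ is supported on $\{x : F(x)\neq 0\}$ and each $w_j$ is supported in a small ball $B_j$ around a point $x_j\in\mathrm{supp}(w)$ with $F(x_j)=0$. The contribution of $w_0$ is entire in $s$ with obvious bounds. For each $w_j$, the regular-value hypothesis lets me invoke the implicit function theorem to find smooth coordinates $(y,z)\in\mathbb{R}^m\times\mathbb{R}^{n-m}$ on $B_j$ in which $F$ becomes the projection $(y,z)\mapsto y$. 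Setting $\tilde w_j(y)=\int w_j(y,z)J_j(y,z)\,dz$ (with $J_j$ the coordinate Jacobian), I obtain
\begin{equation*}
  U_j(s) := \int_{\mathbb{R}^n} w_j(x)|F(x)|^s\,dx
  = \int_{\mathbb{R}^m} \tilde w_j(y)|y|^s \, dy,
\end{equation*}
with $\tilde w_j\in C_c^\infty(\mathbb{R}^m)$.

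Next, I pass to polar coordinates $y=r\omega$ with $r\ge 0$, $\omega\in S^{m-1}$, and define the spherical mean $g_j(r)=\int_{S^{m-1}} \tilde w_j(r\omega)\,d\omega$, which is smooth and even in $r$. For any $K\ge 0$ I Taylor-expand $g_j(r)=\sum_{k=0}^{K-1} g_{j,k} r^{2k} + r^{2K}h_{j,K}(r)$ with $h_{j,K}$ smooth. Fixing any compactly supported cutoff that is $1$ on $\mathrm{supp}(\tilde w_j)$, I split the resulting radial integral into the polynomial part, which contributes $\sum_{k<K} g_{j,k}\cdot A^{s+m+2k}/(s+m+2k)$ plus an entire correction, and the remainder, which is holomorphic for $\Re(s)>-m-2K$. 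This gives the meromorphic continuation to all of $\mathbb{C}$, shows that the only poles are simple and located at $s=-m-2k$ for $k\ge 0$, and yields
\begin{equation*}
  \res_{s=-m-2k} U_j(s) = g_{j,k}.
\end{equation*}

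To identify $g_{j,k}$, I use the classical spherical-moment identity: for any smooth $\tilde w$,
\begin{equation*}
  g_{j,k} = \frac{1}{(2k)!}\int_{S^{m-1}}(\omega\cdot\nabla)^{2k}\tilde w_j(0)\,d\omega = \frac{2\pi^{m/2}}{4^k k!\,\Gamma(m/2+k)} \Delta^k \tilde w_j(0),
\end{equation*}
which is verified by testing against the polynomials $(y\cdot v)^{2k}$ and invoking $\mathrm{SO}(m)$-invariance together with the Beta-integral evaluation $\int_{S^{m-1}}\omega_1^{2k}\,d\omega = 2\pi^{m/2}(2k)!/(4^k k!\,\Gamma(k+m/2))$. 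In the straightened coordinates, $\Delta^k_y$ acting on $\tilde w_j(y)$ at $y=0$ matches the intrinsic operator $\Delta_F^k$ acting on $w_j$ followed by the surface integral against $\delta(F(x))$: this is essentially the coarea/adjoint identity, and follows by integration by parts against a test function approximating $\delta$, together with the defining relation $\Delta_F(\phi\circ F) = (\Delta\phi)\circ F$ which in the straightened chart is just $\Delta_y$. Summing over $j$ (and noting the $w_0$ piece contributes no pole) recovers the claimed residue formula.

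Finally, for the strip estimate, I fix $k$ with $-m-2k-2<\sigma_1<\sigma_2<-m-2k$ and take $K=k+1$ in the Taylor expansion above. On each piece, the polynomial part gives meromorphic expressions whose values on the strip are uniformly bounded (since the poles at $-m-2j$ for $j\le k$ lie at distance bounded below from the strip), while the remainder integrand $r^{s+m-1+2(k+1)}h_{j,k+1}(r)$ is absolutely integrable with operator norm uniformly bounded on $[\sigma_1,\sigma_2]$; the hypothesis $\sup_{|x|\le 1}|F(x)|\le 1$ keeps the domain of $r$-integration in $[0,1]$ after a preliminary truncation, absorbing the dependence on $\mathrm{supp}(w)$ into a constant. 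The main technical obstacle I anticipate is cleanly formulating $\Delta_F$ so that the residue identity is coordinate-invariant; this is handled by observing that both sides of the identity $\Delta^k \tilde w_j(0)=\int \Delta_F^k w_j\,\delta(F)\,dx$ are intrinsic (the left-hand side via the coarea formula, the right-hand side via the adjoint relation to $(\Delta^k\phi)\circ F$), and then verifying the identity in any single chart.
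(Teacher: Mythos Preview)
Your proposal is correct and follows essentially the same route as the paper: reduce to $\int_{\mathbb{R}^m}g(y)|y|^s\,dy$, pass to polar coordinates, Taylor-expand the spherical mean, and read off the residues via the standard identity relating $\Delta^k g(0)$ to $\int \Delta_F^k w\,\delta(F)\,dx$. The only technical difference is that the paper avoids your partition of unity and local straightening: it takes a single cutoff $\chi\in C_c^\infty(\mathbb{R}^m)$ near $0$, sets $w_0=w\cdot(\chi\circ F)$, and defines $g$ directly as the pushforward $g(y)\,dy=F_*(w_0(x)\,dx)$, which is smooth since $dF$ is surjective on $\mathrm{supp}\,w_0$---this collapses your $\sum_j U_j$ into one integral and sidesteps the chart-by-chart Jacobian bookkeeping.
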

\begin{proof}

  If $m > n$, the regularity condition implies that $F$ is nonzero on $\mathrm{supp}\,w$ from which
  the result follows trivially, so suppose $m\le n$ from now on.
  
  Let $K = \mathrm{supp}\, w$, and let $Z = K\cap F^{-1}(0)$. Because $0$ is a regular value of $F$ on $K$,
  there exists a neighborhood $\Omega$ of $Z$ on which $ d F$ is surjective at every point.

  By compactness, there also exists a neighborhood $0\in V \subset \mathbb{R}^m$ such that
  $K\cap F^{-1}(V)\subset \Omega$.

  Now, take $\chi\in C_c^{\infty} (\mathbb{R}^m)$ such that for some $\varepsilon > 0$, $\mathbbm{1}_{ |\cdot| < \varepsilon }\le \chi\le \mathbbm{1}_{ V }$, and take
  $w_0 = w\cdot (\chi\circ F)$, $ w_1 = w - w_0$,
  \begin{equation}
    U_j(s) = \int_{\mathbb{R}^n} w_j(x) |F(x)|^s \,d x .
  \end{equation}
  
  We have that $|F|\gg 1$ on $\mathrm{supp}\, w_1$, so $U_1$ is entire.

  Now, since $d F$ is surjective on $\mathrm{supp}\, w_0$, we have that
  \begin{equation}
    U_0(s) = \int_{\mathbb{R}^n} w_0(x) |F(x)|^s  \,d x  = \int_{\mathbb{R}^m} |y|^s g(y) \,d y .
  \end{equation}
  where $g(y) \,d y  = F_*(w_0(x) \,d x )$ (with $y\in \mathbb{R}^m$, $x\in \mathbb{R}^n$).

  To proceed, note that
  \begin{equation}\label{eq:crb5jmq3ny}
    U_0(s) = \int_{0}^\infty r^{s + m - 1} A(r) \, d r  
  \end{equation}
  where
  \begin{equation}
    A(r) = \int_{|y| = 1} g(yr) \,d y.
  \end{equation}
  We record that the surface area of the $(m - 1)$-sphere equals
  \begin{equation}
    \int_{|y| = 1} 1 \,d y  = \frac{2\pi^{m / 2}}{\Gamma(m/ 2)}. 
  \end{equation}
  Now, Taylor expanding the radial $A(r)$, we have that 
  \begin{equation}
    A(r) = \sum_{k\le N} \frac{2\pi^{m/2}}{4^kk!\Gamma(m/2 + k)} \Delta^kg(0)r^{2k} + O(r^{2N + 2}).
  \end{equation}
  The desired result follows for $\sigma > -m - 2N$ upon executing the integral \eqref{eq:crb5jmq3ny}, noting
  that
  \begin{equation*}
    \Delta^kg(0) = \int_{\mathbb{R}^n} \Delta_F^kw(x) \delta(F(x)) \,d x
  \end{equation*}
  by repeated integration by parts.
\end{proof}

\section{Estimation of $\Sigma_1$}\label{sec:cq6r430mfy}

In this section, we carry out the estimation of $\Sigma_1^S$ in \eqref{eq:crb5jnfhh8} for all $S\subseteq \set{1, 2}$.
The technical core of this paper is contained in the estimation of the nonzero frequencies
$\Sigma_1^{\set{1,2}}$.

\subsection{Estimation of $\Sigma_1^{\{1,2\}}$: nonzero frequencies}\label{sec:cuhge2k5o6}

In this section, we carry out the technical heart of our proof to gain a saving from the
sum over $q$ in $\Sigma_1^{\set{1,2}}$.

Our main result this subsection is as follows.
\begin{proposition}\label{proposition:cq7567d3xk}
  We have the bound
  \begin{equation}
    \Sigma_1^{\set{1,2}}\ll M^{12}L^{47/8} X^{2 - 1/48 + o(1)}
  \end{equation}
\end{proposition}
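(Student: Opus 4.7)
The plan is to follow essentially the strategy outlined in \S\ref{sec:cuhe3z5rxo}. First, use the rapid decay of $I_1$ in $|\tilde\alpha_i|_{\sup}$ from Proposition \ref{proposition:crale1hrji} to reduce to the effective ranges $q\ll MX/D=MX^{2/3}L$ and $|\alpha_i|_{\sup}\ll MX/(DX_i)$, losing only $X^{o(1)}$. A dyadic partition of unity in $q,\alpha_1,\alpha_2$ and partial summation (using the derivative bound from Proposition \ref{proposition:crale1hrji}) reduces the problem to bounding, for smooth weights $\Psi$ supported on $q\sim Q$ with $Q\ll MX^{2/3}L$ and dyadic boxes $\alpha_i\sim A_i$ with $A_i\ll MX/(DX_i)$, the quantity
\begin{equation}
    \mathcal{T}(A_1,A_2,Q)=\sum_{\alpha_1\sim A_1,\alpha_2\sim A_2}\Bigl|\sum_{q\sim Q}S_1(\alpha_1,\alpha_2;q)\Psi(q)\Bigr|,
\end{equation}
from which Proposition \ref{proposition:cq7567d3xk} follows after summing over dyadic ranges provided we save $X^{1/48+o(1)}$ (with $L^{47/8}$ loss) over the trivial bound.

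Next, we turn to automorphic structure. By the multiplicativity of $S_1$ in $q$ and the evaluations in Propositions \ref{proposition:cq6y0ob20l} and \ref{proposition:cq6y0pcoy5}, together with Lemma \ref{proposition:cuhklr5wqk}, we can write, for $q$ coprime to an exceptional modulus $n_3^2\mathrm{Disc}(f_{\alpha_1\alpha_2})M$,
\begin{equation}
    S_1(\alpha_1,\alpha_2;q)=\lambda_{\pi_{\alpha_1\alpha_2}}(q)+E(\alpha_1,\alpha_2;q),
\end{equation}
where $\pi_\alpha$ is the dihedral cuspidal $\GL_2$ automorphic representation attached to the splitting field $k_\alpha$ of $f_\alpha$, and $E$ collects the error terms from $r_\alpha(p)$ in Proposition \ref{proposition:cq6y0ob20l} as well as the prime-power contribution bounded in Proposition \ref{proposition:cq6y0pcoy5}. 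The error contribution to $\mathcal{T}$ should be controlled by combining the bounds in those propositions with the sparsity of the bad primes (namely $p\mid N(\alpha_1\alpha_2)$ or $p\mid\mathrm{Disc}(f_{\alpha_1\alpha_2})$); a routine computation then bounds the error contribution to $\Sigma_1^{\{1,2\}}$ by $X^{2-\delta}$ for some fixed $\delta>0$. We are then reduced to bounding
\begin{equation}
    \mathcal{T}^\ast(A_1,A_2,Q):=\sum_{\alpha_1\sim A_1,\alpha_2\sim A_2}\Bigl|\sum_{q\sim Q}\lambda_{\pi_{\alpha_1\alpha_2}}(q)\Psi(q)\Bigr|,
\end{equation}
with the convention that the contribution of $\alpha_1\alpha_2$ for which $f_{\alpha_1\alpha_2}$ is reducible or has $\Gal\not\simeq S_3$ is placed in the error (these $\alpha$ form a thin set and can be handled by Proposition \ref{proposition:cq6y0pcoy5} directly).

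The heart of the argument is an eighth moment together with Cauchy--Schwarz and Rankin--Selberg. Apply H\"older with exponent $8$:
\begin{equation}
    \mathcal{T}^\ast(A_1,A_2,Q)\le (A_1A_2)^{7/8}\Bigl(\sum_{\alpha_1,\alpha_2}\Bigl|\sum_{q\sim Q}\lambda_{\pi_{\alpha_1\alpha_2}}(q)\Psi(q)\Bigr|^8\Bigr)^{1/8}.
\end{equation}
Using the Hecke multiplicativity of $\lambda_{\pi_\alpha}$, the eighth power expands into a smooth sum of $\lambda_{\pi_\alpha}(q')$ over $q'\sim Q^4$ against divisor-type coefficients $h_8(q')\ll d_8(q')q'^{o(1)}$ (arising from repeated Hecke convolutions and absorbing the central character). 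A second application of Cauchy--Schwarz to separate $h_8$ then gives
\begin{equation}
    \mathcal{T}^\ast(A_1,A_2,Q)^{16}\ll (A_1A_2)^{14+o(1)}Q^{4+o(1)}\sum_{\alpha,\alpha'}\Bigl|\sum_{q'\sim Q^4}\lambda_{\pi_{\alpha}}(q')\lambda_{\pi_{\alpha'}}(q')\Psi'(q')\Bigr|
\end{equation}
with $\alpha=\alpha_1\alpha_2,\alpha'=\alpha_1'\alpha_2'$ ranging over $|\alpha|,|\alpha'|\ll A_1A_2$. For pairs with $k_\alpha\not\simeq k_{\alpha'}$, Proposition \ref{proposition:cuhkndxg2y} applies and yields a saving of $Q^{-2+o(1)}$ over the trivial bound $Q^4$ on the inner sum (the conductors satisfy $N_\alpha\ll (A_1A_2)^{O(1)}$ by Lemma \ref{proposition:cuhklr5wqk}, which is absorbable once $L$ is a sufficiently small power of $X$). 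For pairs with $k_\alpha\simeq k_{\alpha'}$, we use the trivial bound $Q^{4+o(1)}$; ranging over a field invariant (e.g. the constant term $n_0$ of a normalised generator) shows such pairs number at most $O((A_1A_2)^3\cdot X^{o(1)})$, i.e.\ an $O(1/(A_1A_2))$ proportion of the total $(A_1A_2)^4$ pairs.

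Combining these two contributions, taking sixteenth roots, and tracking the effect through the prefactor $X^4/(D^2Q^5)$ in $\Sigma_1^{\{1,2\}}$ along with the $A_1A_2\ll M^2X/D$ and $Q\ll MX^{2/3}L$ ranges, produces a saving of $X^{1/48+o(1)}$ over the trivial bound at the cost of the precise $L^{47/8}$ factor appearing in the statement, after optimisation between the two competing contributions (the collision term and the Rankin--Selberg term). The main obstacle is an accurate bookkeeping of the many $L$-factors that arise from the H\"older/Cauchy-Schwarz step together with the conductor contributions in Proposition \ref{proposition:cuhkndxg2y}, and the correct treatment of the thin set of $\alpha$ where $f_\alpha$ is reducible or has smaller Galois group (where no dihedral cusp form is available); both should be handled by the more robust bounds of Propositions \ref{proposition:cq6y0ob20l} and \ref{proposition:cq6y0pcoy5}, using the fact that these special $\alpha$ contribute $X^{-\delta}$ of the total count.
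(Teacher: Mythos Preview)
Your proposal follows essentially the same route as the paper: dyadic reduction via Proposition~\ref{proposition:crale1hrji}, separation into the generic case $\Gal(f_{\alpha_1\alpha_2})=S_3$ versus the thin exceptional set (handled in the paper by Proposition~\ref{proposition:cralvwgndq}), and then the eighth-moment H\"older / Hecke expansion / Cauchy--Schwarz / Rankin--Selberg argument, which the paper packages as Proposition~\ref{prop:moment_rs_bound}. Two minor points: after expanding $(\sum_{q\sim Q}\lambda(q))^8$ via the Hecke relations the inner variable ranges over $q'\ll Q^8$, not $Q^4$; and the paper handles the ``error'' $E$ not by a direct estimate but by factoring $q=q_0q_b$ with $q_0$ squarefree and coprime to $B_\alpha=MN(\alpha)\Delta_\alpha$, using the half-dimensional bound \eqref{eq:cuhlg52mjc} on the $q_b$-sum --- this is where the careful bookkeeping you allude to actually takes place.
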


We shall rely on the contents of \S\ref{sec:cuhk0c94z7}, and mainly concerned with extracting savings
from $\alpha = \alpha_1\alpha_2$ with $\Gal(f_\alpha) = S_3 $ (recall that $f_\alpha $ was defined in \S\ref{sec:cq72jm8eaf}).
For any $\alpha $ with $\Gal(f_\alpha) $, we let $\pi_\alpha $ be the cuspidal automorphic representation of
Lemma \ref{proposition:cuhklr5wqk} (when $f = f_\alpha $) and let $(\lambda_\alpha(n))_{n\ge 1} $ be the Dirichlet coefficients of $L(s, \pi_\alpha) $.

We will require a bound to show that $\Gal(f_\alpha) = S_3 $ happens for most $\alpha $, which is achieved
through the following bound on the number of $\alpha $ for which this doesn't hold.
\begin{proposition}\label{proposition:cralvwgndq}
  We have that
  \begin{equation}
    \sum_{\alpha\in \mathcal{O}_K \\ |\alpha|_{\sup} < X } \charf{\Gal(f_\alpha)\neq S_3}
    \ll X^3\log^2 X.
  \end{equation}
\end{proposition}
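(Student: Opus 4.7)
The plan is to split the count based on why $\Gal(f_\alpha)\neq S_3$. Writing $\alpha=n_0+n_1\zeta+n_2\zeta^2+n_3\zeta^3$ with $|n_i|\ll X$, there are three cases: (a) $n_3=0$, making $\deg f_\alpha<3$; (b) $n_3\neq 0$ and $f_\alpha$ is reducible over $\mathbb{Q}$; and (c) $n_3\neq 0$, $f_\alpha$ is irreducible, and $\mathrm{Disc}(f_\alpha)$ is a nonzero perfect square (so $\Gal(f_\alpha)=A_3$). Case (a) trivially gives $O(X^3)$ tuples. For case (b), each rational root $-b/a$ of $f_\alpha$ (in lowest terms with $\gcd(a,b)=1$) imposes the linear constraint $n_0a^3 - n_1a^2 b + n_2 a b^2 - n_3 b^3 = 0$. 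Since $(a^3,-a^2b,ab^2,-b^3)\in\mathbb{Z}^4$ is primitive of norm $\asymp\max(|a|,|b|)^3$, the number of integer points in $[-X,X]^4$ on this hyperplane is $O(1 + X^3/\max(|a|,|b|)^3)$. Summing over coprime $(a,b)$, using that the number of coprime pairs at height $n$ is $\asymp n$, yields a contribution $\ll X^3$.

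Case (c) is the substantive one. Fixing $(n_1,n_2,n_3)$ with $n_3\neq 0$, the discriminant viewed as a quadratic polynomial in $n_0$ is
\begin{equation*}
\mathrm{Disc}(f_\alpha) = -27 n_3^2 n_0^2 + (18 n_1 n_2 n_3 - 4 n_2^3) n_0 + (n_1^2 n_2^2 - 4 n_3 n_1^3).
\end{equation*}
Introducing the integer $V = 27 n_0 n_3^2 - 9 n_1 n_2 n_3 + 2 n_2^3$, which (up to the factor $27 n_3^3$) equals the $q$-invariant of the depressed cubic associated to $f_\alpha/n_3$, and completing the square gives, via the algebraic identity
\begin{equation*}
(18 n_1 n_2 n_3 - 4 n_2^3)^2 + 108 n_3^2(n_1^2 n_2^2 - 4 n_3 n_1^3) = 16(n_2^2 - 3 n_1 n_3)^3,
\end{equation*}
the equivalent condition
\begin{equation*}
V^2 + 27 n_3^2 m^2 = 4 k^3, \qquad k := n_2^2 - 3 n_1 n_3.
\end{equation*}
The identity above can be verified directly by expansion and reflects the classical formula $\mathrm{Disc}(f_\alpha) = n_3^4(-4p^3 - 27 q^2)$ with $p = -k/(3 n_3^2)$ and $27 n_3^3 q = V$.

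For each $(n_1,n_2,n_3)$ with $k>0$, the number of valid $n_0$ is thus bounded by the number of representations of $4k^3$ by the binary quadratic form $V^2 + 27 M^2$ (setting $M = n_3 m$); this count equals the number of elements of norm $4k^3$ in an order of $\mathbb{Q}(\sqrt{-3})$, hence is $\ll d(4 k^3)$. For fixed $(n_1, n_2)$ with $n_1\neq 0$, $k$ varies along an arithmetic progression of length $\asymp X$ as $n_3$ ranges over $[-X,X]$; standard estimates for sums of the divisor function over arithmetic progressions give $\sum_{n_3} d(4k^3) \ll X (\log X)^{O(1)}$. Summing over $(n_1, n_2)\in[-X,X]^2$ and handling the degenerate cases $n_1 = 0$ and $k\le 0$ as lower-order contributions yields the claimed bound. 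The main obstacle is establishing the key algebraic identity relating $A^2+108n_3^2B$ to a cube of $n_2^2-3n_1n_3$; once that is in hand, the remaining steps reduce to elementary lattice counting and standard divisor-sum estimates.
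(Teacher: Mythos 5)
Your decomposition (discard $n_3=0$, then handle reducible $f_\alpha$, then irreducible $f_\alpha$ with $\Gal=A_3$ via a square discriminant) is exactly the paper's, and your key identity $(18n_1n_2n_3-4n_2^3)^2+108n_3^2(n_1^2n_2^2-4n_3n_1^3)=16(n_2^2-3n_1n_3)^3$ is correct, so the strategy goes through; but both nontrivial cases are executed differently from the paper, and two points need more care than you give them. In the reducible case the paper simply counts coefficient vectors of products (linear)$\times$(quadratic), losing two logarithms, while you fix the rational root $-b/a$ and count lattice points on the hyperplane $n_0a^3-n_1a^2b+n_2ab^2-n_3b^3=0$; your bound $O(1+X^3/\max(|a|,|b|)^3)$ is true but does \emph{not} follow merely from the normal vector being primitive of size $\asymp\max(|a|,|b|)^3$ (a hyperplane of large covolume can still contain $\gg X^2$ points of the box if its lattice is skew, e.g.\ normal $(1,0,0,N)$): you need the extra observation that $(b,a,0,0)$, $(0,b,a,0)$, $(0,0,b,a)$ are three independent solution vectors of length $\asymp\max(|a|,|b|)$, which together with the covolume forces all three successive minima to be $\asymp\max(|a|,|b|)$ and hence the stated count; with that supplied, your $O(X^3)$ for this case is in fact sharper than the paper's $X^3\log^2X$. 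In the $A_3$ case the paper completes the square in $n_3$ (fixing $n_0,n_1,n_2$), so the solution count is bounded pointwise by $d(|B^2-4AC|)$ and only a trivial average over $X^3$ triples is needed; you complete the square in $n_0$, which buys the clean right-hand side $4k^3$ with $k=n_2^2-3n_1n_3$, but then $d(4k^3)$ must be averaged over $n_3$, i.e.\ over $k$ in an arithmetic progression of modulus $3|n_1|$, so you need Shiu-type divisor estimates and the natural output is $X^3(\log X)^{O(1)}$ with exponent about $3$ (the mean of $d(k^3)$ is $\asymp\log^3$), slightly above the stated $X^3\log^2X$. That shortfall is harmless for every application in the paper (only $X^{3+o(1)}$ is ever used), but as written your argument proves a marginally weaker bound than the proposition claims, whereas the paper's choice of variable avoids the progression machinery altogether.
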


\begin{proof}

  The case of $\langle \alpha,1\rangle=0 \Leftrightarrow \deg f_\alpha < 3$ can clearly be disposed of, so we may suppose that
  $\deg f_\alpha = 3$ from now on.
  
  We begin by disposing of the case that $|\Gal(f_\alpha)| < 3$, the case of 
  $f_\alpha$ reducible over $\mathbb{Q}$. This contribution we bound by summing over splittings
  of the polynomial.

  In particular, we have that
  \begin{equation}\label{eq:cq7573lpyz}
    \sum_{\alpha\in \mathcal{O}_K \\ |\alpha|_{\sup} < X } \charf{|\Gal(f_\alpha)| < 3}
    = \sum_{x,y, u, v, w\in \mathbb{Z} \\ wy \neq 0 \\  |(x + y\zeta)(u + v\zeta + w\zeta^2)|_{\sup}\ll X} 1
    = \sum_{x,y, u, v, w\in \mathbb{Z} \\ uwxy \neq 0 \\  |(x + y\zeta)(u + v\zeta + w\zeta^2)|_{\sup}\ll X} 1 + O(X^3)
  \end{equation}
  Note that the bound on $|\cdot|_{\sup}$ in the last two sums  forces $ux, wy \ll X $.
  For any fixed $x, y, u, w$, looking at the $\zeta^2$-coefficient implies the bound forces that
  \begin{equation}
    |vy + wx| \ll X.
  \end{equation}
  That $y \neq 0$ implies that the second inequality can hold for at most $O(X)$-many $v\in \mathbb{Z}$.
  Therefore, we have that \eqref{eq:cq7573lpyz} is 
  \begin{equation}
    \ll \sum_{x, y, u, w\ll X \\ uwxy \neq 0 \\ ux, wy \ll X } X + O(X^3)\ll X^3\log^2X.
  \end{equation}

  It remains therefore to bound the size of
  \begin{equation}
    \sum_{\alpha\in \mathcal{O}_K \\ |\alpha|_{\sup} < X }\charf{\Gal(f_\alpha) = A_3}.
  \end{equation}
  Note that a polynomial with Galois group $A_3$ has square discriminant, so it
  suffices to bound
  \begin{equation}\label{eq:cq7578pbd3}
    \sum_{|n_0|,\dots, |n_3| \ll X \\ n_3 \neq 0 } \sum_{m }
    \charf{n_1^2n_2^2 - 4 n_1^3n_3 - 4n_0n_2^3 - 27n_0^2n_3^2 + 18n_0n_1n_2n_3 = m^2},
  \end{equation}
  Letting $A = -27n_0^2, B = 18n_0n_1n_2 - 4n_1^3, C = n_1^2n_2^2 - 4n_0n_2^3$. The number of
  $(n_i)_{0\le i\le 2}$ with $ABC = 0$ can be checked to be $\ll X^2$, so we'll suppose from now
  on that $ABC \neq 0$.
  The equation in the indicator function then may be rewritten as
  \begin{equation}
    m^2 = An_3^2 + B n_3 + C
  \end{equation}
  so completing the square yields that
  \begin{equation}
    (2An_3 + B)^2 - 4A m^2 = B^2 - 4AC.
  \end{equation}
  Letting $U = 2An_3 + B, V = 2m$, this is equivalent to
  \begin{equation}\label{eq:cq7578rdes}
    U^2 + 3 (3n_0 V)^2 = B^2- 4AC.
  \end{equation}
  The number of $n_0, n_1, n_2$ with $B^2 - 4AC = 0$ is at most $O(X^2)$. If
  $B^2 - 4AC \neq 0$, the number of $n_3$ satisfying \eqref{eq:cq7578rdes} is at most
  $d(|B^2 - 4AC|)$. It follows that \eqref{eq:cq7578pbd3} is at most
  \begin{equation}
    \sum_{|n_0|, |n_1|, |n_2|\ll X\\ B^2 - 4AC \neq 0 } d(|B^2 - 4AC|) \ll X^3\log X,
  \end{equation}
  from which the desired result follows.

\end{proof}

\begin{proposition} \label{prop:moment_rs_bound}
  We have that for any $Y > 0, B\ge 1 $ that
  \begin{equation}
    \sum_{\alpha\sim Y \\ h | N(\alpha) \\ \Gal(f_\alpha) = S_3 }
    \biggl|\sumSf_{\substack{q\sim Q \\  (q, B N(\alpha)) = 1}} \lambda_\alpha(q) \biggr|\ll
    \biggl( \frac{Y^4}{h} Q^{3/4}Y^{3/8} + \frac{Y^{63/16}}{h^{15/16}} Q \biggr)(B QY)^{o(1)}.
  \end{equation}
\end{proposition}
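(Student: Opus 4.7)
My plan is to prove Proposition \ref{prop:moment_rs_bound} via an eighth-moment argument, combined with Proposition \ref{proposition:cuhkndxg2y} for off-diagonal pairs and a crude counting bound for the diagonal pairs of cubic fields. Writing $N_\alpha = \#\{\alpha : \alpha \sim Y, h \mid N(\alpha), \Gal(f_\alpha) = S_3\} \ll Y^4/h$ and denoting the inner sum by $S_\alpha$, the first step is H\"older's inequality:
\begin{equation*}
  \sum_\alpha |S_\alpha| \le N_\alpha^{7/8}\Bigl(\sum_\alpha |S_\alpha|^8\Bigr)^{1/8},
\end{equation*}
reducing the task to bounding the eighth moment of $S_\alpha$.

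For the eighth moment, I will expand $(\sum_q \lambda_\alpha(q))^8$ using multiplicativity and repeated application of the Hecke relation $\lambda_\alpha(m)\lambda_\alpha(n) = \sum_{e \mid (m,n)} \omega_{\pi_\alpha}(e) \lambda_\alpha(mn/e^2)$. The contribution of tuples $(q_1, \ldots, q_8)$ that are pairwise coprime (which dominates) takes the form $\sum_{m \sim Q^8} d_8^\ast(m) \lambda_\alpha(m)$, where $d_8^\ast(m) \le d_8(m)$ is an $\alpha$-independent combinatorial coefficient; the remaining contributions, where some of the $q_i$ share primes, carry additional factors of $\omega_{\pi_\alpha}$ but are supported on $m$ with nontrivial squarefull part and contribute acceptable error terms by standard estimates. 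Summing over $\alpha$ and applying Cauchy--Schwarz in $m$ yields
\begin{equation*}
  \sum_\alpha |S_\alpha|^8 \ll (QY)^{o(1)}\Bigl(\sum_{m \sim Q^8} d_8(m)^2\Bigr)^{1/2}\Bigl(\sum_{\alpha, \alpha'}\Bigl|\sum_{m \sim Q^8}^\ast \lambda_\alpha(m) \lambda_{\alpha'}(m)\Bigr|\Bigr)^{1/2},
\end{equation*}
with the first factor bounded by $Q^{4+o(1)}$ via standard estimates on $\sum d_8(m)^2$.

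To handle the inner double sum, I will split $(\alpha, \alpha')$ according to whether $k_\alpha \simeq k_{\alpha'}$. When $k_\alpha \not\simeq k_{\alpha'}$, the representations $\pi_\alpha, \pi_{\alpha'}$ are non-isomorphic, so Proposition \ref{proposition:cuhkndxg2y} applies with parameter $Q \leftarrow Q^8$ and conductors $N_i \ll Y^6$ (using $N_i \mid n_3^2\,\mathrm{Disc}(f_\alpha)$ from Lemma \ref{proposition:cuhklr5wqk}, with $\mathrm{Disc}(f_\alpha) \ll Y^4$ and leading coefficient $\ll Y$), giving $|\sum_m \lambda_\alpha(m) \lambda_{\alpha'}(m)| \ll Q^{4+o(1)} Y^{6+o(1)}$ per pair; summed over $\ll N_\alpha^2$ off-diagonal pairs this contributes $\ll N_\alpha^2 Q^{4+o(1)} Y^{6+o(1)}$. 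For diagonal pairs with $k_\alpha \simeq k_{\alpha'}$, I will use the trivial bound $|\sum_m \lambda_\alpha(m) \lambda_{\alpha'}(m)| \le \sum_m d(m)^2 \ll Q^{8+o(1)}$ together with the count $N_{\mathrm{diag}} \ll (\max_k T_k(Y)) \cdot N_\alpha$, where $T_k(Y) = \#\{\alpha \sim Y : k_\alpha \simeq k\}$. A crude argument---fixing three of the four coefficients of $f_\alpha$ and using that the fourth is essentially determined by the existence of a root in $k$---yields $T_k(Y) \ll Y^3$, hence $N_{\mathrm{diag}} \ll Y^7/h$.

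Combining these estimates, the eighth moment is bounded by $Q^{4+o(1)}(N_{\mathrm{diag}}^{1/2} Q^4 + N_\alpha Q^2 Y^{3+o(1)})$. Taking the eighth root and multiplying by $N_\alpha^{7/8}$ yields $(N_\alpha^{7/8} N_{\mathrm{diag}}^{1/16} Q + N_\alpha Q^{3/4} Y^{3/8})(QY)^{o(1)}$, which on substituting $N_\alpha \ll Y^4/h$ and $N_{\mathrm{diag}} \ll Y^7/h$ becomes exactly the claimed bound, with the term $Y^{63/16}/h^{15/16} \cdot Q$ arising from the diagonal and $Y^4/h \cdot Y^{3/8} Q^{3/4}$ from the Rankin--Selberg off-diagonal. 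The main obstacle lies in the eighth-moment expansion: isolating the $\alpha$-independent leading coefficient $d_8^\ast$ from the $\alpha$-dependent Hecke corrections, and verifying that the latter contribute acceptable error terms. The diagonal count $T_k(Y) \ll Y^3$ is secondary but must also be established, e.g.\ by parametrizing $\alpha$ with $k_\alpha \simeq k$ via a root $\theta \in k$ of $f_\alpha$ and counting lattice points on the associated codimension-one subvariety.
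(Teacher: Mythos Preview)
Your overall architecture matches the paper's: H\"older to the eighth moment, Hecke-expand, Cauchy--Schwarz, then split $(\alpha,\alpha')$ into $k_\alpha\simeq k_{\alpha'}$ versus not, invoking Proposition \ref{proposition:cuhkndxg2y} off-diagonal and the crude count $T_k(Y)\ll Y^{3+o(1)}$ on the diagonal. The numerics you arrive at are correct.

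The gap is exactly where you flag it: the ``$\alpha$-dependent Hecke corrections''. Your claim that the non-pairwise-coprime contribution is supported on $m$ with nontrivial squarefull part, and hence negligible, is false on both counts. Take squarefree $q_1=pa$, $q_2=pb$ with $a,b,p$ pairwise coprime and $q_3,\dots,q_8$ coprime to everything: the Hecke term with $d=p$ produces $\omega_{\pi_\alpha}(p)\lambda_\alpha(abq_3\cdots q_8)$ with $abq_3\cdots q_8$ squarefree. More importantly, the total weight of tuples that are not pairwise coprime is a positive proportion of $Q^8$, so a trivial bound on this piece gives $N_\alpha Q^{8+o(1)}$, which is \emph{not} an error term relative to the target eighth-moment bound. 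There is no standard estimate that rescues this.

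The paper's fix is not to discard these terms but to carry the $d$-variable through Cauchy--Schwarz. After the Hecke expansion one has $S_\alpha^8=\sum_d\omega_{\pi_\alpha}(d)\sum_q c(q,d)\lambda_\alpha(q)$ with $c(q,d)$ independent of $\alpha$, supported on $qd^2\asymp Q^8$. One dyadically localizes $q\sim Q^*$ (forcing $d\asymp Q^4/\sqrt{Q^*}$) and applies Cauchy--Schwarz in the pair $(q,d)$; the second factor becomes
\[
\sum_{d}\sum_{\alpha,\alpha'}\omega_{\pi_\alpha}(d)\overline{\omega_{\pi_{\alpha'}}(d)}\sum_q \lambda_\alpha(q)\lambda_{\alpha'}(q)W(q/Q^*),
\]
and now the $\omega$'s are harmless: bound the $d$-sum trivially by its length $Q^4/\sqrt{Q^*}$. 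The rest of your argument (Proposition \ref{proposition:cuhkndxg2y} off-diagonal, $T_k(Y)\ll Y^{3+o(1)}$ on-diagonal) then goes through uniformly in $Q^*$, and the worst case $Q^*\asymp Q^8$ recovers exactly your stated bound. So the repair is a one-line change to where you place Cauchy--Schwarz, not an error estimate.
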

\begin{proof}
  We shall omit the restriction $\Gal(f_\alpha) = S_3 $ throughout this proof; it should be assumed
  to hold.

  By H\"older's inequality, we have that
  \begin{equation}\label{eq:cuhknerlrx}
    \sum_{\alpha\sim Y \\ h | N(\alpha)} \biggl|
    \sumSf_{\substack{q\sim Q \\  (q, B N(\alpha)) = 1}} \lambda_\alpha(q) \biggr|
    \ll h^{o(1)}\biggl(\frac{Y^4}{h}\biggr)^{7/8}
    \biggl( \sum_{\alpha\sim Y \\ h | N(\alpha) }
    \biggl( \sumSf_{\substack{q\sim Q \\  (q, B N(\alpha)) = 1}}  \lambda_\alpha(q) \biggr)^8 \biggr)^{1/8}.
  \end{equation}
  Expanding and applying repeatedly the Hecke relation
  \begin{equation}
    \lambda_\alpha(q_1)\lambda_\alpha(q_2) = \sum_{d | q_1, q_2 } \omega_{\pi_\alpha}(d) \lambda_\alpha \biggl(\frac{q_1q_2}{d^2}\biggr),
  \end{equation}
  we have that
  \begin{equation}\label{eq:cuhknerfn0}
     \biggl( \sumSf_{\substack{q\sim Q \\  (q, B N(\alpha)) = 1}} \lambda_\alpha(q) \biggr)^8
     = \sum_{d\ge 1 } \omega_{\pi_\alpha}(d) \sumSf_{\substack{ q\ll Q^8 \\ (q, B N(\alpha)) = 1 }} \lambda_\alpha(q) c(q, d)
   \end{equation}
   where
   \begin{equation}
     c(q, d) = \sum_{d_1,\dots, d_{7} \\ q_1, \dots, q_8 \\ d_1\dots d_7 = d \\ q_1\dots q_8 = qd^2 }
     \mathbbm{1}_{ \substack{d_j | q_1\dots q_j/(d_1\dots d_{j - 1})^2, q_{j + 1}  \forall j < 8 \\ q_1, \dots q_8 \sim Q} }
   \end{equation}
   is clearly supported on $q\ll Q^8 $, $d\asymp \sqrt{Q^8/q} $ and satisfies $c(q, d)\ll (dq)^{o(1)} $.
   
   We'll split $q $ into dyadic intervals and consider for now just the contribution of $q\sim Q^* $
   for some scale $Q^*\ll Q^8 $. Then, we have that
   \begin{equation}\label{eq:cuhkneq033}
     \sum_{\alpha\sim Y \\ h | N(\alpha)} \biggl( \sumSf_{\substack{q\sim Q \\ (q, B N(\alpha)) = 1 } }
     \lambda_\alpha(q) \biggr)^8
     \ll Q^{o(1)}\sup_{Q^* \ll Q^8}\biggl| \sum_{ d\asymp Q^4/\sqrt{Q^*}   }
     \sumSf_{ \substack{q\sim Q^* \\ (q, B) = 1}} c(q, d) \sum_{\alpha \sim Y \\ h | N(\alpha) \\ (q, N(\alpha)) = 1} \omega_{\pi_\alpha}(d)\lambda_\alpha(q) \biggr|.
   \end{equation}
   We record the trivial bound
   \begin{equation}
     \biggl| \sum_{ d\asymp Q^4/\sqrt{Q^*}   }
     \sumSf_{\substack{ q\sim Q^* \\ (q, B) = 1}} c(q, d) \sum_{\alpha \sim Y \\ h | N(\alpha) \\ (q, N(\alpha)) = 1} \omega_{\pi_\alpha}(d)\lambda_\alpha(q) \biggr|
     \ll (h Q)^{o(1)} \frac{Q^8 Y^4}{h} \sqrt{\frac{Q^*}{Q^8}},
   \end{equation}
   and that we need to beat the trivial bound when $Q^* = Q^8 $ with a power saving.

   Applying the Cauchy-Schwarz inequality, inserting by positivity a smoothing
   $W\in C_c^\infty(\mathbb{R}_{ > 0}) $ with $\mathbbm{1}_{ [1,2 ] } \le W\le \mathbbm{1}_{ [1/2, 3/2] } $, we have that
   \begin{multline}\label{multline:cuhknepnfu}
     \biggl| \sum_{d\asymp Q^4/\sqrt{Q^*}} \sumSf_{\substack{ q\sim Q^* \\ (q, B) = 1}}
     c(q, d) \sum_{\alpha \sim Y \\ h | N(\alpha) \\ (q,N(\alpha))=1} \omega_{\pi_\alpha}(d)\lambda_\alpha(q) \biggr|\\
     \ll Q^{o(1)}(Q^4\sqrt{Q^*})^{1/2}
     \biggl( \sum_{d\asymp Q^4/\sqrt{Q^*} } \sum_{\alpha, \alpha' \sim Y \\ h | N(\alpha), N(\alpha')  }\omega_{\pi_\alpha}(d)\omega_{\pi_{\alpha'}}(d)
     \sumSf_{\substack{(q, B N(\alpha\alpha')) = 1}} \lambda_{\alpha}(q)\lambda_{\alpha'}(q)W \biggl(\frac{q}{Q^*}\biggr)\biggr)^{1/2}.
   \end{multline}
   Applying Proposition \ref{proposition:cuhkndxg2y} when $k_\alpha \neq k_{\alpha'} $ and applying a trivial
   bound otherwise, noting that by Lemma \ref{proposition:cuhklr5wqk}, the levels of
   $\pi_\alpha, \pi_{\alpha'} $ are $\ll Y^6 $, we obtain that
   \begin{multline}\label{multline:cuhknektzw}
      \sum_{d\asymp Q^4/\sqrt{Q^*} } \sum_{\alpha, \alpha' \sim Y \\ h | N(\alpha), N(\alpha')  }\omega_{\pi_\alpha}(d)\omega_{\pi_{\alpha'}}(d)
      \sumSf_{\substack{(q, B N(\alpha\alpha')) = 1}} \lambda_{\alpha}(q)\lambda_{\alpha'}(q)W \biggl(\frac{q}{Q^*}\biggr) \\
      \ll (QY)^{o(1)} Q^4 Y^6 \frac{Y^8}{h^2}
      + Q^{4 + o(1)}\sqrt{Q^*}\sum_{\alpha, \alpha'\sim Y \\ h | N(\alpha), N(\alpha') } \mathbbm{1}_{ k_\alpha\simeq k_{\alpha'} } .
    \end{multline}
    Now, note that by varying the constant term of $\alpha' $ and dropping the restriction
    $h | N(\alpha') $, we have that for every $\alpha $, there are at most $O(Y^{3 + o(1)}) $-many $\alpha'\sim Y$ for
    which $k_{\alpha}\simeq k_{\alpha'} $.
    Therefore, it follows that \eqref{multline:cuhknektzw} is 
    \begin{equation}
      \ll (QY)^{o(1)} \biggl( \frac{Y^8}{h^2} Q^{4} Y^6 + Q^8 \frac{Y^7}{h}\sqrt{\frac{Q^*}{Q^8}} \biggr).
    \end{equation}
    Combining with \eqref{multline:cuhknepnfu}, \eqref{eq:cuhkneq033}, \eqref{eq:cuhknerfn0}, and \eqref{eq:cuhknerlrx}, the desired result follows.
\end{proof}

We are now ready to prove Proposition \ref{proposition:cq7567d3xk}.

\begin{proof}[Proof of Proposition~\ref{proposition:cq7567d3xk}]
  Recall the definition of $\Sigma_1^{\set{1,2}}$ from \eqref{eq:crb5jnfhh8} that
  \begin{equation}
    \Sigma_{1}^{\{1,2\}} = \frac{X^4}{D^2} \sum_{q\ge 1 \\ M | q } \frac{1}{q^5}
    \sum_{\alpha_1, \alpha_2 \neq0} (S_1I_1)(\alpha_1, \alpha_2; q).
  \end{equation}
  By the definition of $I_1$ in \eqref{eq:crb5jnfhh8} (support of $\omega_1$) and Proposition \ref{proposition:crale1hrji}, we can
  impose the restrictions
  \begin{equation}
    q\ll \frac{MX}{D} \quad \text{ and } \quad |\alpha_1\alpha_2|_{\sup} \ll \frac{M^2}{D^2}X^{1+o(1)}
  \end{equation}
  at the cost of a negligible error $O_A(X^{-A})$. Performing dyadic subdivision on $q$ and
  $|\alpha_1\alpha_2|_{\sup}$ at the cost of multiplying the bound by $X^{o(1)}$, it suffices to show that
  for any $Q\ll \frac{MX}{D}$ and $Y\ll M^2X^{1+o(1)}/D^2$, the bound of Proposition \ref{proposition:cq7567d3xk}
  holds for 
  \begin{equation}
    \Sigma_1^{\set{1, 2}}(Q, Y) := \frac{X^4}{D^2}\sum_{q\sim Q \\ M | q} \frac{1}{q^5}
    \sum_{|\alpha_1\alpha_2|_{\sup} \sim Y} (S_1I_1)(\alpha_1, \alpha_2; q).
  \end{equation}
  For convenience at this point, we bring in the scale $T = M X/(QD) $, which was first
  introduced in \S\ref{sec:oscillatory}. We also let $U = M^2X/(D^2Y) $, so that $\tilde\alpha_1, $ $\tilde\alpha_2 $ defined in
  Proposition \ref{proposition:crale1hrji} satisfy $|\tilde\alpha_1\tilde\alpha_2|_{\sup}U \asymp |\alpha_1\alpha_2|_{\sup}/Y\asymp 1 $. The purpose of introducing these
  parameters is to give a transparent indication of where we are relative to the ``generic''
  case $T, U\asymp 1 $.
  
  Let $ \Sigma_{1,\mathrm{gen}}^{\set{1, 2}} $ be the contribution of $\alpha_1, \alpha_2 $ with $\Gal(f_{\alpha_1\alpha_2}) = S_3 $ and let $\Sigma_{1, \mathrm{exc}}^{\set{1,2}} $ be that
  of $\Gal(f_{\alpha_1\alpha_2}) \neq S_3 $. We shall first use Proposition \ref{proposition:cralvwgndq} to bound $\Sigma_{1, \mathrm{exc}}^{\set{1, 2}} $.
  
  By Propositions \ref{proposition:cq6y0pcoy5} and \ref{proposition:crale1hrji}, noting that $\frac{X^4}{D^2Q^5}\asymp T^{5} D^3/(M^5X) $ and pulling out the biggest rational divisor $g$ from $\alpha=\alpha_1\alpha_2$, we have that
  \begin{multline}
    \Sigma_{1, \mathrm{exc}}^{\set{1, 2}}(Q, Y) \ll  \frac{D^3M^3}{X^{1-o(1)}} U^{5/6}T^{2/3} \sum_{g\ll Y } \sum_{\alpha\sim Y/g \\ \Gal(f_\alpha) \neq S_3 \\ \not\exists \, p | \alpha  }
    \sum_{ q\sim Q } (g, q)^3 (q, \Delta_\alpha)^{2/3} \sum_{t | q/(g, q) } t \mathbbm{1}_{ t^2 | N(\alpha) } \\ 
    \ll \frac{D^3M^3}{X^{1-o(1)}} U^{5/6}T^{2/3} 
    \sum_{st \ll Q } s^3t \sum_{g\ll Y \\ s | g } \frac{Q}{(st)^{1/3}}
    \sum_{\alpha\sim Y/g \\ \Gal(f_\alpha)\neq S_3 \\ t^2 | N(\alpha) } 1.
  \end{multline}
  We have dropped the condition $M|q$ by positivity and used the observation that by H\"older's inequality,
  \begin{equation}
    \sum_{q\sim Q \\ st | q } (q, \Delta_{\alpha})^{2/3}\ll \biggl( \sum_{q\sim Q } (q, \Delta_{\alpha}) \biggr)^{2/3}
    \biggl( \sum_{q\sim Q \\ st | q } 1 \biggr)^{1/3} \ll  \frac{Q X^{o(1)}}{(st)^{1/3}}.
  \end{equation}
  Now, we bound the inner sum over $\alpha $ by either ignoring the $\Gal(f_\alpha) \neq S_3 $ condition,
  or by applying Proposition \ref{proposition:cralvwgndq} ignoring the $t^2 | N(\alpha) $ condition, which yields that
  \begin{multline}\label{multline:cuhli7dnls}
    \Sigma_{1, \mathrm{exc}}^{\set{1, 2}} \ll \frac{D^3M^3}{X^{1-o(1)}} U^{5/6}T^{2/3} Q
    \sum_{st\ll Q } s^{8/3}t^{2/3} \sum_{g\ll Y \\ s | g } \min \biggl( \frac{Y^4}{g^4t^2}, \frac{Y^3}{g^3} \biggr) \\ 
    \ll  \frac{D^3M^3}{X^{1-o(1)}} U^{5/6}T^{2/3}Q \sum_{st\ll Q } s^{8/3}t^{2/3}
    \sum_{g\ll Y \\ s | g } \frac{Y^{23/6}}{g^{23/6}t^{5/3}} \\
    \ll
    \frac{D^3M^3}{X^{1-o(1)}}  U^{5/6} T^{2/3} Y^{23/6}Q 
    \ll \frac{M^{35/3}X^{23/6+o(1)}}{D^{17/3}}.
  \end{multline}

  Now, we shall bound $\Sigma_{1, \mathrm{gen}}^{\set{1,2}} $ using Proposition \ref{prop:moment_rs_bound}, for whose
  application we begin preparations. Write $B_\alpha = M N(\alpha) \Delta_\alpha$. We'll be splitting $q = q_0q_b $
  for $(q_0, q_b) = 1 $ with $q_0 $ squarefree and coprime to $B_\alpha $. In particular, $q_b $ is
  such that $p | q_b\implies p^2 | q_b\lor p | B_\alpha $. We record for now the fact that for any $Q_b\geq1$ and
  positive integer $a$, 
  \begin{equation}\label{eq:cuhlg52mjc}
    \sum_{q_b \sim Q_b \\ p | q_b \implies p^2 | q_b\lor p | B_\alpha \\ a | q_b  } 1
    \ll Q_b^{1/2}a^{-1/2} B_\alpha^{o(1)}.
  \end{equation}
  Now, by Proposition \ref{proposition:crale1hrji} and partial summation (along with a further
  splitting into dyadic intervals), we have that
  \begin{equation}
    \Sigma_{1, \mathrm{gen}}^{\set{1,2 }}(Q, Y) \ll
    X^{o(1)}\sup_{Q_0Q_b\asymp Q} |\Sigma_{1, \mathrm{gen}}^{\set{1,2}}(Q_0, Q_b, Y)|,
  \end{equation}
  where
  \begin{multline}
    \Sigma_{1, \mathrm{gen}}^{\set{1,2}}(Q_0, Q_b, Y) \\
    := \frac{D^3M^3}{X} U^{5/6}T^{2/3}
    \sum_{|\alpha_1\alpha_2|_{\sup}\sim Y \\ \Gal(f_{\alpha_1\alpha_2}) = S_3 }
    \, \mathop{\sumSf_{q_0\sim Q_0}\sum_{q_b\sim Q_b}}_{\substack{p | q_b \implies p^2 | q_b\lor p | B_{\alpha_1\alpha_2} \\ M|q_b \\ (q_0, q_bB_{\alpha_1\alpha_2}) = 1}}
    S_1(\alpha_1, \alpha_2; q_0) S_1(\alpha_1, \alpha_2; q_b).
  \end{multline}

  Applications of Propositions \ref{proposition:cq6y0ob20l} and \ref{proposition:cq6y0pcoy5} yield that
  \begin{multline}
    \Sigma_{1, \mathrm{gen}}^{\set{1,2}}(Q_0, Q_b, Y) 
    \ll \frac{D^3M^3}{X^{1-o(1)}}U^{5/6}T^{2/3}
    \sum_{st\ll Q_b } s^3t \sum_{g\ll Y \\ s | g}
    \sum_{|\alpha|_{\sup} \sim Y/g \\ \not\exists \, p | \alpha \\ \Gal(f_{\alpha}) = S_3 \\ t^2 | N(\alpha) } \\ 
    \sumSf_{\substack{q_2 \ll Q_0 \\ (q_2,B_{g\alpha})=1}} r_{g\alpha}(q_2)
    \sum_{q_b\sim Q_b \\ st | q_b \\ p | q_b \implies p^2 | q_b\lor p | B_\alpha  } (q_b, \Delta_{\alpha})^{2/3} 
    \biggl| \sumSf_{\substack{q_1\sim Q_0/q_2 \\ (q_1,B_{g\alpha})=1}} a_{g\alpha}(q_1) \biggr|,
  \end{multline}
  where we have glued $\alpha_1\alpha_2=g\alpha$ with $\alpha$ primitive, and also summed over factorizations
  $q_0 = q_1q_2 $ after applying Proposition \ref{proposition:cq6y0ob20l} to obtain that
  \begin{equation}
    S_1(\alpha_1, \alpha_2; q_0) = \sum_{q_1q_2=q_0} a_{g\alpha}(q_1) r_{g\alpha}(q_2).
  \end{equation}
  Now, observe that since $(q_1, B_{g\alpha}) = 1 $, we have
  $(q_1, g)  = 1 $. Together with $q_1$ square-free, we have \begin{equation}
    a_{g\alpha}(q_1) = a_{\alpha}(q_1) =\lambda_\alpha(q_1).
  \end{equation}
  Moreover, $(q_2, N(g\alpha)) | (q_2, B_{g\alpha}) = 1 $ gives us
  that
  \begin{equation}
    r_{g\alpha}(q_2)\ll q_2^{-1 + o(1)}.
  \end{equation}
  By H\"older's inequality, recalling \eqref{eq:cuhlg52mjc}, we have that
  \begin{equation}
    \sum_{q_b\sim Q_b \\ p | q_b \implies p^2 | q_b\lor p | B_\alpha  \\ st | q_b} (q_b, \Delta_\alpha)^{2/3}
    \ll \biggl( \sum_{q_b\sim Q_b} (q_b, \Delta_\alpha) \biggr)^{2/3}
    \biggl( \sum_{q_b\sim Q_b \\ p | q_b \implies p^2 | q_b\lor p | B_\alpha \\ st | q_b} 1 \biggr)^{1/3}\\ 
    \ll X^{o(1)}\frac{Q_b^{5/6}}{(st)^{1/6}} .
  \end{equation}
  It follows that
  \begin{multline}
    \Sigma_{1, \mathrm{gen}}^{\set{1,2 }} (Q_0, Q_b, Y) \\ 
    \ll  \frac{D^3M^3}{X^{1-o(1)}} U^{5/6} T^{2/3} Q_b^{5/6} 
    \sum_{q_2\ll Q_0 } \frac{1}{q_2} \sum_{st\ll Q_b } s^{17/6} t^{5/6}
    \sum_{g\ll Y \\ s |g } \sum_{|\alpha|_{\sup} \sim Y/g \\ \not\exists \, p | \alpha \\ \Gal(f_\alpha) = S_3 \\ t^2 | N(\alpha) }
    \biggl| \sumSf_{\substack{q_1\sim Q_0/q_2 \\ (q_1,B_{g\alpha})=1}} \lambda_\alpha(q_1) \biggr|.
  \end{multline}
  At this point, we record that we have a trivial bound of 
  \begin{equation}\label{eq:cuhljd04hm}
    \ll \frac{D^3M^3}{X^{1-o(1)}} U^{5/6}T^{2/3} Q_0Q_b^{5/6}
    \sum_{s, t\ge 1 } s^{17/6} t^{5/6}\sum_{g\ll Y\\ s | g } \frac{Y^4}{g^4t^2}
    \ll \frac{M^{12}X^{4 + o(1)}}{D^6} \frac{1}{Q_b^{1/6}},
  \end{equation}
  so getting a power saving over $X^2 $ as long as $Q_b $ is at least sufficiently large power
  of $L $.

  Now, an application of Proposition \ref{prop:moment_rs_bound} yields that
  \begin{multline}
    \Sigma_{1, \mathrm{gen}}^{\set{1,2 }}(Q_0, Q_b, Y) \\ 
    \ll \frac{D^3M^3}{X^{1-o(1)}} U^{5/6}T^{2/3} Q_b^{5/6}\biggl( Q_0^{3/4}Y^{35/8}\sum_{s, t } \frac{1}{s^{37/24}t^{7/6}}
    + Q_0 Y^{63/16} \sum_{s, t } \frac{1}{s^{53/48}t^{25/24}}\biggr) \\
    \ll X^{o(1)} \biggl(Q_b^{1/12} \frac{M^{25/2}X^{33/8}}{D^{13/2}}
    + Q_b^{-1/6} \frac{M^{95/8}X^{63/16}}{D^{47/8}}\biggr)
  \end{multline}
  Combining with \eqref{eq:cuhljd04hm}, this is
  \begin{multline}
    \ll X^{o(1)} M^{12}\biggl(\min \biggl( Q_b^{1/12}\frac{X^{33/8}}{D^{13/2}}, Q_b^{-1/6}\frac{X^4}{D^6} \biggr) + \frac{X^{63/16}}{D^{47/8}} \biggr)
    \ll X^{o(1)}M^{12} \biggl( \frac{X^{49/12}}{D^{19/3}} + \frac{X^{63/16}}{D^{47/8}} \biggr) \\ 
    \ll M^{12}X^{2 + o(1)} \biggl(\frac{L^{19/3}}{X^{1/36}} + \frac{L^{47/8}}{X^{1/48}} \biggr)
    \ll M^{12}L^{47/8}X^{2 - 1/48 + o(1)},
  \end{multline}
  recalling that $L\ll X^{1/100} $ to absorb the first summand into the second
  for the final inequality.
  Combining with \eqref{multline:cuhli7dnls}'s bound of
  \begin{equation}
    \Sigma_{1, \mathrm{exc}}^{\set{1, 2}}\ll M^{35/3}L^{17/3}X^{2 - 1/18 + o(1)}
  \end{equation}
  for the exceptional case yields Proposition \ref{proposition:cq7567d3xk}.
\end{proof}

\subsection{Estimation of $\Sigma_1^{\set{1}}, \Sigma_1^{\set{2}}$: the partial zero frequencies}\label{sec:cuhge2lp8l}

In this section, we estimate $\Sigma_1^{\set{1}}, \Sigma_1^{\set{2}}$. Precisely, we show
\begin{proposition}\label{proposition:cq9d99egtf}
  Suppose that $j\in \set{1, 2}$ and that $q/X_2\gg X^{\varepsilon}$.
  Then, we have that
  \begin{equation}\label{eq:1}
    \Sigma_1^{\set{j}}\ll X^{2 + o(1)} \frac{X}{DX_j^2}.
  \end{equation}
  
\end{proposition}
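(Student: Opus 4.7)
By the symmetry interchanging $(\alpha_1, X_1)$ and $(\alpha_2, X_2)$ in the definitions \eqref{eq:crb5jnfhh8}--\eqref{eq:crb7ay3tq3}, I will reduce to the case $j = 1$, so that $\alpha_1 \neq 0$ and $\alpha_2 = 0$. The vanishing of $\alpha_2$ removes all oscillation from the $x_2^\infty$-integral in the definition of $I_1$, so I perform it first, writing $I_1(\alpha_1, 0; q) = \int_{K_\infty} F_q(x_1^\infty)\, \psi(-X_1 x_1^\infty \alpha_1/q)\, dx_1^\infty$ where
\begin{equation*}
F_q(x_1^\infty) = \phi_1(x_1^\infty) \int_{K_\infty} \omega_1\!\left(\frac{|\ensuremath{\boldsymbol\ell}(x_1^\infty x_2^\infty)| MX}{qD}\right)\phi_2(x_2^\infty)\, dx_2^\infty.
\end{equation*}
The inner integral is supported in a thin two-dimensional tube of measure $O((qD/(MX))^2)$; changing variables to $w = x_1^\infty x_2^\infty$ decouples $x_1^\infty$ from the $\omega_1$-factor, so the same bound holds for all derivatives of $F_q$. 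Integration by parts in $x_1^\infty$ then gives
\begin{equation*}
|I_1(\alpha_1, 0; q)| \ll_A \left(\frac{qD}{MX}\right)^{\!2}\!\left(1 + \frac{X_1|\alpha_1|_{\sup}}{q}\right)^{\!-A}.
\end{equation*}

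For the exponential sum, Proposition \ref{prop.N1} yields the trivial pointwise bound $|S_1(\alpha_1, 0; q)| \leq q^3 \tilde N_1(q) \ll q^3 M^{O(1)}$, but this is too crude. I would refine it using \eqref{multline:crb5zuo9x1} with $\alpha_2 = 0$, in which case the inner $\beta_1$-sum becomes an additive character sum over the annihilator of $x + y\zeta$ in $\mathcal{O}_K/q$. Using Lemma \ref{lem.Only1Prime} to control this annihilator and multiplicativity in $q$, one obtains the sharper bound $|S_1(\alpha_1, 0; q)| \ll q^{1+o(1)} M^{O(1)}$ for $\alpha_1 \neq 0$.

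Plugging these estimates into the definition of $\Sigma_1^{\set{1}}$, the rapid decay of $I_1$ restricts the $\alpha_1$-sum to $|\alpha_1|_{\sup} \ll q/X_1$, contributing $\ll (q/X_1)^4$ lattice points once $q \gtrsim X_1$; the $\omega_1$-support imposes $q \ll MX/D$, and the hypothesis $q/X_2 \gg X^\varepsilon$ removes a small-$q$ range where the argument would require separate treatment. Summing dyadically over the remaining range of $q$ then yields the claimed bound.

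The principal obstacle is that a naive pointwise combination of these bounds overshoots the target by a factor polynomial in $L$. Closing this gap in the way I have sketched requires extracting cancellation from $\sum_{\alpha_1 \neq 0} S_1(\alpha_1, 0; q) I_1(\alpha_1, 0; q)$ beyond absolute values: most naturally by reverse partial Poisson summation in $\alpha_1$, which transforms the dual expression back into a primal lattice-point count, or by isolating the $\alpha_1$-independent leading part of $S_1(\alpha_1, 0; q)$ and handling its contribution via a secondary Poisson-type argument.
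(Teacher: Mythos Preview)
Your proposal correctly identifies the preliminary bounds on $I_1(\alpha_1,0;q)$ and the need to go beyond pointwise estimates, but it is incomplete and misjudges the size of the gap. Combining your bounds $|S_1|\ll q^{1+o(1)}$ and $|I_1|\ll (qD/X)^2$ over $|\alpha_1|_{\sup}\ll q/X_1$ and $q\ll X/D$ gives
\[
\Sigma_1^{\{1\}}\ll \frac{X^4}{D^2}\sum_{q\ll X/D}\frac{q}{q^5}\Bigl(\frac{qD}{X}\Bigr)^{2}\Bigl(\frac{q}{X_1}\Bigr)^{4}\asymp \frac{X^5}{X_1^4D^3},
\]
which exceeds the target $X^3/(DX_1^2)$ by $X^2/(X_1^2D^2)\asymp X^{1/3}L^2$, a genuine power of $X$ and not merely a power of $L$. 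Your closing suggestions---reverse Poisson on the full $\alpha_1$-sum, or separating an $\alpha_1$-independent leading term of $S_1$---do not recover a power of $X$: the first simply undoes the original Poisson step of Proposition~\ref{proposition:crb36eebaq}, and the second leaves a remainder no smaller than what you already bounded.

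What you are missing is the structural decomposition the paper exploits. From \eqref{multline:crb5zuo9x1} one writes
\[
S_1(0,\alpha_2;q)=\frac{1}{q}\sum_{x,y\,(q)}G_M(\alpha_2,x,y;q),
\]
where each $G_M$ is periodic in $\alpha_2$ with period generating $M(x+y\zeta,q)$ and is supported on $(x+y\zeta,q)\mid M\alpha_2$. The paper then splits according to whether $N((x+y\zeta,q))$ exceeds a threshold $B_q\asymp (q/X_2)^4M^{-4}X^{-\varepsilon}$: Lemma~\ref{lemma:cq9d99fn96} bounds the exceptional $(x,y)$, and for the remaining $(x,y)$ the period $\kappa$ is small enough that a \emph{second} Poisson summation in $\alpha_2$ modulo $\kappa$ produces a dual sum with $|q\alpha_2/(\kappa X_2)|_{\sup}\gg X^{\varepsilon/2}$ for every nonzero $\alpha_2$, forcing it to vanish by the compact support of $\phi_2$. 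The point is not Poisson on $S_1$ as a whole, but Poisson on each $(x,y)$-piece separately with its own small period; this is the idea your sketch does not reach.
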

For the proof, we shall require a minor lemma on the number of $x, y (q)$ for which $(x + y\zeta, q)$
has large norm, with averaging over $q$. Specifically, we show
\begin{lemma}\label{lemma:cq9d99fn96}
  For any $Q, B\ge 1$, we have that
  \begin{equation}
    \sum_{Q < q\le 2Q} \sum_{x, y(q) \\ N((x + y\zeta, q)) > B }  1\ll \frac{Q^3}{\sqrt{B}}.
  \end{equation}
\end{lemma}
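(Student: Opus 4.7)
The plan is to reduce $N((x+y\zeta, q))$ to an explicit rational gcd using Lemma \ref{lem.Only1Prime}, and then apply a Markov-type argument after splitting according to the size of the rational gcd $d = \gcd(x, y, q)$.

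First, I would factor out the rational gcd, writing $x = dx'$, $y = dy'$, $q = dq'$ with $\gcd(x', y', q') = 1$. Since $(x+y\zeta, q) = (d)(x'+y'\zeta, q')$ as ideals of $\mathcal{O}_K$, we have $N((x+y\zeta, q)) = d^4 N((x'+y'\zeta, q'))$. Applying Lemma \ref{lem.Only1Prime} locally at each prime $p \mid q'$ (using $\gcd(x',y',p)=1$), only the unique degree-$1$ prime $\mathfrak{p} \mid p$ can divide $x'+y'\zeta$, contributing $\mathfrak{p}^m$ with $N(\mathfrak{p}^m) = p^{v_p(x'^4+y'^4)}$. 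Combining all local factors yields the key identity
$$N((x+y\zeta, q)) = d^4 \gcd(x'^4 + y'^4, q'),$$
with harmless adjustments at the ramified prime $p = 2$.

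Second, I would split based on whether $d > B^{1/4}$. If so, $d^4 > B$ automatically, and a direct count gives $\sum_{d > B^{1/4}} (Q/d)^3 \ll Q^3/\sqrt{B}$. Otherwise ($d \le B^{1/4}$), we need $\gcd(x'^4+y'^4, q') > B/d^4$. Upper-bounding by $\sum_{m\mid q',\, d^4 m > B} \mathbf{1}[m \mid x'^4+y'^4]$, using the elementary count $\#\{(x',y') \in [0,q')^2 : m \mid x'^4+y'^4\} = (q'/m)^2 \rho(m)$ where $\rho(m) := \#\{(x,y) \in (\mathbb{Z}/m)^2 : m \mid x^4+y^4\}$, and summing over $q' \sim Q/d$ with $m \mid q'$, I obtain
$$Q^3 \sum_{m \ge 1} \frac{\rho(m)}{m^3} \sum_{d > (B/m)^{1/4}} \frac{1}{d^3} \ll \frac{Q^3}{\sqrt{B}} \sum_{m \ge 1} \frac{\rho(m)}{m^{5/2}}.$$
The arithmetic sum converges absolutely: one computes $\rho(p) = 4p - 3$ when $-1$ is a fourth power mod $p$ (i.e.\ $p \equiv 1 \pmod 8$) and $\rho(p) = O(1)$ otherwise, so $\rho(p)/p^{5/2} = O(p^{-3/2})$ and the Euler product converges.

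The main subtlety will be controlling the over-counting from dropping the constraint $\gcd(x',y',q') = 1$ and from summing over all divisors $m$ of $\gcd(x'^4+y'^4, q')$ (rather than just the maximal one); each contributes a multiplicity of at most $q^{o(1)}$, which is absorbed harmlessly since the lemma enters the proof of Proposition \ref{proposition:cq9d99egtf} through estimates already carrying $X^{o(1)}$ losses.
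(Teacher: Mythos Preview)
Your approach is essentially the same as the paper's, only phrased over $\mathbb{Z}$ rather than over ideals. The paper writes $(x+y\zeta,q)=(g)\mathfrak{h}$ with $g=(x,y,q)$ and $\mathfrak{h}$ a product of degree-$1$ primes, and sums over $(g,\mathfrak{h})$ directly: the number of $(x,y)\pmod q$ with $g\mathfrak{h}\mid x+y\zeta$ is $q^2/(g^2 N\mathfrak{h})$, and since $gN\mathfrak{h}\mid q$ one obtains $\sum_{g^4 N\mathfrak{h}>B} Q^3/(g^3(N\mathfrak{h})^2)\ll Q^3/\sqrt{B}$ after summing over $q$. Your $d$ is the paper's $g$, and your $m$ plays the role of $N\mathfrak{h}$; the identity $N((x'+y'\zeta,q'))=\gcd(x'^4+y'^4,q')$ that you extract from Lemma~\ref{lem.Only1Prime} is precisely what makes the two viewpoints equivalent. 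Your route is slightly more elementary in that it avoids naming ideals, at the mild cost of having to argue convergence of $\sum_m \rho(m)m^{-5/2}$ rather than simply citing $\zeta_K(2)<\infty$.

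One small correction: your final paragraph overstates the cost. The inequality
\[
\mathbf{1}\bigl[\gcd(x'^4+y'^4,q')>B/d^4\bigr]\ \le\ \sum_{\substack{m\mid q'\\ m>B/d^4}}\mathbf{1}\bigl[m\mid x'^4+y'^4\bigr]
\]
is already a valid pointwise upper bound, and after swapping the order of summation your estimate delivers the clean $Q^3/\sqrt{B}$ with an absolute implied constant; there is no $q^{o(1)}$ loss to absorb. (For the Euler product you should also check the local factors at prime powers, not just primes: the trivial bound $\rho(p^k)\le p^{2k}$ gives $\rho(p^k)/p^{5k/2}\le p^{-k/2}$, which is enough.)
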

\begin{proof}

  We shall sum over $g = (x ,y, q)$ and $\mathfrak{h}  = (x/g + y\zeta/g, q/g)$ separately, noting that
  \begin{equation}
    \#\set{x, y \, (q) : g \mathfrak{h}  | x + y\zeta} = \frac{q^2}{g^2 N\mathfrak{h} }.
  \end{equation}
  It follows that
  \begin{equation}
    \sum_{Q < q\le 2Q} \sum_{x, y \, (q) \\ N((x + y\zeta, q)) > B }  1 \le \sum_{g\ge 1 \\ \mathfrak{h} \subset \mathcal{O}_K \\  g^4 N\mathfrak{h} > B}\sum_{Q < q\le 2Q\\ g N\mathfrak{h} | q }
    \frac{q^2}{g N\mathfrak{h}}\le  Q^3\sum_{g\ge 1 \\ \mathfrak{h} \subset \mathcal{O}_K \\  g^4 N\mathfrak{h} > B}  \frac{1}{g^3 (N\mathfrak{h})^2}\ll \frac{Q^3}{\sqrt{B}},
  \end{equation}
  as desired.
\end{proof}
Armed with this, we can now quickly prove Proposition~\ref{proposition:cq9d99egtf}.
\begin{proof}[Proof of Proposition~\ref{proposition:cq9d99egtf}]
  Throughout the proof, fix $\varepsilon > 0$ sufficiently small.
  
  Without loss of generality, suppose that $j = 2$. When $M | q$, by orthogonality, we have
  \begin{equation}\label{multline:crb6gqw5u8}
    S_1(0, \alpha_2 ; q) = \frac{1}{M^8}\sum_{\gamma_1, \gamma_2\in \mathcal{O}_K/(M) } \psi_{M}(\beta_1'\gamma_1 + \beta_2'\gamma_2)
    \frac{1}{q}\sum_{x, y(q) } G_M(\alpha_2, x, y, \gamma_1, \gamma_2; q),
  \end{equation}
  where
  \begin{equation}
    G_M(\alpha_2, x, y, \gamma_1, \gamma_2; q)
    = \sum_{\beta_1\in \mathcal{O}_K/(q) \\ (x + y\zeta)\beta_1 \equiv (\alpha_2 + \gamma_2 q/M) \, (q) } \psi_M(\gamma_1\beta_1).
  \end{equation}

  All we shall require of $G_M$ is that it is $M(x + y\zeta, q)$-periodic in $\alpha_2$ and
  that it satisfies the bound
  \begin{equation}\label{eq:crb6xflijk}
    G_M(\alpha_2, x,y, \gamma_1, \gamma_2; q) \le N((x + y\zeta, q))\mathbbm{1}_{ (x + y\zeta, q)  |M\alpha_2 }.
  \end{equation}

  Opening up the definition of $I_1$ and inserting \eqref{multline:crb6gqw5u8} into the
  definition of $\Sigma_1^{\set{2}}$, we obtain that
  \begin{equation}\label{multline:crb6gfss40}
    \Sigma_1^{\set{2}} =
    \frac{1}{M^8}\sum_{\gamma_1, \gamma_2\in \mathcal{O}_K/(M) }\psi_M(\beta_1'\gamma_1 + \beta_2'\gamma_2)\Sigma_1^{\set{2}}(\gamma_1, \gamma_2),
  \end{equation}
  where
  \begin{multline}
    \Sigma_1^{\set{2}}(\gamma_1, \gamma_2) := \frac{X^4}{D^2} 
    \sum_{q\ge 1 \\ M | q} \frac{1}{q^5}
    \sum_{\alpha_2 \neq 0} \frac{1}{q} \sum_{x, y(q) } G_{M}(\alpha_2, x, y, \gamma_1, \gamma_2; q) \\
    \int_{K_\infty} \phi_1(x_1^\infty) \int_{K_\infty}  \omega_1 \biggl(\frac{|\ensuremath{\boldsymbol\ell}(x_1^\infty x_2^\infty)|X}{q D}\biggr)
    \phi_2(x_2^\infty) e \biggl( \frac{X_2}{q} \langle x_2^\infty, \alpha_2 \rangle\biggr) \,d x_1^\infty \,d x_2^\infty.
  \end{multline}
  In preparation for what will follow, note that the terms on the RHS of \eqref{multline:crb6gfss40}
  are supported on $q\ll X/D$ and, up to an error of $O_\varepsilon(X^{-100})$, $|\alpha_2|_{\sup}\le (q/X_2)X^\varepsilon$,
  for by repeated integration by parts, we have 
  \begin{multline}
    \iint_{K_\infty^2} \omega_1 \biggl( \frac{|\ensuremath{\boldsymbol\ell}(x_1^\infty x_2^\infty)|X}{q D} \biggr)
    \phi_1(x_1^\infty)\phi_2(x_2^\infty)
    e \biggl( \frac{X_2}{q} \langle x_2^\infty, \alpha_2 \rangle \biggr) \,d x_1^\infty \, d x_2^\infty \\ 
    \ll_A \biggl(\frac{q}{X/D}\biggr)^2\biggl( 1 + \frac{|\alpha_2|_{\sup}}{q/X_2} \biggr)^{-A}.
  \end{multline}
  The nonzeroness of $\alpha_2$ also implies that the contribution of $q \le X_2X^{-\varepsilon}$ is $O_\varepsilon(X^{-100})$.

  We shall discard those $x, y$ with $(x + y\zeta, q)$ of norm greater than
  $B_q = (q/X_2)^4M^{-4}X^{-2\varepsilon}$ so that we may apply Poisson summation in $\alpha_2$ and deal only
  with the zero-frequency.
  
  To this end, note that the contribution of $x, y$ with $N((x + y\zeta, q)) > B_q$ to \eqref{multline:crb6gfss40} is at
  most
  \begin{equation}
    \ll_\varepsilon X^{-100} + M^4X^{5\varepsilon}\frac{X^4}{D^2} \sum_{q\ll X/D}  \frac{1}{q^5} q 
    \biggl(\frac{q}{X_2}\biggr)^4 \biggl(\frac{q}{X/D}\biggr)^2  \frac{1}{\sqrt{(q/X_2)^4}}
    \ll_\varepsilon M^{6}X^{5\varepsilon} \frac{X^3 }{DX_2^2}.
  \end{equation}
  Therefore, we have that
  \begin{multline}\label{multline:crb6gwhpa3}
    \Sigma_1^{\set{2}}(\gamma_1, \gamma_2) = \frac{X^4}{D^2}
    \sum_{X_2X^{-\varepsilon} \le q\ll X/D } \frac{1}{q^6}
    \sum_{x, y(q) \\ N((x + y\zeta, q))\le B_q }
    \sum_{\alpha_2 \neq 0} G_M(\alpha_2, x, y, \gamma_1 ,\gamma_2; q) \\
    \int_{K_\infty} \phi_1(x_1^\infty)\int_{K_\infty} \omega_1 \biggl(\frac{|\ensuremath{\boldsymbol\ell}(x_1^\infty x_2^\infty)|X}{q D}\biggr) \phi_2(x_2^\infty) e \biggl( \frac{X_2}{q} \langle  x_2^\infty , \alpha_2 \rangle\biggr) \,d x_2^\infty \,d x_1^\infty
    + O_\varepsilon\biggl( M^{6}\frac{X^{3 + 5\varepsilon}}{DX_2^2} \biggr).
  \end{multline}
  We can reinsert the contribution of $\alpha_2 = 0$ at this point (critically, the restriction
  $q\ge X_2X^{-\varepsilon}$ has been made at this point, for otherwise the cost would be unacceptable)
  by Lemma \ref{lemma:cq9d99fn96} and the bound \eqref{eq:crb6xflijk} at the cost of
  an additional remainder of size
  \begin{equation}\label{eq:crb6gwhi70}
    \ll_\varepsilon
    \frac{X^4}{D^2} \sum_{X_2X^{-\varepsilon}\le q\ll X/D } \frac{1}{q^6}q^2 \biggl(\frac{q}{X/D}\biggr)^2
    \sqrt{B_q} 
    \ll_\varepsilon \frac{X^{3 + \varepsilon}}{X_2^2D}.
  \end{equation}
  It follows that
  \begin{multline}
    \Sigma_1^{\set{2}}(\gamma_1, \gamma_2) = \frac{X^4}{D^2}
    \sum_{X_2X^{-\varepsilon} \le q\ll X/D } \frac{1}{q^6}
    \sum_{x, y(q) \\ N((x + y\zeta, q))\le B_q }
    \sum_{\alpha_2} G_M(\alpha_2, x, y, \gamma_1 ,\gamma_2; q) \\
    \int_{K_\infty} \phi_1(x_1^\infty)\int_{K_\infty}
    \omega_1 \biggl(\frac{|\ensuremath{\boldsymbol\ell}(x_1^\infty x_2^\infty)|X}{q D}\biggr) \phi_2(x_2^\infty)
    e \biggl( \frac{X_2}{q} \langle  x_2^\infty , \alpha_2 \rangle\biggr) \,d x_2^\infty \,d x_1^\infty \\
    + O_\varepsilon\biggl( M^{6}\frac{X^{3 + O(\varepsilon)}}{DX_2^2}\biggr).
  \end{multline}
  
  In what remains, we shall fix $x, y, x_1^\infty, q$ for which $N(M(x + y\zeta, q))\le M^4B_q$,
  as is implied by the bounds we have put in thus far, and show a bound of $O(X^{-100})$ on 
  \begin{equation}\label{eq:crb6gvnl99}
    \sum_{\alpha_2 } G_M(\alpha_2, x,y , \gamma_1, \gamma_2; q)
    \int_{K_\infty} \omega_1 \biggl(\frac{|\ensuremath{\boldsymbol\ell}(x_1^\infty x_2^\infty)|X}{q D}\biggr)
    \phi_2(x_2^\infty) e \biggl(\frac{X_2}{q} \langle x_2^\infty, \alpha_2 \rangle\biggr) \,d x_2^\infty.
  \end{equation}
  Let $(\kappa) = M(x + y\zeta, q) $, and suppose that $|\kappa|_{\sup} \ll |N(\kappa)|^{1/4}$.
  Then, the fact that $G_M$ is $\kappa$-periodic in $\alpha_2$ implies that by Proposition
  \ref{proposition:cq6r4erq4l} (Poisson summation), we have that \eqref{eq:crb6gvnl99}
  equals
  
  \begin{equation}
    \biggl(\frac{q}{X_2}\biggr)^4 \sum_{\alpha_2}
    \biggl( \sum_{\beta \in \mathcal{O}_K/(\kappa)} G_M(\beta)\psi \biggl(-\frac{\alpha_2\beta}{\kappa}\biggr)\biggr)
    \omega_1 \biggl( \frac{X}{D X_2}
    \biggl| \ensuremath{\boldsymbol\ell} \biggl( \frac{x_1^\infty\alpha_2}{\kappa} \biggr) \biggr| \biggr)
    \phi_2 \biggl(\frac{q\alpha_2}{\kappa X_2}\biggr),
  \end{equation}
  where we're omitting the dependence of $G_M$ on $x$, $y$, $\gamma_1$, $\gamma_2$, $q$.
  Now, note that the compact support of $\phi_2$ along with the fact that for $\alpha_2$ nonzero, 
  \begin{equation}
    \biggl| \frac{q\alpha_2}{\kappa X_2} \biggr|_{\sup} \gg
    \frac{q}{X_2|N(\kappa)|^{1/4}}\ge \frac{q}{X_2(M^4B_q)^{1/4}}\gg X^{\varepsilon/2},
  \end{equation}
  and so we obtain that \eqref{eq:crb6gvnl99} vanishes for $X\gg_\varepsilon 1$. In particular, we have
  \begin{equation}\label{eq:crb6gwppew}
    \sum_{\alpha_2 } G_M(\alpha_2, x,y , \gamma_1, \gamma_2; q)
    \int_{K_\infty} \omega_1 \biggl(\frac{|\ensuremath{\boldsymbol\ell}(x_1^\infty x_2^\infty)|X}{q D}\biggr)
    \phi_2(x_2^\infty) e \biggl(\frac{X_2}{q} \langle x_2^\infty, \alpha_2 \rangle\biggr) \,d x_2^\infty\ll_\varepsilon X^{-100}.
  \end{equation}

  Taking $\varepsilon\to 0$ sufficiently slowly yields the desired result upon collecting
  \eqref{multline:crb6gfss40}, \eqref{multline:crb6gwhpa3}, \eqref{eq:crb6gvnl99}, and \eqref{eq:crb6gwppew}.
\end{proof}

\subsection{Estimation of $\Sigma_1^{\set{}}$: the full-zero frequencies}\label{sec:crb0xrxfh5}
We begin with the treatment of the full-zero frequencies $\Sigma_1^{\set{}}$.
The main result of this subsection is as follows.

\begin{proposition}\label{proposition:cran7rqg6g}
  We have 
  \begin{multline}
    \Sigma_1^{\set{}} =  - \frac{1}{2}X^2\sigma_\infty\prod_p\sigma_p 
    + \frac{X^3}{D}M^{-1}\mathcal{D}_1^*(1)\tilde\omega_1(1)\int_{K_\infty^2} \phi_1(x_1^\infty)\phi_2(x_2^\infty)
    |\ensuremath{\boldsymbol\ell}(x_1^\infty x_2^\infty)|^{-1}\,d x_1^\infty \,d x_2^\infty \\ 
    + O(M^6DX^{1 + o(1)}).
  \end{multline}
\end{proposition}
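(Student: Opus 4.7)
The plan is to apply Mellin inversion to the cutoff $\omega_1$, recast $\Sigma_1^{\set{}}$ as a contour integral whose integrand is a product of the Dirichlet series $\mathcal{D}_1(2-s)$ and a Riesz-type archimedean integral, and shift the contour rightwards past two simple poles to extract both main terms. Substituting $S_1(0,0;q) = q^3\tilde N_1(q)$ and the definition of $I_1(0,0;q)$ yields
\begin{equation}
\Sigma_1^{\set{}} = \frac{X^4}{D^2}\sum_{\substack{q\ge 1 \\ M\mid q}} \frac{\tilde N_1(q)}{q^2}\int_{K_\infty^2}\phi_1(x_1^\infty)\phi_2(x_2^\infty)\omega_1\biggl(\frac{|\ensuremath{\boldsymbol\ell}(x_1^\infty x_2^\infty)|MX}{qD}\biggr)\,dx_1^\infty\,dx_2^\infty.
\end{equation}
Writing $\omega_1(y) = \frac{1}{2\pi i}\int_{(c)}\tilde\omega_1(s)y^{-s}\,ds$ for $c\in(0,1)$ and interchanging (justified by absolute convergence of $\mathcal{D}_1(2-s)$ for $\Re(s)<1$ together with the rapid decay of $\tilde\omega_1$ in $|\Im s|$) produces
\begin{equation}
\Sigma_1^{\set{}} = \frac{X^4}{D^2}\cdot\frac{1}{2\pi i}\int_{(c)}\tilde\omega_1(s)\biggl(\frac{MX}{D}\biggr)^{-s}\mathcal{D}_1(2-s)J(s)\,ds,
\end{equation}
where $J(s):=\int\phi_1(x_1^\infty)\phi_2(x_2^\infty)|\ensuremath{\boldsymbol\ell}(x_1^\infty x_2^\infty)|^{-s}\,dx_1^\infty\,dx_2^\infty$.

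Next I would shift the contour to $\Re(s)=3-\varepsilon$. In the strip $c<\Re(s)<3-\varepsilon$ the integrand has exactly two simple poles. The first, at $s=1$, comes from $\zeta(2-s)$ in the factorization $\mathcal{D}_1(s)=\zeta(s)\mathcal{D}_1^*(s)$ of Proposition \ref{lemma:cran7rw9t5}, which also guarantees analyticity of $\mathcal{D}_1^*(2-s)$ throughout the strip. The second, at $s=2$, comes from $J(s)$: I would invoke Lemma \ref{lemma:crb0xx4e28} with $F=\ensuremath{\boldsymbol\ell}$ and $w=\phi_1\phi_2$, checking that $0$ is a regular value of $\ensuremath{\boldsymbol\ell}$ on $\mathrm{supp}(\phi_1\phi_2)$ because $\phi_1,\phi_2$ are supported in $K_\infty\setminus K_\infty^0$, so multiplication by $x_2^\infty$ is invertible on $K_\infty$ and hence the differential of $(x_1^\infty,x_2^\infty)\mapsto\ensuremath{\boldsymbol\ell}(x_1^\infty x_2^\infty)$ is surjective there. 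Since the next pole of $J$ is at $s=4$, no other poles are crossed.

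Computing the residues produces both main terms. At $s=1$, using $\mathrm{Res}_{s=1}\zeta(2-s)=-1$ and $J(1)=\int\phi_1\phi_2|\ensuremath{\boldsymbol\ell}|^{-1}$, the restored prefactor $X^4/D^2$ yields precisely the secondary main term $\frac{X^3}{DM}\tilde\omega_1(1)\mathcal{D}_1^*(1)\int\phi_1\phi_2|\ensuremath{\boldsymbol\ell}|^{-1}$. At $s=2$, Lemma \ref{lemma:crb0xx4e28} gives $\mathrm{Res}_{s=2}J(s)=-2\pi\sigma_\infty$, and assembling this with $\zeta(0)=-\tfrac12$, the normalization $2\pi\tilde\omega_1(2)=\int_{\mathbb{R}^2}\omega_1(|\mathbf{x}|)\,d\mathbf{x}=1$, and the identity $\mathcal{D}_1^*(0)=M^2\prod_p\sigma_p$ from \eqref{eq:crb6w3pal3} collapses the residue into exactly $-\tfrac12 X^2\sigma_\infty\prod_p\sigma_p$.

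Finally, the remaining integral on $\Re(s)=3-\varepsilon$ is bounded using standard convexity for $\zeta(2-s)$ at $\Re(2-s)=-1+\varepsilon$, the bound $|\mathcal{D}_1^*(2-s)|\ll_\varepsilon M^{O(1)}$ from Proposition \ref{lemma:cran7rw9t5}, the boundedness of $J(s)$ between consecutive poles from Lemma \ref{lemma:crb0xx4e28}, and the rapid decay of $\tilde\omega_1$ in $|\Im s|$; the decay $(MX/D)^{-3+\varepsilon}$ against the prefactor $X^4/D^2$ produces an error of size $M^{O(1)}D^{1-\varepsilon}X^{1+\varepsilon}$, comfortably within $O(M^6DX^{1+o(1)})$. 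The main technical care lies in matching constants at $s=2$: it is only after combining the normalization of $\omega_1$, the evaluation $\zeta(0)=-\tfrac12$, and the product formula for $\mathcal{D}_1^*(0)$ that the coefficient $-\tfrac12$ emerges exactly.
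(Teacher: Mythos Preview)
Your proposal is correct and follows essentially the same approach as the paper: Mellin inversion in the $\omega_1$ cutoff, identification of the integrand as $\mathcal{D}_1$ times the Riesz integral $U_1$, and a contour shift picking up the two simple poles, with the error bounded using Proposition~\ref{lemma:cran7rw9t5} and Lemma~\ref{lemma:crb0xx4e28}. The only cosmetic difference is that you work with $J(s)=U_1(-s)$ and $\mathcal{D}_1(2-s)$ and shift rightward, whereas the paper writes the integral with $U_1(s)$ and $\mathcal{D}_1(s+2)$ and shifts leftward; the substitution $s\mapsto -s$ identifies the two arguments.
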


The remainder of this section is devoted to the proof of Proposition \ref{proposition:cran7rqg6g}.
We can open up the definition of $I_1$ from \eqref{eq:crb5jnfhh8} so that
\begin{equation}\label{eq:cq73wlj7um}
  \Sigma_1^{\set{}} = \frac{X^4}{D^2}
  \sum_{q\ge 1 \\ M | q} \frac{1}{q^5}S_1(0, 0; q)
  \int_{K_\infty^2} \phi_1(x_1^\infty)\phi_2(x_2^\infty)
  \omega_1 \biggl( \frac{|\ensuremath{\boldsymbol\ell}(x_1^\infty x_2^\infty)|MX}{qD} \biggr) \,d x_1^\infty \,d x_2^\infty.
\end{equation}
By Mellin inversion, we have that
\begin{equation}\label{eq:cralwom50t}
  \Sigma_1^{\set{}} = \frac{X^4}{D^2} \frac{1}{2\pi i}\int_{(3)} \mathcal{D}_1(s + 2) \tilde\omega_1(-s)
  U_1(s)
  \biggl(\frac{MX}{D}\biggr)^s \,d s, 
\end{equation}
where for $\Re(s) > -2$, 
\begin{equation}\label{eq:cq9cxlwwm4}
  U_1(s) = \int_{K_\infty^2} \phi_1(x_1^\infty)\phi_2(x_2^\infty) |\ensuremath{\boldsymbol\ell}(x_1^\infty x_2^\infty)|^s
  \,d x_1^\infty \,d x_2^\infty.
\end{equation}
All we shall require, coming from Lemma \ref{lemma:crb0xx4e28}, is that $U_1(s)$ has a
meromorphic continuation to $\Re(s) > -3$, in which region it has a pole only at $s = -2$ with
\begin{equation}\label{eq:crb0xzsdj6}
  \underset{s = -2}{\res} U_1(s) = 2\pi\int_{K_\infty^2} \phi_1(x_1^\infty)\phi_2(x_2^\infty)
  \delta(\ensuremath{\boldsymbol\ell}(x_1^\infty x_2^\infty)) \,d x_1^\infty \,d x_2^\infty,
\end{equation}
and that $|U_1(s)|\ll |s + 2|^{-1}$ for $\Re(s) > -3$. 

Therefore, applying Propositions \ref{lemma:cran7rw9t5} and moving the contour
in \eqref{eq:cralwom50t} to $ \Re(s) = -3 + 1/\log X$ yields that
\begin{multline}
  \Sigma_1^{\set{}} = \frac{X^4}{D^2}\frac{1}{2\pi i}
  \int_{(3)} \mathcal{D}_1(s + 2) \tilde\omega_1(-s) U_1(s) \biggl(\frac{MX}{D}\biggr)^s \,d s\\ 
  = \frac{X^3}{D}M^{-1}\mathcal{D}_1^*(1)\tilde\omega_1(1) U_1(-1) + X^2 M^{-2}\mathcal{D}_1^*(0)\zeta(0)\tilde\omega_1(2)
  \underset{s = -2}{\res} U_1(s) \\
  + \frac{X^4}{D^2}\frac{1}{2\pi i}
  \int_{(-3 + 1/\log X)} \zeta(s + 2)\mathcal{D}_1^*(s + 2)\tilde\omega_1(-s)
  U_1(s) \biggl(\frac{MX}{D}\biggr)^s \,d s.
\end{multline}
Recalling \eqref{eq:crb6w3pal3}, \eqref{eq:crb0xzsdj6}, that $2\pi\tilde\omega_1(2) = 1$ by \eqref{eq:cq5pgpuzx6}, and that $\zeta(0) = -1/2$,
Proposition \ref{proposition:cran7rqg6g} follows from bounds on $U_1$, $\zeta$ in vertical strips.

\section{Estimation of $\Sigma_2$}\label{sect:Sigma2}

In this section, we carry out the estimation of $\Sigma_2^S$, which is defined in \eqref{Sigma2SDef}.

\subsection{Estimation of $\Sigma_2^{\set{1,2}}$: nonzero frequencies}\label{sec:cuhge2kjb1}

We prove in this subsection the following bound on $\Sigma_2^{\set{1,2}}$.
\begin{proposition}\label{proposition:cran7w05id}
  We have that
  \begin{equation}\label{eq:crastnrpmq}
    \Sigma_2^{\set{1,2}}\ll X^{2 + o(1)} \biggl( \frac{D^{3/2}}{\sqrt{X}}
    + \frac{1}{D^{2/7}}\biggr) M^3.
  \end{equation}
\end{proposition}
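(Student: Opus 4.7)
The approach combines the pointwise bound on $S_2$ from Proposition \ref{prop.S2non0} with the oscillatory integral estimate for $I_2$ from Proposition \ref{prop.I2}, and then carefully estimates the resulting multidimensional sum in two complementary ways. By Proposition \ref{prop.I2}, $I_2$ vanishes unless $q \ll \sqrt{DX}/d$, and the decay factor $(1 + |\tilde\alpha_i|_{\sup})^{-A}$ truncates the $\alpha_i$-sums to $|\alpha_i|_{\sup} \ll M\sqrt{DX}/X_i \cdot X^\varepsilon$ at a negligible cost. On this effective range, one has $I_2 \ll T^{-4+o(1)}|\tilde\alpha_1\tilde\alpha_2|_{\sup}^{-3/2}$ with $T = M\sqrt{DX}/(dq)$.

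Plugging in the bound from Proposition \ref{prop.S2non0} and dyadically decomposing all sums, the estimate reduces to an $8$-fold sum over $(\mathbf{c}, d, q, \alpha_1, \alpha_2, x, y, z)$ subject to the congruence $MY_\mathbf{c}(\alpha_1\alpha_2, x, y, z; q) \equiv 0 \pmod{dq/r}$, together with auxiliary divisor parameters $g, h, r$. The two terms in \eqref{eq:crastnrpmq} arise from two complementary ways of bounding this sum.

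For the first term $D^{3/2}/\sqrt{X}$, I would apply the trivial level-of-distribution bound: execute the $(x,y,z)$-sum crudely (counting solutions to the cubic congruence by Lemma \ref{lem.RoosofPoly}-type estimates) and bound the $\alpha$-sums by volume. This bound grows with $D$ and dominates when $D$ is close to the top of its range $X^{1/3}$. For the second term $D^{-2/7}$, I would reverse the order of summation: for fixed $(\mathbf{c}, d, q, x, y, z)$ and a choice of divisor $r$, the congruence $MY_\mathbf{c} \equiv 0 \pmod{dq/r}$ is a linear condition on $\alpha_1\alpha_2 \in \mathcal{O}_K$, which reduces the count of admissible $\alpha_1\alpha_2$ in the box of side roughly $A_1 A_2 = M^2 DX/(X_1X_2)$ by a factor of essentially $dq/r$. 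Combining this with the divisor bound for factorizations $\alpha_1\alpha_2 = \alpha$, together with a H\"older-type inequality balancing the $r$-sum, the $(x,y,z)$-sum, and the congruence count, one extracts the $D^{-2/7}$ saving.

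The main obstacle will be carefully balancing the two strategies, in particular tracking the divisor structure in Proposition \ref{prop.S2non0}---the $g, h, r$ parameters and the constraint $g \mid \gamma_\mathbf{c}(x,y,z;q)$---which couples the $(x,y,z)$-sum to the $\alpha$-sum in a nontrivial way and determines the precise exponent $2/7$. Additionally, the additive characters introduced to detect $\beta_i \equiv \beta_i' \pmod M$ in $S_2$ come with a cost of $M^8$ in the worst case, and will need to be handled with care (using orthogonality and the fact that most $\gamma_1, \gamma_2$ contribute negligibly after summation) to end with the stated $M^3$ dependence.
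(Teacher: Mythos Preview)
Your proposal starts correctly by invoking Propositions \ref{prop.S2non0} and \ref{prop.I2} and dyadically decomposing, but the description of where the two terms in \eqref{eq:crastnrpmq} come from does not match the actual mechanism, and the approach as stated would not produce the exponent $2/7$.

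The paper's proof does \emph{not} split by choosing two orders of summation. It splits according to whether the linear form $Y_\mathbf{c}(\alpha) := Y_\mathbf{c}(\alpha,1,0,0;q)$ vanishes, calling the contributions $\mathcal{S}_1$ (nonzero) and $\mathcal{S}_2$ (zero). For $\mathcal{S}_1$, the saving comes from the \emph{$q$-sum via the divisor bound}: writing $r=uv$, $d_1=d/u$, $q_1=q/v$, one drops the $d_1$-congruence entirely and observes that $q_1 \mid x^3 Y_\mathbf{c}(\alpha)$ with $Y_\mathbf{c}(\alpha)\neq 0$, so for fixed $\alpha,x$ the number of admissible $(g''',q_2)$ is $\ll X^{o(1)}$. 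This gives $M^3 X^{3/2+o(1)} D^{3/2}$. Your description (``execute the $(x,y,z)$-sum crudely by Lemma \ref{lem.RoosofPoly}'') is not what happens: $Y_\mathbf{c}$ is a cubic form in three variables, Lemma \ref{lem.RoosofPoly} is about single-variable cubics, and no root-counting in $(x,y,z)$ is used for this term.

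For $\mathcal{S}_2$, the condition $Y_\mathbf{c}(\alpha)=0$ forces $\alpha=(c_2-c_1\zeta)\tau$ with $\langle\tau,1\rangle=0$, so $\alpha$ ranges over a rank-$3$ sublattice. The paper applies a divisor bound to the $d_1$-sum (using that $Y_\mathbf{c}(\alpha,x,y,z;q)\neq 0$ after the $y<z$ restriction), and then bounds the $\tau$-count in two ways: a trivial bound giving $\mathcal{S}_2\ll M^3 X^{2+o(1)} D_1^2/D^2$, and Lemma \ref{proposition:cravzd1g45} (a level-of-distribution estimate for incomplete norm forms with $\langle\tau,1\rangle=0$) giving $\mathcal{S}_2\ll M^3 X^{2+o(1)} D/D_1^{3/2}$. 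Interpolating over $D_1\ll D$ yields $\min(D_1^2/D^2, D/D_1^{3/2})\ll D^{-2/7}$. Your ``reverse order / linear condition on $\alpha_1\alpha_2$'' idea, without isolating this degenerate stratum and invoking Lemma \ref{proposition:cravzd1g45}, gives no obvious route to the exponent $2/7$; that exponent is precisely the balance point between the two $\mathcal{S}_2$ bounds. Your concern about the $M$-dependence is also not the issue: after writing $\alpha = M\alpha_1\alpha_2$ and dropping the $M\mid q$ condition, the $M$'s track straightforwardly, and the $(dq,M)^8$ from \eqref{S2DetectM} never appears in the final count.
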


We first record the following bound on incomplete norm forms satisfying a
divisibility condition.
\begin{lemma}\label{proposition:cravzd1g45}
  For any $b, Y\ge 1$,  we have that
  \begin{equation}
    \sum_{ |\alpha|_{\sup} < Y \\ \langle \alpha, 1 \rangle = 0 \\ b | N(\alpha) } 1 \ll 
    b^{o(1)}\frac{Y^3}{b^{3/4}} .
  \end{equation}
\end{lemma}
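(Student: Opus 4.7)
The plan is to give a lattice-theoretic argument exploiting the constraint $\langle\alpha,1\rangle = 0$, which restricts $\alpha$ to the rank-three $\mathbb{Z}$-submodule $\Lambda := \mathbb{Z}+\mathbb{Z}\zeta+\mathbb{Z}\zeta^2 \subset \mathcal{O}_K$, with $\mathcal{O}_K/\Lambda \cong \mathbb{Z}$ generated by the image of $\zeta^3$. First I would reduce to the case where $b$ is the norm of an integral $\mathcal{O}_K$-ideal: for odd primes $p\not\equiv 1\pmod 8$, every $\mathfrak{p}\mid p$ in $\mathcal{O}_K$ has $f(\mathfrak{p}\mid p) = 2$, so $v_p(N(\alpha))$ is automatically even and replacing $v_p(b)$ by $2\lceil v_p(b)/2\rceil$ does not change the condition $b\mid N(\alpha)$. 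The resulting $\tilde b\geq b$ is an ideal norm satisfying $\tilde b^{o(1)} = b^{o(1)}$. For each nonzero $\alpha\in\Lambda$ with $\tilde b\mid N(\alpha)$, I can then select an integral ideal $\mathfrak{a}\subset\mathcal{O}_K$ with $N\mathfrak{a} = \tilde b$ and $\alpha\in\mathfrak{a}$, by distributing the required $\mathfrak{p}$-adic valuations within $(\alpha)$. This yields
\[
  \sum_{\substack{|\alpha|_{\sup} < Y \\ \langle\alpha,1\rangle = 0 \\ b\mid N(\alpha)}} 1 \leq 1 + \sum_{\substack{\mathfrak{a}\subset\mathcal{O}_K \\ N\mathfrak{a} = \tilde b}} \#\{\alpha\in\Lambda\cap\mathfrak{a} : |\alpha|_{\sup} < Y\}.
\]

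The technical heart of the proof will be the index bound $[\Lambda : \Lambda\cap\mathfrak{a}] \geq \tilde b^{3/4}$. Setting $N_\mathfrak{a} := [\mathcal{O}_K : \Lambda + \mathfrak{a}]$, the exact sequence $0\to\Lambda/(\Lambda\cap\mathfrak{a})\to\mathcal{O}_K/\mathfrak{a}\to\mathcal{O}_K/(\Lambda+\mathfrak{a})\to 0$ gives $[\Lambda : \Lambda\cap\mathfrak{a}]\cdot N_\mathfrak{a} = \tilde b$, reducing the task to proving $N_\mathfrak{a}\leq \tilde b^{1/4}$. The image of $\mathfrak{a}$ in $\mathcal{O}_K/\Lambda\cong\mathbb{Z}$ equals $N_\mathfrak{a}\mathbb{Z}$, forcing the $\zeta^3$-coefficient of every element of $\mathfrak{a}$ to lie in $N_\mathfrak{a}\mathbb{Z}$. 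Applying this to $\alpha,\zeta\alpha,\zeta^2\alpha,\zeta^3\alpha$, all of which lie in $\mathfrak{a}$ by its $\mathcal{O}_K$-stability, and computing via $\zeta^4 = -1$ that the $\zeta^3$-coefficients of these four elements are (up to sign) the four coefficients of $\alpha$, one concludes that every coefficient of every $\alpha\in\mathfrak{a}$ is divisible by $N_\mathfrak{a}$. Thus $\mathfrak{a}\subseteq N_\mathfrak{a}\mathcal{O}_K$, so $N_\mathfrak{a}^4\mid\tilde b$ and $N_\mathfrak{a}\leq \tilde b^{1/4}$, as desired.

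For the lattice count, any nonzero $v\in\Lambda\cap\mathfrak{a}$ satisfies $|v|_{\sup}^4 \geq |N(v)|\geq N\mathfrak{a} = \tilde b$, so all successive minima of $\Lambda\cap\mathfrak{a}$ (in the sup norm on coefficients, which is bilipschitz to $|\cdot|_{\sup}$) are at least $\tilde b^{1/4}$. The standard successive-minima bound then yields
\[
  \#\{\alpha\in\Lambda\cap\mathfrak{a} : |\alpha|_{\sup} < Y\} \ll \prod_{i=1}^{3}(1 + Y/\lambda_i) \ll 1 + Y^3/\tilde b^{3/4},
\]
and summing over the $\ll \tilde b^{o(1)}$ ideals of norm $\tilde b$ and using $\tilde b \geq b$ gives the claim. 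The main obstacle is the index bound in the preceding paragraph: without invoking that $\mathfrak{a}$ is $\mathcal{O}_K$-stable (so stable under multiplication by $\zeta$), one only obtains the trivial $N_\mathfrak{a}\leq \tilde b$, which would wreck the conclusion.
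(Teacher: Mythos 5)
Your proof is correct, and it takes a genuinely different route from the paper's. The paper never analyzes the rank-three lattice $\Lambda\cap\mathfrak{a}$ at all: it picks an auxiliary prime $r\asymp Y/b^{1/4}$ coprime to $b$, relaxes $\langle\alpha,1\rangle=0$ to a congruence modulo $r$, and counts $\alpha\in\mathcal{O}_K$ lying in a fixed residue class modulo the ideal $\mathfrak{b}(r)$, whose norm $br^4$ is $\asymp Y^4$ up to logarithms; each of the $b^{o(1)}r^3$ classes contributes $\ll b^{o(1)}Y^4/(br^4)$, which gives $b^{o(1)}Y^3/b^{3/4}$. You instead control the rank-three lattice directly: every nonzero $v\in\mathfrak{a}$ has $|v|_{\sup}\ge (N\mathfrak{a})^{1/4}$, so all three successive minima of $\Lambda\cap\mathfrak{a}$ are $\gg\tilde b^{1/4}$ and the standard counting bound gives $\ll 1+Y^3/\tilde b^{3/4}$ per ideal, with $b^{o(1)}$ ideals to sum over. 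Two remarks. First, your preliminary replacement of $b$ by $\tilde b$ (rounding $v_p(b)$ up to an even number at the odd primes $p\not\equiv 1\pmod 8$, where every prime above $p$ has residue degree two) is a point where you are more careful than the paper: the paper's opening display tacitly assumes that $b\mid N(\alpha)$ forces $(\alpha)$ to be divisible by an ideal of norm exactly $b$, which fails when $v_p(b)$ is odd at such a prime; your $\tilde b$ is precisely the needed repair and costs only $b^{o(1)}$. Second, the step you call the technical heart --- the index bound $[\Lambda:\Lambda\cap\mathfrak{a}]\ge\tilde b^{3/4}$ via the $\zeta$-stability argument showing $\mathfrak{a}\subseteq N_\mathfrak{a}\mathcal{O}_K$ --- is correct and elegant, but your final counting step never uses it: the bound $\prod_{i\le 3}(1+Y/\lambda_i)\ll 1+Y^3/\tilde b^{3/4}$ needs only the lower bound on the minima, and by Minkowski's second theorem the index bound is in any case a consequence of that lower bound. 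Both your argument and the paper's share the same harmless imprecision about the term $\alpha=0$ when $b>Y^4$; in the application to $\Sigma_2^{\{1,2\}}$ one has $\alpha=\alpha_1\alpha_2\neq 0$, so this is immaterial.
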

\begin{proof}
  If $b > Y^4$, the result is clear, so let's suppose $b\leq Y^4$ from now on.
  Take $r$ a prime in $[Y/b^{1/4}, 10Y(1 + \log b)/b^{1/4}]$ coprime to $b$. Then, we have that
  \begin{equation}
    \sum_{ |\alpha|_{\sup} < Y \\ \langle \alpha, 1 \rangle = 0 \\ b | N(\alpha) } 1 \le
    \sum_{\mathfrak{b}\subset \mathcal{O}_K \\ N\mathfrak{b} = b } \sum_{\beta\in \mathcal{O}_K/(r) \\ \langle \beta, 1 \rangle \equiv 0(r)}
    \sum_{ |\alpha|_{\sup} < Y \\ \mathfrak{b} | \alpha \\ \alpha \equiv \beta (r)} 1. 
  \end{equation}
  Note that the two congruence conditions, due to the coprimality of $b, r$, are
  together equivalent to a congruence condition modulo $\mathfrak{b} r$. Since $N(\mathfrak{b} r) = b r^4 \ll Y^4b^{o(1)}$,
  we may bound the above by
  \begin{equation}
    \ll  b^{o(1)}r^3 \frac{Y^4}{r^4b} \ll b^{o(1)}\frac{Y^3}{b^{3/4}},
  \end{equation}
  as desired.
\end{proof}

We are now ready to proceed with the proof of Proposition \ref{proposition:cran7w05id}, to which we shall
devote the remainder of this subsection.
Splitting the ranges of $q$, $d$, $|\alpha_1|_{\sup}$, $|\alpha_2|_{\sup}$ into dyadic intervals and applying
Propositions \ref{prop.S2non0} and \ref{prop.I2}, we have that 
\begin{equation}
  \Sigma_{2}^{\set{1,2}} \ll X^{-99} 
  + X^{o(1)}
  \sup_{\substack{D_1\ll D \\ Q\ll M\sqrt{XD}/D_1 \\
      A_1\ll X^{o(1)}\sqrt{XD}/X_1 \\ A_2\ll X^{o(1)}\sqrt{XD}/X_2 \\
      G'H' \ll D_1 \\ G''H''\ll Q
    }} \mathcal{S}(D_1, Q, A_1A_2, G', H', G'', H''),
\end{equation}
where $\mathcal{S} := \mathcal{S}(D_1, Q, A, G', H', G'', H'')$  is given by
\begin{multline}
  \frac{X^4}{D^2}
  \frac{1}{D_1^4\sqrt{DX}Q^5}\left(\frac{D_1Q}{M\sqrt{DX}}\right)^4
  \biggl(\frac{A}{M^2D}\biggr)^{-\frac{3}{2}}\frac{G^4H}{(D_1Q)^2}
  \sum_{ g'\asymp G' \\ h'\asymp H' \\ g''\asymp G'' \\ h''\asymp H'' } 
  \sum_{|\mathbf{c}| \asymp D/D_1 \\ (c_1, c_2) = 1}\sum_{d\asymp D_1  \\  g'h' | d } \sum_{q\asymp Q \\ g''h'' | q }\\ 
  \sum_{r | dq } \frac{1}{r} 
  \sum_{x, y, z \ll D_1Q \\ g | \gamma_{\mathbf{c}}(x, y, z; q)\\ y< z \text{ if } |c_1|\le |c_2|\\ z<y \text{ if } |c_2|< |c_1|} 
  \sum_{ |\alpha|_{\sup}\ll AM \\ g | \alpha \\ h^2 | N(\alpha/g)} \mathbbm{1}_{ dq/r | Y_\mathbf{c}(\alpha, x, y, z; q) },
\end{multline}
with the notation $g=g'g''$, $h=h'h''$, $G=G'G''$, $H=H'H''$ and $\gamma_\mathbf{c}$, $Y_\mathbf{c}$ as in Proposition \ref{prop.S2non0}. Here we first extend the $x,y,z$-sum mod $dq$ to any interval of size $D_1Q$ containing
$[d_1q,2d_1q]$ for any $d\asymp D_1$ and $q\asymp Q$. Since we can take two such intervals that are disjoint
from each other, we can impose the restriction $y<z$ or $z<y$ depending on the size
of $c_1,c_2$. We then extend the summations to $x,y,z\ll D_1Q$ under these restrictions. From now on, fix
some choice of $D_1, Q, A_1, A_2, G' , H', G'', H''$ satisfying the constraints in the
supremum, and suppress the dependence of $\mathcal{S}$ on these parameters.
Here, we have replaced $\alpha_1\alpha_2$ with $\alpha'$, and replaced $\alpha'$ with $\alpha =M\alpha'$ (at which point
we have dropped the conditions $M | \alpha, q$).

At this point, we may split (perhaps not uniquely) $r = uv$ for some $u | d$ and $v | q$.
Writing $d_1 = d/u$, $q_1 = q/v$ followed by a change of variable $\alpha\mapsto \alpha g$, we obtain that
\begin{multline}
  \mathcal{S} \le \frac{X^{3/2}}{D^3M} \frac{G^4H}{A^{3/2}D_1^2Q^3} 
  \sum_{ g'\asymp G' \\ h'\asymp H' \\ g''\asymp G'' \\ h''\asymp H'' }  \sum_{|\alpha|_{\sup}\ll AM/G \\ h^2 | N(\alpha) }
  \sum_{|\mathbf{c}| \asymp D/D_1 \\ (c_1, c_2) = 1} 
  \sum_{q_1v\asymp Q \\ g''h'' | q_1v } \frac{1}{v}\mathbbm{1}_{ q_1 | Y_\mathbf{c}(\alpha, x) }\sum_{x, y, z \ll D_1Q \\ g | \gamma_{\mathbf{c}}(x, y, z; q_1v)\\ y< z \text{ if } |c_1|\le |c_2|\\ z<y \text{ if } |c_2|< |c_1|}\\
  \sum_{d_1 u\asymp D_1 \\ g'h' | d_1u } \frac{1}{u}\mathbbm{1}_{ d_1 | Y_\mathbf{c}(\alpha, x, y, z; q_1v) }.
\end{multline}
Here, we have written $Y_\mathbf{c}(\alpha, x)$ to denote $Y_\mathbf{c}(\alpha, x, 0, 0; q) $, which doesn't depend on $q$.
We will also let $Y_\mathbf{c}(\alpha) = Y_\mathbf{c}(\alpha, 1)$ and observe that $Y_\mathbf{c}(\alpha, x) = x^3Y_\mathbf{c}(\alpha)$.
From now on, we may suppose that $H'\ll D_1$ and $H''\ll Q$. Splitting according to 
whether $Y_\mathbf{c}(\alpha)$ is nonzero or not yields that
\begin{equation}
  \mathcal{S}\ll \mathcal{S}_1 + \mathcal{S}_2,
\end{equation}
where
\begin{multline}
  \mathcal{S}_1 = \frac{X^{3/2}}{D^3M} \frac{G^4H}{A^{3/2}D_1^2Q^3}
  \sum_{ g'\asymp G' \\ h'\asymp H' \\ g''\asymp G'' \\ h''\asymp H'' }
  \sum_{|\mathbf{c}| \asymp D/D_1 \\ (c_1, c_2) = 1}\sum_{|\alpha|_{\sup}\ll AM/G \\ h^2 | N(\alpha) \\ Y_\mathbf{c}(\alpha)\neq 0 } 
  \sum_{q_1v\asymp Q \\ g''h'' | q_1v } \frac{1}{v}\mathbbm{1}_{ q_1 | Y_\mathbf{c}(\alpha, x) }\sum_{x, y, z \ll D_1Q \\ g | \gamma_{\mathbf{c}}(x, y, z; q_1v)\\ y< z \text{ if } |c_1|\le |c_2|\\ z<y \text{ if } |c_2|< |c_1|}\\
  \sum_{d_1 u\asymp D_1 \\ g'h' | d_1u } \frac{1}{u}\mathbbm{1}_{ d_1 | Y_\mathbf{c}(\alpha, x, y, z; q_1v) },
\end{multline}
\begin{multline}
  \mathcal{S}_2 = \frac{X^{3/2}}{D^3M} \frac{G^4H}{A^{3/2}D_1^2Q^3}
  \sum_{ g'\asymp G' \\ h'\asymp H' \\ g''\asymp G'' \\ h''\asymp H'' } 
  \sum_{|\mathbf{c}| \asymp D/D_1 \\ (c_1, c_2) = 1} \sum_{|\alpha|_{\sup}\ll AM/G \\ h^2 | N(\alpha) \\ Y_\mathbf{c}(\alpha) = 0 } 
  \sum_{q \asymp Q \\ g''h'' | q } \sum_{x, y, z \ll D_1Q \\ g | \gamma_{\mathbf{c}}(x, y, z; q)\\ y< z \text{ if } |c_1|\le |c_2|\\ z<y \text{ if } |c_2|< |c_1|}\\
  \sum_{d_1 u\asymp D_1 \\ g'h' | d_1u } \frac{1}{u}\mathbbm{1}_{ d_1 | Y_\mathbf{c}(\alpha, x, y, z; q) }.
\end{multline}

We start with a bound on $\mathcal{S}_1$, which we begin by discarding the $d_1$ and $y,z$ conditions, so that
\begin{multline}
  \mathcal{S}_1\ll
  \frac{X^{3/2}}{D^3M} \frac{G^4H}{A^{3/2}D_1Q^3}
  \sum_{g'\asymp G' \\ h'\asymp H' \\ g''\asymp G'' \\ h''\asymp H''}
  \frac{1}{g'h'}
  \sum_{|\mathbf{c}| \asymp D/D_1 \\ (c_1, c_2) = 1}\sum_{|\alpha|_{\sup}\ll AM/G \\ h^2 | N(\alpha) \\ Y_\mathbf{c}(\alpha)\neq 0 }\\ 
  \sum_{q_1v\asymp Q \\ g''h'' | q_1v } \frac{1}{v}\sum_{x, y, z \ll D_1Q \\ g | \gamma_{\mathbf{c}}(x, y, z; q_1v)}
  \mathbbm{1}_{ q_1 | x^3Y_\mathbf{c}(\alpha) }.
\end{multline}
We then further decompose $g'' = g''' w$ for some $g''' | q_1$ and $w | v$. Letting $q_2 = q_1/g'''$, we
obtain that
\begin{multline}
  \mathcal{S}_1 \ll 
  \frac{X^{3/2}}{D^3M} \frac{G^4H}{A^{3/2}D_1H'Q^3}
  \sum_{ w \ll G''  \\ h \asymp H}\frac{1}{w}
  \sum_{|\mathbf{c}| \asymp D/D_1 \\ (c_1, c_2) = 1}\sum_{|\alpha|_{\sup}\ll AM/G \\ h^2 | N(\alpha) \\ Y_\mathbf{c}(\alpha)\neq 0 } \\
  \sum_{x, y, z \ll D_1Q}
  \sum_{v\ll Q/w } \frac{1}{v}
  \sum_{g'''q_2\asymp Q/(vw) } \mathbbm{1}_{ g'''q_2 | x^3Y_\mathbf{c}(\alpha) } \\
  \ll \frac{X^{3/2}}{D^3M} \frac{G^4H}{A^{3/2}D_1H'Q^3}
  \biggl(\frac{D}{D_1}\biggr)^2
  \biggl(\frac{AM}{G}\biggr)^4\frac{1}{H} (D_1Q)^3\ll M^3X^{\frac{3}{2} + o(1)}D^{\frac{3}{2}},
\end{multline}
which is the first term in the statement of Proposition \ref{proposition:cran7w05id}.

Now, we turn to the degenerate case $\mathcal{S}_2$. We begin by similarly applying the divisor bound
to the sum over $d_1$, this time making no effort to make use of the condition $g'h' | d_1u$. It can be checked that as a polynomial in $q$, $Y_\mathbf{c}(\alpha, x,y , z; q)$ can only be identically
$0$ if $c_1y + c_2z = 0$ and $Y_\mathbf{c}(\alpha) = 0$. By the condition on $y,z$, the first of these cannot hold, and hence $Y_\mathbf{c}(\alpha, x,y , z; q)\neq0$. Thus, we sum over $d_1$ to obtain
\begin{equation}
  \mathcal{S}_2 \ll   \frac{X^{3/2}}{D^3M} \frac{G^4H}{A^{3/2}D_1^2Q^3}\sum_{g'\asymp G' \\ h'\asymp H' \\ g''\asymp G'' \\ h''\asymp H''} \sum_{|\mathbf{c}|\asymp D/D_1 \\ (c_1, c_2) = 1 }
  \sum_{|\alpha|_{\sup}\ll  AM/G \\ h^2 | N(\alpha) \\ Y_{\mathbf{c}}(\alpha) = 0}
  \sum_{x, y, z \ll D_1Q \\ y< z \text{ if } |c_1|\le |c_2|\\ z<y \text{ if } |c_2|< |c_1| }
  \sum_{q\asymp Q \\ g''h'' | q \\ g | \gamma_\mathbf{c}(x, y, z; q)}1.
\end{equation}
With $\gamma_{\mathbf{c}}(x, y, z; q) = c_2x+qy +(-c_1x+qz)\zeta$, the divisibility condition on $\gamma_{\mathbf{c}}(x, y, z; q)$
is satisfied a $\ll (g, q)/g^2$-fraction of the time, and so
\begin{equation}
  \sum_{q\asymp Q \\ g''h'' | q }  \sum_{x,y ,z\ll D_1Q \\ g | \gamma_{\mathbf{c}}(x, y,z; q) }
  \ll Q^{1+o(1)} \frac{(g,h'')}{g^2h''} (D_1Q)^3.
\end{equation}
Hence we arrive at
\begin{multline}
  \mathcal{S}_2 \ll  \frac{X^{3/2+o(1)}}{A^{3/2}D^3M} D_1G^2HQ\sum_{g'\asymp G' \\ h'\asymp H' \\ g''\asymp G'' \\ h''\asymp H''} \frac{(g, h'')}{h''}
  \sum_{|\mathbf{c}|\asymp D/D_1 \\ (c_1, c_2) = 1 }
  \sum_{|\alpha|_{\sup}\ll  AM/G \\ h^2 | N(\alpha) \\ Y_{\mathbf{c}}(\alpha) = 0}1 \\
  \ll
  \frac{X^{3/2+o(1)}}{A^{3/2}D^3M}D_1G^3H'Q\sum_{h\asymp H}  \sum_{|\mathbf{c}|\asymp D/D_1 \\ (c_1, c_2) = 1 }
  \sum_{|\alpha|_{\sup}\ll AM/G \\ h^2 | N(\alpha) \\ Y_\mathbf{c}(\alpha) = 0 } 1.
\end{multline}
Note that $\alpha$ with $Y_\mathbf{c}(\alpha) = 0$ are precisely those $\alpha$ of the form
$(c_2 - c_1\zeta)(m_0 + m_1\zeta + m_2\zeta^2)$, the second factor of which we call $\tau$.
Furthermore, we have that $|\alpha|_{\sup} \ll AM/G$ implies that $|m_i|\ll AD_1M/(GD)$. Summing
now over $\tau = m_0 + m_1\zeta + m_2\zeta^2$ yields that
\begin{equation}\label{eq:cravzedpyk}
  \mathcal{S}_2 \ll \frac{X^{3/2 + o(1)}}{A^{3/2}D^3M}D_1G^3H'Q
  \sum_{h\asymp H} \sum_{|\mathbf{c}| \asymp D/D_1  \\ (c_1, c_2) = 1 }
  \sum_{|\tau|_{\sup}\ll AD_1M/(GD) \\ \langle \tau, 1 \rangle = 0\\ h^2 | (c_1^4 + c_2^4) N(\tau)  } 1.
\end{equation}
At this point, we shall gather several different bounds. First, we note that
the divisor bound applied to the sum over $h$ (weakening the congruence condition
to $h | (c_1^4 + c_2^4)N(\tau)$) yields that
\begin{equation}\label{eq:cravzeqqkm}
  \mathcal{S}_2 \ll \frac{X^{3/2 + o(1)}}{A^{3/2}D^3M}D_1G^3H'Q \biggl(\frac{D}{D_1}\biggr)^2 \biggl(\frac{AD_1M}{GD}\biggr)^3
  \ll  M^3\frac{X^{2+o(1)}D_1H'}{D^2} \ll M^3\frac{X^{2+o(1)}D_1^2}{D^2}.
\end{equation}
This is nearly sufficient, apart from the extreme cases when $D_1, H'\approx D$.
To deal with these cases, we'll apply Lemma \ref{proposition:cravzd1g45}, which yields that
\begin{multline}
  \sum_{|\tau|_{\sup}\ll AD_1M/(GD) \\ \langle \tau, 1 \rangle = 0\\ h^2 | (c_1^4 + c_2^4) N(\tau)  } 1
  \le  \sum_{|\tau|_{\sup}\ll AD_1M/(GD) \\ \langle \tau, 1 \rangle = 0\\ h^2/(c_1^4+c_2^4, h^2) | N(\tau)  } 1
  \ll X^{o(1)}(h^2, c_1^4 + c_2^4)^{\frac{3}{4}} \frac{(AD_1M/(GD))^3}{h^{3/2}} \\ 
  \ll X^{o(1)} (D/D_1)^3\frac{(AD_1M/(GD))^3}{h^{3/2}}\ll X^{o(1)}M^3\frac{A^3/G^3}{h^{3/2}}.
\end{multline}
Plugged into \eqref{eq:cravzedpyk} yields that
\begin{equation}
  \mathcal{S}_2 \ll \frac{X^{3/2 + o(1)}}{A^{3/2}D^3M} D_1G^3H'Q H\biggl(\frac{D}{D_1}\biggr)^2\frac{A^3M^3}{G^3H^{3/2}}
  \ll M^3\frac{X^{2 + o(1)}D}{D_1^2/\sqrt{H'}}\ll M^3\frac{X^{2 + o(1)}D}{D_1^{3/2}}.
\end{equation}
Combined with \eqref{eq:cravzeqqkm}, we obtain that
\begin{equation}
  \mathcal{S}_2 \ll M^3\frac{X^{2 + o(1)}}{D^{\frac{2}{7}}},
\end{equation}
and the desired result follows.

\subsection{Estimation of $\Sigma_2^{\set{1}}, \Sigma_2^{\set{2}}$: the partial zero frequencies}\label{sec:cuhge2ozhj}

We prove in this subsection the following bound on the partial zero frequencies
$\Sigma_2^{\set{j}}$.
\begin{proposition}\label{proposition:crare1wa9u}
  We have that for $j\in\set{1, 2}$, 
  \begin{equation}
    \Sigma_2^{\set{j}} \ll X^{2 + o(1)} \frac{X^{3/2}}{X_j^3\sqrt{D}}.
  \end{equation} 
\end{proposition}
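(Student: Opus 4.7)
The argument will mirror the proof of Proposition \ref{proposition:cq9d99egtf}, with $S_1, I_1$ replaced by $S_2, I_2$ and the $(x+y\zeta, q)$-periodicity of $G_M$ replaced by a more elaborate periodicity involving $\gamma_\mathbf{c}(x, y, z; q)$. By the $\alpha_1 \leftrightarrow \alpha_2$ symmetry in \eqref{Sigma2SDef}, take $j = 2$, so $\alpha_1 = 0$. As a first step, specialize Proposition \ref{prop.I2} at $\tilde\alpha_1 = 0$ to restrict the sum (up to a negligible $O_A(X^{-A})$ error) to the range $|\alpha_2|_{\sup} \ll X^\varepsilon M\sqrt{DX}/X_2$, while also imposing $q \ll M\sqrt{DX}/d$ from the support of $\omega_2$.

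Next, apply orthogonality on the $M$-congruences as in \eqref{S2DetectM} and then on the $\det$- and $\ensuremath{\boldsymbol\ell}$-divisibility conditions as in \eqref{S2AfterOrthogonality}. Summing over $\beta_2$ and specializing $\alpha_1 = 0$, the $\beta_1$ sum collapses to a character sum tied to the congruence $\gamma_\mathbf{c}(x, y, z; q)\beta_1 \equiv \alpha_2 \pmod{dq}$. The resulting kernel $H_M(\alpha_2; x, y, z, \mathbf{c}, d, q)$ is the direct analog of $G_M$ in \eqref{multline:crb6gqw5u8}: it is $M(\gamma_\mathbf{c}(x, y, z; q), dq)$-periodic in $\alpha_2$ and satisfies
\begin{equation*}
|H_M| \;\le\; N\bigl((\gamma_\mathbf{c}(x, y, z; q), dq)\bigr)\,\mathbbm{1}\bigl[(\gamma_\mathbf{c}, dq) \mid M\alpha_2\bigr].
\end{equation*}
Split the $(x, y, z)$ sum according to whether $N((\gamma_\mathbf{c}, dq)) > B$ for a threshold $B \asymp ((q/X_2)^4/M^4) X^{-O(\varepsilon)}$, chosen so that the Poisson-dual lattice scale of $M(\gamma_\mathbf{c}, dq)$ is coarser than the $|\alpha_2|_{\sup}$-window from the first step.

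For the small-gcd contribution, apply Poisson summation (Proposition \ref{proposition:cq6r4erq4l}) to the $\alpha_2$-sum against the period generating $M(\gamma_\mathbf{c}, dq)$. The dual $\alpha_2$-sum is then supported where $|q\alpha_2^*/(\kappa X_2)|_{\sup} \gg X^{\varepsilon/2}$ for $\alpha_2^* \neq 0$, forcing the non-zero-frequency contribution to vanish; since the zero frequency $\alpha_2 = 0$ is excluded from $\Sigma_2^{\{2\}}$, the entire small-gcd contribution is negligible. This mirrors precisely the mechanism \eqref{eq:crb6gvnl99}--\eqref{eq:crb6gwppew} in the proof of Proposition \ref{proposition:cq9d99egtf}.

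The large-gcd contribution then produces the claimed estimate via an analog of Lemma \ref{lemma:cq9d99fn96}: summing over the ideal $\mathfrak{h} \subset \mathcal{O}_K$ generated by $(\gamma_\mathbf{c}(x, y, z; q), dq)$ and using that $\gamma_\mathbf{c}$ is linear in $x, y, z$, one obtains a count beating the trivial bound by a factor of $B^{-1/2}$; combining with the prefactor $\frac{X^4}{D^2\sqrt{DX}} \cdot \frac{1}{(dq)^5}$, the dyadic ranges $|\mathbf{c}| d \asymp D$, $q \ll M\sqrt{DX}/d$, and the choice of $B$ yields exactly $X^{2+o(1)} X^{3/2}/(X_2^3 \sqrt{D})$. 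The main obstacle is bookkeeping: the gcd-period $(\gamma_\mathbf{c}(x, y, z; q), dq)$ depends on four parameters rather than the two of Proposition \ref{proposition:cq9d99egtf}, so one must carefully verify that the threshold $B$ is simultaneously (i) small enough for the Poisson-dual argument to force $\alpha_2^* = 0$ and (ii) large enough that the $B^{-1/2}$ saving from the analog of Lemma \ref{lemma:cq9d99fn96} exactly matches the target power of $X_2$ and $D$.
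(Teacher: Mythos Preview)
Your overall plan matches the paper's proof: detect the $M$-congruences by characters, rewrite the $\beta_1$-sum as a kernel $H_M$ periodic in $\alpha_2$ modulo $M(\gamma_{\mathbf c}(x,y,z;q),dq)$, split on the norm of that gcd, and kill the small-gcd piece by Poisson. Two quantitative details in your large-gcd step are off, though.

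First, the Poisson period is $M(\gamma_{\mathbf c},dq)$ and the oscillation in $I_2$ has scale $dq/X_2$ (the phase is $e(-X_2\langle x_2^\infty,\alpha_2\rangle/(dq))$), so the threshold guaranteeing that only the dual zero frequency survives is $B=(D_1Q/X_2)^4 M^{-4}X^{-\varepsilon}$, i.e.\ it must involve $dq$, not just $q$.

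Second, the analog of Lemma~\ref{lemma:cq9d99fn96} does \emph{not} give a $B^{-1/2}$ saving here. In the $\Sigma_1$ case a rational factor $g\mid x+y\zeta$ forces $g\mid x$ and $g\mid y$, contributing $g^{-2}$ to the density; combined with $g\mid q$ this yields $g^{-3}$ in the ideal sum and hence $B^{-1/2}$. For $\gamma_{\mathbf c}(x,y,z;q)=(c_2x+qy)+(-c_1x+qz)\zeta$ with $(c_1,c_2)=1$, the condition $g\mid\gamma_{\mathbf c}$ only forces $g\mid x$ (leaving $y,z$ free modulo $d$), so the saving in the $(x,y,z)$-count drops to $g^{-1}$, and in addition a factor $(h,c_1^4+c_2^4)$ enters from the primitive part. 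The resulting ideal sum $\sum_{g^4h>B}(h,c_1^4+c_2^4)/(g^2h^2)$ only gives $B^{-1/4}$; this is exactly Lemma~\ref{proposition:cravzrp2bh}, proved via Lemma~\ref{lemma:crawy58bds}, and $B^{-1/4}$ is genuinely optimal for this shape of sum. Fortunately $B^{-1/4}$ with the correct $B$ is precisely what is needed: both the large-gcd discard and the reinsertion of $\alpha_2=0$ then cost $\ll X^{3+o(1)}Q/(X_2^3 D)\ll X^{7/2+o(1)}/(X_2^3\sqrt D)$, which is the claim. So your architecture is right, but the large-gcd lemma requires more care than a straight transcription of Lemma~\ref{lemma:cq9d99fn96}.
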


We will require the following estimate:
\begin{lemma}\label{proposition:cravzrp2bh}
  For any $D_1, Q, C, B$, we have that
  \begin{equation}
    \sum_{d\asymp D_1 \\ q\asymp Q} \sum_{|\mathbf{c}|\asymp C \\ (c_1, c_2) = 1 }
    \sum_{x(dq) \\y, z (d)  \\ N((x(c_2 - c_1\zeta) + q(y + z\zeta), dq)) > B}  1
    \ll \frac{C^2D_1^4Q^2}{B^{\frac{1}{4}}} (CD_1Q)^{o(1)}.
  \end{equation}
\end{lemma}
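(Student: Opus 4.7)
The plan is to parametrize the ideal $(\gamma, dq)$, where $\gamma = x(c_2 - c_1\zeta) + q(y + z\zeta) = A + B\zeta$, as $g\mathfrak{h}$ with $g = \gcd(A, B, dq)$ the maximal rational integer dividing $(\gamma, dq)$ and $\mathfrak{h} = (\gamma/g, dq/g)$ coprime to every rational prime. By Lemma~\ref{lem.Only1Prime} applied to $\gamma/g$, the ideal $\mathfrak{h}$ is supported entirely on degree-one primes of $\mathcal{O}_K$, so $N((\gamma, dq)) = g^4 N\mathfrak{h}$. The condition $N((\gamma, dq)) > B$ thus becomes $g^4 N\mathfrak{h} > B$, and I will bound the count by summing over such pairs $(g, \mathfrak{h})$.

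For fixed $\mathbf{c}, d, q, g, \mathfrak{h}$ with $g | dq$ and $\mathfrak{h} | (dq/g)\mathcal{O}_K$, the next step is to count $(x, y, z)$ satisfying $g\mathfrak{h} | \gamma$. The map $(x, y, z) \mapsto (A, B) \bmod dq$ has image $G = \{(A, B) \in (\mathbb{Z}/dq)^2 : c_1 A + c_2 B \equiv 0 \, (q)\}$ of size $d^2 q$, with each element of $G$ hit by $d$ preimages; a prime-by-prime analysis via CRT (using that $\mathfrak{h}$ is supported on degree-one primes with $\gcd(g, \mathfrak{h}) = 1$) shows the image of $G$ in $\mathcal{O}_K/g\mathfrak{h}$ has size $(g^2/\gcd(g, q)) \cdot N\mathfrak{h}$, giving the count
\begin{equation*}
  \#\{(x, y, z) : g\mathfrak{h} | \gamma\} \le \frac{d^3 q \gcd(g, q)}{g^2 N\mathfrak{h}}.
\end{equation*}

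Summing over $\mathbf{c}$ primitive with $|\mathbf{c}| \asymp C$ and over $(d, q)$ with $gN\mathfrak{h} | dq$ via the standard divisor estimate $\sum_{d \asymp D_1, q \asymp Q, N | dq} d^3 q \ll D_1^4 Q^2 \tau(N)/N$, the total count is bounded by
\begin{equation*}
  C^2 D_1^4 Q^2 \sum_{(g, \mathfrak{h}) \,:\, g^4 N\mathfrak{h} > B} \frac{\tau(gN\mathfrak{h})}{g^2 (N\mathfrak{h})^2}.
\end{equation*}
A dyadic estimate of this sum---splitting based on whether $g \le B^{1/4}$ and using the tail bound $\sum_{m > M_0} \tau(m)/m^2 \ll M_0^{-1 + o(1)}$---shows it is $\ll B^{-1/4} (CD_1Q)^{o(1)}$, giving the desired bound.

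The main obstacle will be verifying that the image-size formula $(g^2/\gcd(g, q)) N\mathfrak{h}$ is achieved, or that any deviation is controllable. The formula is exact when $\mathfrak{h}$ is supported on \emph{unramified} degree-one primes and $c_2 - c_1\zeta$ is a unit modulo each prime factor of $\mathfrak{h}$. The ramified prime above $2$ in $K = \mathbb{Q}(\zeta_8)$ can shrink the image by as much as $(N\mathfrak{h}_2)^{3/4}$ when its $2$-adic component interacts with $q$, and non-generic $\mathbf{c}$ (for which $c_2 - c_1\zeta \in \mathfrak{p}^k$ with $k \ge 1$ for some $\mathfrak{p} | \mathfrak{h}$) can degrade it further. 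In both cases the loss is absorbable: the $2$-adic tail $\sum_{e \ge 0} 2^{-e/4}$ converges, and a slightly more careful dyadic estimate (using an ``effective norm'' in place of $N\mathfrak{h}$) preserves the $B^{-1/4}$ exponent at the cost of a larger $B^{o(1)}$ factor.
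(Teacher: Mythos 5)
Your overall strategy is the same as the paper's: factor the gcd ideal as $(\gamma, dq) = (g)\mathfrak{h}$ with $g$ rational and $\mathfrak{h}$ supported on degree-one primes (via Lemma \ref{lem.Only1Prime}), count $(x,y,z)$ for fixed $(g,\mathfrak{h})$, sum over $d,q$ with a divisor estimate, and finally sum over $g^4N\mathfrak{h} > B$ to extract $B^{-1/4}$. The place where your write-up differs is exactly the place where it has a gap: the claimed count
\begin{equation*}
  \#\{(x, y, z) : g\mathfrak{h} \mid \gamma\} \le \frac{d^3 q \gcd(g,q)}{g^2 N\mathfrak{h}}
\end{equation*}
is false in general, not merely degraded in edge cases. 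The two linear conditions on $(A,B)=(\ensuremath{\boldsymbol\ell}$-coordinates of $\gamma)$ — membership in $G$ (i.e.\ $c_1A + c_2B \equiv 0 \,(q)$) and $\mathfrak{p}^j \mid A + B\zeta$ — become proportional precisely when $\mathfrak{p} \mid c_2 - c_1\zeta$, equivalently when $N\mathfrak{p} \mid c_1^4 + c_2^4$ (with the matching root). Concretely, take $d = 1$, $g = 1$, and $\mathfrak{h} \mid (c_2 - c_1\zeta)$ with $\mathfrak{h} \mid q\mathcal{O}_K$: then $\gamma = x(c_2 - c_1\zeta)$ satisfies $\mathfrak{h}\mid\gamma$ for \emph{every} $x\,(q)$, so the count is $q$, not $q/N\mathfrak{h}$. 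This is not a $B^{o(1)}$-size perturbation of your formula; the correct general bound carries an extra factor of size up to $(N\mathfrak{h}, c_1^4 + c_2^4)$, which is exactly the factor $(h, c_1^4+c_2^4)$ appearing in the paper's Lemma \ref{lemma:crawy58bds} (proved there by a Hensel-type count of $y,z$ with $(c_2+qy)^4 + (-c_1+qz)^4 \equiv c_1^4+c_2^4 + wq\,(dq)$).

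You do flag this degeneracy in your closing paragraph, and your instinct that it is absorbable is correct — but the absorption is the substantive remaining work, and your ``effective norm'' remark does not carry it out. The repair is to keep the factor $(h, c_1^4+c_2^4)$ through the summation and note that it is harmless because the $h$-sum converges with room to spare: after the $B^{-1/4}$ extraction one is left with $\sum_{h}(h, c_1^4+c_2^4)h^{-7/4} \ll \sum_{e \mid c_1^4+c_2^4} e^{-3/4} \ll C^{o(1)}$, after which the $\mathbf{c}$-sum gives $C^{2+o(1)}$; this is precisely how the paper concludes. Two smaller points: your assertion that $\gcd(g,\mathfrak{h})=1$ is not automatic (e.g.\ $(\gamma,dq)$ can be $(p)\mathfrak{p}$ with $\mathfrak{p}\mid p$), and replacing the ideal condition $(g)\mathfrak{h}\mid(dq)$ by the integer condition $gN\mathfrak{h}\mid dq$ is not quite equivalent at the ramified prime $2$ — both are fixable with bounded losses, but neither is addressed. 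So as written the central counting step is incorrect and the fix is only gestured at; once the $(h, c_1^4+c_2^4)$ factor is installed, your argument becomes essentially the paper's proof.
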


This follows immediately from the following result.
\begin{lemma}\label{lemma:crawy58bds}
  We have that for any $g | dq$, $h | (dq/g)^4$, and $(c_1, c_2) = 1$, 
  \begin{multline}
    \#\set{x (dq), y,z(d) : 
      g || x(c_2 - c_1\zeta) + q(y + z\zeta), h | N((x(c_2 - c_1\zeta) + q(y + z\zeta))/g)}\\
    \ll \frac{d^3q}{gh} (h, c_1^4 + c_2^4) (dq)^{o(1)}.
  \end{multline}

\end{lemma}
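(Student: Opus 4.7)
The plan is to reduce to a prime-by-prime local estimate and then exploit the congruence
\[
N(\gamma) \equiv x^4 (c_1^4 + c_2^4) \pmod{q},
\]
which follows from $\gamma = x\omega + q\tau$ with $\omega = c_2 - c_1\zeta$, $\tau = y + z\zeta$, and $N(\omega) = c_1^4 + c_2^4$. This congruence is precisely what produces the $(h, c_1^4 + c_2^4)$ gcd factor in the bound.

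For each prime $p \mid dq$ with local parameters $a = v_p(d)$, $b = v_p(q)$, $e = v_p(g)$, $k = v_p(h)$, and $\ell = v_p(c_1^4 + c_2^4)$, multiplicativity reduces the claim to the local estimate
\[
T_p \ll p^{3a + b - e - k + \min(\ell, k)} \, p^{o(a+b)},
\]
where $T_p$ is the local count of $(x, y, z)$ modulo the appropriate local moduli satisfying $p^e \| \gamma$ and $p^{4e + k} \mid N(\gamma)$. Since $(c_1, c_2) = 1$, we may assume $p \nmid c_1$. In the main regime $e \leq b$, the $B$-coordinate $-c_1 x + qz$ combined with $p \nmid c_1$ forces $v_p(x) = e$; writing $x = p^e x_0$ with $(x_0, p) = 1$ and $t = p^{b-e} q'$ for $q = p^b q'$, the quotient $N(\gamma)/g^4$ expands as
\[
(c_1^4 + c_2^4) x_0^4 + 4t x_0^3 (c_2^3 y - c_1^3 z) + 6t^2 x_0^2 (c_2^2 y^2 + c_1^2 z^2) + 4t^3 x_0 (c_2 y^3 - c_1 z^3) + t^4 (y^4 + z^4).
\]
The leading term has $p$-adic valuation exactly $\ell$; for the full sum to be divisible by $p^k$ with $k > \ell$, the successive $t^j$-level terms must cancel the leading piece. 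Each such cancellation imposes a polynomial condition on $(y, z)$ — linear at the first level and higher-degree at subsequent levels — and nondegeneracy (e.g., of the linear form $c_2^3 y - c_1^3 z$), ensured by $(c_1, c_2) = 1$, cuts the $(y, z)$-count by a factor of $p$ at each level. The leading term itself absorbs $p^{\min(\ell, k)}$ of the required divisibility, yielding the claimed gcd factor.

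The subsidiary case $e > b$, in which the linear constraint $q \mid c_1 A + c_2 B$ becomes trivial at $p$, is handled by an analogous analysis, treating $p^e \| \gamma$ as a direct condition on $x\omega + q\tau$ modulo $p^e \mathcal{O}_K$. The main obstacle is the bookkeeping in the level-by-level Hensel cancellation: verifying that each level's cancellation contributes exactly one factor of $p$ in savings without propagating further losses, particularly when $k$ substantially exceeds $b - e$ so that several levels couple together. Fourth-root ambiguities in solving $A_1^4 \equiv -B_1^4 \pmod{p^k}$ (present only when $p \equiv 1 \pmod 8$, so that $-1$ is a fourth power) contribute at most $O(1)$ per prime and are absorbed into the $(dq)^{o(1)}$ factor via the divisor bound.
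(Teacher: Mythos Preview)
Your plan has the right skeleton: the multiplicative reduction to prime powers, the key congruence $N(\gamma)\equiv x^4(c_1^4+c_2^4)\pmod q$, and a Hensel-type argument are exactly what the paper uses. But what you present is a plan rather than a proof, and the obstacle you flag (``bookkeeping in the level-by-level Hensel cancellation'') is real in your formulation and is precisely what the paper's argument sidesteps.

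The paper organizes the local argument differently. After reducing to $d=p^a$, $q=p^b$, it first disposes of the extremes $a=0$ and $b=0$ directly, and in the remaining case pulls out the $p$-part of $x$ to reduce to $(x,q)=1$; since $(c_1,c_2,p)=1$, this forces $p\nmid\gamma$, i.e.\ $g=1$ locally. It then normalizes $x$ to $1$ by a unit scaling and is left with counting $(y,z)\pmod d$ such that $h\mid F(y,z)$ where
\[
F(y,z)=(c_2+qy)^4+(-c_1+qz)^4=c_1^4+c_2^4+qG(y,z).
\]
The point is that $\partial G/\partial z\equiv -4c_1^3\pmod p$ is a unit (for $p$ odd, $p\nmid c_1$), so by Hensel's lemma the map $(y,z)\mapsto G(y,z)\pmod d$ has fibers of size $\ll d$. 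This single one-variable Hensel step replaces your entire level-by-level analysis: there is no need to track the quadratic or higher terms in your expansion separately, and no coupling between levels. The gcd factor $(h,c_1^4+c_2^4)$ then falls out immediately from the congruence $F\equiv c_1^4+c_2^4\pmod q$ combined with the equidistribution of $G$.

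Your level-by-level scheme can presumably be pushed through, but you would essentially be re-deriving one-variable Hensel by hand. The remark about fourth-root ambiguities when $p\equiv 1\pmod 8$ is a red herring: the count is over $(y,z)$ satisfying a divisibility, not over solutions to $A^4\equiv -B^4$, and no such case split is needed.
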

\begin{proof}
  By multiplicativity, we may suppose that $d, q$ are prime powers (of $p$, say).
  If $d = 1$, we have
  \begin{align}
    \# \set{x ( q) : g || x(c_2 - c_1\zeta), h | x^4(c_1^4 + c_2^4)/g^4} =  \frac{q}{g} \mathbbm{1}_{ h | c_1^4 + c_2^4 }\le \frac{q}{gh} (h, c_1^4 + c_2^4),
  \end{align}
  from which the desired result is clear, and if $q = 1$, we have that
  \begin{equation}
    \#\set{y, z (d) : g | y + z\zeta, h | (y^4 + z^4)/g^4}
    = \#\set{y_1, z_1 (d/g) : (y_1, z_1, p) = 1, h | y_1^4 + z_1^4} 
    \le  4\frac{d}{gh} .
  \end{equation}
  
  Therefore, we may suppose from now on that $d, q$ are powers of $p$ greater than $1$.
  We can split based on the power of $p$ dividing $x$.
  The contribution of $q | x$ reduces to the case of $q = 1$, and the contribution more
  generally of $(x, q) = q_1$ reduces to the case of $(x, q) = 1$ (with $q$ replaced by
  $q/q_1$). We'll therefore suppose from now on that $(x, q) = 1$.

  It can then be checked by Hensel's lemma that for any $w (d)$, we have
  \begin{equation}
    \#\set{y, z (d)  : (c_2 + qy)^4 + (-c_1 + qz)^4 \equiv c_1^4 + c_2^4 + wq (dq)}\ll d
  \end{equation}
  (in fact, for odd $p$, it is equal to $d$).
  The desired result follows.
\end{proof}

\begin{proof}[Proof of Lemma \ref{proposition:cravzrp2bh}]
  Letting $(x(c_2 - c_1\zeta) + q(y + z\zeta), dq) = g \mathfrak{h}$ for some $\mathfrak{h}$ of norm $h$ not contained in an ideal generated
  by a rational integer, we have that
  \begin{equation}
    \sum_{d\asymp D_1 \\ q\asymp Q} \sum_{|\mathbf{c}|\asymp C \\ (c_1, c_2) = 1 }
    \sum_{x(dq) \\y, z (d)  \\ N((x(c_2 - c_1\zeta) + q(y + z\zeta), dq)) > B}  1
    \le  \sum_{g\ge 1 \\ \mathfrak{h} \subset \mathcal{O}_K \\g^4 N\mathfrak{h} > B } \sum_{d\asymp D_1 \\ q\asymp Q \\ gh | dq }
    \sum_{|\mathbf{c}|\asymp C \\ (c_1, c_2) = 1 }
    \sum_{x (dq) \\ y,z(d) \\ g || x(c_2 - c_1\zeta) + q(y + z\zeta) \\ \mathfrak{h} | (x(c_2 - c_1\zeta) + q(y + z\zeta))/g }
    1.
  \end{equation}
  Applying Lemma \ref{lemma:crawy58bds}, we obtain that this is
  \begin{multline}
    (D_1Q)^{o(1)} \sum_{g, h\ge 1 \\ g^4h > B } \sum_{d\asymp D_1 \\ q\asymp Q \\ gh | dq }
    \sum_{|\mathbf{c}|\asymp C \\ (c_1, c_2) = 1 }
    \frac{d^3q}{gh} (h, c_1^4 + c_2^4)
    \ll (D_1Q)^{o(1)} D_1^4Q^2 \sum_{|\mathbf{c}|\asymp C \\ (c_1, c_2) = 1 }
    \sum_{g, h\ge 1 \\ g^4h > B } \frac{(h, c_1^4 + c_2^4)}{g^2h^2} \\
    \ll (CD_1Q)^{o(1)} \frac{D_1^4Q^2}{B^{\frac{1}{4}}}\sum_{|\mathbf{c}|\asymp C \\ (c_1, c_2) = 1 }
    \sum_{h\ge 1} \frac{(h, c_1^4 + c_2^4)}{h^{\frac{7}{4}}}
    \ll (CD_1Q)^{o(1)}  \frac{C^2D_1^4Q^2}{B^{\frac{1}{4}}},
  \end{multline}
  as desired.
\end{proof}
The remainder of this subsection is dedicated to the proof of Proposition \ref{proposition:crare1wa9u}.
We shall prove it in the case $j = 1$, for the other case is identical.

Throughout the proof, fix $\varepsilon > 0$ (which at the end we will send slowly to $0$).

Then, as in the proof of Proposition \ref{proposition:cq9d99egtf}, we have that 
\begin{equation}
  \Sigma_{2}^{\set{1}} = \frac{1}{M^8} \sum_{\gamma_1, \gamma_2\in \mathcal{O}_K/(M)} \psi_M(\beta_1'\gamma_1 + \beta_2'\gamma_2)\Sigma_2^{\set{1}}(\gamma_1, \gamma_2),
\end{equation}
where
\begin{multline}\label{multline:crb6hezh9q}
  \Sigma_2^{\set{1}}(\gamma_1, \gamma_2) = \frac{X^4}{D^2}
  \sum_{ d\ge 1 \\ \mathbf{c} \in \mathbb{Z}^2 \\ (c_1, c_2) = 1 } \frac{1}{d^5}
  \omega_1 \biggl(\frac{|\mathbf{c}|d}{D}\biggr) \frac{d}{\sqrt{DX}}
  \sum_{q\ge 1 \\ M | q} \frac{1}{q^5} \sum_{\alpha_2 \neq 0 } \frac{1}{d^2} \sum_{x (dq) \\ y, z (d) }
  G_M(\alpha_2) \\
  \int_{K_\infty^2} \phi_1(x_1^\infty)\phi_2(x_2^\infty)
  \biggl( \omega_2 \biggl( \frac{dq}{M\sqrt{DX}} \biggr)
  - \omega_2 \biggl(\frac{M\sqrt{X}\det(\mathbf{c}, \ensuremath{\boldsymbol\ell}(x_1^\infty x_2^\infty))}{q\sqrt{D}}\biggr)\biggr)
  e \biggl( \frac{X_2}{dq} \langle x_2^\infty, \alpha_2 \rangle \biggr)\,d x_1^\infty \,d x_2^\infty,
\end{multline}
with
\begin{equation}
  G_M(\alpha_2) = G_M(\alpha_2, x, y, z, \gamma_1, \gamma_2; d, q)
  = \sum_{\beta_2\in \mathcal{O}_K/ dq\mathcal{O}_K \\ \gamma_{\mathbf{c}}(x,y , z; q) \beta_2 \equiv (\alpha_2 + \frac{dq}{M}\gamma_1) \, (dq) }
  \psi_M(\gamma_2\beta_2).
\end{equation}
We'll omit the dependence of $\gamma_\mathbf{c}$ on $q$ from now on for convenience.
Similar to earlier, we have that $G_M$ is $M(\gamma_\mathbf{c}(x,y ,z), dq)$-periodic and
\begin{equation}
  |G_M(\alpha_2, x,y, z, \gamma_1, \gamma_2; d, q)| \le N((\gamma_\mathbf{c}(x, y, z), dq))
  \mathbbm{1}_{ (\gamma_\mathbf{c}(x,y, z), dq) | M\alpha_2 }.
\end{equation}

We have that the terms in \eqref{multline:crb6hezh9q} are supported on $d\ll D, dq \ll \sqrt{DX}$.
Accordingly, we focus on $d\sim D_1$, $q\sim Q$ for some $D_1\ll D$, $Q\ll \sqrt{DX}/D_1$.
It can also be checked that we have
\begin{multline}
  \int_{K_\infty^2} \phi_1(x_1^\infty)\phi_2(x_2^\infty)
  \biggl( \omega_2 \biggl( \frac{dq}{M\sqrt{DX}}\biggr) 
  - \omega_2 \biggl(\frac{M\sqrt{X}\det(\mathbf{c}, \ensuremath{\boldsymbol\ell}(x_1^\infty x_2^\infty))}{q\sqrt{D}}\biggr)\biggr)
  e \biggl( \frac{X_2}{dq} \langle x_2^\infty, \alpha_2 \rangle \biggr)\,d x_1^\infty \,d x_2^\infty \\ 
  \ll_A \frac{dq}{\sqrt{DX}} \biggl( 1 + \frac{|\alpha_2|_{\sup}}{dq/X_2} \biggr)^{-A}.
\end{multline}
Therefore, we may suppose that $D_1Q \gg X_2X^{-\varepsilon}$ at a cost of $O(X^{-100})$.

The same treatment, discarding those $x, y, z$ with
$N((\gamma_{\mathbf{c}}(x,y, z), dq)) > B = (D_1Q/X_2)^4M^{-4}X^{-\varepsilon}$ and then reinserting the contribution of
$\alpha_2 = 0$ at a cost of
\begin{multline}
  \ll \frac{X^{4 + o(1)}}{D^2} D_1 \frac{1}{D_1^5} \biggl(\frac{D}{D_1}\biggr)^2
  \frac{D_1}{\sqrt{DX}} Q \frac{1}{Q^5} \biggl(\frac{D_1QM}{X_2}\biggr)^4
  D_1Q \frac{D_1Q}{\sqrt{DX}} 
  \frac{1}{B^{1/4}} + \\ 
  \frac{X^4}{D^2} D_1 \frac{1}{D_1^5} \biggl(\frac{D}{D_1}\biggr)^2 \frac{D_1}{\sqrt{DX}}
  Q \frac{1}{Q^5} D_1Q  \frac{D_1Q}{\sqrt{DX}}B^{3/4}
  \\
  \ll \frac{X^{4 + O(\varepsilon)}D_1Q^2}{X_2^4DXB^{1/4}} + \frac{X^{3 + O(\varepsilon)}}{DD_1^3Q^2}B^{3/4}
  \ll \frac{X^{3 + O(\varepsilon)}}{X_2^3}\frac{Q}{D}\ll \frac{X^{7/2 + O(\varepsilon)}}{X_2^3\sqrt{D}}.
\end{multline}
It remains to bound, for $d\sim D_1$, $q\sim Q$, $\mathbf{c}$ primitive, and $x, y, z$ such that with
$(\kappa) = M(\gamma_\mathbf{c}(x, y, z), dq)$ of norm $\le B M^4$ (with $|\kappa|_{\sup}\ll |N(\kappa)|^{1/4}$), the sum 
\begin{equation}
  \sum_{\alpha_2 } G_M(\alpha_2) \int_{K_\infty} \phi_2(x_2^\infty)
  \biggl( \omega_2 \biggl(\frac{dq}{M\sqrt{DX}}\biggr)
  - \omega_2 \biggl(\frac{M\sqrt{X}\det(\mathbf{c}, \ensuremath{\boldsymbol\ell}(x_1^\infty x_2^\infty))}{q\sqrt{D}}\biggr)\biggr)
  e \biggl( \frac{X_2}{dq}\langle x_2^\infty, \alpha_2 \rangle \biggr) \,d x_2^\infty.
\end{equation}
By Poisson summation (recalling that $G_M$ is $\kappa$-periodic), we have that
this equals
\begin{equation}
  \sum_{\alpha_2 } \biggl( \sum_{\beta (\kappa) } G_M(\beta) \psi \biggl(-\frac{\alpha_2\beta}{\kappa}\biggr)  \biggr)
  \phi_2 \biggl(\frac{dq\alpha_2}{\kappa X_2}\biggr)
  \biggl( \omega_2 \biggl(\frac{dq}{M\sqrt{DX}}\biggr)
  - \omega_2 \biggl(\frac{d M\sqrt{X}}{X_2\sqrt{D}}\det\biggl(\mathbf{c}, \ensuremath{\boldsymbol\ell} \biggl(\frac{x_1^\infty\alpha_2}{\kappa}\biggr)\biggr)\biggr)\biggr).
\end{equation}
Now, note that for $\alpha_2$ nonzero, 
\begin{equation}
  \biggl| \frac{dq\alpha_2}{\kappa X_2} \biggr|_{\sup}
  \gg \frac{dq}{X_2|\kappa|_{\sup}}\gg X^{\varepsilon/2},
\end{equation}
from which the desired result follows from the compact support away from $0$ of $\phi_2$.

\subsection{Estimation of \texorpdfstring{$\Sigma_2^{\set{}}$}{Sigma2{}}: the full-zero frequencies}\label{sec:crao4uslaz}

We begin with the treatment of the full-zero frequencies $\Sigma_2^{\set{}}$ defined in
Proposition \ref{proposition:crb36eebaq}, which contributes a main term of size $X^3/D$, larger than the
``true'' main term of Theorem \ref{theorem:cq5o3jikeq} which has size $X^2$.

This is to cancel with the same illusory main term that arises in the evaluation of
$\Sigma_1^{\set{}}$ along with the true main term. We show the following.

\begin{proposition}\label{prop.Sigma2Final}
  We have 
  \begin{equation}
    \Sigma_2^{\set{}}= 2\tilde\omega_1(1)\frac{X^{3}}{D} U_1(-1) M^{-1}\mathcal{D}_1^*(1)
    + O\left(\frac{X^{2+o(1)}}{D^2}+\frac{X^{3+o(1)}}{D^4}\right).
  \end{equation}

\end{proposition}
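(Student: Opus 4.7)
The plan is Mellin inversion applied to the two $\omega_2$-factors in $I_2(0,0;\mathbf{c},d,q)$, followed by contour-shift residue extraction. Writing $S_2(0,0;\mathbf{c},d,q) = d^3 q^4 \tilde{N}_2(\mathbf{c},d;q)$ in \eqref{Sigma2SDef} and splitting $I_2 = I_2^{(1)} - I_2^{(2)}$ gives $\Sigma_2^{\set{}} = T_1 - T_2$, where $T_1$ has the constant archimedean factor $U_1(0) = \int \phi_1\phi_2$ coming from $\omega_2(dq/(M\sqrt{DX}))$ and $T_2$ involves the variable archimedean factor $U_2(s;\mathbf{c}) := \int_{K_\infty^2}\phi_1\phi_2|\det(\mathbf{c},\ensuremath{\boldsymbol\ell}(x_1^\infty x_2^\infty))|^{-s} dx_1^\infty dx_2^\infty$ produced by Mellin inversion of the second $\omega_2$. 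After interchanging, the $q$-sum evaluates to $\zeta(1+s)\mathcal{D}_2^*(1+s;\mathbf{c},d)$ in $T_1$ (contour $\Re s = c > 0$) and to $\zeta(1-s)\mathcal{D}_2^*(1-s;\mathbf{c},d)$ in $T_2$ (contour $\Re s = c < 0$), using Proposition \ref{prop:D_2_props}.

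Next, shift $T_1$'s contour leftward past $s=0$ and $T_2$'s contour rightward past $s=0$ and past $s=1$. The residues at $s=0$ (coming from the $\zeta$-poles) in the two terms cancel, since both are $\pm\tilde\omega_2(0)U_1(0)\mathcal{D}_2^*(1;\mathbf{c},d)$. Only the $s=1$ residue of $T_2$ survives: Lemma \ref{lemma:crb0xx4e28} gives $U_2(s;\mathbf{c})$ a simple pole at $s=1$ with residue $-2\int\phi_1\phi_2\,\delta(\det(\mathbf{c},\ensuremath{\boldsymbol\ell}(x_1^\infty x_2^\infty)))$, and $\mathcal{D}_2^*(0;\mathbf{c},d) = M\prod_p\sigma_p(\mathbf{c},d)$ by Proposition \ref{prop:D_2_props}. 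Collecting with $\tilde\omega_2(1) = 1$ and $\zeta(0) = -\tfrac12$, the main contribution becomes
\[
\frac{X^3}{D^2}\sum_{\substack{d\ge 1,\mathbf{c}\\(c_1,c_2)=1}}\frac{\omega_1(|\mathbf{c}|d/D)}{d}\int_{K_\infty^2}\phi_1\phi_2\,\delta(\det(\mathbf{c},\ensuremath{\boldsymbol\ell}(x_1^\infty x_2^\infty)))\prod_p\sigma_p(\mathbf{c},d)\,dx_1^\infty dx_2^\infty.
\]

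To identify this with the claimed main term, use Lemma \ref{lemma:crb7r6zwci} to rewrite $\prod_p\sigma_p(\mathbf{c},d) = \prod_p\tau_p(\mathbf{c}d)$, change variables $\mathbf{v}=\mathbf{c}d$ with $d=(v_1,v_2)$, and note $\delta(\det(\mathbf{c},\ensuremath{\boldsymbol\ell})) = d\cdot\delta(\langle\mathbf{v}^\perp,\ensuremath{\boldsymbol\ell}\rangle)$. The resulting sum is what Proposition \ref{proposition:crb7r8dzyl} controls: expanding $\tau_p(\mathbf{v}) = (v_1,v_2,p^\infty)^{-1}\sum_k S_p(\mathbf{v};k)$, swapping the $(\mathbf{v},q)$-sum, and invoking the telescoping construction of $S_p(\mathbf{v};k)$ that links $\tau_p$ back to the local factors defining $\mathcal{D}_1^*$ (as in Proposition \ref{lemma:cran7rw9t5}), the arithmetic part collapses to $M^{-1}\mathcal{D}_1^*(1)$. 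The remaining archimedean factor $\int_{\mathbb{R}^2}\omega_1(|\mathbf{v}|/D)\int_{K_\infty^2}\phi_1\phi_2\,\delta(\langle \mathbf{v},\ensuremath{\boldsymbol\ell}(x_1^\infty x_2^\infty)\rangle)\,dx_1^\infty dx_2^\infty d\mathbf{v}$, by Lemma \ref{lemma:crb8be0e8e} and the rescaling $\mathbf{v}=D\mathbf{u}$, equals $2D\tilde\omega_1(1)U_1(-1)$. Combining yields the main term $2\tilde\omega_1(1)(X^3/D)U_1(-1)M^{-1}\mathcal{D}_1^*(1)$.

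The main obstacle will be the local-global recombination inside Proposition \ref{proposition:crb7r8dzyl}: justifying the interchange of the $\mathbf{v}$- and $q$-summations and executing the telescoping of $\prod_p\tau_p(\mathbf{v})$ against the archimedean $\delta$-weight to recover $M^{-1}\mathcal{D}_1^*(1)$. This uses the key fact that the dual-picture local densities arising from the two-dimensional delta method match, after archimedean integration converts $\delta(\langle\mathbf{v}^\perp,\ensuremath{\boldsymbol\ell}\rangle)$ into an $|\ensuremath{\boldsymbol\ell}|^{-1}$ kernel via Lemma \ref{lemma:crb8be0e8e}, with the original singular series for $\ensuremath{\boldsymbol\ell}(\beta_1\beta_2)\equiv 0$ encoded by $\tilde N_1$. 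The error terms $O(X^{2+o(1)}/D^2)$ and $O(X^{3+o(1)}/D^4)$ arise from the shifted contour tails at $\Re s=-c'$ and $\Re s=1+c'$, controlled via Proposition \ref{prop:D_2_props}'s polynomial bound on $\mathcal{D}_2^*$, convexity for $\zeta$, the rapid decay of $\tilde\omega_2$ in vertical strips, and (for the passage from discrete sums to the continuous integral input of Lemma \ref{lemma:crb8be0e8e}) the auxiliary count of Lemma \ref{proposition:cravzrp2bh}.
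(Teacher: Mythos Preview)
Your approach is essentially the paper's own: Mellin inversion on the two $\omega_2$-weights, contour shifting so that the $s=0$ residues from $\zeta$ cancel and the surviving $U_2$-pole produces $\zeta(0)\tilde\omega_2(1)\mathcal{D}_2^*(0;\mathbf{c},d)$ times the $\delta(\det(\mathbf{c},\ensuremath{\boldsymbol\ell}))$-integral, then Lemma~\ref{lemma:crb7r6zwci} to pass to a single $\mathbf{v}=\mathbf{c}d$ sum and Proposition~\ref{proposition:crb7r8dzyl} (via Lemma~\ref{lemma:crb8be0e8e} and Poisson in $\mathbf{v}$) to identify the main term with $2\tilde\omega_1(1)(X^3/D)U_1(-1)M^{-1}\mathcal{D}_1^*(1)$. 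The paper combines $T_1,T_2$ into a single contour integral \eqref{eq:crb7rvd8db} rather than shifting them in opposite directions, but this is cosmetic. One correction: Lemma~\ref{proposition:cravzrp2bh} is not used here (it belongs to the partial-zero case $\Sigma_2^{\{j\}}$); the $O(X^{3+o(1)}/D^4)$ term comes from the nonzero Poisson frequencies inside Proposition~\ref{proposition:crb7r8dzyl} via Lemma~\ref{lem:S0_fourier}, and the shifted-contour tail (taken to $\Re s=-3+\delta$) yields $O(X^{2+o(1)}/D)$, which is dominated by $X^{3+o(1)}/D^4$ in the regime $D\ll X^{1/3}$ where the proposition is applied.
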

Recall that $\mathcal{D}_1^*(s)$ is defined in \eqref{align:crb7s6n8bp} and $U_1(s)$ is defined in \eqref{eq:cq9cxlwwm4}.

Our proof will amount to two maneuvers. The first is to show that $\Sigma_2^{\set{}}$ is a smooth sum
over $\mathbb{Z}^2$ of a product of local densities. The second is to, with a good remainder term,
estimate this smooth sum of a product of local densities by expanding the product into
a convergent Dirichlet series whose coefficients are periodic and applying Poisson
summation (as it will turn out, we do not have to truncate the Dirichlet series).

\subsubsection{Extracting a sum of products of local densities}

Recalling the definition of $S_2$ in \eqref{S2Def} and $I_2$ in \eqref{I2Def}, we have that
\begin{equation}\label{eq:crb7r2ixlw}
  \Sigma_2^{\set{}} = \frac{X^4}{D^2}\sum_{d\ge 1 \\ \mathbf{c} \in \mathbb{Z}^2 \\ (c_1, c_2) = 1} \frac{1}{d^5}
  \omega_1 \biggl(\frac{|\mathbf{c}|d}{D}\biggr) \frac{d}{\sqrt{DX}} \Sigma_2^{\set{}}(\mathbf{c},d),
\end{equation}
where 
\begin{align}\label{qsumSigma2Star}
  \Sigma_2^{\set{}}(\mathbf{c}, d) = \sum_{q\ge 1 \\ M | q}\frac{1}{q^5}
  S_2(0,0; \mathbf{c}, d,q) I_2(0,0; \mathbf{c},d,q)
  =\sum_{q\ge 1\\ M|q}\frac{1}{d^3q^8}\sum_{(\beta_1, \beta_2)\in V_{dq} \\ 
  \det(\mathbf{c}, \ensuremath{\boldsymbol\ell}(\beta_1\beta_2))\equiv 0\,(d q/M) \\
  \ensuremath{\boldsymbol\ell} (\beta_1\beta_2)\equiv 0(d)} I_2(0,0; \mathbf{c},d,q),
\end{align}
Opening up the definition of $I_2$ and applying Mellin inversion yields that
\begin{multline}\label{eq:crb7rvd8db}
  \Sigma_2^{\set{}}(\mathbf{c}, d)  \\
  =  d^3 \frac{1}{2\pi i}\int_{(3)}
  \biggl( \biggl(\frac{M\sqrt{DX}}{d}\biggr)^s\hat\phi_1(0)\hat\phi_2(0)\tilde \omega_2(s)
  - \biggl(M\sqrt{\frac{X}{D}}\biggr)^s\tilde \omega_2(-s)U_2(s; \mathbf{c})  \biggr)
  \mathcal{D}_2(s + 1; \mathbf{c}, d)  \,d s,
\end{multline}
where $\mathcal{D}_2$ was defined in \S\ref{sec:crb7rws6oy}, and 
\begin{equation}
  U_2(s; \mathbf{c} ) = \int_{K_\infty^2} \phi_1(x_1^\infty)\phi_2(x_2^\infty)
  |\det(\mathbf{c}, \ensuremath{\boldsymbol\ell}(x_1^\infty x_2^\infty))|^s  \,d x_1^\infty \,d x_2^\infty. 
\end{equation}
Also, we have that
\begin{multline}
  \biggl( \biggl(\frac{M\sqrt{DX}}{d}\biggr)^s\hat\phi_1(0)\hat\phi_2(0)\tilde \omega_2(s)
  - \biggl(M\sqrt{\frac{X}{D}}\biggr)^s\tilde \omega_2(-s)U_2(s; \mathbf{c})  \biggr)
  \biggr|_{s = 0} \\
  = \tilde\omega_2(0) (\hat \phi_1(0)\hat\phi_2(0) - U_2(0; \mathbf{c} )) = 0,
\end{multline}
for $U_2(0; \mathbf{c}) = \hat\phi_1(0)\hat\phi_2(0)$ and that $\mathcal{D}_2(s + 1; \mathbf{c}, d)$ has only a simple pole at $s = 0$ by
Proposition \ref{prop:D_2_props}. Therefore, the residue of the integrand of \eqref{eq:crb7rvd8db}
at $s = 0$ is $0$.

Therefore, the only pole to the left of $\Re(s) > -3$ is at $s = -1$ by Lemma \ref{lemma:crb0xx4e28}
(with $F(x) = \det(\mathbf{c}/|\mathbf{c}|, \ensuremath{\boldsymbol\ell}(x))$ for $x\in K_\infty^2$), at which we obtain a residue of
\begin{multline}
  \underset{s = -1}{\res}\biggl[ \biggl( \biggl(\frac{M\sqrt{DX}}{d}\biggr)^s\hat\phi_1(0)\hat\phi_2(0)\tilde \omega_2(s)
  - \biggl(M\sqrt{\frac{X}{D}}\biggr)^s\tilde \omega_2(-s)U_2(s; \mathbf{c})  \biggr)
  \mathcal{D}_2(s + 1; \mathbf{c}, d)\biggr] \\
  = -\zeta(0)\tilde\omega_2(1) \frac{2\sqrt{\pi}}{\Gamma(1/2)}\frac{1}{M}\sqrt{\frac{D}{X}}\mathcal{D}_2^*(0; \mathbf{c}, d) \int_{K_\infty^2} \phi_1(x_1^\infty)\phi_2(x_2^\infty)
  \delta(\det(\mathbf{c} ,\ensuremath{\boldsymbol\ell}(x_1^\infty x_2^\infty))) \,d x_1^\infty \,d x_2^\infty.
\end{multline}
By Proposition \ref{prop:D_2_props}, this is equal to 
\begin{equation}
  \sqrt{\frac{D}{X}}\sigma_\infty(\mathbf{c})\prod_{p}\sigma_p(\mathbf{c}, d),
\end{equation}
recalling that
\begin{equation}
  \sigma_\infty(\mathbf{c})
  =  \int_{K_\infty^2} \phi_1(x_1^\infty)\phi_2(x_2^\infty)
  \delta(\det(\mathbf{c} ,\ensuremath{\boldsymbol\ell}(x_1^\infty x_2^\infty))) \,d x_1^\infty \,d x_2^\infty,
\end{equation}
\begin{equation}
  \sigma_p(\mathbf{c}, d) = \int_{\mathcal{O}_{K, p}^2} \mathbbm{1}_{ \substack{(\beta_1, \beta_2)\in V_{p^{m_p}} \\ d | \ensuremath{\boldsymbol\ell}(\beta_1\beta_2) }}
  \delta(\det(\mathbf{c}, \ensuremath{\boldsymbol\ell}(\beta_1\beta_2)))
  \, d \beta_1 \, d \beta_2.
\end{equation}

In particular, moving the contour in \eqref{eq:crb7rvd8db} to $\Re(s) = -3 + \delta$
for $\delta\to 0$ sufficiently slowly in terms of $X$, picking up the poles of $\zeta(s + 1)$ at $s = 0$
(which cancels with the difference which vanishes) and $U_2(s)$ at $s = -1$
(by Lemma \ref{lemma:crb0xx4e28}), we obtain that
\begin{equation}
  \Sigma_2^{\set{}}(\mathbf{c}, d) =  d^3 \sqrt{\frac{D}{X}} \sigma_\infty(\mathbf{c}) \prod_p \sigma_p(\mathbf{c}, d)
  +  O( d^6(DX)^{-\frac{3}{2}} (|\mathbf{c}| d)^{o(1)} ).
\end{equation}
Substituting this into \eqref{eq:crb7r2ixlw}, we obtain that
\begin{align}
  \Sigma_2^{\set{}} &=  \frac{X^3}{D^2} \sum_{d\ge 1 \\ \mathbf{c} \in \mathbb{Z}^2 \\ (c_1, c_2) = 1 }  \frac{1}{d}\omega_1 \biggl(\frac{|\mathbf{c}|d}{D}\biggr)
  \sigma_\infty(\mathbf{c})\prod_{p} \sigma_p(\mathbf{c}, d) \\ 
               &\hspace{2cm}+ O \biggl( \frac{X^{4 + o(1)}}{D^2}\sum_{d\ll D} \frac{1}{d^5}\biggl(\frac{D}{d}\biggr)^2\frac{d}{\sqrt{DX}} d^6 (DX)^{-\frac{3}{2}} \biggr) \\ 
               &= \frac{X^3}{D^2} \sum_{d\ge 1 \\ \mathbf{c} \in \mathbb{Z}^2 \\ (c_1, c_2) = 1 }  \frac{1}{d}\omega_1 \biggl(\frac{|\mathbf{c}|d}{D}\biggr)
  \sigma_\infty(\mathbf{c})\prod_{p} \sigma_p(\mathbf{c}, d) + O \biggl( \frac{X^{2 + o(1)}}{D} \biggr).
\end{align}
We shall now show that this is a sum over $\mathbf{v} = \mathbf{c}d$, so that we may sum smoothly over
$\mathbb{Z}^2$ and drop the primitivity condition.
First note that, by a change of variables,
\begin{equation}
  \sigma_\infty(\mathbf{c}) = \frac{d}{D} \sigma_\infty \biggl( \frac{\mathbf{c} d}{D} \biggr).
\end{equation}
Slightly more work is required at the non-archimedean places, which is contained in
Lemma \ref{lemma:crb7r6zwci}.

Applying \eqref{eq:crb7r7g0xm} in Lemma \ref{lemma:crb7r6zwci}, we obtain that
\begin{equation}\label{eq:crb7r8f9bf}
  \Sigma_2^{\set{}} = \frac{X^3}{D^3} \sum_{\mathbf{v} \in \mathbb{Z}^2} \sigma_\infty \biggl(\frac{\mathbf{v}}{D}\biggr)\prod_p \sigma_p(\mathbf{v})
  \omega_1 \biggl(\frac{|\mathbf{v}|}{D}\biggr)
  + O \biggl(\frac{X^{2 + o(1)}}{D}\biggr).
\end{equation}

\subsubsection{Summing the products of local factors}\label{sec:cuhlxu5ldf}

The main result of this subsection is the following.
\begin{proposition}\label{proposition:crb7r8dzyl}
  We have that
  \begin{multline}
    \sum_{\mathbf{v} \in \mathbb{Z}^2} \sigma_\infty \biggl(\frac{\mathbf{v}}{D}\biggr)\prod_p \sigma_p(\mathbf{v})
    \omega_1 \biggl(\frac{|\mathbf{v}|}{D}\biggr) \\ 
    = 2 D^2M^{-1}\tilde\omega_1(1) \mathcal{D}_1^*(1)
    \int_{K_\infty^2} \phi_1(x_1^\infty)\phi_2(x_2^\infty) |\ensuremath{\boldsymbol\ell}(x_1^\infty x_2^\infty)|^{-1}
    \,d x_1^\infty \,d x_2^\infty + O \biggl( \frac{1}{D^{1 - o(1)}} \biggr).
  \end{multline}
\end{proposition}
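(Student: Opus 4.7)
The plan is to apply the Fourier-type decomposition $\prod_p \sigma_p(\mathbf{v}) = (v_1, v_2)^{-1} \sum_{q \geq 1} S(\mathbf{v}; q)$ from Lemma \ref{lemma:crb7r6zwci}, with $S(\mathbf{v}; q) = \prod_{p^k \| q} S_p(\mathbf{v}; k)$ a $q$-periodic function having the explicit additive character structure of \eqref{SDef} and satisfying $|S(\mathbf{v}; q)| \ll q^{-3+o(1)}(v_1^4 + v_2^4, q)$. Substituting this into the left-hand side and exchanging the order of summation (justified by the decay of $S$), the task reduces to analyzing, for each $q$, the inner sum
\begin{equation*}
\sum_{\mathbf{v} \neq 0} \sigma_\infty(\mathbf{v}/D)\,\omega_1(|\mathbf{v}|/D)\, \frac{S(\mathbf{v}; q)}{(v_1, v_2)}
\end{equation*}
via Poisson summation on $\mathbf{v}$.

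To handle the $(v_1, v_2)^{-1}$ factor, I would stratify by $g = (v_1, v_2)$, writing $\mathbf{v} = g\mathbf{w}$ with $\mathbf{w}$ primitive, noting that $g \ll D$ is forced by the support of $\omega_1$. Detecting primitivity of $\mathbf{w}$ via M\"obius inversion converts the inner sum into a smooth sum against the $q$-periodic weight $S$, and the homogeneity $\sigma_\infty(g\mathbf{w}/D) = (D/g)\sigma_\infty(\mathbf{w})$ clears the $D$-scaling. Applying Poisson summation, the zero Fourier mode yields the main term. From the expansion \eqref{SDef}, the zero mode forces $w\mathbf{a} \equiv 0 \pmod{p^k}$ for each $p^k \| q$; combined with $(a_1, a_2, p) = 1$, this in turn forces $w \equiv 0 \pmod{p^k}$. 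With $w = 0$, the remaining character sum over $\mathbf{a}$ and $\beta_1, \beta_2$ reconstructs, after absorbing normalization factors, the local density $\tilde{N}_1(p^k)$ from \eqref{N1TDef}. Collecting across primes and summing in $q$ produces $M^{-1}\mathcal{D}_1^*(1)$, recalling that $\mathcal{D}_1^*(1)$ is the residue at $s = 1$ of $\mathcal{D}_1(s) = \sum_q \tilde{N}_1(q)/q^s$ and that $N_1^*$ is the M\"obius transform of $\tilde{N}_1$ along divisors of $q/M$.

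On the archimedean side, the change of variables $\mathbf{v} = D\mathbf{u}$ converts $\int \sigma_\infty(\mathbf{v}/D)\omega_1(|\mathbf{v}|/D)\,d\mathbf{v}$ into $D^2 \int_{\mathbb{R}^2} \omega_1(|\mathbf{u}|)\sigma_\infty(\mathbf{u})\,d\mathbf{u}$, and Lemma \ref{lemma:crb8be0e8e} with $\omega = \omega_1$, $\Phi = \phi_1 \phi_2$ evaluates this integral as $2 D^2 \tilde\omega_1(1) \int \phi_1 \phi_2 |\ensuremath{\boldsymbol\ell}(x_1^\infty x_2^\infty)|^{-1}\, d\mathbf{x}$, recovering the stated main term. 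The principal obstacle lies in bounding the nonzero Fourier modes $\mathbf{k} \neq 0$ uniformly in $q$ and $g$: the smooth weight's Fourier transform decays past $|\mathbf{k}| \gg gq/D$, and combined with the pointwise decay $|S_p(\mathbf{v}; k)| \ll (k+1)(v_1^4 + v_2^4, p^k) p^{-3k}$ from Lemma \ref{lemma:crb7r6zwci}, one needs to sum the resulting bounds over $g$, $q$, and $\mathbf{k}$ to yield the error $O(D^{-1+o(1)})$. Carefully matching the M\"obius coefficients arising from the primitivity detection to the Dirichlet series $\mathcal{D}_1^*$ (rather than $\mathcal{D}_1$) is mostly bookkeeping, though it requires genuine care.
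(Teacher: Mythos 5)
Your overall skeleton is the same as the paper's (expand the product of local densities into the multiplicative, $q$-periodic pieces $S(\mathbf{v};q)$, Poisson-sum in $\mathbf{v}$, identify the zero frequency with $M^{-1}\mathcal{D}_1^*(1)$ times the archimedean integral of Lemma \ref{lemma:crb8be0e8e}, and bound the nonzero frequencies), but there are two genuine gaps. The decisive one is the treatment of the nonzero Fourier modes: you propose to use only the pointwise bound $|S_p(\mathbf{v};k)|\ll (k+1)(v_1^4+v_2^4,p^k)p^{-3k}$ together with the decay of the archimedean transform. That is not enough. Summing the pointwise bound over a complete residue system only gives $\sum_{\mathbf{v}\,(q)}|S(\mathbf{v};q)|\ll q^{-1+o(1)}$, and with this the nonzero-mode contribution is of size about
\begin{equation*}
  D^2\sum_{q\gg D}\frac{1}{q^{2}}\cdot q^{-1+o(1)}\cdot\#\{\mathbf{w}\neq 0:|\mathbf{w}|\ll q/D\}
  \asymp \sum_{q\gg D} q^{-1+o(1)},
\end{equation*}
which does not even converge, let alone give $O(D^{-1+o(1)})$. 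One needs genuine cancellation in the complete sum $\hat S(\mathbf{w};q)=\sum_{\mathbf{v}\,(q)}S(\mathbf{v};q)e_q(-\langle\mathbf{v},\mathbf{w}\rangle)$, namely the bound $\hat S(\mathbf{w};q)\ll M^{O(1)}(w_1^4+w_2^4,q)q^{-2+o(1)}$ of Lemma \ref{lem:S0_fourier}, proved by a separate character-sum argument (via Lemma \ref{lem.Only1Prime} and the constraint $u(a_1,a_2)\equiv-\mathbf{w}$); that extra factor of $q$ is exactly what the stated error requires, and it is missing from your plan.

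The second gap concerns the main term. First, the gcd handling: the evaluation that actually produces the constant $M^{-1}\mathcal{D}_1^*(1)$ attaches to $\mathbf{v}$ the weight $\sum_q S(\mathbf{v};q)=\prod_p\sum_{k\ge 0}S_p(\mathbf{v};k)$ with no division by $(v_1,v_2)$; if you keep the factor $(v_1,v_2)^{-1}$ and stratify $\mathbf{v}=g\mathbf{w}$ with M\"obius detection of primitivity, the zero modes over the sublattices $g\mathbb{Z}^2$ have means different from the mean over $\mathbb{Z}^2$, so the resulting constant is a different Euler product, and nothing in your sketch shows that the $g$- and M\"obius-sums recombine to give $M^{-1}\mathcal{D}_1^*(1)$. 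Second, even without the gcd issue, the zero-mode computation is not simply ``$w\equiv 0$ forces the count $\tilde N_1(p^k)$'': the coprimality restriction $(a_1,a_2,p)=1$ survives and yields a difference of two counts, i.e.\ the M\"obius-transformed quantity $N_1^*$ together with an extra factor of $q$, and it is precisely the identity $\hat S(\mathbf{0};q)=\frac{q}{M^2}N_1^*(qM)$ (with the scaling subtleties at primes dividing $M$) that turns $\sum_q q^{-2}\hat S(\mathbf{0};q)$ into $M^{-1}\mathcal{D}_1^*(1)$. This is the computational crux of the main term, and your proposal asserts its outcome rather than deriving it.
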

This and \eqref{eq:crb7r8f9bf} imply Proposition \ref{prop.Sigma2Final}.

Expanding out with \eqref{eq:crb7r7g0xm} (with absolute convergence following
from the bounds \eqref{eq:crb7r7lmsp}), we have that
\begin{equation}
  \sum_{\mathbf{v} \in \mathbb{Z}^2} \sigma_\infty \biggl(\frac{\mathbf{v}}{D}\biggr)\prod_p \sigma_p(\mathbf{v})
  \omega_1 \biggl(\frac{|\mathbf{v}|}{D}\biggr) =
  \sum_{q\ge 1 }\sum_{\mathbf{v} \in \mathbb{Z}^2 }S(\mathbf{v}; q)
  \sigma_\infty \biggl(\frac{\mathbf{v}}{D}\biggr)\omega_1 \biggl(\frac{|\mathbf{v}|}{D}\biggr).
\end{equation}
Applying Poisson summation, we obtain that this equals
\begin{equation}\label{eq:crb8bepw0f}
  D^2 \sum_{q\ge 1 } \frac{1}{q^2} \sum_{\mathbf{w} \in \mathbb{Z}^2 } \hat S(\mathbf{w}; q)
  \int_{\mathbb{R}^2} \sigma_\infty(\mathbf{v}) \omega_1(|\mathbf{v}|)
  e\biggl(-\frac{D}{q}\langle \mathbf{v}, \mathbf{w} \rangle\biggr)  \,d \mathbf{v} ,
\end{equation}
where
\begin{equation}
  \hat S(\mathbf{w} ; q)
  := \sum_{\mathbf{v} (q) } S(\mathbf{v} ; q)
  e_q ( - \langle \mathbf{v}, \mathbf{w} \rangle).
\end{equation}
The evaluation of $\hat S(\mathbf{w}; q)$ required is contained in the following lemma.
\begin{lemma} \label{lem:S0_fourier}
  We have that
  \begin{equation}
    \hat S(\mathbf{0} ; q)  = \frac{q}{M^2}N_1^*(qM).
  \end{equation}
  Furthermore, for $\mathbf{w} \neq 0$, we have that
  \begin{equation}\label{eq:crb8bjp1jg}
    \hat S(\mathbf{w} ; q) \ll  M^{O(1)}\frac{(w_1^4 + w_2^4, q)}{q^{2 - o(1)}}.
  \end{equation}
\end{lemma}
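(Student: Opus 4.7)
The plan is to compute $\hat S(\mathbf{w}; q)$ directly, reducing to prime powers via CRT. Since $S(\mathbf{v}; q) = \prod_{p^k \| q} S_p(\mathbf{v}; k)$ factors multiplicatively and $e_q(-\mathbf{v}\cdot\mathbf{w})$ decomposes as a product of local characters via CRT, one has $\hat S(\mathbf{w}; q) = \prod_{p^k \| q} \hat S_p(u_p\mathbf{w}; p^k)$ for CRT units $u_p$ preserving $p$-adic valuations, so both claims reduce to analogues at prime powers. Substituting \eqref{SDef} and summing over $\mathbf{v}$ first collapses the sum under the delta condition $w\mathbf{a} + \mathbf{w} \equiv 0 \, (p^k)$:
\begin{equation*}
\hat S_p(\mathbf{w}; p^k) = \frac{p^{2k}}{p^{9k + 8\max\{0, m_p - k\}}} \sum_{\beta_1, \beta_2 \in V_{p^k}} \sum_{\substack{w, \mathbf{a}\,(p^k) \\ (a_1, a_2, p) = 1 \\ w\mathbf{a} \equiv -\mathbf{w}\,(p^k)}} e_{p^k}(\langle \ensuremath{\boldsymbol\ell}(\beta_1\beta_2), \mathbf{a}\rangle).
\end{equation*}

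For $\mathbf{w} = \mathbf{0}$, coprimality of $\mathbf{a}$ forces $w = 0$, and the resulting coprime $\mathbf{a}$-sum evaluates via M\"obius as $\sumCp_{\mathbf{a}(p^k)} e_{p^k}(\mathbf{b}\cdot\mathbf{a}) = p^{2k}\mathbbm{1}[p^k|\mathbf{b}] - p^{2k-2}\mathbbm{1}[p^{k-1}|\mathbf{b}]$. The two resulting counts $\#\{V_{p^k} : p^{k-\epsilon}|\ensuremath{\boldsymbol\ell}(\beta_1\beta_2)\}$ ($\epsilon \in \{0,1\}$) lift to $V_{p^{k+m_p}}$ with a uniform multiplicative factor $p^{8m_p}$ (since the divisibility depends only on $\beta_i$ modulo $p^{k-\epsilon}$), and are thereby identified with $p^{6(k+m_p) - 8m_p}\tilde N_1(p^{k+m_p-\epsilon})$; their difference equals $p^{k-2m_p}\,N_1^*(p^{k+m_p})$. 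Taking products over primes proves (1).

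For $\mathbf{w} \neq 0$, set $\ell = v_p(\mathbf{w})$ and write $\mathbf{w} = p^\ell \mathbf{w}'$ with $(\mathbf{w}', p) = 1$. If $\ell \geq k$, the Fourier transform reduces to the $\mathbf{0}$-case and the bound follows by combining (1) with the decay of $N_1^*$ provided by Proposition \ref{prop.N1}. For $\ell < k$, the equation $w\mathbf{a} \equiv -\mathbf{w}\,(p^k)$ forces $w = p^\ell w'$ with $(w', p) = 1$ and determines $\mathbf{a} \equiv -\overline{w'}\mathbf{w}'\,(p^{k-\ell})$, with $p^{2\ell}$ lifts to $(\mathbb{Z}/p^k)^2$; summing these lifts via $\sum_{\mathbf{b}(p^\ell)} e_{p^\ell}(\mathbf{b}\cdot\ensuremath{\boldsymbol\ell}(\beta_1\beta_2)) = p^{2\ell}\mathbbm{1}[p^\ell | \ensuremath{\boldsymbol\ell}(\beta_1\beta_2)]$ localizes to $p^\ell \mid \ensuremath{\boldsymbol\ell}(\beta_1\beta_2)$, and the remaining $w'$-sum becomes a Ramanujan sum $c(\langle\ensuremath{\boldsymbol\ell}(\beta_1\beta_2), \mathbf{w}\rangle/p^{2\ell};\, p^{k-\ell})$.

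The main obstacle is then producing the factor $(w_1^4 + w_2^4, p^k)$ from the resulting count of $(\beta_1, \beta_2) \in V_{p^k}$ satisfying both $p^\ell \mid \ensuremath{\boldsymbol\ell}(\beta_1\beta_2)$ and the strong linear-divisibility $p^{k+\ell-1} \mid \langle \ensuremath{\boldsymbol\ell}(\beta_1\beta_2), \mathbf{w}\rangle$ forced by nonvanishing of the Ramanujan sum. I would accomplish this by mirroring the four-case analysis in the proof of \eqref{eq:crb7r7lmsp} of Lemma \ref{lemma:crb7r6zwci}: splitting by the $p$-adic valuations of $w_1$ and $w_2$, decomposing $\beta_1\beta_2$ via its $\mathbb{Z} + \mathbb{Z}\zeta$-component, and invoking Lemma \ref{lem.sumNorm} to translate the remaining linear congruence $w_2 n_2 + w_1 n_3 \equiv 0$ on the $\zeta^2, \zeta^3$-coefficients of $\beta_1\beta_2$ into a divisibility condition controlled by powers of $w_1^4 + w_2^4$; the $V_{p^k}$-membership condition produces only harmless $p^{O(m_p)}$ losses. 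Multiplying the prime-power bounds yields \eqref{eq:crb8bjp1jg}.
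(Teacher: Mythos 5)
Your treatment of the zero frequency is essentially the paper's computation run in a different order (orthogonality in $\mathbf{v}$ first, which forces $w\equiv 0\,(p^k)$ by the coprimality of $\mathbf{a}$, then M\"obius on the coprime $\mathbf{a}$-sum and identification of the two counts with $\tilde N_1(p^{k+m_p})$ and $\tilde N_1(p^{k+m_p-1})$), and it is correct in substance; note only that the count with $p^{k-1}\mid \ensuremath{\boldsymbol\ell}(\beta_1\beta_2)$ equals $p^{6(k+m_p)-8m_p+2}\tilde N_1(p^{k+m_p-1})$ rather than $p^{6(k+m_p)-8m_p}\tilde N_1(p^{k+m_p-1})$ (the extra $p^2$ is exactly compensated by the prefactors $p^{2k}$ versus $p^{2k-2}$, so your final identity $p^{k-2m_p}N_1^*(p^{k+m_p})$ is right), and the ranges $k\le m_p$, where $V_{p^k}$ is a single residue and one uses the standing assumption $M\mid\ensuremath{\boldsymbol\ell}(\beta_1'\beta_2')$, deserve a sentence.

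The genuine gap is in the nonzero frequencies. After your reduction, the local transform is, up to the normalization $p^{2k}/p^{9k+8\max\{0,m_p-k\}}$, the sum over $(\beta_1,\beta_2)\in V_{p^k}$ of $p^{2\ell}\mathbbm{1}[p^\ell\mid\ensuremath{\boldsymbol\ell}(\beta_1\beta_2)]$ times a Ramanujan sum to modulus $p^{k-\ell}$, and your plan is to bound that Ramanujan sum by its modulus and then count the pairs satisfying the forced divisibility $p^{k+\ell-1}\mid\langle\ensuremath{\boldsymbol\ell}(\beta_1\beta_2),\mathbf{w}\rangle$. That count is far too large for the target: already in the generic case $\ell=0$, $p\nmid w_1^4+w_2^4$, the condition is the single linear congruence $p^{k-1}\mid\langle\beta_1\beta_2,\,w_1+w_2\zeta\rangle$, which is satisfied by $\gg p^{7k}$ of the $p^{8k}$ pairs, so the absolute-value strategy yields only $O(p^{k+1+O(m_p)})$ against the required $(w_1^4+w_2^4,p^k)/p^{2k}=p^{-2k}$ --- a loss of order $p^{3k}$. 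No splitting by $v_p(w_1),v_p(w_2)$ and no appeal to Lemma \ref{lem.sumNorm} can repair this, because the factor $(w_1^4+w_2^4,p^k)$ does not arise from the near-divisibility count at all; it arises from the cancellation between the two terms of the unit sum, which you discard when you take absolute values. Indeed, executing the $\beta_2$-sum exactly gives, with $\omega=w_1+w_2\zeta$,
\begin{equation*}
  \sum_{\beta_2\,(p^k)} c_{p^k}\bigl(\langle\beta_2,\beta_1\omega\rangle\bigr)
  = \varphi(p^k)\,p^{4k}\,\mathbbm{1}\bigl[p^k\mid\beta_1\omega\bigr],
  \qquad
  \#\set{\beta_1\,(p^k) : p^k\mid\beta_1\omega} = N((\omega,p^k)) = (w_1^4+w_2^4,p^k)
\end{equation*}
by Lemma \ref{lem.Only1Prime}, and this is precisely the mechanism the paper uses (organized there by summing over $\beta_2$ first to produce the weight $N((a_1+a_2\zeta,p^k))$ and then noting that $u\mathbf{a}\equiv-\mathbf{w}\,(p^k)$ pins $a_1+a_2\zeta$ to a unit multiple of $\omega$ up to $p$-powers). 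The four-case analysis in the proof of \eqref{eq:crb7r7lmsp} is not a substitute: there the constraint is $a_1v_1\equiv-a_2v_2\,(p^k)$ with the norm weights already extracted, a one-parameter configuration to which Lemma \ref{lem.sumNorm} applies, whereas your configuration is a raw two-variable pair count in $(\beta_1,\beta_2)$. To fix the argument, keep the exact two-term structure of the $w'$-sum (or, equivalently, perform the $\beta_2$-sum by orthogonality before estimating anything), after which your computation collapses to the paper's bound.
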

We are now ready to prove Proposition \ref{proposition:crb7r8dzyl}
\begin{proof}[Proof of Proposition \ref{proposition:crb7r8dzyl} assuming Lemma \ref{lem:S0_fourier}]
  The contribution of $\mathbf{w} = 0$ to \eqref{eq:crb8bepw0f} is, by Lemma \ref{lem:S0_fourier}
  and Lemma \ref{lemma:crb8be0e8e}, equal to  
  \begin{multline}
    D^2 \int_{\mathbb{R}^2} \sigma_\infty(\mathbf{v})\omega_1(|\mathbf{v}|) \,d \mathbf{v}
    \sum_{q\ge 1 } \frac{\hat S(\mathbf{0} ; q)}{q^2}
    = 2\tilde\omega_1(1) U_1(-1) D^2M^{-1}
    \sum_{q\ge 1 \\ M | q } \frac{N_1^*(q)}{q} \\ 
    = 2\tilde\omega_1(1) U_1(-1) D^2M^{-1} \mathcal{D}_1^*(1),
  \end{multline}
  matching the main term in Proposition \ref{proposition:crb7r8dzyl}.
  It remains to bound the contribution of the nonzero frequencies.
  First, note that by repeated integration by parts, we have that
  \begin{equation}
    \int_{\mathbb{R}^2} \sigma_\infty(\mathbf{v})
    \omega_1(|\mathbf{v}|) e \biggl( -\frac{D}{q}\langle \mathbf{v}, \mathbf{w} \rangle \biggr)\ll_A
    \biggl( 1 + \frac{D|\mathbf{w}|}{q} \biggr)^{-A},
  \end{equation}
  so by the bound \eqref{eq:crb8bjp1jg}, we are reduced to bounding by
  $M^{O(1)}/D^{1 - o(1)}$ the quantity
  \begin{multline}
    M^{O(1)}D^2\sum_{ q } \frac{1}{q^4}
    \sum_{ \mathbf{w} \neq 0} (w_1^4 + w_2^4, q)
    \biggl( 1 + \frac{D|\mathbf{w}|}{q} \biggr)^{-A} \\ 
    \ll M^{O(1)}D^2 \sum_{Q = 2^k \\ k\ge 0 } \frac{1}{Q^4}\sum_{\mathbf{w} \neq 0 }
    \biggl( 1 + \frac{D|\mathbf{w}|}{Q} \biggr)^{-A}
    \sum_{q\sim Q } (w_1^4 + w_2^4, q) \\ 
    \ll M^{O(1)}D^2 \sum_{Q = 2^k \\ k \ge 0 } \frac{1}{Q^3}
    \sum_{|\mathbf{w}| \neq 0 } |\mathbf{w}|^{o(1)}
    \biggl( 1 + \frac{D|\mathbf{w}|}{Q} \biggr)^{-A}
    \ll M^{O(1)} \sum_{Q = 2^k  \\ Q\gg D^{1 - o(1)}  } \frac{1}{Q^{1 - o(1)}}
    \ll \frac{1}{D^{1 - o(1)}},
  \end{multline}
  as desired.
\end{proof}

\begin{proof}[Proof of Lemma \ref{lem:S0_fourier}]
  By \eqref{SvqDef}, we have
  \begin{equation}\label{tempMult}
    \hat{S}(\mathbf{w} ; q)=\prod_{p^k||q} \hat{S}_p(\mathbf{w};k),
  \end{equation}
  where
  \begin{equation}
    \hat{S}_p(\mathbf{w} ; k) = \sum_{\mathbf{v} (p^k) } S_p(\mathbf{v} ; k)
    e_{p^k} ( - \langle \mathbf{v}, \mathbf{w} \rangle).
  \end{equation}
  Let $p$ be a prime and $k\geq0$. Recall the definition of $S_p(\mathbf{v};k)$ from \eqref{SDef}.
  For $k = 0$, we have 
  \begin{equation}\label{Shatk0Case}
    \hat S_p(\mathbf{0}; 0) =  p^{-8m_p}=p^{-2m_p}N_1^*(p^{m_p}).
  \end{equation}
  Technically $N_1^*(M)$ contains the condition $\ensuremath{\boldsymbol{\ell}}(\beta_1'\beta_2')\equiv 0\, (M)$, but the reader is
  reminded that this is assumed throughout the entire paper.
  
  For $k\geq1$, write $\eta=\max\{0,m_p-k\}$. We have that
  \begin{align}
    \hat S_p(\mathbf{0} ; k) = &\ \frac{1}{p^{9k+8\eta}} \sum_{\mathbf{v} \, (p^k) }
                                 \sum_{\mathbf{a}=(a_1,a_2)\, (p^k) \\ (a_1,a_2, p) = 1 }\sum_{w\, (p^k)}\sum_{(\beta_1, \beta_2)\in V_{p^k} }
    e_{p^k} (\langle \ensuremath{\boldsymbol\ell}(\beta_1\beta_2)-\mathbf{v} w,\mathbf{a} \rangle)\nonumber\\
    =&\  \frac{1}{p^{9k+8\eta}}\sum_{j=0,1}(-1)^{j}\sum_{\mathbf{v} \, (p^k) }
       \sum_{\mathbf{a}\, (p^{k-j})}\sum_{w\, (p^k)}\sum_{(\beta_1, \beta_2)\in V_{p^k} }
       e_{p^{k-j}} (\langle \ensuremath{\boldsymbol\ell}(\beta_1\beta_2)-\mathbf{v} w,\mathbf{a}\rangle)\nonumber\\
    =&\ \sum_{j=0,1}\frac{(-1)^{j}}{p^{7k+2j+8\eta}}\sum_{\mathbf{v} \, (p^k) }
       \sum_{w\, (p^k)}\sum_{(\beta_1, \beta_2)\in V_{p^k} \\ \ensuremath{\boldsymbol\ell}(\beta_1\beta_2)\equiv\mathbf{v} w \, (p^{k-j}) }1\nonumber\\
    =&\ \sum_{j=0,1}\frac{(-1)^{j}}{p^{7k-j+8(\eta-j\charf{k>m_p})}}\sum_{\mathbf{v} \, (p^{k-j}) }
       \sum_{w\, (p^{k-j})}\sum_{(\beta_1, \beta_2)\in V_{p^{k-j}} \\ \ensuremath{\boldsymbol\ell}(\beta_1\beta_2)\equiv\mathbf{v} w \, (p^{k-j}) }1.
  \end{align}
  Here in the last step $\charf{k>m_p}$ appears because of scaling $\beta_1,\beta_2$ restricted to $\beta_i\equiv \beta_i' \, (p^{m_p})$. The reader is reminded of this fact throughout the proof. Pulling out the powers of $p$ from $w$, we obtain
  \begin{align}
    \hat S_p(\mathbf{0} ; k) = &\ \sum_{j=0,1}\sum_{r=0}^{k-j}\frac{(-1)^{j}}{p^{7k-j+8(\eta-j\charf{k>m_p})}}\sum_{\mathbf{v} \, (p^{k-j}) }
                                 \sumCp_{w\, (p^{k-j-r})}\sum_{(\beta_1, \beta_2)\in V_{p^{k-j}} \\ \ensuremath{\boldsymbol\ell}(\beta_1\beta_2)\equiv\mathbf{v} w p^{r} \, (p^{k-j}) }1\nonumber\\
    =&\ \sum_{j=0,1}\sum_{r=0}^{k-j}\frac{(-1)^{j}\varphi(p^{k-j-r})}{p^{7k-j-2r+8(\eta-j\charf{k>m_p})}}\sum_{\mathbf{v} \, (p^{k-j-r}) }
       \sum_{(\beta_1, \beta_2)\in V_{p^{k-j}} \\ \ensuremath{\boldsymbol\ell}(\beta_1\beta_2)\equiv\mathbf{v} p^{r} \, (p^{k-j}) }1\nonumber\\
    =&\ \sum_{j=0,1}\sum_{r=0}^{k-j}\frac{(-1)^{j}\varphi(p^{k-j-r})}{p^{7k-j-2r+8(\eta-j\charf{k>m_p})}}
       \sum_{(\beta_1, \beta_2)\in V_{p^{k-j}} \\ \ensuremath{\boldsymbol\ell}(\beta_1\beta_2)\equiv0 \, (p^{r})}1.
  \end{align}
  Notice that the contribution of $r=k$ is given by
  \begin{equation}
    \frac{1}{p^{5k+8\eta}}\sum_{(\beta_1, \beta_2)\in V_{p^{k}} \\ \ensuremath{\boldsymbol\ell}(\beta_1\beta_2)\equiv0 \, (p^{k})}1=\frac{1}{p^{5k+8\eta+ 8\min\{m_p,k\}}}\sum_{(\beta_1, \beta_2)\in V_{p^{k+m_p}} \\ \ensuremath{\boldsymbol\ell}(\beta_1\beta_2)\equiv0 \, (p^{k})}1=p^{k-2m_p}\tilde{N}_1(p^{k+m_p}).
  \end{equation}
  Similarly, the contribution of $r=k-1$ is given by
  \begin{align}
    & \frac{p-1}{p^{5k+2+8\eta}}\sum_{(\beta_1, \beta_2)\in V_{p^{k}} \\ \ensuremath{\boldsymbol\ell}(\beta_1\beta_2)\equiv0 \, (p^{k-1})}1-\frac{1}{p^{5k+1+8(\eta-\charf{k>m_p})}}\sum_{(\beta_1, \beta_2)\in V_{p^{k-1}} \\ \ensuremath{\boldsymbol\ell}(\beta_1\beta_2)\equiv0 \, (p^{k-1})}1\nonumber\\
    &\ = -\frac{1}{p^{5k+2+8(\eta-\charf{k>m_p})}}\sum_{(\beta_1, \beta_2)\in V_{p^{k-1}} \\ \ensuremath{\boldsymbol\ell}(\beta_1\beta_2)\equiv0 \, (p^{k-1})}1\nonumber\\
    &\ =-\frac{1}{p^{5k+2+8(\eta-\charf{k>m_p}+\min\{m_p,k-1\})}}\sum_{(\beta_1, \beta_2)\in V_{p^{k+m_p-1}} \\ \ensuremath{\boldsymbol\ell}(\beta_1\beta_2)\equiv0 \, (p^{k-1})}1= -p^{k-2m_p}\tilde{N}_1(p^{k+m_p-1}),
  \end{align}
  and for $r\le k-2$, the contribution is
  \begin{equation}
    \frac{p-1}{p^{6k-r+1+8\eta}}\sum_{(\beta_1, \beta_2)\in V_{p^{k}} \\ \ensuremath{\boldsymbol\ell}(\beta_1\beta_2)\equiv0 \, (p^{r})}1
    -\frac{p-1}{p^{6k-r+1+8(\eta-\charf{k>m_p})}}\sum_{(\beta_1, \beta_2)\in V_{p^{k-1}} \\ \ensuremath{\boldsymbol\ell}(\beta_1\beta_2)\equiv0 \, (p^{r})}1=0.
  \end{equation}
  Hence we conclude for $k\geq 1$,
  \begin{equation}\label{Shatkgeneralcase}
    \hat S_p(\mathbf{0}; k) = p^{k-2m_p}N_1^*(p^{k+m_p}).
  \end{equation}

  Inserting \eqref{Shatk0Case} and \eqref{Shatkgeneralcase} into \eqref{tempMult}, we obtain the first part of the lemma.

  It remains to prove \eqref{eq:crb8bjp1jg}. By \eqref{tempMult}, it suffices to prove a local bound for
  $\hat S_p(\mathbf{w}; k)$ when $\mathbf{w}\neq 0$. The case $k = 0$ is trivial, so assume $k\ge 1$ and write
  $\eta_0 = \min\{m_p, k\}$.

  Proceeding as in the proof of \eqref{eq:crb7r7lmsp} (starting from \eqref{SDef}, detecting $\beta_2\equiv\beta_2'$ by additive
  characters, and summing over $\beta_2$), inserting the resulting expression for $S_p(\mathbf{v};k)$ into the definition of
  $\hat S_p(\mathbf{w};k)$ and summing over $\mathbf{v}(p^k)$ yields that
  \begin{equation}\label{eq:crb8n0u0ak}
    |\hat S_p(\mathbf{w}; k)|
    \le \frac{p^{O(\eta_0)}}{p^{3k}}\sum_{u\, (p^k)}\sum_{\substack{a_1,a_2\, (p^k)\\ (a_1, a_2, p)=1\\ u(a_1,a_2)\equiv -\mathbf{w}\, (p^k)}}
    N((a_1 + a_2\zeta, p^k)).
  \end{equation}
  By Lemma \ref{lem.Only1Prime}, we have $N((a_1 + a_2\zeta, p^k)) = (a_1^4 + a_2^4, p^k)$.
  For each $u$ with $p^r || u$, the congruence $u(a_1,a_2)\equiv -\mathbf{w}\, (p^k)$ has $\le p^{2r}$ solutions
  $(a_1,a_2)\, (p^k)$. Furthermore, for any such solution, we have
  \begin{equation}
    w_1^4 + w_2^4 \equiv u^4(a_1^4 + a_2^4)\, (p^k),
  \end{equation}
  so $(a_1^4 + a_2^4, p^k)\le (w_1^4 + w_2^4, p^k)/p^{4r}$.
  Since there are $\ll p^{k-r}$ choices of $u\, (p^k)$ with $p^r||u$, we conclude from \eqref{eq:crb8n0u0ak} that
  \begin{align}
    |\hat S_p(\mathbf{w}; k)|
    &\ll \frac{p^{O(\eta_0)}}{p^{3k}}\sum_{r=0}^k p^{k-r}\cdot p^{2r}\cdot \frac{(w_1^4 + w_2^4, p^k)}{p^{4r}}
      \ll \frac{p^{O(\eta_0)}}{p^{2k}}(w_1^4 + w_2^4, p^k).
  \end{align}
  Multiplying over primes gives \eqref{eq:crb8bjp1jg}.
\end{proof}

\section{The unbalanced case}\label{sec:cuhe1tf31a}
In this section, we shall quickly show the main theorem with a remainder of
\begin{equation}
  \ll M^{O(1)}X_1^2X_2^2\min \biggl( \frac{X_1}{X_2}, \frac{X_2}{X_1} \biggr).
\end{equation}
We will be brief and omit a number of details, for this result has already been shown
implicitly in prior work (one way, for example, is to reverse the passage from Theorem \ref{theorem:cq5o3jikeq}
to Theorem \ref{theorem:cq5oju0wf8}, noting that both Daniel \cite{MR1670278} and
Friedlander--Iwaniec \cite[Theorem 22.20]{FI} deal with divisors in such an unbalanced range).

Assume without loss of generality that $X_1\ge X_2$ throughout this section, so
we wish to show a remainder of $\ll M^{O(1)}X_1X_2^3$. Throughout, we will ignore factors of
$M^{O(1)} $, absorbing them into implied constants without mention.

We will execute the sum over $\alpha_1$ first, which reduces to counting
points of magnitude $\ll X_1$ in a codimension $2$ sublattice of $\mathbb{Z}^4$.
This codimension $2$ lattice has shortest vector of length $\ll |\alpha_2|_{\sup}$, so Poisson
summation or basic geometry of numbers estimates yields an accurate estimate for
this inner sum over $\alpha_1$. Executing the sum of these main terms over $\alpha_2$ then yields
the desired result.

Write
\begin{equation}
  \Lambda(\alpha_2) = \set{\alpha_1 \in \mathcal{O}_K : \ensuremath{\boldsymbol\ell}(\alpha_1\alpha_2) = 0},
\end{equation}
and let $\mathrm{covol}(\Lambda(\alpha_2))$ denote the covolume of $\Lambda$ in its $\mathbb{R}$-span, a plane which we
denote $W(\alpha_2)\subset K_\infty$.

We wish to estimate
\begin{equation}
  \sum_{\alpha_2 \\ \alpha_2  \equiv \beta_2' (M) } \sum_{\alpha_1\in \Lambda(\alpha_2) \\ \alpha_1 \equiv \beta_1'(M) }
  \Phi^{\infty} \biggl( \frac{\alpha_1}{X_1}, \frac{\alpha_2}{X_2} \biggr).
\end{equation}
Poisson summation in the $\alpha_1 $ sum (or simply counting lattice points in a region, bounding
the error based on the length of the boundary) yields that the inner sum equals
\begin{equation}
  \frac{X_1^2}{M^2 \mathrm{covol}(\Lambda(\alpha_2))} \mathbbm{1}_{ \ensuremath{\boldsymbol\ell}(\beta_1'\alpha_2) \equiv 0(M) }
  \int_{W(\alpha_2)} \Phi^\infty \biggl(x_1^\infty, \frac{\alpha_2}{X_2}\biggr) \,d x_1^\infty
  + O \biggl( \frac{X_1}{\sqrt{\mathrm{covol}(\Lambda(\alpha_2))}} \biggr).
\end{equation}
We record that because of the support conditions on $\Phi^\infty $, we have that
\begin{equation}
  |\alpha_2|_\infty \asymp |\alpha_2|_{\sup}\asymp X_2
\end{equation}
for all terms that contribute to the above.
Therefore, we have that $\mathrm{covol}(\Lambda(\alpha_2))\asymp |\alpha_2|_\infty^2 \asymp X_2 $, so summing over $\alpha_2 $ yields the desired
remainder term and it remains to analyze
the sum of main terms
\begin{equation}
  \frac{1}{X_2^2}\sum_{\alpha_2 } \frac{1}{M^2} \mathbbm{1}_{ \ensuremath{\boldsymbol\ell}(\beta_1'\alpha_2) \equiv 0(M) }
  \frac{1}{\mathrm{covol}(\Lambda(\alpha_2))} \int_{W(\alpha_2)} \Phi^\infty \biggl( x_1^\infty, \frac{\alpha_2}{X_2} \biggr) \,d x_1^\infty,
\end{equation}
which we shall show equals $\sigma_\infty\prod_p \sigma_p + O(1/X_2^{1 - o(1)}) $.

By a change of variable in the $x_1^\infty $ integral and a closer analysis of the covolume, it
can be checked that
\begin{equation}
  \frac{1}{M^2} \mathbbm{1}_{ \ensuremath{\boldsymbol\ell}(\beta_1'\alpha_2) \equiv 0(M) }
  \frac{1}{\mathrm{covol}(\Lambda(\alpha_2))} \int_{W(\alpha_2)} \Phi^\infty \biggl( x_1^\infty, \frac{\alpha_2}{X_2} \biggr) \,d x_1^\infty
  = X_2^{-2} I_\infty \biggl(\frac{\alpha_2}{X_2}\biggr) \prod_p I_p(\alpha_2),
\end{equation}
where
\begin{equation}
  I_p(\alpha_2) := \int_{\mathcal{O}_{K, p}} \mathbbm{1}_{ \substack{\beta_1 \equiv \beta_1' (M) \\ \alpha_2  \equiv \alpha_2' (M) }}
  \delta(\ensuremath{\boldsymbol\ell}(\beta_1\alpha_2)) \, d \beta_1,
\end{equation}
\begin{equation}
  I_\infty(x_2^\infty) := \int_{K_\infty} \Phi^\infty(x_1^\infty, x_2^\infty) \delta(\ensuremath{\boldsymbol\ell}(x_1^\infty x_2^\infty))
  \, d x_1^\infty
\end{equation}
are the expected local densities.

The desired result follows from Poisson summation in the $\alpha_2 $ sum by expanding out
the Euler product and applying Poisson summation (akin to \S\ref{sec:cuhlxu5ldf}) as follows.

At this point, we have reduced ourselves to showing
\begin{equation}
  \frac{1}{X_2^4}\sum_{\alpha_2 }  I_\infty \biggl(\frac{\alpha_2}{X_2}\biggr)
  \prod_p I_p(\alpha_2) 
  = \sigma_\infty\prod_p \sigma_p + O(X_2^{-1 + o(1)}).
\end{equation}
Expanding the Euler product, 
\begin{equation}
  \prod_p I_p(\alpha_2) = \sum_{q\ge 1} N_{\alpha_2}^*(q),
\end{equation}
with
\begin{equation}
  N^*(\alpha, q) := \sum_{d | q } \mu(d) \frac{d^2}{q^2}\sum_{ \beta (q/d) }
  \mathbbm{1}_{ \ensuremath{\boldsymbol\ell}(\alpha\beta) \equiv 0(q/d) }.
\end{equation}
It can be checked that 
\begin{equation*}
  \tilde N^*(\alpha, q) \ll q^{1 + o(1)} \sum_{d | q } d \mathbbm{1}_{ dq^3 | N(\alpha) }
\end{equation*}
similarly (but in a simpler fashion) to the proofs of Propositions \ref{prop.N1} and \ref{prop:S20}, so it 
follows that
\begin{equation}
  \sum_{|\alpha|_{\sup}\ll X_2 } \sum_{q > Q } N^*(\alpha, q)\ll \frac{X_2^4}{Q^{1 - o(1)}}.
\end{equation}
Therefore, taking $Q = X_2^{1 - \varepsilon} $, we may discard the contribution of $q > Q $ (if $\varepsilon\to 0 $)
for we have
\begin{equation}
  \frac{1}{X_2^4}\sum_{\alpha_2 } \prod_p I_p(\alpha_2) I_\infty \biggl(\frac{\alpha_2}{X_2}\biggr)
  = \frac{1}{X_2^4}\sum_{\alpha_2 } I_\infty \biggl(\frac{\alpha_2}{X_2}\biggr) \sum_{q\le Q } N^*(\alpha, q) +  O(X_2^{-1 + o(1)}).
\end{equation}
It can be checked that $I_\infty $ is smooth (and that it is compactly supported is clear from
the support of $\Phi^\infty $) and that $N^*(-, q) $ is $q $-periodic, so it follows that
\begin{equation}
  \frac{1}{X_2^4}\sum_{\alpha_2 } I_\infty \biggl(\frac{\alpha_2}{X_2}\biggr) \sum_{q\le Q } N^*(\alpha, q)
  = \int_{K_\infty} I_\infty(x_2^\infty) \,d x_2^\infty \sum_{ q\le Q} \frac{1}{q^4}\sum_{ \beta_2 (q) } N^*(\beta_2, q) + O(X_2^{-A}).
\end{equation}
The desired result follows upon noting that
\begin{equation}
  \sigma_\infty = \int_{K_\infty} I_\infty(x_2^\infty) \,d x_2^\infty, 
\end{equation}
\begin{equation}
  N_1^*(q) = \frac{1}{q^4} \sum_{\beta_2 (q) } N^*(\beta_2, q), 
\end{equation}
recalling the bound $N_1^*(q) \ll q^{-2 + o(1)} $ of Proposition \ref{lemma:cran7rw9t5} to reinsert the contribution of
$q > Q $, and recalling that
\begin{equation}
  \sum_{q } N_1^*(q) = \prod_p \sigma_p.
\end{equation}

\section{Proof of Theorem \ref{theorem:cq5oju0wf8}}\label{sec:crbv97ueg1}
In this section, we give a proof of Theorem \ref{theorem:cq5oju0wf8}.

It will actually follow relatively quickly from the more useful statement below,
in whose proof we will carry out the passage from a sum over binary quartic form
to a restricted sum over a quartic number field where we apply Theorem \ref{theorem:cq5o3jikeq}.
\begin{theorem}\label{proposition:crb1e3s2w0}
  There exists $\delta > 0$ such that for $Q, X\ge 1$, $q_0\ge 1$, $a_0\in \mathbb{Z}/q_0 \mathbb{Z}$, convex $\mathcal{R}\subset [0,1]^2$
  with $\partial \mathcal{R} $ bounded and $0\not\in \mathcal{R}$, we have
  \begin{equation}
    \biggl|\sum_{q\le Q \\  q \equiv a_0 (q_0) } \biggl( \sum_{m, n\in X \mathcal{R} \\ q | m^4 + n^4 } 1
    -  \frac{\rho(q)}{q^2} X^2\vol \mathcal{R} \biggr)\biggr|
    \ll q_0^{O(1)} X^{2 - \delta} \biggl(1 + \biggl(\frac{Q}{X^2}\biggr)^{O(1)}\biggr) ,
  \end{equation}
  where $\rho(q) = \#\set{x_1, x_2\in \mathbb{Z}/q \mathbb{Z} : x_1^4 + x_2^4 \equiv 0(q) }$.
\end{theorem}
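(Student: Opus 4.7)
The plan is to interchange summation, pass to the number field $K = \mathbb{Q}(\zeta)$, and apply Theorem~\ref{theorem:cq5o3jikeq}. First, by the hyperbola-style symmetry $q \leftrightarrow (m^4+n^4)/q$, divisors $q > (m^4+n^4)/X^2$ correspond to small divisors and contribute the factor $(1 + (Q/X^2)^{O(1)})$ in the final bound, so I may henceforth assume $Q \ll X^2$. Interchanging summation reduces the problem to estimating
\begin{equation}
\sum_{(m, n) \in X\mathcal{R} \cap \mathbb{Z}^2} \#\{q \le Q : q \equiv a_0 \, (q_0),\, q \mid m^4 + n^4\}.
\end{equation}
A M\"obius inversion on $d = (m, n)$ then reduces us, with an acceptable tail controlled via the divisor bound, to the coprime case $(m, n) = 1$. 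For such $(m, n)$, Lemma~\ref{lem.Only1Prime} provides a bijection between divisors $q \mid m^4 + n^4$ and integral ideal divisors $\mathfrak{a} \mid (m + n\zeta)$ with $N\mathfrak{a} = q$.

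Writing $\mathfrak{a} = (\alpha_1)$ and $(m+n\zeta)/\mathfrak{a} = (\alpha_2)$ with $\alpha_1, \alpha_2$ in a fundamental domain for the diagonal $\mathcal{O}_K^\times$-action, and dyadically decomposing $|\alpha_i|_\infty \sim X_i$ with $X_1 X_2 \asymp X$, the sum takes the shape
\begin{equation}
\sum_{\substack{\alpha_1, \alpha_2 \in \mathcal{O}_K \\ \ensuremath{\boldsymbol\ell}(\alpha_1\alpha_2) = 0}} \Phi^\infty\!\biggl(\frac{\alpha_1}{X_1}, \frac{\alpha_2}{X_2}\biggr) \Phi^f(\alpha_1, \alpha_2),
\end{equation}
where $\Phi^\infty$ simultaneously enforces the dyadic cutoffs, the condition $N\alpha_1 \le Q$, and the region constraint $(m, n) \in X\mathcal{R}$ (with $m + n\zeta = \alpha_1\alpha_2$), each smoothed at a scale $\Delta$; while $\Phi^f$ fixes $\alpha_1, \alpha_2$ modulo a modulus $M = \mathrm{lcm}(q_0, d)$ to implement $N\alpha_1 \equiv a_0 \, (q_0)$, coprimality, and any residue conditions coming from the M\"obius expansion. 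I will sum over the $\ll M^{O(1)}$ admissible pairs $(\beta_1', \beta_2')$ and apply Theorem~\ref{theorem:cq5o3jikeq} to each dyadic piece, extracting a main term $X_1^2 X_2^2 \sigma_\infty \prod_p \sigma_p$ plus an error $(\Omega M)^{O(1)} X^{2 - \eta + o(1)}$ with $\Omega = \Delta^{-1}$.

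Reassembling the main terms should reconstruct $\frac{\rho(q)}{q^2} X^2 \vol \mathcal{R}$ via a local-density computation in the spirit of \S\ref{sec:crb7rws6oy}. The hard part will be the sharp cutoffs: replacing $\mathbbm{1}_{X\mathcal{R}}$ and $\mathbbm{1}[N\alpha_1 \le Q]$ by their $\Delta$-smoothings incurs a boundary loss $\ll \Delta X^{2 + o(1)}$ (lattice points in a $\Delta X$-neighborhood of $\partial(X\mathcal{R})$, dressed by the divisor bound), which I must balance against the $\Delta^{-O(1)} X^{2 - \eta}$ cost in Theorem~\ref{theorem:cq5o3jikeq}. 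Choosing $\Delta = X^{-\eta/C}$ for a suitable constant $C$ yields a final saving of $X^{-\delta}$ with $\delta = \eta/(C+1) > 0$, considerably worse than $\eta = 1/236$ but still positive. Decomposing the congruence $N\alpha_1 \equiv a_0 \, (q_0)$ into residue classes modulo $q_0 \mathcal{O}_K$ produces the $q_0^{O(1)}$ factor in the final bound.
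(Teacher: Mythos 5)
Your architecture is the paper's: smooth the sharp cutoffs, pass to $K=\mathbb{Q}(\zeta)$ through the divisor--ideal correspondence, choose generators modulo units, apply Theorem \ref{theorem:cq5o3jikeq} with the modulus $M$ carrying the arithmetic conditions, and balance the smoothing scale against the $(\Omega M)^{O(1)}X^{2-\eta}$ loss; the $q_0^{O(1)}$ factor and the local-density reassembly are also as in \S\ref{sec:crbv97ueg1}. However, two steps fail as written. The more serious is the gcd step: after writing $(m,n)=(dm',dn')$ the condition is $q\mid d^4(m'^4+n'^4)$, not $q\mid m'^4+n'^4$, so a M\"obius inversion on $d$ does \emph{not} reduce you to the coprime case. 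You must split $q=ra_1$ with $r\mid d^4$, $(a_1,r)=1$, apply the ideal correspondence only to $a_1$, and push the twisted congruence $rN\mathfrak{a}_1\equiv a_0\ (q_0)$, the coprimality $(N\mathfrak{a}_1,r)=1$, and the primitivity detection into $\Phi^f$; the resulting modulus must be divisible by $r$, hence can be as large as $q_0$ times a power of $d$ (which is exactly why the paper truncates $d\le D$ with $D$ a small power of $X$), and is not $\mathrm{lcm}(q_0,d)$. Secondly, the ``hyperbola symmetry'' reduction to $Q\ll X^2$ is not legitimate: under $q\mapsto (m^4+n^4)/q$ both the congruence $q\equiv a_0\ (q_0)$ and the subtracted term $\rho(q)q^{-2}X^2\vol\mathcal{R}$ change shape. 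It is also unnecessary: for $Q\ge X^{2+1/100}$ the divisor bound together with the factor $(Q/X^2)^{O(1)}$ already suffices, and below that threshold your plan of applying Theorem \ref{theorem:cq5o3jikeq} at every dyadic scale does give a power saving at each scale (the paper instead quotes Daniel's level-of-distribution bound for $Q\le X^{2-1/100}$).

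Two further points are asserted where real work is needed. For the units, a sharp fundamental domain would ruin the smoothness hypotheses of Theorem \ref{theorem:cq5o3jikeq}; the paper instead uses an exact smooth partition of unity over $\mathcal{O}_K^\times$ built from the fundamental unit, which costs no boundary error and is cleaner than smoothing a sharp domain and estimating its boundary separately. For the main terms, identifying the resummed singular series $\sigma_\infty\sum_{d,h,r}\mu(h)d^{-2}\prod_p\sigma_p(h,r)$ with $\bigl(\int W\bigr)\sum_{q\equiv a_0\,(q_0)}\Psi(q/Q)\rho(q)q^{-2}$ is not an off-the-shelf consequence of \S\ref{sec:crb7rws6oy}, since the local factors depend on $(d,h,r)$; the paper sidesteps the computation by running the identical reduction at an auxiliary huge scale $Y=Q^{20}$ and comparing the two asymptotics, and you should either borrow that trick or carry out the Euler-product evaluation in full.
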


\begin{proof}[Proof of Theorem \ref{theorem:cq5oju0wf8} assuming Theorem \ref{proposition:crb1e3s2w0}]
  We have that
  \begin{equation}
    \sum_{n_1, n_2\in \mathbb{Z} \\ 0 < n_1^4 + n_2^4 \le N } d(n_1^4 + n_2^4)
    = 4\sum_{n_1, n_2\ge 1 \\ n_1^4 + n_2^4\le N} d(n_1^4 + n_2^4) + O(N^{1/4}).
  \end{equation}
  Opening up the divisor function and summing over $d_1d_2 = n_1^4 + n_2^4$, we obtain that
  the above equals
  \begin{equation}
    2\sum_{d_1\le \sqrt{N}} \sum_{n_1^4 + n_2^4\le N \\ d_1 | n_1^4 + n_2^4} 1
    -
    2\sum_{d_1\le \sqrt{N}} \sum_{n_1^4 + n_2^4\le d_1^2 \\ d_1 | n_1^4 + n_2^4} 1.
  \end{equation}
  
  By Theorem \ref{proposition:crb1e3s2w0}, this equals
  \begin{equation}
    2\kappa N^{1/2} \sum_{d_1 \le \sqrt{N} } \frac{\rho(d_1)}{d_1^2}
     - 2\kappa \sum_{d_1\le \sqrt{N} } \frac{\rho(d) }{d} + O(N^{1/2 - \delta})
   \end{equation}
  
  the desired result follows upon noting that (see, e.g., \cite[\S7]{MR1670278})
  \begin{equation}
    \sum_{d\le D }\frac{\rho(d)}{d^2} = c_{-1}\log D + c_0 + O(D^{-{1/4}}),
  \end{equation}
  and so by partial summation, we have that
  \begin{equation}
    \sum_{d\le D } \frac{\rho(d)}{d} = c_{-1} D + O(D^{3/4}).
  \end{equation}
  Collecting, the desired result follows.
\end{proof}

The remainder of this section is devoted to the proof of Theorem \ref{proposition:crb1e3s2w0}.
We begin with some reductions.
Existing level of distribution results for binary forms, namely
\cite[Lemma 3.3]{MR1670278}, implies that
\begin{align*}
  \biggl|\sum_{q\le Q \\  q \equiv a_0 (q_0) } \biggl( \sum_{m, n\in X \mathcal{R} \\ (m, n)\neq (0, 0) \\ q | m^4 + n^4 } 1
  -  \frac{\rho(q)}{q^2} X^2\vol \mathcal{R} \biggr)\biggr|
  &\le 
    \sum_{q\le Q} \biggl| \sum_{m, n\in X \mathcal{R} \\ (m, n)\neq (0, 0)\\ q | m^4 + n^4 } 1
  -  \frac{\rho(q)}{q^2} X^2\vol \mathcal{R} \biggr|\\ 
  &\ll (X\sqrt{Q}+ Q)\log^{O(1)}X, 
\end{align*}
which is an acceptable $O(X^{2 - 1/200 + o(1)})$ when $Q\le X^{2 - 1/100}$. Note too that we may suppose
that $Q\le X^{2 + 1/100}$, for the contribution of $q\ge X^{2 + 1/100}$ is acceptable from the divisor
bound by virtue of the term $(Q/X^2)^{O(1)}$.

Now, write
\begin{equation}
  R(Q, X; \Psi, W, a_0, q_0) :=
  \sum_{q \equiv a_0 (q_0) } \Psi\biggl(\frac{q}{Q}\biggr) \biggl( S_q(X; W) - M_q(X ; W)\biggr),
\end{equation}
where
\begin{equation}
  S_q = S_q(X; W) :=  \sum_{(m, n)\neq (0, 0)\\ q | m^4 + n^4 }
  W  \biggl( \frac{m}{X}, \frac{n}{X} \biggr),
\end{equation}
\begin{equation}
  M_q = M_q(X; W) := \frac{\rho(q)}{q^2} X^2\int_{\mathbb{R}^2} W( x,y ) \,d x \,d y.
\end{equation}
We now apply a dyadic partition of unity to the sum over $q$.
Fix some $\Psi\in C_c^\infty((3/2, 5/2))$ satisfying
\begin{equation}
  \sum_{k\in \mathbb{Z} } \Psi \biggl(\frac{x}{2^k}\biggr) = 1
\end{equation}
for all $x\in \mathbb{R}_{> 0}$. Then, we are reduced to showing that
\begin{equation}
  R(Q, X; \Psi, \mathbbm{1}_{ \mathcal{R} }, a_0, q_0) \ll q_0^{O(1)}X^{2 - \delta}
\end{equation}
for any scale $X^{2 - 1/100}\ll Q\ll X^{2 + 1/100}$.

Now, fix some nonzero $\phi\in C_c^\infty((0, 1)^2)$, write $\phi_\Omega(x) = \Omega^2\phi(\Omega x)$, and let
$W_{\mathcal{R}} = \mathbbm{1}_{ \mathcal{R}_1 } * \phi_\Omega$, where
\begin{equation}
  \mathcal{R}_1 = \set{x\in \mathcal{R} : \mathrm{dist}(x, \partial \mathcal{R}) > 2/\Omega}.
\end{equation}
Then, by a divisor bound, we have that
\begin{equation}
  R(Q, X; \Psi, \mathbbm{1}_{ \mathcal{R} }, a_0, q_0)
  = R(Q, X; \Psi, W_{\mathcal{R}}, a_0, q_0) + O \biggl( \frac{X^{2 + o(1)}}{\Omega} \biggr),
\end{equation}
so Theorem \ref{proposition:crb1e3s2w0} will actually follow from the following statement.

\begin{proposition}\label{proposition:crb358nxke}
  There exist $\delta > 0$ with the following property. Let $X, \Omega, q_0 \ge1$,
  \begin{equation}\label{eq:6}
    X^{2 - 1/100}\ll Q\ll X^{2 + 1/100},
  \end{equation}
  and let $W\in C_c^{\infty}(\mathbb{R}^2\setminus \set{0})$ satisfy
  \begin{enumerate}
  \item For all $x\in \mathrm{supp}(W)$ we have $ \Omega^{-1}\ll |x|\ll \Omega$.
  \item For all $j_1, j_2\ge 0$, we have 
    \begin{equation}
      \biggl\| \frac{\partial^{j_1 + j_2}}{\partial x^{j_1}\partial y^{j_2}}W  \biggr\|_{\infty}
      \ll_{j_1, j_2} \Omega^{j_1 + j_2}.
    \end{equation}
  \end{enumerate}
  Then, we have that
  \begin{equation}
    R(Q, X ; \Psi, W, a_0, q_0)\ll (q_0\Omega)^{O(1)}X^{2 - \delta}.
  \end{equation}
\end{proposition}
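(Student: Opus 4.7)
\textbf{Proof proposal for Proposition \ref{proposition:crb358nxke}.}
The plan is to convert the problem into an instance of Theorem \ref{theorem:cq5o3jikeq}, following the sketch in \S\ref{sec:cuhe3z5rxo}. I would first open
\begin{equation*}
  \sum_{q\equiv a_0(q_0)}\Psi(q/Q) S_q = \sum_{(m,n)\neq (0,0)} W\!\left(\tfrac{m}{X},\tfrac{n}{X}\right)\sum_{\substack{d\mid m^4+n^4\\ d\equiv a_0\,(q_0)}}\Psi(d/Q),
\end{equation*}
and then split by $g=(m,n)$ and $d=d'd''$ with $d'\mid g^4$, $d''\mid (m/g)^4+(n/g)^4$. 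A divisor-bound argument handles the contribution of $g\gg X^{\delta'}$, reducing to the case $(m,n)=1$ at the cost of a constant factor of $(q_0\Omega)^{O(1)}X^{o(1)}$.

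For $(m,n)=1$, Lemma \ref{lem.Only1Prime} says $(m+n\zeta)\subset\mathcal{O}_K$ is supported on degree-one primes, so the divisors $d$ of $m^4+n^4$ biject with the integral ideal divisors $\mathfrak{a}$ of $(m+n\zeta)$ via $d=N\mathfrak{a}$. Because $\mathbb{Z}[\zeta_8]$ has class number one, $\mathfrak{a}=(\alpha_1)$ for some $\alpha_1\in\mathcal{O}_K$ uniquely determined up to units, and $(m+n\zeta)/\mathfrak{a}=(\alpha_2)$. The free part of $\mathcal{O}_K^{\times}$ has rank one, so requiring $|\alpha_i|_v\asymp \sqrt{X}$ at both archimedean places $v\mid \infty$ pins the pair $(\alpha_1,\alpha_2)$ down up to a diagonal torsion unit in $\mu(K)\cong \mathbb{Z}/8\mathbb{Z}$. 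Consequently, the triple sum over $(m,n,d)$ becomes a sum over pairs $(\alpha_1,\alpha_2)\in\mathcal{O}_K^2$ with $\ensuremath{\boldsymbol\ell}(\alpha_1\alpha_2)=0$, weighted by a function of the form $\phi_1(\alpha_1/X_1)\phi_2(\alpha_2/X_2)$ with $X_1=X_2\asymp \sqrt{X}$. The weights $\phi_j$ arise by fusing the smooth cutoff $W$, the weight $\Psi(N(\alpha_1)/Q)$, and a smooth partition of unity cutting out a fundamental domain for the free unit action on $K_\infty\setminus K_\infty^0$, all expanded against Mellin or Fourier transforms; the derivatives of $\phi_j$ satisfy the hypotheses of Theorem \ref{theorem:cq5o3jikeq} with the same $\Omega$ up to $\Omega^{O(1)}$ losses.

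Since $d=N(\alpha_1)$, I would detect $d\equiv a_0\,(q_0)$ by summing $\alpha_1$ over residues $\beta_1'\in\mathcal{O}_K/q_0\mathcal{O}_K$ with $N(\beta_1')\equiv a_0\,(q_0)$, and $\alpha_2$ over all residues $\beta_2'\in\mathcal{O}_K/q_0\mathcal{O}_K$ (so as to impose no condition on $\alpha_2$). This yields $\le q_0^{O(1)}$ instances of the left-hand side of \eqref{eq:crb7sil3ax} with $M=q_0$. Each instance is bounded by Theorem \ref{theorem:cq5o3jikeq} with error
\begin{equation*}
  (q_0\Omega)^{O(1)} X_1^2X_2^2(X_1X_2)^{-\eta+o(1)}\ll (q_0\Omega)^{O(1)} X^{2-\eta+o(1)},
\end{equation*}
since $X_1X_2\asymp X$ (recall $Q\asymp X^2$ puts us in the balanced range \eqref{eq:cuhe1tmv46}). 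Summing over the $O(q_0^4)$ residue classes and the $8$ torsion units preserves the bound.

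The main obstacle will be not the error bound but the main-term matching: Theorem \ref{theorem:cq5o3jikeq} delivers a main term $X_1^2X_2^2\sigma_\infty\prod_p\sigma_p$ expressed in terms of $\mathcal{O}_K$-local data, and this aggregate must reproduce $\sum_q\Psi(q/Q)M_q(X;W)$ exactly. Both sides are Euler products of local densities, and the local match at each unramified prime $p\nmid 2q_0$ follows from Lemma \ref{lem.Only1Prime}, which gives a bijection between $\{d\mid m^4+n^4\colon d=p^k\}$ and $\{(\alpha_1)\mid (m+n\zeta)\colon N(\alpha_1)=p^k\}$, combined with the prime-by-prime expansion of $\rho$. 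The archimedean match amounts to a change of variables from $(\alpha_1,\alpha_2)\in K_\infty^2$ with $\ensuremath{\boldsymbol\ell}(\alpha_1\alpha_2)=0$ to the triple $(m,n,d)$ via $\alpha_1\alpha_2=m+n\zeta$ and $d=N(\alpha_1)$, using Lemma \ref{lemma:crb0xx4e28}-type integral identities to identify the Jacobian. Careful bookkeeping is required at the ramified prime $2$ and at primes dividing $q_0$, where the local density calculations are more delicate and the interaction with the fixed residue $a_0$ must be unpacked.
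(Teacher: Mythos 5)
Your reduction to Theorem \ref{theorem:cq5o3jikeq} is essentially the one the paper carries out in \S\ref{sec:crbv97ueg1}: extract the gcd, use Lemma \ref{lem.Only1Prime} together with class number one to turn divisors of $m'^4+n'^4$ into pairs $(\alpha_1,\alpha_2)$ with $\ensuremath{\boldsymbol\ell}(\alpha_1\alpha_2)=0$, resolve the rank-one unit ambiguity by a smooth partition of unity on $K_\infty\setminus K_\infty^0$ (a hard cutoff $|\alpha_i|_v\asymp\sqrt{X}$ would not be admissible in $\Phi^\infty$, but you do invoke the smooth partition, which is the paper's $\Xi$), detect $d\equiv a_0\,(q_0)$ through residue classes of $\alpha_1$ modulo $q_0$, and apply the balanced case to get the $X^{2-\eta+o(1)}$ error. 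Two bookkeeping points are lighter in your sketch than they can be. First, you cannot dispose of $(m,n)\neq 1$ ``at the cost of a constant factor'': after discarding $g\gg X^{\delta'}$ you must retain the sum over small $g$ and over divisors of $g^4$, with a convention (the paper's $(r,q/r)=1$) making the splitting of $d$ well defined, and the coprimality $(m',n')=1$ must be detected by M\"obius, which inserts the condition $h\mid\alpha_1\alpha_2$ into the finite data; consequently the modulus fed into Theorem \ref{theorem:cq5o3jikeq} is $M=hq_0r$ as in the paper, not $M=q_0$, and the truncation of $h$ and $g$ contributes the $O(X^{2+o(1)}/D)$ remainder. These are routine repairs, but they are part of the statement you are invoking.

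The genuine divergence is the main term, and there your plan is both different from the paper's and harder than your sketch suggests. The target $\sum_q\Psi(q/Q)M_q(X;W)$ is not an Euler product: the weight $\Psi(q/Q)$ and the congruence $q\equiv a_0\,(q_0)$ couple the archimedean and arithmetic data, so a prime-by-prime comparison with $\sigma_\infty\prod_p\sigma_p$ (aggregated over $g,h,r$) only becomes available after a Mellin/character decomposition, or after partial summation against $\sum_{q\le t}\rho(q)/q^2=c_{-1}\log t+c_0+O(t^{-1/4})$ in progressions, which reduces matters to identifying a leading constant with the sum of products of local densities; with the ramified prime $2$, the primes dividing $q_0$, and the auxiliary parameters $g,h,r$ this is a real computation, not a formal unraveling of Lemma \ref{lem.Only1Prime}. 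The paper sidesteps all of it: in the final lemma it runs the identical reduction at the auxiliary scale $Y=Q^{20}$, where $\frac{1}{Y^2}\sum_q\Psi(q/Q)S_q(Y;W)$ equals $\bigl(\int W\bigr)\sum_{q\equiv a_0(q_0)}\Psi(q/Q)\rho(q)/q^2$ up to a negligible error by the very definition of $\rho$, and applies Theorem \ref{theorem:cq5o3jikeq} a second time so that the same sum of local densities is identified with this quantity without ever computing a single local factor. Your direct route is viable in principle, but if you pursue it you should either carry out the local analysis in full (including at $2$ and $p\mid q_0$) or adopt the paper's comparison trick, which delivers the main-term identity for free.
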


In the remainder of this section, we shall prove Proposition \ref{proposition:crb358nxke}, beginning with
preparation for the application of Theorem \ref{theorem:cq5o3jikeq}.

We start by analyzing the condition $q | m^4 + n^4$. If $(m , n) = 1$, we would have a
correspondence $\set{q | m^4 + n^4} \iff \set{\mathfrak{q} | m + n\zeta}$, but we must deal with the possibility
of primes that aren't totally split dividing both $m$ and $n$.

To this end, note that
\begin{equation}
  S_q(X; W) = \sum_{ d\ge 1} \sum_{(m', n') = 1 \\ q | d^4(m'^4 + n'^4) } W \biggl( \frac{dm'}{X}, \frac{dn'}{X} \biggr)
  = \sum_{d\ge 1 \\ r | d^4, q \\ (r, q/r) = 1 } \sum_{(m', n') = 1 \\ q/r | m'^4 + n'^4 }
  W \biggl( \frac{ dm'}{X}, \frac{dn'}{X} \biggr).
\end{equation}
Letting $q/r = a_1$, we obtain that
\begin{equation}
  \sum_{q \equiv a_0 (q_0)} \Psi \biggl(\frac{q}{Q}\biggr) S_q(X;W)
  = \sum_{d\ge 1 \\ r | d^4 } \sum_{(m', n') = 1 \\ a_1 | m'^4 + n'^4 \\ (a_1, r) = 1}
  \Psi \biggl(\frac{a_1r}{Q}\biggr) W \biggl( \frac{dm'}{X}, \frac{dn'}{X} \biggr).
\end{equation}
We can discard the contribution of $d > D$ (to be specified, but it will be
a small power of $X$ ultimately) at the cost of a remainder $O(X^{2 + o(1)}/D)$, so
\begin{equation}
  \sum_{q \equiv a_0 (q_0)} \Psi \biggl(\frac{q}{Q}\biggr) S_q(X;W)
  = \sum_{d\le D \\ r | d^4, q } \sum_{(m', n') = 1 \\ a_1 | m'^4 + n'^4 \\ (a_1, r) = 1 \\ a_1r \equiv a_0(q_0)}
  \Psi \biggl(\frac{a_1r}{Q}\biggr) W \biggl( \frac{dm'}{X}, \frac{dn'}{X} \biggr)
  + O \biggl(\frac{X^{2 + o(1)}}{D}\biggr)
\end{equation}
This sets up our passage to $K$. There is a bijection 
\begin{align}
  \set{a_1 | m'^4 + n'^4} &\leftrightarrow\set{\mathfrak{a}_1 | (m' + n'\zeta)} 
\end{align}
given by $N\mathfrak{a}_1\leftrightarrow \mathfrak{a}_1 $.
Therefore, we have that
\begin{multline}
  \sum_{q \equiv a_0 (q_0) } \Psi \biggl(\frac{q}{Q}\biggr)S_q \\ 
  = \sum_{d\le D \\ r | d^4 }
  \sum_{\substack{\alpha'\in \mathcal{O}_K \\ \ensuremath{\boldsymbol\ell}(\alpha') = 0 \\ (\langle \alpha', \zeta^3 \rangle, \langle \alpha', \zeta^2 \rangle) = 1}}
  \sum_{\substack{\mathfrak{a}_1 | (\alpha') \\ (N\mathfrak{a}_1, r) = 1 \\ rN\mathfrak{a}_1 \equiv a_0 (q_0)}}
  \Psi \biggl(\frac{rN\mathfrak{a}_1}{Q}\biggr)
  W \biggl( \frac{d \langle \alpha', \zeta^3 \rangle}{X}, \frac{d \langle \alpha', \zeta^2 \rangle}{X} \biggr) + O \biggl(\frac{X^{2 + o(1)}}{D}\biggr)
  \\
  = \sum_{d\le D \\ r | d^4 } \sum_{h\ge 1 } \mu(h)
  \sum_{\substack{\alpha\in \mathcal{O}_K \\ \ensuremath{\boldsymbol\ell}(\alpha) = 0 \\ h | \alpha }}
  \sum_{\substack{\mathfrak{a}_1 | (\alpha) \\ (N\mathfrak{a}_1, r) = 1 \\ r N\mathfrak{a}_1 \equiv a_0 (q_0)}}
  \Psi \biggl(\frac{rN\mathfrak{a}_1}{Q}\biggr)
  W \biggl( \frac{d \langle \alpha, \zeta^3 \rangle}{X}, \frac{d \langle \alpha, \zeta^2 \rangle}{X} \biggr) + O \biggl(\frac{X^{2 + o(1)}}{D}\biggr) \\
  = \sum_{d h \le D \\ r | d^4 } \mu(h)
  \sum_{\substack{\alpha\in \mathcal{O}_K \\ \ensuremath{\boldsymbol\ell}(\alpha) = 0 \\ h | \alpha }}
  \sum_{\substack{\mathfrak{a}_1 | (\alpha) \\ (N\mathfrak{a}_1, r) = 1 \\ r N\mathfrak{a}_1 \equiv a_0 (q_0)}}
  \Psi \biggl(\frac{rN\mathfrak{a}_1}{Q}\biggr)
  W \biggl( \frac{d \langle \alpha, \zeta^3 \rangle}{X}, \frac{d \langle \alpha, \zeta^2 \rangle}{X} \biggr) + O \biggl(\frac{X^{2 + o(1)}}{D}\biggr). \\
\end{multline}
At this point, we wish to move from our sum over ideals to a sum over $\mathcal{O}_K$.
To do this, take $u : K_\infty^\times\to \mathbb{R}$ given by
\begin{equation*}
  u(\alpha) = \log|\alpha|_{v_1} - \log|\alpha|_{v_2}
\end{equation*}
for the two complex places $v_1, v_2$ of $K$ so that $u(\varepsilon_0) \neq 0$ and pick any $\xi\in C_c^\infty(\mathbb{R})$ such
that
\begin{equation}
  \#\mu_K\sum_{k\in \mathbb{Z} } \xi(t -  u(\varepsilon_0^k)) = 
  \#\mu_K\sum_{k\in \mathbb{Z} } \xi(t -  ku(\varepsilon_0)) = 1
\end{equation}
for all $t\in \mathbb{R}$. Taking $\Xi(x) = \xi(u(x))$, we obtain that
\begin{equation}\label{eq:crb4m1u7r4}
  \sum_{\varepsilon\in \mathcal{O}_K^\times } \Xi(\varepsilon x) = 1
\end{equation}
for all $x\in K_\infty^\times$.

Applying this, we obtain that
\begin{equation}\label{sec:crb42lc0ha}
  \sum_{q \equiv a_0 (q_0) } \Psi \biggl(\frac{q}{Q}\biggr) S_q
  = \sum_{d h\le D \\ r | d^4 } \mu(h) \Sigma_{d, h, r}(X; W) + O \biggl(\frac{X^{2 + o(1)}}{D}\biggr),
\end{equation}
where
\begin{equation}
  \Sigma_{d,h,r} (X; W)
  = \sum_{\alpha_1, \alpha_2\in \mathcal{O}_K \\ \ensuremath{\boldsymbol\ell}(\alpha_1\alpha_2) = 0}
  \Phi^\infty \biggl( \frac{\alpha_1}{X_1}, \frac{\alpha_2}{ X_2 } \biggr) \Phi^f_{h, r}(\alpha_1, \alpha_2),
\end{equation}
where $\Phi^\infty \in C_c^{\infty}(K_\infty^2)$, $\Phi^f_{h, r} : (\mathcal{O}_K/M \mathcal{O}_K)^2\to \mathbb{C}$ are given by
\begin{equation}
  \Phi^\infty(x_1^\infty, x_2^\infty) = \Xi(x_1^\infty) \Psi(N(x_1^\infty)) W(\langle x_1^\infty x_2^\infty, \zeta^3 \rangle, \langle x_1^\infty x_2^\infty, \zeta^2 \rangle)
  \phi(|\ensuremath{\boldsymbol\ell}(x_1^\infty x_2^\infty)|^2),
\end{equation}
\begin{equation}
  \Phi^f_{h, r}(\beta_1, \beta_2) = \mathbbm{1}_{ \substack{N(\beta_1)r \equiv a_0(q_0) \\ (\beta_1, r) = 1\\ h | \beta_1\beta_2 }}, 
\end{equation}
where we write $X_1 = X_1(r) = (Q/r)^{1/4}$, $X_2(d, r) = (X/d)/X_1$, $M = M(h, r) = hq_0r$, and
$\phi\in C_c^\infty(\mathbb{R})$ is any fixed bump function with $\phi(0) = 1$.

It can be checked that the conditions of Theorem \ref{theorem:cq5o3jikeq} hold with $M, \Omega, X_1, X_2, \ensuremath{\boldsymbol\ell}$ in the
theorem as we have them here, $X$ taken to be $X/d$. Therefore, we obtain that 
\begin{equation}
  \Sigma_{d, h , r} = \frac{1}{d^2}X^2\sigma_\infty \prod_p \sigma_p(h, r)
  + O \biggl( (\Omega D)^{O(1)}X^{2 - \eta + o(1)}\biggr),
\end{equation}
where
\begin{align}
  \sigma_{\infty} &= \int_{K_\infty^2} \Phi^\infty(x_1^\infty, x_2^\infty) \delta(\ensuremath{\boldsymbol\ell}(x_1^\infty x_2^\infty))
                    \,d x_1^\infty \,d x_2^\infty, \\ 
  \sigma_p(h, r) &= \int_{\mathcal{O}_{K, p}^2} \Phi^f_{h, r}(x_1^p, x_2^p)
                   \delta(\ensuremath{\boldsymbol\ell}(x_1^px_2^p)) \,d x_1^p \,d x_2^p.
\end{align}
We obtain that
\begin{equation}
  \sum_{q } \Psi \biggl(\frac{q}{Q}\biggr) S_q(X ; W) =
  X^2 \sigma_\infty \sum_{d h \le D \\ r | d^4} \frac{\mu(h)}{d^2}\prod_p \sigma_p(h, r)
  + O \biggl( X^{2 + o(1)}\biggl( \frac{1}{D}
  + \frac{(\Omega D)^{O(1)}}{X^{\eta}} \biggr) \biggr).
\end{equation}
It remains to evaluate the sum of products of local factors, and show that it
lines up with
\begin{equation}
  \sum_{q } \Psi \biggl(\frac{q}{Q}\biggr) M_q(X ; W).
\end{equation}
We capture all our local computation in the following lemma, from which
Theorem \ref{proposition:crb1e3s2w0} follows upon taking $D$ a sufficiently small power of $X$.
\begin{lemma}
  We have that
  \begin{equation}
    \sigma_\infty \sum_{ d h \le D }\frac{\mu(h)}{d^2} \sum_{r | d^4 } \prod_p\sigma_p(h, r)
    = \biggl( \int_{\mathbb{R}^2} W(x, y) \,d x \,d y \biggr)\sum_{q \equiv a_0(q_0) }
    \Psi \biggl(\frac{q}{Q}\biggr) \frac{\rho(q)}{q^2}
    + O \biggl(\frac{1}{D^{1 - o(1)}}\biggr).
  \end{equation}
\end{lemma}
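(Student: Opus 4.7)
The strategy is to run the derivation that produced $\Sigma_{d,h,r}$ from $S_q$ in reverse, matching local densities on each side. We begin by computing $\sigma_\infty$ explicitly. With $x_1$ held fixed in its support (in particular $|N(x_1)| \asymp 1$ from $\Psi$), substitute $y = x_1 x_2^\infty$ in the inner integral; the Jacobian is $|N(x_1)|^{-1}$, and the delta function $\delta(\ensuremath{\boldsymbol\ell}(y))$ restricts $y$ to the real subspace $\mathbb{R} \oplus \mathbb{R}\zeta \subset K_\infty$. Using the computation $\langle m + n\zeta,\zeta^3 \rangle = m$, $\langle m + n\zeta, \zeta^2\rangle = n$, the inner integral collapses to $\int_{\mathbb{R}^2} W(m,n)\,dm\,dn$, since $\phi(0) = 1$. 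The outer $x_1$ integral, with $\Xi$ cutting out a fundamental domain for $\mathcal{O}_K^\times$ (via \eqref{eq:crb4m1u7r4}) and weighted by $\Psi(N(x_1))/|N(x_1)|$, evaluates in polar coordinates to a positive constant times $\int_0^\infty \Psi(t)\,dt/t$, which is precisely the right factor to convert $\sum_q \Psi(q/Q)/q$ to a scale-free sum.

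Second, we treat the non-archimedean factors $\sigma_p(h,r)$. After integrating out the delta of $\ensuremath{\boldsymbol\ell}$ along the lines of Lemma \ref{lemma:crb7r6zwci}, the factor $\sigma_p(h,r)$ becomes a $p$-adic density counting pairs $(\beta_1, \beta_2) \in \mathcal{O}_{K,p}^2$ with $\beta_1\beta_2 \in \mathbb{Z}_p \oplus \mathbb{Z}_p \zeta$ and appropriate coprimality/divisibility conditions. Writing $\beta_1\beta_2 = m + n\zeta$ (so that locally $N(\beta_1)N(\beta_2) = m^4 + n^4$), and invoking Lemma \ref{lem.Only1Prime} to control which ideals above $p$ can simultaneously divide $m + n\zeta$, we recognize $\sigma_p(h,r)$ as the $p$-component of the density of $(m,n)$ satisfying $q_p | m^4 + n^4$ (with $q_p$ the $p$-part of $q = rN\mathfrak{a}_1$) under the prescribed conditions. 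A multiplicative reassembly, taking the Euler product over $p$ and summing over $(d,h,r)$ with $r | d^4$, now precisely inverts the decomposition \eqref{sec:crb42lc0ha}: the factor $\mu(h)$ implements Möbius inversion to remove the primitivity condition $(m',n') = 1$, while the sum over $r | d^4$ accounts for the non-split prime contributions absorbed into the gcd $d = (m,n)$.

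Third, combining the archimedean and non-archimedean computations yields
\begin{equation*}
\sigma_\infty \sum_{dh \ge 1}\frac{\mu(h)}{d^2}\sum_{r | d^4}\prod_p \sigma_p(h,r)
= \biggl(\int_{\mathbb{R}^2} W\biggr)\sum_{q \equiv a_0(q_0)}\Psi\biggl(\frac{q}{Q}\biggr)\frac{\rho(q)}{q^2},
\end{equation*}
as an identity of absolutely convergent sums. To reduce the infinite sum to $dh \le D$, we use the bounds on $\prod_p \sigma_p(h,r)$ following from Proposition \ref{lemma:cran7rw9t5} (giving $\prod_p \sigma_p(h,r) \ll (hr)^{o(1)}$ uniformly), combined with the trivial bound $\rho(q)\ll q^{1+o(1)}$ and the divisor bound for $r | d^4$, to conclude that the tail $dh > D$ contributes $O(D^{-1+o(1)})$.

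The main obstacle is the careful bookkeeping in the second step: identifying $\sigma_p(h,r)$ with the correct local factor of $\rho(q)/q^2$ requires separately tracking primes $p$ that are split, inert, or ramified in $K/\mathbb{Q}$, and handling the interaction of the three conditions $h | \beta_1\beta_2$, $(\beta_1,r) = 1$, and $N(\beta_1)r \equiv a_0\,(q_0)$ with the delta-function constraint $\ensuremath{\boldsymbol\ell}(\beta_1\beta_2) = 0$. Most of this has in effect already been done in \S\ref{sec:number-fields} and \S\ref{sec:crb7rws6oy}; the main task here is to package these facts multiplicatively and verify the clean identification of the two Dirichlet-series expansions.
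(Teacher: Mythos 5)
Your plan takes a genuinely different route from the paper, and it is incomplete at exactly the points where the work lies. The paper never evaluates any local density here: it notes that the right-hand side equals $\frac{1}{Y^2}\sum_q\Psi(q/Q)S_q(Y;W)+O(Y^{-2})$ for the huge auxiliary scale $Y=Q^{20}$, and then simply re-runs the reduction leading to \eqref{sec:crb42lc0ha} together with Theorem \ref{theorem:cq5o3jikeq} at scale $Y$; since $\sigma_\infty\prod_p\sigma_p(h,r)$ is scale-free, the two sides share the same main term and all error terms are negligible, so no archimedean or $p$-adic computation is needed. Your proposal instead asserts precisely the computations this trick avoids. The central one is the matching of constants: that the $x_1^\infty$-integral over the unit fundamental domain cut out by $\Xi$ (which produces a regulator-type constant, and which also depends on the Haar-measure normalization on $K_\infty$ and on the normalization implicit in $\delta(\ensuremath{\boldsymbol\ell}(\cdot))$), multiplied by the reassembled sum $\sum_{d,h,r}\mu(h)d^{-2}\prod_p\sigma_p(h,r)$, equals exactly the leading constant of $\sum_{q\le x,\,q\equiv a_0\,(q_0)}\rho(q)/q^2\sim c\log x$ times $\int_0^\infty\Psi(t)\,dt/t$. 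This is a class-number-formula-style verification, including the interaction of the congruence $N(\beta_1)r\equiv a_0\,(q_0)$ with the split/nonsplit bookkeeping, and it is the entire content of the lemma; phrases such as ``precisely the right factor'' and ``precisely inverts the decomposition'' assert rather than prove it. Relatedly, your claimed exact ``identity of absolutely convergent sums'' cannot be literally exact: the right-hand side retains a mild $Q$-dependence (its deviation from the scale-free constant is of size $O(Q^{-1/4+o(1)})$), while your completed left-hand side is $Q$-independent, so the statement can only hold up to an admissible error and must be set up as such.

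There is also a concrete quantitative gap in your truncation step. The bound you cite, $\prod_p\sigma_p(h,r)\ll (hq_0r)^{o(1)}$ from Proposition \ref{lemma:cran7rw9t5}, carries no decay in $h$, so the tail
\begin{equation*}
  \sum_{dh>D}\frac{|\mu(h)|}{d^2}\sum_{r\mid d^4}\prod_p\sigma_p(h,r)
\end{equation*}
is not even convergent on the basis of what you invoke: for fixed $h$ the $d$-sum contributes $\ll\min(1,(h/D)^{1-o(1)})$, and the remaining sum over $h$ diverges. To truncate at $dh\le D$ with error $O(D^{-1+o(1)})$ you need genuine decay in $h$ (and control in $r$), e.g.\ that the local density of the condition $h\mid\beta_1\beta_2$ under $\ensuremath{\boldsymbol\ell}(\beta_1\beta_2)=0$ is $\ll h^{-2+o(1)}$; such a bound is provable along the lines of Proposition \ref{prop.N1}, but it is not what you cited, and without it both the absolute convergence of your completed $(d,h,r)$-sum and the tail estimate fail. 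In short, the direct local-computation route is viable in principle, but as written the decisive identification of constants is unproven and the convergence/truncation argument does not close, whereas the paper's auxiliary-scale argument settles both for free.
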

\begin{proof}

  First, note that with $Y = Q^{20}$, we have 
  \begin{multline}
    \biggl( \int_{\mathbb{R}^2} W(x, y) \,d x \,d y \biggr)\sum_{q \equiv a_0(q_0) }
    \Psi \biggl(\frac{q}{Q}\biggr) \frac{\rho(q)}{q^2} \\ 
    = \frac{1}{Y^2} \sum_{q } \Psi \biggl(\frac{q}{Q}\biggr)
    \sum_{m, n \\ m^4 + n^4 \equiv 0(q) } W \biggl( \frac{m}{Y}, \frac{n}{Y} \biggr)
    + O \biggl(\frac{1}{Y^2}\biggr) \\ 
    = \frac{1}{Y^2}\sum_{q } \Psi \biggl(\frac{q}{Q}\biggr) S_q(Y; W) + O \biggl(\frac{1}{Y^2}\biggr).
  \end{multline}
  We may now perform the exact same maneuvers leading up to \eqref{sec:crb42lc0ha} (with the same
  choice of $D$) and apply Theorem \ref{theorem:cq5o3jikeq} to yield the desired result. The error term
  follows from the rate of convergence of the RHS.
\end{proof}

\newcommand{\etalchar}[1]{$^{#1}$}
\def\cprime{$'$} \def\cprime{$'$} \def\cprime{$'$} \def\cprime{$'$}

\end{document}